\theoremstyle{plain}
\newtheorem{thm}{Theorem}[subsection]
\newtheorem*{thm*}{Theorem}
\newtheorem{thms}{Theorem}[section]
\newtheorem{prop}[thm]{Proposition}
\newtheorem{cor}[thm]{Corollary}
\newtheorem{lem}[thm]{Lemma}
\newtheorem*{lem*}{Lemma}
\newtheorem{prop*}{Proposition}
\theoremstyle{definition}
\newtheorem{defn}[thm]{Definition}
\newtheorem*{defn*}{Definition}
\newtheorem{eg}[thm]{Example}
\newtheorem{egs}[thms]{Example}
\newtheorem*{ex*}{Exercise}
\newtheorem{rk}[thm]{Remark}
\newtheorem{rks}[thms]{Remark}
\newtheorem{ntn}[thm]{Notation}
\newcommand{\FLV}{\ensuremath{\F\L\V}\xspace}
\newcommand{\FLG}{\ensuremath{\F\L\G}\xspace}
\newcommand{\flg}{\ensuremath{(\mathrm{FLG})}\xspace}
\newcommand{\ngerms}[1][n]{\ensuremath{(#1\text{-}\mathrm{germs})}\xspace}
\newcommand{\nbuds}[1][n]{\ensuremath{(#1\text{-}\mathrm{buds})}\xspace}
\newcommand{\Gn}[1][n]{\ensuremath{\G_{#1}}\xspace}
\newcommand{\Bn}[1][n]{\ensuremath{\B_{#1}}\xspace}
\newcommand{\nTh}[1][n]{\ensuremath{(#1\text{-}\mathrm{Th})}\xspace}
\newcommand{\nInf}[1][n]{\ensuremath{(#1\text{-}\mathrm{Inf})}\xspace}
\newcommand{\Isom}{\ensuremath{\operatorname{Isom}}\xspace}
\newcommand{\sEnd}{\ensuremath{\operatorname{\E\!\mathit{nd}}}\xspace}
\newcommand{\sAut}{\ensuremath{\operatorname{\A\!\mathit{ut}}}\xspace}
\newcommand{\Autstr}{\ensuremath{\operatorname{\A\!\mathit{ut}_{\textrm{str}}}}
\xspace}
\newcommand{\laz}{\ensuremath{L}\xspace}
\newcommand{\Vect}[1][]{\ensuremath{\mathrm{(Vect}{}_{#1}\mathrm{)}}\xspace}
\newcommand{\can}{\mathrm{can}}
\newcommand{\tr}{\mathrm{tr}}
\newcommand{\fet}{\textnormal{f\'et}}
\newcommand{\fpqc}{\textnormal{fpqc}}
\newcommand{\nlim}[1][1]{\ensuremath{#1\text{-}\varprojlim}}
\DeclareMathOperator{\Ab}{Ab}
\DeclareMathOperator{\Fib}{Fib}
\DeclareMathOperator{\CFG}{CFG}
\DeclareMathOperator{\Gp}{Gp}
\DeclareMathOperator{\Inf}{Inf}
\DeclareMathOperator{\St}{St}
\renewcommand{\L}{\ensuremath{\mathscr{L}}\xspace}
\begin{document}

\renewcommand{\O}{\ensuremath{\mathscr{O}}\xspace}

\title{On the moduli stack of commutative, $1$-parameter formal Lie groups}
\author{Brian D. Smithling}
%\thanks{See \url{http://www.math.uchicago.edu/~bds/fg.pdf} for the latest version of this document.}
%\date{August 24, 2007}
\address{Max-Planck-Institut f\"ur Mathematik, Vivatsgasse 7, 53111 Bonn,
Germany}
\email{bsmithli@mpim-bonn.mpg.de}
\subjclass[2000]{Primary 14L05, 14D20; Secondary 18A30, 18D05}
\keywords{Formal Lie group, formal group law, pro-algebraic stack}

\begin{abstract}
We commence a general algebro-geometric study of the moduli stack of
commutative, $1$-parameter formal Lie groups.  We emphasize the pro-algebraic
structure of this stack: it is the inverse limit, over varying $n$, of moduli
stacks of $n$-buds, and these latter stacks are algebraic.  Our main results
pertain to various aspects of the height stratification relative to fixed prime 
$p$ on the stacks of buds and formal Lie groups.
\end{abstract}
\maketitle

\begin{comment}

\include{intro}
\include{definitions}
\include{moduli_theory}
\include{height_strat_buds}
\include{height_strat_flg}
\include{val_crit}
\include{appendix}
\end{comment}

\section*{Introduction}\label{s:intro}
%Classically, a formal group law is an encapsulation in algebra of the notion of
%an infinitesimal neighborhood of the identity element $e$ of a Lie group $G$.
%Let us consider the $1$-dimensional case.
% Then after choosing a coordinate chart in $G$ around $e$, say
%with $0$ corresponding to $e$, the Taylor expansion of the multiplication law
%on $G$ determines a power series $F(X,Y)$.  The unit and associativity axioms
%for $G$ imply corresponding relations on $F$, expressible as equalities between
%certain power series obtained from $F$.  But these equalities then make sense
%for any formal power series in two variables defined over any ring.  Hence one arrives at the definition of a formal group law; see \eqref{def:fl.gp.law}.

The aim of this paper is to explicate some of the basic algebraic geometry of
the moduli stack of commutative, $1$-parameter formal Lie groups.  Formal group
laws have received extensive study in the mathematical literature for well over
50 years.  Their applications cut across a wide swath of mathematics;
Hazewinkel's massive treatise \cite{haz78} of almost 30 years ago already
counted over 500 citations.
%
%Formal groups have long been of interest in number theory and related areas of
%algebra; for example, they play fundamental roles in the theory of elliptic
%curves and, more generally, abelian varieties.  Hazewinkel's massive treatise
%\cite{haz78} offers a comprehensive overview of formal group law theory circa 30
%years ago, including a survey of applications of formal groups across a broad
%swath of mathematics.  
We shall make no attempt to survey the vast literature
produced since then.  But we do wish to signal one area of mathematics in which
the moduli stack of formal groups has appeared in an explicit and important way:
namely, \emph{stable homotopy theory}.

Formal group laws enter into topology in the study of generalized cohomology
theories: to any even, periodic cohomology theory on the category of topological
spaces, one may associate a formal group law in a way canonical up to choice of
coordinate; see, for example, Lurie's survey article \cite{lur06}.  In 
\cite{qui69}, Quillen observed that the formal group law associated to complex
cobordism $MU$ is \emph{universal}.  In fact, the affine schemes $\Spec MU_*$ 
and $\Spec MU_*MU$ admit a natural internal groupoid structure
\[\tag{$*$}\label{disp:MU.gpd}
   \Spec MU_*MU \rra \Spec MU_*
\] 
which represents the functor
\[
   \vcenter{
   \xymatrix@R=0ex{
      (\text{Affine schemes})^\opp \ar[r]
         & \text{Groupoids}\\
      \Spec R \ar@{|->}[r]
         & \biggl\{\parbox{28ex}{\centering formal group laws and strict isomorphisms over $R$}\biggr\}.
   }
   }
\]
Thus the stack of formal groups arises \emph{naturally} in topology.  

In the 1970's, Morava began to emphasize the role of group laws of
given \emph{height}, and in particular the \emph{Morava stabilizer groups}, in
stable homotopy theory, and he advocated for the importation of algebraic 
geometry into the subject as a means to gain conceptual insight. The notion of 
height plays a fundamental role in homotopy theory's so-called \emph{chromatic 
picture}; see Ravenel's important reference \cite{rav86} for details.  More
recently, owing much to the influence and insight of Hopkins, topologists have 
come to understand that a great deal of the chromatic picture's impressive 
computational architecture is, in some sense, governed by the structure of the 
moduli stack of formal Lie groups; see the introduction to \cite{ghmr05} for
some discussion.

%More
%recently, owing much to the influence and insight of Hopkins, topologists have 
%come to understand a great deal of the impressive computational 
%architecture of the chromatic picture as being dictated by the structure 
%of the moduli stack of formal Lie groups; see the introduction to \cite{ghmr05} 
%for some discussion.

For example, in \cite{hop99}*{21.4}, Hopkins interprets the Landweber exact 
functor theorem as a statement closely related to flatness of quasi-coherent 
sheaves on the stack of formal groups. In \cite{prib04}*{4.8}, Pribble obtains a 
purely stack-theoretic analog of the chromatic convergence theorem of 
Hopkins-Ravenel \cite{rav92}*{\s8.6}. In \cite{nau07}*{26}, Naumann gives a 
satisfying algebro-geometric explanation of Hovey's and Strickland's result 
\cite{hovstr05}*{4.2} that for any two Landweber-exact $BP_*$-algebras $R$ and 
$S$ of the same height%
\footnote{This notion of height is closely related to, but in some sense is not
completely compatible with, the notion of height we use when speaking of
the \emph{height stratification}; see \eqref{rk:p-typ}.}%
, the comodule categories of the Hopf algebroids 
\[
   (R, R\tensor_{BP_*}BP_*BP \tensor_{BP_*} R) 
   \quad\text{and}\quad 
   (S, S\tensor_{BP_*}BP_*BP \tensor_{BP_*} S)
\]
are equivalent: namely, he shows that the underlying \emph{stacks} are 
equivalent.  Many important change of rings theorems in stable homotopy theory 
can be seen as arising in this way.

Despite the apparent importance and utility of the moduli stack of formal Lie 
groups in homotopy theory (to say nothing of whatever applications it may have 
in  other branches of mathematics), surprisingly little foundational material on 
this stack has yet appeared in the mathematical literature.  Our intent in this paper 
is  to take some of the first steps towards filling this gap.  That said, let us 
hasten to add that many things we shall discuss have already been treated 
elsewhere in one form or another. Notably, \cite{nau07}*{\s6} contains a quick 
account of some of the basic algebro-geometric moduli theory of formal groups: 
in particular, Naumann gives an algebro-geometric definition of formal groups, 
shows that they form an fpqc-stack, and gives an intrinsic description of the 
stack associated to \eqref{disp:MU.gpd} as the stack of formal groups with 
trivialized canonical bundle.  The \emph{height stratification} is defined and 
plays a prominent role in \cite{prib04}*{\s4.4}.  Hopkins has covered a
considerable amount of moduli theory in \cite{hop99} and in other courses at
MIT.  And there have been many papers --- let us signal especially \cite{laz55}
--- devoted to classification of formal group \emph{laws}; it is this work, of 
course, on which ours shall ultimately rest.

The essential feature of this paper is, perhaps then, its \emph{scope}.  We have
tried to develop the moduli theory largely from the ground up, beginning from 
the foundations of the classical formal group law literature.  Although the 
topologists have emerged as the primary consumers of the theory, let us now 
extend our apologies to them:  we have aproached the subject as a pure piece of 
algebraic geometry, with essentially no regard for considerations arising in 
topology aside from a few remarks in \eqref{rk:p-typ}.  For a much more 
comprehensive account of the moduli stack of formal Lie groups and its relation 
to stable homotopy theory, we refer to the forthcoming \cite{goerss?}.

%Our aim in this work is to commence a general algebro-geometric study of the moduli
%stack of commutative, $1$-parameter formal Lie groups, beginning from the
%foundations of classical formal group law theory. Of course, algebraic
%topologists have already undertaken a considerable study of moduli of formal
%groups, and any such treatment will necessarily bear a good deal of formal
%similarity to theirs. But the topologists have seen fit to streamline the
%algebraic geometry at certain points, discarding some algebro-geometric
%considerations irrelevant to the applications they have in mind.  In our view,
%the moduli stack of formal Lie groups is an object worthy of a treatment in full
%comportment with the foundations of algebraic geometry as currently formulated. 
%It is our hope that a wide range of algebro-geometric techniques may then be
%brought to bear on the subject.

Let us now discuss the main contents of this paper.  

Section \ref{s:defs} serves chiefly to collect terminology surrounding the various objects
at play.  Following, for example, \cites{gro74, mes72}, we shall 
refer to the algebro-geometric version of a formal group law as a ``formal Lie 
group'', as opposed to just ``formal group''.  Though the ``Lie'' is now less 
commonly used than in earlier treatments of the subject, we feel that ``formal 
group'' should be reserved for any group object arising in a formal-geometric 
context.  Our choice of terminology (re)emphasizes that the objects we study are
the algebro-geometric analog of an infinitesimal neighborhood of the identity 
of a ($1$-dimensional) Lie group; see \eqref{def:fv} and \eqref{def:fg}.

In \s\ref{s:moduli_theory} we address the first properties of the stack of formal Lie groups \FLG
and of the related stacks we consider.  Unfortunately, \FLG is not
\emph{algebraic} in the sense traditional in algebraic geometry
\cite{lamb00}*{4.1}:  for example, its diagonal is not of finite type
\cite{lamb00}*{4.2}.  One remedy for this defect, due to Hopkins and followed 
in \citelist{\cite{goerss04}*{1.8}\cite{prib04}*{3.15}\cite{nau07}*{6}}, is the
following.  Let $\Lambda$ denote the Lazard ring.  Then the universal formal 
group law specifies a map 
%$\Spec \Lambda \to \FLG$
\[\tag{$**$}\label{disp:Laz_map}
   \Spec \Lambda \to \FLG
\]
which is at least surjective, flat, and affine (it is not an honest presentation because 
it is not of finite type).   So one simply \emph{redefines} the notion of 
algebraic stack to mean an ``affine-ized'' version of the usual one, using flat 
covers, so that \eqref{disp:Laz_map} is a presentation and \FLG is algebraic. 
See \cite{hol?} for an axiomatization of the idea.  

This modified definition carries several advantages.  One is that a considerable 
amount of algebraic geometry may still be done ``as usual''on such stacks. 
Amongst examples of import to the topologists, we note that there are 
satisfactory notions of closed substack, quasi-coherent sheaf, etc.
Another advantage is that the $2$-category of Hopf algebroids becomes 
antiequivalent to the $2$-category of algebraic stacks equipped with a 
presentation, and the category of comodules for a given Hopf algebroid becomes 
equivalent to the category of quasi-coherent sheaves on the associated stack.  
Hence a useful link is forged to homotopy theory.

On the other hand, the modified definition is ultimately awkward from the point 
of view of geometry: many examples of objects that ought to be algebraic are 
not%
\footnote{A phenomenon already present in examples of interest to homotopy
theorists, as noted in \cite{goerss04}*{footnote 5}.}%
, including even all non-quasi-compact \emph{schemes}.  We shall view the
modified definition as unsuitable.  But we are still left with the problem of
finding a good setting in which to regard \FLG as a geometric object. 
One possibility is to again attempt to weaken the notion of
algebraic stack, starting, say, by requiring our presentations only to be fpqc coverings by
schemes.  Or, at least in the case of \FLG, we could get by with
presentations that are a bit more specialized; see \cite{goerss04}*{1.2, 1.13} 
for some musings on this point.  A satisfactory approach along these lines 
would certainly be highly desirable.  But many phenomena in  algebraic geometry 
are only well understood with certain finiteness hypotheses in place, and we 
believe that such an approach is likely to present some untoward foundational 
issues, at least weighed against the immediate aims of this paper.  So we shall take 
another tack.

We shall continue to understand algebraic stack in the traditional sense, as in
\cite{lamb00}.  Then, indeed, \FLG is not algebraic.  But in some sense it 
is not far from algebraic.  Namely, we may naturally describe it as a 
\emph{pro-algebraic} stack.  We do so by
considering the algebro-geometric classification of what are
classically known as \emph{$n$-bud laws} \eqref{def:nbud.law}.  These were introduced in Lazard's
seminal paper \cite{laz55}; informally, they are just truncated
formal group laws.  The moduli stack \Bn of $n$-buds
is a perfectly good algebraic stack \eqref{st:nbuds=AT_nbsBLT_n}, and, quite as a formal group law is a ``limit'' of
$n$-bud laws, we obtain the stack of formal Lie groups as the
``limit'' of the stacks \Bn \eqref{st:FLG=limBn}.

%The second part of the main body is devoted to what is perhaps the essential
%feature of the geometry of the stack of formal Lie groups, namely the
%\emph{height stratification} relative to a fixed prime $p$.  Of course, the 
%height stratification enters into the chromatic picture in stable homotopy 
%theory in an essential way.  As such, it has received significant study by 
%topologists.  We pursue here a study of its structure that is, in some regards, 
%more detailed than appears to be available in the topological literature; 
%compare \cites{prib04,nau07}.  

In \s\s\ref{s:ht_strat_buds} and \ref{s:ht_strat_flg} we turn to the essential 
feature of the geometry of the stacks \Bn and \FLG, respectively, namely the 
\emph{height stratification} relative to a fixed prime  $p$.  These sections 
form the core of the paper.  The height stratification on \FLG consists of an
infinite descending chain of closed substacks
\[
   \FLG = \FLG^{\geq 0} \ctnsneq \FLG^{\geq 1} \ctnsneq \dotsb,
\]
and, for each $n$, the height stratification on \Bn consists of a finite 
descending chain of closed substacks
\[
   \Bn = \Bn^{\geq 0} \ctnsneq \Bn^{\geq 1} \ctnsneq \dotsb.
\]
As $n$ varies, the stratifications on \Bn are compatible in a suitable sense, 
and their ``limit'' recovers the stratification on \FLG \eqref{st:FLG>=h=lim}.

One of our main results in the following.

\begin{thm*}[\ref{st:nbuds_sm}]
\Bn is smooth over $\Spec \ZZ$ of relative dimension $-1$ at every point, and, 
when it is defined, $\Bn^{\geq h}$ is smooth over 
$\Spec\FF_p$ of relative dimension $-h$ at every point.
\end{thm*}

We emphasize that the theorem asserts not only that the locally closed strata
are smooth, but that the closed subschemes \emph{defining} the stratification
are themselves smooth.  Hence each stratum has smooth closure.  
%The condition on the size of $n$ relative to $h$ is merely to ensure that
%$\Bn^{\geq h}$ is \emph{defined}, that is, that the filtration
%\eqref{disp:Bn.filt} doesn't terminate before reaching $h$.

Much of our subsequent effort is devoted to studying the strata $\FLG^h \subset
\FLG$ and $\Bn^h \subset \Bn$ of height $h$ formal Lie groups and $n$-buds, 
respecitvely. It is a classical result of Lazard \cite{laz55}*{Th\'eor\`eme IV} 
that over a separably closed field of characteristic $p$, formal group laws are 
classified up to isomorphism by their height.  We obtain a generalization as 
follows.  Let $H = H_h$ be a ``Honda'' formal group law of height $h$ defined 
over $\FF_p$; see \eqref{st:[p]_H=T^p^h.over.Fp}.  Then for any $\FF_p$-scheme 
$S$, we may view $H$ as a group law over $\Gamma(S,\O_S)$, and we define the 
functor $\sAut(H)\colon S \mapsto \Aut_{\Gamma(S,\O_S)}(H)$.

\begin{thm*}[\ref{st:FLG^h=B(AutH)}]
$\FLG^h$ is equivalent to the classifying stack $B\bigl(\sAut(H)\bigr)$ for the
fpqc topology.
\end{thm*}

There is a version of the theorem when working with the stack $\FLG_{\tr}^h$ of
formal Lie groups of height $h$ with trivialized canonical bundle. 
%considered by Naumann.  
One has an exact sequence
\[
   1 \to \Autstr(H) \to \sAut(H) \to \GG_m,
\]
where
% \[
%    1 \to \Autstr(H) \to \sAut(H) \to \mu_{p^h-1} \to 1,
% \]
% where $\mu_{p^h-1} \subset \GG_m$ is the subgroup scheme of $(p^h-1)$th roots 
% of unity, and 
$\Autstr(H)$ is the sub-group functor of \emph{strict} automorphisms of $H$.
Then one obtains $\FLG_{\tr}^h \approx \sAut(H) \bs \GG_m$.  Of course, here
$\GG_m$ acts naturally on the right-hand side; the action appears on the
left-hand side as $\GG_m$'s natural action on trivializations, and this action
realizes the forgetful functor $\FLG_{\tr}^h \to \FLG^h$ as a $\GG_m$-torsor.
We note that when $K$ is a field containing $\FF_{p^h}$, $\sAut(H)(K)$ is precisely
the $h$th Morava stabilizer group studied in homotopy theory.

There is also a version of \eqref{st:FLG^h=B(AutH)} for buds. Let $H^{(n)}$ 
denote the $n$-bud law obtained from $H$ by discarding terms of degree $\geq 
n+1$, and let $\sAut(H^{(n)})$ denote the functor sending each $\FF_p$-scheme
$S \mapsto \Aut_{\Gamma(S,\O_S)}(H^{(n)})$.

\begin{thm*}[\ref{st:Bn^h=B(AutH^(n))}]
When it is defined, $\Bn^h$ is equivalent to the
classifying stack $B\bigl(\sAut(H^{(n)})\bigr)$ for the finite \'etale topology
\cite{sga3-1}*{IV 6.3}.
\end{thm*}

%As above, the condition on the size of $n$ relative to $h$ is merely to ensure
%that $\Bn^h$ is defined.

The results \eqref{st:Bn^h=B(AutH^(n))} and \eqref{st:FLG^h=B(AutH)} accord the
groups $\sAut(H^{(n)})$ and $\sAut(H)$ important places in the theory.  We
investigate their structure in the following way.  For each $n$, there is a
natural filtration of normal subgroups
\[
   \sAut(H^{(n)}) =: \A_0^{H^{(n)}} 
	   \supset \A_1^{H^{(n)}}
		\supset \dotsb
		\supset \A_{n-1}^{H^{(n)}}
		\supset \A_n^{H^{(n)}} := 1
\]
and an infinite filtration of normal subgroups
\[
	\sAut(H) := \A_0^H \supset \A_1^H \supset \A_2^H \supset \dotsb;
\]
see \eqref{def:A^H^(n)} and \eqref{def:A^H}, respectively.  In the case of 
$\sAut(H)$, the $\A_\bullet^H$-topology recovers the usual topology on the 
Morava stabilizer group.

We calculate the successive quotients of the $\A_\bullet^{H^{(n)}}$- and
$\A_\bullet^H$-filtrations as follows.  For the
$\A_\bullet^{H^{(n)}}$-quotients, let $l$ be the nonnegative integer such that
$p^l \leq n < p^{l+1}$, and assume $h \leq l$.

\begin{thm*}[\ref{st:AutH^(n)_succ_quot}, \ref{st:AutH_succ_quots}]
There are natural identifications of presheaves on $\Sch{}_{/\FF_p}$
\[
	\A^{H^{(n)}}_i / \A^{H^{(n)}}_{i+1} \ciso
	\begin{cases}
		\mu_{p^h-1}, & i = 0;\\
		\GG_a^{\Fr_{p\smash{^h}}}, & i = p-1,\ p^2-1,\dotsc,\ p^{l-h}-1;\\
		\GG_a, & i = p^{l-h+1}-1,\ p^{l-h+2}-1, \dotsc,\ p^l-1;\\
		0, & \text{otherwise},
	\end{cases}
\]
and
\[
   \A^H_i / \A^H_{i+1} \ciso
   \begin{cases}
	   \mu_{p^h-1}, & i = 0;\\
	   \GG_a^{\Fr_{p\smash{^h}}}, & i = p-1,\ p^2-1,\ p^3-1,\dotsc;\\
	   0, & \text{otherwise.}
   \end{cases}
\]
\end{thm*}

Here $\mu_{p^h-1} \subset \GG_m$ is the sub-group scheme of $(p^h-1)$th roots of
unity, and $\GG_a^{\Fr_{p\smash{^h}}} \subset \GG_a$ is the sub-group scheme of
fixed points for the $p^h$th-power Frobenius operator.  It follows that
$\sAut(H^{(n)})$ is a smooth group scheme over $\FF_p$ of dimension $h$
\eqref{st:AutH^(n)=sm_dim_h}.

In addition to \eqref{st:FLG^h=B(AutH)}, we obtain another description of
$\FLG^h$ via a classical theorem of Dieudonn\'e \cite{dieu57}*{Th\'eor\`eme 3} and Lubin
\cite{lub64}*{5.1.3}, the full details of which we shall not recall here.  Very roughly,
their theorem characterizes $\Aut_{\FF_{p^h}}(H)$ as the profinite group $G$ of
units in a certain $p$-adic division algebra; see \eqref{rk:O_D=End(H)} for a
precise formulation.

\begin{thm*}[\ref{rk:O_D=End(H).analog}]
There is an equivalence of stacks over $\FF_{p^h}$,   
\[
   \FLG^h \fib{\Spec \FF_{p^h}} \Spec \FF_{p^h} \approx \ilim B(G/N),
\]   
where the limit is taken over all open normal subgroups $N$ of $G$.
\end{thm*}

The theorem is really a corollary of Dieudonn\'e's and Lubin's theorem and of
\eqref{st:FLG^h=limU_n}, where we show that $\FLG^h$ is a limit of certain
classifying stacks of \emph{finite \'etale} (but nonconstant) groups over
$\FF_p$.  These groups all become constant after base change to $\FF_{p^h}$.

In \s\ref{s:val_crit} we describe some aspects of the
stacks \FLG and \Bn related to separatedness and properness.

\begin{thm*}[\ref{thm:Bn.univ.closed}]
$\Bn$ is universally closed over $\Spec \ZZ$, and, when it is defined, 
$\Bn^{\geq h}$ is universally closed over $\Spec \FF_p$.
\end{thm*}

%As usual, the condition on the size of $n$ relative to $h$ is merely to ensure
%that $\Bn^{\geq h}$ is defined.  
The stacks \Bn and $\Bn^{\geq h}$ fail to be
proper because they are not separated; see \eqref{rk:sep}.  The failure of
separatedness prevents us from concluding in a formal way that \FLG and
$\FLG^{\geq h}$ also satisfy the valuative criterion of universal closedness.
Nevertheless, these stacks do satisfy the valuative criterion in many cases; see
\eqref{st:fl_u_clsd}.  By contrast, we deduce from our theory that the
\emph{stratum} $\FLG^h$ does satisfy the valuative criterion of separatedness.

\begin{thm*}[\ref{st:fl_sep}]
Let \O be a valuation ring and $K$ its field of fractions. Then $\FLG^h(\O) \to
\FLG^h(K)$ is fully faithful for all $h\geq 1$.
\end{thm*}

When \O is a discrete valuation ring, the theorem is a (very) special case of de
Jong's general theorem \cite{dj98}*{1.2} that restriction of
$p$-divisible groups from $\Spec \O$ to the generic point $\Spec K$ is fully
faithful.

We have included an appendix at the end describing some of the basic theory of
limits in $2$-categorical contexts; or more precisely, of limits in bicategories.

It is our hope that our study of the moduli stack of formal Lie groups will
serve as a rough model for treating the related stack of $p$-divisible groups
for fixed prime $p$. Indeed, this last stack also admits a pro-algebraic
description: it is a limit, over varying $n$, of algebraic stacks of certain
finite locally free group schemes of order $p^n$.

This paper is a condensed and slightly reorganized version of the author's 
Ph.D.\ thesis \cite{sm07}.
%, which the reader
%may wish to consult for a more leisurely account of the material presented
%here.

%\tableofcontents

\subsection*{Acknowledgments}
It is a pleasure to thank Mark Behrens, Eugenia Cheng, Tom Fiore,
Mark Kisin, and Mike Schulman for helpful conversation and suggestions.  I am
especially grateful to Peter May for reading a preliminary draft of this paper and for
offering a number of excellent comments; and to Paul Goerss for a great deal of
help towards understanding the place of the stack of formal Lie groups in
homotopy theory, including sharing with me a draft of \cite{goerss?}.  
Above all, I deeply thank my Ph.D. advisor,
Bob Kottwitz, for his patience and constant encouragement in overseeing this
work.

\subsection*{Notation and conventions}\label{s:not&convs}
Except where noted otherwise, we adopt the following notation and conventions.

We relate objects in a category by writing
\begin{center}
\begin{tabular}{l l l}
   $=$ & for & equal;\\
   $\ciso$ & for & canonically isomorphic;\\
   $\iso$ & for & isomorphic; and\\
   $\approx$ & for & \parbox[t]{64.5ex}{equivalent or $2$-isomorphic (e.g.\
      for categories, fibered categories, stacks, etc.).}
\end{tabular}
\end{center}
For simplicty, we often label arrows obtained in a tautological or canonical way
by $\xra\can$. Context will always make the precise meaning clear.

We write \Sets for the category of sets, $(\Gp)$ for the category of groups, and \Sch for the category of schemes.  
We write $\Fib(\C)$ for the $2$-category of fibered categories over a category
\C. We abbreviate the term ``category fibered in groupoids'' by CFG, and we
write $\CFG(\C)$ for the $2$-category of CFG's over a category \C. By a
\emph{stack}, we mean any fibered category in which morphisms and objects 
satisfy descent.  In particular, we do not require stacks to be CFG's, contrary 
to many authors' convention; compare, for example, \cite{lamb00}*{3.3.1}.  
%Modulo
%the CFG condition, what we call a separated fibered category is often called a prestack
%in the literature; we have chosen our terminology to better respect the analogy
%between sheaves and presheaves on the one hand, and stacks and fibered
%categories on the other.

%We typically use roman letters enclosed in parentheses to denote categories
%containing non-invertible morphisms, or fibered categories containing
%non-Cartesian
%morphisms; e.g.\ \Sch, \Sets, etc.  We typically use script letters \X, \Y, \Z,
%etc.\ to denote groupoids or CFG's.

Given an object $S$ in a category \C, we write $\C_{/S}$ for the overcategory of
$S$-objects, that is, of objects $T$ equipped with a morphism $T \to S$ in \C.
When $\C = \Sch$ and $S$ is an affine scheme $\Spec A$, we often write
$\Sch{}_{/A}$ instead of $\Sch{}_{/S}$.

Given an object $S$ in a category \C, we typically write just $S$ again for
both the presheaf of sets on, and fibered category over, \C determined by $S$,
$T \mapsto \Hom_\C(T,S)$.  When clarity demands that the notation distinguish
between $S$ and its associated presheaf or fibered category, we denote the
latter two by $\ul S$.

By default, ``sheaf'' or ``presheaf'' means ``sheaf of sets'' or ``pre\-sheaf of
sets'', respectively.

% Given a category \C equipped with a topology $\tau$, we denote the category of sheaves on \C by $\Sh_\tau(\C)$ or, when $\tau$ is clear from context, by $\Sh(\C)$.

When working in a $2$-categorical context (for example, the case of fibered
categories and stacks), we say that a diagram of objects and $1$-morphisms
\[
   \xymatrix{
      A \ar[r] \ar[d] & B \ar[d]\\
      C \ar[r] & D
      }
\]
is \emph{Cartesian} if the two composites $A \to D$ are isomorphic, and the choice of
an isomorphism induces an equivalence $A \eqarrow C \fib D B$.  %We often denote Cartesian diagrams by
%\[
%   \xymatrix{
%      A \ar[r] \ar[d] \ar@{}[dr] |*{\square} & B \ar[d]\\
%      C \ar[r] & D.
%      }
%\]

All rings are commutative with $1$.  We write $k(s)$ for the residue
field at the point $s$ of a scheme.  Given a ring $A$ and an $A$-module $M$, we write $\wt M$ for the quasi-coherent
$\O_{\Spec A}$-module on $\Spec A$ obtained from $M$.

Given a scheme $S$, we write $\Gamma(S)$ for the global sections
$\Gamma(S,\O_S)$ of the structure sheaf.  In addition, for each integer $n \geq
0$ and indeterminates $T_1,\dotsc$, $T_m$, we define the ring
\begin{align*}
   \Gamma_n(S;T_1,\dotsc,T_m) 
      := {}& \Gamma\bigl(S, \O_S[T_1,\dotsc,T_m]/(T_1,\dotsc,T_m)^{n+1}\bigr)\\
      \ciso {}& \Gamma(S)[T_1,\dotsc,T_m](T_1,\dotsc,T_m)^{n+1},
\end{align*}
where $\O_S[T_1,\dotsc,T_m]/(T_1,\dotsc,T_m)^{n+1}$ is the sheaf on $S$ defined
on each open subset $U$ by
\[
   U \mapsto \Gamma(U,\O_S)[T_1,\dotsc,T_m]/(T_1,\dotsc,T_m)^{n+1}.
\]
In particular, we have $\Gamma_0(S;T_1,\dotsc,T_m) \ciso \Gamma(S)$.

\section{Definitions}\label{s:defs}

We begin by introducing some of the basic language and notation related to the
objects we study in this paper.

\subsection{Review of formal group laws I}\label{ss:reviewI}

Before beginning to study formal group laws from the point of view of algebraic
geometry, it will be convenient to briefly review some of the foundations of the
classical algebraic theory.  We will review aspects of the classical theory
related to the notion of \emph{height}, as well as some refinements to the 
material we discuss here, in \s\ref{ss:reviewII}.  The reader may wish to skip
this section and refer back only as needed.

Let $A$ be a ring.

\begin{defn}\label{def:fl.gp.law}
A \emph{formal Lie group law,} or just \emph{formal group law, over $A$} is a
power series $F(T_1,T_2)\in A[[T_1, T_2]]$ in two variables satisfying
\begin{enumerate}
\renewcommand{\theenumi}{I}
\item\label{it:gp.law.id}
	(identity) $F(T,0) = F(0,T) = T$;
\renewcommand{\theenumi}{A}
\item\label{it:gp.law.assoc}
	(associativity) $F\bigl(F(T_1,T_2),T_3\bigr) = F\bigl(T_1,F(T_2,T_3)\bigr)$; 
	and 
\renewcommand{\theenumi}{C}
\item\label{it:gp.law.comm}
	(commutativity) $F(T_1,T_2) = F(T_2,T_1)$;
\end{enumerate}
here the equalities are of elements in the rings $A[[T]]$, $A[[T_1,T_2,T_3]]$,
and $A[[T_1,T_2]]$, respectively. A \emph{homomorphism} $F \to G$ of formal
group laws over $A$ is a power series $f(T) \in A[[T]]$ with constant term $0$
satisfying
\[\tag{$*$}\label{disp:gp.law.hom.cond}
	f\bigl(F(T_1,T_2)\bigr) = G\bigl(f(T_1),f(T_2)\bigr)		
\]
in $A[[T_1,T_2]]$.
\end{defn}

One obtains the notion of a bud law as a ``truncated'' version of
\eqref{def:fl.gp.law}.  Let $n\geq 0$.

\begin{defn}\label{def:nbud.law}
An \emph{$n$-bud law over $A$} is an element 
\[
	F(T_1,T_2) \in A[T_1,T_2]/(T_1, T_2)^{n+1}
\]
satisfying conditions \eqref{it:gp.law.id}, \eqref{it:gp.law.assoc}, and
\eqref{it:gp.law.comm} of \eqref{def:fl.gp.law} in the rings 
\[
   A[T]/(T)^{n+1},\ A[T_1,T_2,T_3]/(T_1,T_2,T_3)^{n+1},\ 
	   \text{and}\ A[T_1, T_2]/(T_1,T_2)^{n+1},
\]
respectively. A \emph{homomorphism} $F \to G$ of $n$-bud laws over $A$ is an
element $f(T) \in A[T]/(T)^{n+1}$ with constant term $0$ satisfying
\eqref{disp:gp.law.hom.cond} in the ring $A[T_1,T_2]/(T_1,T_2)^{n+1}$.
\end{defn}

\begin{eg}\label{eg:add&mult_laws}
Over any ring,
\begin{itemize}
\item
   the \emph{additive law} or \emph{$n$-bud} is given by $F(T_1,T_2) = T_1 +
   T_2$; and
\item
   the \emph{multiplicative law} or \emph{$n$-bud}, $n \geq 2$, is given by
   \[
      F(T_1,T_2) = (1+T_1)(1+T_2) - 1 = T_1 + T_2 + T_1T_2.
   \]
\end{itemize}
\end{eg}

\begin{rk}\label{rk:inverse}
We recall that any group law or bud law $F$ over $A$ is automatically equipped 
with a unique inverse homomorphism $i(T) = i_F(T)\colon F \to F$ satisfying $F\bigl(i(T),T\bigr) = 
F\bigl(T,i(T)\bigr) = 0$.
%If 
%\[
%	F(T_1,T_2) = T_1 + T_2 + \sum_{i+j \geq 2} a_{ij}T_1^i T_2^j,
%		\quad a_{ij} \in A,
%\]
%then, solving recursively, one sees
%\[
%	i(T) = -T + a_{11}T^2 - a_{11}^2 T^3 + \dotsb.
%\]
\end{rk}

%\begin{rk}
%Let $F \xra f G \xra g H$ be homomorphisms of group laws or of bud laws over
%$A$.  Then the composite homomorphism $g\circ f$ is just the composition of
%power series in the group law case, or of truncated polynomials in the bud law
%case, $g\bigl( f(T) \bigr)$.  Now, quite generally, a power series or truncated
%polynomial $a_1T + a_2T^2 + \dotsb$ is invertible under composition $\iff$ $a_1
%\in A^\times$.  In other words, $f$ is an isomorphism exactly when its degree
%$1$ coefficient is a unit.
%\end{rk}

\begin{rk}\label{rk:End_A(F)}
Let $F$ be a group law or bud law over $A$.  Then the axioms endow the monoid
$\End_A(F)$ with a natural (noncommutative) \emph{ring} structure: 
multiplication is composition of power series in the group law case, or of
truncated polynomials in the bud law case, and addition is application of $F$;
that is, the sum of $f(T)$ and $g(T)$ is $F\bigl(f(T),g(T)\bigr)$.  We denote
the sum in $\End_A(F)$ by $f +_F g$ to avoid confusion with usual addition of
power series or of truncated polynomials.  The unit element in $\End_A(F)$ is
just $\id(T):=T$.
\end{rk}

\begin{rk}\label{rk:+.in.End_A(F)}
It is convenient to now make the following technical, though obvious, remark,
which we'll use in \s\ref{ss:aut_buds_ht_h}.  Suppose that
\[
	f(T) = a_n T^n + a_{n+1} T^{n+1} + \dotsb
	\quad\text{and}\quad
	g(T) = b_m T^m + b_{m+1} T^{m+1} + \dotsb
\]
are endomorphisms of the group law or bud law $F$.  Say $n \leq m$.  Then since $F$ is of the form
\[
	F(T_1, T_2) = T_1 + T_2 + \text{(higher order terms)},
\]
we observe that
\begin{multline*}
	(f +_F g)(T) = a_n T^n + a_{n+1} T^{n+1} + \dotsb + a_{m-1} T^{m-1}\\
		+ (a_m + b_m) T^m + \text{(higher order terms)}.
\end{multline*}
\end{rk}

Let $\varphi\colon A \to B$ be any ring homomorphism. If $F$ is a formal group 
law or bud law over $A$, then application of $\varphi$ to the coefficients of 
$F$ yields a group law or bud law, respectively, $\varphi_*F$ over $B$.  While
it is easy to see that \emph{universal} group laws and bud laws exist, Lazard
\cite{laz55} obtained the following remarkable description of the rings over
which these universal laws are defined.  Let $n \geq 1$.

%In 
%\cite{laz55}, Lazard obtained the following fundamental result on group laws 
%and bud laws.  Let $n\geq 1$.

\begin{thm}[Lazard]\label{st:Laz.thm}
There exists a universal $n$-bud law $U_n$ over the polynomial ring 
$\ZZ[t_1,\dotsc,t_{n-1}]$ and a universal formal group law $U$ over the polynomial 
ring $\ZZ[t_1,t_2,\dotsc]$.
%; that is, $U_n$ (resp., $U$) has the property 
%that for any ring $B$ and any $n$-bud law $F_n$ (resp., formal group law  
%$F$) over $B$, there exists a unique ring homomorphism $\varphi\colon 
%\ZZ[t_1,\dotsc,t_{n-1}] \to B$ (resp., $\varphi\colon \ZZ[t_1,t_2,\dotsc] 
%\to B$) such that $F_n = \varphi_* U_n$ (resp., $F = \varphi_* U$).
\end{thm}

In other words, the rings $\ZZ[t_1,\dotsc,t_{n-1}]$ and $\ZZ[t_1,t_2,\dotsc,]$ 
\emph{corepresent} the functors $A \mapsto \{n\text{-bud laws over}\ A\}$ and 
$A \mapsto \{\text{formal group laws over}\ A\}$, respectively.  We will recall
a refinement of Lazard's theorem later in \eqref{st:Laz.thm.refinement}.

\begin{proof}[Proof of \eqref{st:Laz.thm}]
This was first proved, though only implicitly for bud laws, in 
\cite{laz55}*{Th\'eor\`emes II and III and their proofs}.  One may also consult 
\cite{haz78}*{I 5.3.1, 5.7.3, 5.7.4} (\cite{haz78} refers to buds as 
``chunks'').
\end{proof}

\begin{rk}
The choice of $U_n$ and $U$ in the theorem are by no means canonical.  That is,
$\ZZ[a_1,\dotsc,a_{n-1}]$ and $\ZZ[a_1,a_2,\dotsc,]$ \emph{noncanonically}
corepresent the respective functors $A \mapsto \{n\text{-bud laws over}\ A\}$
and $A \mapsto \{\text{formal group laws over}\ A\}$.  So already one sees an
advantage in ``dividing out'' out by the isomorphisms between the universal laws
to obtain more canonical objects, so that no particular choice of universal law
is preferred.  Hence one is led to the \emph{stacks} of formal Lie groups and of
$n$-buds.
\end{rk}

\subsection{Formal Lie varieties}\label{ss:flv}

There are many (equivalent) ``geometric'' definitions of formal Lie groups in
the literature.  We shall follow \cites{gro74, mes72} in our treatment.  In
passing from formal group laws to formal Lie groups, one may take as point of
departure the following observation: to give a formal group law over the ring
$A$ is to give a group structure on the formal scheme $\Spf A[[T]]$ over $\Spec
A$ with the $0$ section as identity; see \eqref{eg:trivfg}.  In general, a
formal Lie group is something modeled locally by this picture; see
\eqref{def:fg}.  Hence the basic geometry of formal Lie groups lies in the realm
of \emph{formal} geometry.  In this section, we introduce what are, in some
sense, the basic formal-geometric objects underlying the formal Lie groups,
namely the \emph{formal Lie varieties}.

Let $S$ be a scheme.

\begin{defn}\label{def:fv}
A \emph{(pointed, $1$-parameter) formal Lie variety over $S$} is a sheaf $X$ on 
$\Sch{}_{/S}$ for the fppf topology equipped with a section $\sigma\colon S \to 
X$, such that, Zariski locally on $S$, there is an isomorphism of pointed 
sheaves $X \iso \Spf \O_S[[T]]$, where $\Spf\O_S[[T]]$ is pointed by the 
$0$-section.  A \emph{morphism} of formal Lie varieties is a morphism of pointed
sheaves.
\end{defn}

In other words, a (pointed, $1$-parameter) formal Lie variety is a pointed
formal scheme over $S$ locally (on $S$) of the form $\Spf\O_S[[T]]$.

\begin{rk}\label{rk:general_fv}
More generally, a \emph{pointed formal Lie variety} (without condition on the
number of parameters) is a pointed sheaf over $S$ Zariski locally of the form
$\Spf \O_S[[T_1,\dotsc,T_n]]$ for some $n$ locally constant on $S$
\cite{gro74}*{III 6.7}.  A \emph{formal Lie variety} (without specified point)
is a sheaf over $S$ fpqc locally of the form
\[
   \Spf \O_S[[T_1,\dotsc,T_n]]
\]
for varying $n$ \cite{gro74}*{VI 1.3}.  Though these generalizations of
\eqref{def:fv} are certainly not without their place, we will have little
occasion to consider them here.  So, for simplicity, we shall abuse language and
always understand ``formal Lie variety'' to mean ``pointed, $1$-parameter formal
Lie variety'', unless explicitly stated otherwise.
\end{rk}

\begin{eg}\label{eg:whAA}
The most basic and important example of a formal Lie variety over any base $S$
is just $\Spf\O_S[[T]]$ itself, equipped with the $0$ section, that is, with the
map specified on algebras $\O_S[[T]] \to \O_S$ by $T\mapsto 0$. We denote this
example by $\wh\AA_S$ or, when the base is clear from context, by $\wh\AA$. When
$S$ is an affine scheme $\Spec A$, we also denote $\wh\AA_S$ by $\wh\AA_A$.

Our notation is nonstandard. It is typical to write $\wh\AA_S^1$ for the formal
line $ \Spf \O_S[[T]] $ obtained by completing $\AA_S^1$ at the origin. But
since our interest is almost exclusively in pointed, $1$-parameter formal Lie
varieties, we shall suppress the superscript ${}^1$ to reduce clutter, and we
shall always understand $\wh\AA_S$ to be equipped with the zero section.
\end{eg}

\begin{eg}
More generally, if $T$ is any smooth scheme of relative dimension $1$ over $S$
and $S \to T$ is a section, then the completion of $T$ along the section is a
formal Lie variety over $S$.
\end{eg}

\begin{rk}
One can give a more intrinsic version of the definition of formal Lie variety:
see \cite{gro74}*{VI 1.3} or \cite{mes72}*{II 1.1.4}.  But \eqref{def:fv} has
the advantage of being reasonably concrete, and it will certainly suffice for
our purposes.
\end{rk}

\subsection{Formal Lie groups}\label{ss:flg}

Let $S$ be a scheme.

\begin{defn}\label{def:fg}
A \emph{(commutative, $1$-parameter) formal Lie group over $S$} is an fppf sheaf
of commutative groups on $\Sch{}_{/S}$ such that the
underlying pointed sheaf of sets is a formal Lie variety \eqref{def:fv}.
\end{defn}

In other words, a formal Lie group is a formal Lie variety $(X,\sigma)$ made
into a commutative group object in the category of sheaves on $\Sch{}_{/S}$, such
that the given section $\sigma$ is the identity section.

\begin{rk}\label{rk:general_fg}
One can certainly formulate \eqref{def:fg} without the commutativity
condition.  And, analogously to \eqref{rk:general_fv}, one may define a
\emph{formal Lie group} in full generality as an fppf sheaf of
(not-necessarily-commutative) groups over $S$ whose underlying pointed sheaf of
sets is a formal Lie variety in the general sense of \eqref{rk:general_fv}.  But
as before, we shall have no use for the more general notion.  So we shall
abuse language and always use ``formal Lie group'' in the sense stated in
\eqref{def:fg}.
\end{rk}

\begin{eg}\label{eg:trivfg}
Consider the formal Lie variety $\wh\AA$ over $S$ \eqref{eg:whAA}.  To
make $\wh\AA$ into a formal Lie group, we must define a multiplication map
$\wh\AA \fib S \wh\AA \to \wh\AA$. Since
\[
   \wh\AA \fib S \wh\AA \ciso \Spf \O_S[[T_1,T_2]],
\] 
we may equivalently define a continuous map of $\O_S$-algebras
\[
   \O_S[[T_1,T_2]] \longleftarrow \O_S[[T]].
\]
Any such map is determined by the image $F(T_1,T_2)$ of $T$ in the global
sections $\Gamma(S)[[T_1,T_2]]$. Then $\wh\AA$ becomes a formal Lie group with
the $0$ section as identity exactly when $F$ is a formal group law over
$\Gamma(S)$ in the classical sense \eqref{def:fl.gp.law}. Hence, to give a
formal group law is to give a formal Lie group with a choice of coordinate. We
write $\wh\AA^F = \wh\AA_S^F$ for the group structure on $\wh\AA$ obtained from
$F$.
\end{eg}

\begin{eg}\label{eg:trivfg_morph}
Let $F$ and $G$ be formal group laws over $\Gamma(S)$. Then a morphism of formal
Lie varieties $f\colon \wh\AA \to \wh\AA$ is a morphism of formal Lie groups
$f\colon \wh\AA^F \to \wh\AA^G$ exactly when the diagram of maps on global
sections
\[
   \xymatrix@C+4ex{
      \Gamma(S)[[T_1,T_2]]
         & \Gamma(S)[[T_1,T_2]] \ar[l]_-{(f\times f)^\#} \\
      \Gamma(S)[[T]] \ar[u]^-{F(T_1,T_2)}
         & \Gamma(S)[[T]] \ar[l]_-{f^\#} \ar[u]_-{G(T_1,T_2)}
      }
\]
commutes, that is, when
\[
   f^\#\bigl( F(T_1,T_2) \bigr) = G\bigl( f^\#(T_1),f^\#(T_2) \bigr).
\]
Hence we recover the classical notion of a group law homomorphism $F \to G$
\eqref{def:fl.gp.law}.
\end{eg}

\begin{eg}\label{eg:fg_eg's}\hfill
\begin{itemize}
\item
	The \emph{additive formal Lie group $\wh\GG_a = \wh\GG_{a,S}$ over $S$} is
$\wh\AA_S^F$ for
\[
	F(T_1,T_2) = T_1 + T_2
\]
the additive group law \eqref{eg:add&mult_laws}.  $\wh\GG_a$ is the completion
of $\GG_a$ at the identity.
\item
	The \emph{multiplicative formal Lie group $\wh\GG_m = \wh\GG_{m,S}$ over $S$}
is $\wh\AA_S^F$ for
\[
	F(T_1,T_2) = T_1 + T_2 + T_1T_2
\]
the multiplicative group law \eqref{eg:add&mult_laws}.  $\wh\GG_m$ is the
completion of $\GG_m$ at the identity.
\item
   If $E$ is an elliptic curve over $S$, then the completion of $E$ at the
identity is a formal Lie group over $S$.  Note that this furnishes many examples
of formal Lie groups not admitting a global coordinate.
\item
	More generally, completion at the identity of any smooth commutative group
	scheme of relative dimension $1$ yields a formal Lie group.  When $S$ is
	$\Spec$ of an algebraically closed field, then $\GG_a$, $\GG_m$, and elliptic
	curves are the \emph{only} such connected group schemes.
\end{itemize}
\end{eg}

At this juncture, we could perfectly well begin to consider the moduli stack of
formal Lie groups.  But, as noted in the introduction, it turns out that this
stack is not algebraic.  So in the next section, we shall begin laying the
groundwork to study the related moduli problem of classifying \emph{$n$-buds};
this will afford an ``algebraic approximation'' to the moduli stack of formal
Lie groups, in a sense we make precise in \eqref{st:FLG=limBn}.  We shall return
to the moduli stacks of formal Lie varieties and of formal Lie groups in
\s\ref{ss:flv_stack} and \s\ref{ss:flg_stack}, respectively.

\subsection{Infinitesimal neighborhoods}

Roughly speaking, in algebra, a bud law is a truncation of a formal group law.
In the geometric setting, the role of truncation will be played by \emph{taking
infinitesimal neighborhoods}. In other words, roughly
speaking, a bud will be an infinitesimal neighborhood of the identity of a
formal Lie group. Hence the basic geometry of buds lies in the realm of
\emph{infinitesimal} geometry. In this section we introduce some of the basic
language surrounding these ideas.

Let us begin by recalling the notion of infinitesimal neighborhood from
\cite{gro74}*{VI 1.1} or \cite{mes72}*{II 1.01}.  Let $S$ be a base scheme, $Y$ 
and $X$ sheaves on $\Sch{}_{/S}$ for some fixed topology between the fpqc and
Zariski topologies, inclusive, and $Y \inj X$ a monomorphism. Let $n \geq 0$.

\begin{defn}\label{def:inf}
The \emph{$n$th infinitesimal neighborhood of $Y$ in $X$} is the subsheaf of $X$
obtained as the sheafification of the subpresheaf of $X$ defined on points by
\[
   T \mapsto 
      \left\{ T \rightarrow X \left|
	   \begin{minipage}{52ex}
		\centering there exists a commutative diagram
		\[
			\xymatrix{
				T' \ar@{^{(}->}[r] \ar[d]
					& T \ar[d]\\
				Y \ar@{^{(}->}[r]
					& X
			}
		\]
		for some closed subscheme $T'$ of $T$ defined by a
		quasi-coherent ideal $\I \subset \O_T$ satisfying $\I^{n+1} =
		0$.
	   \end{minipage}
	\right.
	\right\}.
\]
We denote the $n$th
infinitesimal neighborhood by $\Inf_Y^n(X)$ or, when $Y$ is clear from
context, by $X^{(n)}$.
\end{defn}

\begin{rk}
When $Y \inj X$ is a closed immersion of schemes, say with associated sheaf of
ideals $\I \subset \O_X$, one typically defines $\Inf_Y^n(X)$ to be the closed
subscheme of $X$ whose underlying topological space is the image of $Y$ and
whose structure sheaf is the restriction of $\O_X/\I^{n+1}$.  It is not hard to
verify that, in this situation, this notion of $\Inf$ and the notion of
\eqref{def:inf} agree; see \cite{mes72}*{II 1.02}.  
%Hence, in this situation, $Y \to
%\Inf^n_Y(X)$ is always an $n$-thickening.
\end{rk}

\begin{rk}\label{rk:Inf_fctrl}
One verifies at once that $\Inf$ is \emph{functorial} in $X$ and $Y$: precisely,
any commutative diagram
\[
   \xymatrix{
      Y \ar@{^{(}->}[r]  \ar[d]    & X \ar[d]\\          
      Y' \ar@{^{(}->}[r]            & X',
      }
\]
in which the horizontal arrows are monomorphisms, induces a map $\Inf_Y^n(X) \to
\Inf_{Y'}^n(X')$ for any $n$.
\end{rk}

\begin{rk}\label{rk:Inf_bc}
Formation of $\Inf_Y^n(X)$ is compatible with base change on $S$ by
\cite{mes72}*{II 1.03}.
\end{rk}

We will be especially interested in infinitesimal neighborhoods in the case $Y =
S$, so that $X$ is \emph{pointed}.  In this situation, we have the 
following definition.

\begin{defn}\label{def:nInf}
$X$ is \emph{$n$-infinitesimal} (resp., \emph{ind-infinitesimal}) if the natural
arrow $X^{(n)} \to X$ (resp., $\clim_n X^{(n)} \to X$) is an
isomorphism.  We denote by $\nInf(S)$ (resp., $\nInf[\infty](S)$) the category
of $n$-infinitesimal (resp., ind-infini\-tes\-i\-mal) sheaves over $S$.
\end{defn}

\begin{eg}
The motivation for introducing \eqref{def:nInf} is simply that any formal Lie
variety is ind-infinitesimal, and any $n$th infinitesimal neighborhood of a
pointed sheaf is $n$-infinitesimal.
\end{eg}

\begin{rk}
Of course, one has the obvious implications
\[
   \text{$n$-infinitesimal} \implies \text{$m$-infinitesimal for $m \geq n$}
      \implies \text{ind-infinitesimal}.
\]
\end{rk}

\begin{rk}
One verifies at once that the properties of being $n$-infinitesimal or ind-infinitesimal are stable under base change.  Hence \nInf and $\nInf[\infty]$ define fibered categories over \Sch.
\end{rk}

\begin{rk}\label{rk:truncation}
As a special case of \eqref{rk:Inf_fctrl}, the assignment $X \mapsto X^{(n)}$ 
specifies a functor
\[
   \left\{\parbox{19ex}{\centering
      pointed fppf sheaves on $\Sch{}_{/S}$
   }\right\}
   \to \nInf(S).
\]
Hence we get a natural ``truncation'' functor
\[
   \nInf[\infty](S) \to \nInf(S).
\]
We shall return to the topic of truncation a bit more in earnest in \s\ref{ss:inf_inf_sheaves}.
\end{rk}

\begin{rk}\label{rk:inf_prods}
If $X$ and $Y$ are $n$- and $m$-infinitesimal, respectively, sheaves over $S$, then the product (in the category of pointed sheaves over $S$) $X\fib S Y$ is $(n+m)$-infinitesimal.  Hence a product of ind-infinitesimal sheaves is ind-infinitesimal.  

On the other hand, $\nInf(S)$ itself contains finite products: namely, the
product of sheaves $X$ and $Y$ in $\nInf(S)$ is $(X\fib S Y)^{(n)}$.
\end{rk}

%\begin{rk}
%One may regard $\nTh$ \eqref{def:nStrat} as the full sub-fibered category of $\nInf$ consisting of the \emph{representable} sheaves.
%\end{rk}

\subsection{Germs}\label{ss:germs}

In this section we introduce the ``truncated'' analogs of the formal Lie 
varieties, namely the \emph{germs}; these will hold the same relation to buds 
that the formal Lie varieties hold to formal Lie groups.  To define $n$-germs,
we could, so to speak, just ``take the $n$th infinitesimal
neighborhood'' of the definition of formal Lie variety \eqref{def:fv}; that is, we
could copy \eqref{def:fv} word-for-word, only replacing $\O_S[[T]]$ everywhere
with $\O_S[T]/(T)^{n+1}$.  Instead, we shall give a somewhat more intrinsic
definition. We shall deduce that our definition amounts to the infinitesimal
version of a formal Lie variety in \eqref{st:ngermloctriv}.

Let $n\geq 0$, and let $S$ be a scheme.

\begin{defn} \label{def:ngerm}
A \emph{(pointed, $1$-parameter) $n$-germ over $S$} is a pointed, representable,
$n$-infinitesimal \eqref{def:nInf} sheaf $X$ over $S$ satisfying
\begin{enumerate}
\renewcommand{\theenumi}{LF1}
\item\label{it:lf1}
   if \I is the sheaf of ideals on $X$ associated to the given section $\sigma
   \colon S \to X$, then each successive quotient $\sigma^*(\I^i/\I^{i+1})$ for $i =
   1,\dotsc$, $n$ is a locally free $\O_S$-module of rank $1$.
\end{enumerate}
We often denote $n$-germs as triples $(X,\pi,\sigma)$, where $\pi$ denotes
the structure map $X \to S$ and $\sigma$ denotes the section.
\end{defn}

It is convenient to allow the $n = 0$ case; then \eqref{it:lf1} is vacuous, and
a $0$-germ over $S$ consists just of an isomorphism $X \isoarrow S$.

\begin{rk}\label{rk:ngerm.linguistics}
One could give a fully general definition of $n$-germs in a way entirely
analogous to \eqref{rk:general_fv}. But, just as in \eqref{rk:general_fv}, we
won't have occasion to consider the more general notion. So we shall abuse
language and always use ``$n$-germ'' in the sense stated in \eqref{def:ngerm}.
\end{rk}

\begin{eg}\label{eg:trivngerm}
The most basic and important example of an $n$-germ over $S$ is just the
infinitesimal version of $\wh\AA_S$ \eqref{eg:whAA},
\[
	\TT := \TT_S := \TT_{n,S} := \Spec \O_S[T]/(T)^{n+1},
\]
equipped with the $0$ section. In our discussion of germs, we will often work
with respect to a fixed nonnegative integer $n$ and/or a fixed base $S$, and,
accordingly, we will often favor the plainer notation \TT or $\TT_S$ over
$\TT_{n,S}$, provided no confusion seems likely.  When $S$ is an affine scheme
$\Spec A$, we also denote $\TT_S$ by $\TT_A$.  

Our choice of the letter \TT is for ``trivial''; see \eqref{def:trivgerm}.
\end{eg}

\begin{rk}\label{rk:trunc_germs}
Note that for $m \geq n$, we have $\wh\AA ^{(n)}_S \ciso \TT_{m,S}^{(n)} \ciso 
\TT_{n,S}$.  More generally, any truncation of a formal Lie variety or of a germ
is a germ.
\end{rk}

\begin{rk}
Taking $i = 1$ in the definition \eqref{def:ngerm}, we see that, in particular,
the conormal sheaf associated to $\sigma$ is a line bundle on $S$.  This line
bundle will play an important role in our later study of the \emph{height
stratification}.  We remark in addition that, since $\sigma$ is a section of
$\pi$, the conormal sheaf is canonically isomorphic to $\sigma^*
(\Omega^1_{X/S})$ \cite{egaIV4}*{17.2.5}.
\end{rk}

\begin{rk}
It is immediate that the morphisms $\pi$ and $\sigma$ in \eqref{def:ngerm} are
inverse homeomorphisms of topological spaces.  Hence each sheaf
$\sigma^*(\I^i/\I^{i+1})$ appearing in condition \eqref{it:lf1} is canonically
isomorphic to $\pi_*(\I^i/\I^{i+1})$.
\end{rk}

\begin{rk}
In the language of \cites{gro74,mes72}, we could have defined an $n$-germ over
$S$ as an object of $\nInf(S)$, representable and smooth to order $n$ along the section
\citelist{\cite{gro74}*{VI 1.2} \cite{mes72}*{II 3.1.2}}, such that the
conormal sheaf is a line bundle on $S$.  The equivalence of this notion with that of our definition \eqref{def:ngerm}, if not immediately apparent, will be made clear in \eqref{st:ngermloctriv}.
\end{rk}

\begin{defn}\label{def:(ngerms).and.Gn}
We define $\ngerms(S)$ to be the full subcategory of pointed sheaves on
$\Sch{}_{/S}$ consisting of the $n$-germs.  We define $\G_n(S)$ to be the 
groupoid of $n$-germs and their isomorphisms.
\end{defn}

It follows from \eqref{st:ngerm.base.change} below that $\ngerms$ and $\G_n$ 
define a fibered category and a CFG, respectively, over \Sch.  For the moment, 
consider the general situation of a scheme $X$ over $S$ and a quasi-coherent 
sheaf of ideals $\I \subset \O_X$.

\begin{lem}[compare \cite{egaIV4}*{16.2.4}]
Let $i\geq 1$, and suppose $\I^j/\I^{j+1}$ is $S$-flat for $0 \leq j\leq i-1$.  Then formation of $\I^i$ is compatible with base change on $S$; that is, for any Cartesian square
\[
   \xymatrix{
      X' \ar[r]^-p \ar[d] & X\ar[d]\\
      S\smash{'} \ar[r] & S\smash{,}
   }
\]
$\I' := p^*\I$ is naturally identified with a sheaf of ideals in $\O_{X'}$, and the natural arrow $p^*(\I^i) \to (\I')^i$ is an isomorphism.
\end{lem}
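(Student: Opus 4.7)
The plan is to proceed by induction on $i$, using that for any $\O_X$-module $\F$, the pullback $p^*\F$ is naturally computed by the tensor product $\F \otimes_{\O_S} \O_{S'}$ (with $\F$ regarded as an $\O_S$-module via $X \to S$); this is immediate on affines from the construction of the fiber product. The key consequence is that $p^*$ preserves short exact sequences of $\O_X$-modules whose rightmost term is $S$-flat.

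First I would apply $p^*$ to the short exact sequence
\[
   0 \to \I \to \O_X \to \O_X/\I \to 0.
\]
Since $\O_X/\I = \I^0/\I^1$ is $S$-flat by the $j=0$ hypothesis, the pulled-back sequence remains exact, so $p^*\I$ injects into $\O_{X'}$ and is naturally identified with a sheaf of ideals $\I'$. This also serves as the base case $i=1$ of the induction.

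For the inductive step from $i-1$ to $i$, I would apply $p^*$ to
\[
   0 \to \I^i \to \I^{i-1} \to \I^{i-1}/\I^i \to 0.
\]
The $S$-flatness of $\I^{i-1}/\I^i$ (the $j = i-1$ hypothesis) keeps this exact, yielding an injection $p^*(\I^i) \hookrightarrow p^*(\I^{i-1})$ which composes with the inductive identification $p^*(\I^{i-1}) \ciso (\I')^{i-1} \subset \O_{X'}$ to realize $p^*(\I^i)$ as a subsheaf of $\O_{X'}$. To pin down its image as $(\I')^i$, I would use the natural surjection $\I^{\otimes i} \twoheadrightarrow \I^i$ given by multiplication in $\O_X$; applying $p^*$ (which commutes with tensor products) gives a surjection $(p^*\I)^{\otimes i} \twoheadrightarrow p^*(\I^i)$, and its further composition with $p^*(\I^i) \hookrightarrow \O_{X'}$ is by construction the multiplication map $(\I')^{\otimes i} \to \O_{X'}$, which has image $(\I')^i$ by definition. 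Hence $p^*(\I^i) \to (\I')^i$ is an isomorphism.

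The main subtlety lies in the inductive step: in order to compare $p^*(\I^i)$ with the concrete ideal $(\I')^i$, one must first know that the abstract module $p^*(\I^i)$ lands injectively in $\O_{X'}$, and this is exactly what the flatness of each successive quotient $\I^j/\I^{j+1}$ supplies, by keeping the relevant short exact sequences exact after pullback. Aside from this bookkeeping, the argument is essentially formal.
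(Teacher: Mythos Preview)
Your proof is correct and uses the same ingredients as the paper: flatness of the successive quotients $\I^j/\I^{j+1}$ to preserve exactness under pullback, together with the multiplication surjection $\I^{\otimes i}\twoheadrightarrow \I^i$ to identify the image. The paper's version is organized slightly more compactly: rather than inducting on $i$, it observes in one stroke that $\O_X/\I^i$ is $S$-flat (being filtered by the $\I^j/\I^i$ with flat successive quotients), so the exact sequence $\I^{\otimes i}\xra{m}\O_X\to\O_X/\I^i\to 0$ has flat cokernel and hence formation of $\im m = \I^i$ commutes with base change; your induction unwinds this same filtration step by step.
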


\begin{proof}
Consider the exact sequence
\[
   \I^{\tensor i} \xra m \O_X \to \O_X/\I^i \to 0,
\]
where $m$ is the multiplication map.  Now, $\O_X/\I^i$ admits a filtration
\[
   \O_X/\I^i
	  \supset \I/\I^i
	  \supset \I^2/\I^i 
	  \supset \dotsb 
     \supset \I^{i-1}/\I^i 
     \supset 0
\]
whose successive quotients are $S$-flat by hypothesis. Hence $\O_X/\I^i$ is
$S$-flat. That is, $m$ has flat cokernel. Hence formation of the image of $m$ is
compatible with base change. The lemma follows.
\end{proof}

\begin{lem}\label{st:ngerm.base.change}
Suppose $X$ is an $n$-germ over $S$.  Then for any base change $S' \to S$, $S' \fib S X$ is an $n$-germ over $S'$.
\end{lem}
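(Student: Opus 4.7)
The plan is to verify each of the three clauses in the definition of $n$-germ for $X' := S' \times_S X$, and the only substantive point is the locally-free-of-rank-$1$ condition \eqref{it:lf1}, which will follow from the lemma immediately preceding.

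First I would check the easy conditions. The base-changed morphism $X' \to S'$ is representable because representability of sheaves is stable under base change, and it carries a section $\sigma'$ obtained as the base change of $\sigma$ along $S' \to S$. That $X'$ is $n$-infinitesimal over $S'$ is exactly the content of \eqref{rk:Inf_bc}: since $X^{(n)} \isoarrow X$, the base change $(X')^{(n)} \isoarrow X'$ as well.

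The heart of the argument is condition \eqref{it:lf1}. Let $p\colon X' \to X$ be the projection and $\I' \subset \O_{X'}$ the ideal sheaf of $\sigma'$. I first want to apply the preceding lemma to each integer $1 \leq i \leq n+1$ to conclude $p^*(\I^i) \ciso (\I')^i$. The hypothesis there requires each $\I^j/\I^{j+1}$ to be $S$-flat for $0 \leq j \leq i-1$. For $j \geq 1$, the sheaf $\I^j/\I^{j+1}$ is annihilated by $\I$, hence is an $\O_X/\I$-module; under the identification $\O_X/\I \ciso \sigma_*\O_S$ coming from the section, and using that $\sigma$ and $\pi$ are inverse homeomorphisms, we have $\I^j/\I^{j+1} \ciso \sigma_*\sigma^*(\I^j/\I^{j+1})$, whose $\O_S$-module structure is the locally free one given by hypothesis \eqref{it:lf1}; in particular it is $S$-flat. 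For $j = 0$, $\O_X/\I \ciso \sigma_*\O_S$ is plainly $S$-flat. So the lemma applies.

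Having secured $p^*(\I^i) \ciso (\I')^i$ for each $i$, right-exactness of $p^*$ applied to $\I^{i+1} \to \I^i \to \I^i/\I^{i+1} \to 0$ yields a canonical identification $p^*(\I^i/\I^{i+1}) \ciso (\I')^i/(\I')^{i+1}$. Pulling back along $\sigma'$ and using $\sigma'^* \circ p^* \ciso q^* \circ \sigma^*$, where $q\colon S' \to S$, gives
\[
   (\sigma')^*\bigl((\I')^i/(\I')^{i+1}\bigr) \ciso q^*\sigma^*(\I^i/\I^{i+1}),
\]
which is the pullback of a locally free rank $1$ $\O_S$-module, hence locally free of rank $1$ over $\O_{S'}$. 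This verifies \eqref{it:lf1} for $X'$ and completes the proof. I expect the only point requiring care to be the flatness verification needed to invoke the preceding lemma — everything else is a formal base-change manipulation.
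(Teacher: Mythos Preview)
Your proof is correct and follows essentially the same approach as the paper: both rely on the preceding lemma to control base change of the powers $\I^i$, and then deduce that the successive quotients remain locally free of rank $1$. The paper's proof is a one-liner (``Immediate from the previous lemma, since base change for arbitrary schemes preserves quotients of quasi-coherent modules and the property of being finite locally free of given rank''), whereas you spell out the verification of the flatness hypothesis and the representability/$n$-infinitesimality conditions explicitly; but the substance is the same.
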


\begin{proof}
Immediate from the previous lemma, since base change for arbitrary schemes
preserves quotients of quasi-coherent modules and the property of being finite
locally free of given rank.
\end{proof}

\subsection{Trivial germs}\label{ss:trivngerms}

Let us begin with the definition.

\begin{defn}\label{def:trivgerm}
The $n$-germ $X$ over $S$ is \emph{trivial} if it is isomorphic to $\TT_S$ \eqref{eg:trivngerm}.
\end{defn}

We shall now show that every germ is trivial Zariski locally. As a first step,
let $\pi\colon X \to S$ be a scheme over $S$ admitting a section $\sigma$ which
is a closed immersion and a homeomorphism.

\begin{lem}\label{st:nbudaff}  
$X$ is affine over $S$.
\end{lem}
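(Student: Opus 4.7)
The statement is Zariski local on $S$, so I would first reduce to the case $S = \Spec A$ affine, by restricting $\sigma$ to the preimage of each affine open of $S$ (which again yields a closed immersion and a homeomorphism on the restricted pieces). The lemma then amounts to showing that $X$ itself is affine.

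In the reduced setting, $\sigma$ identifies the underlying topological space of $X$ with $|\Spec A|$, so $X$ is quasi-compact. Let $\I \subset \O_X$ denote the quasi-coherent ideal sheaf of $\sigma$; since $\sigma$ is surjective on points, $\I$ lies in the nilradical, and in the cases in which this lemma is to be applied (where $X$ is an $n$-germ, hence $n$-infinitesimal) one actually has $\I^{n+1}=0$. The plan is to exploit the finite filtration
\[
   0 = \I^{n+1} \subset \I^n \subset \dotsb \subset \I \subset \O_X,
\]
whose successive quotients $\I^i/\I^{i+1}$ are $\O_X/\I$-modules, hence correspond via $\sigma$ to quasi-coherent sheaves on the affine scheme $S$.

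From this filtration I would deduce affineness of $X$ by Serre's cohomological criterion. For any quasi-coherent sheaf $\F$ on $X$, twist the filtration by $\F$ and chase the long exact cohomology sequences of $0 \to \I^{i+1}\F \to \I^i\F \to (\I^i/\I^{i+1})\F \to 0$ inductively from $i=n$ downward: each graded piece $(\I^i/\I^{i+1})\F$ is supported (via $\sigma_*$) on the affine scheme $S$ and hence has vanishing positive cohomology, and the induction starts from $\I^{n+1}\F = 0$. The conclusion is $H^j(X,\F) = 0$ for $j > 0$ for all quasi-coherent $\F$, which combined with quasi-compactness of $X$ yields that $X$ is affine.

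The only real subtlety is ensuring that the filtration by powers of $\I$ terminates; this is automatic from the nilpotence $\I^{n+1}=0$ in the germ case at hand, and represents the main (minor) obstacle in reading the lemma with its stated hypotheses. For the fully general statement, where $\sigma$ is only assumed a closed immersion and a homeomorphism, one instead invokes the standard result that any thickening of an affine scheme is affine (compare \cite{egaI}*{5.1.9}).
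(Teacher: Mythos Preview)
Your argument via Serre's cohomological criterion is correct for the case you actually need (the $n$-germ case, where $\I^{n+1}=0$): the filtration terminates, the graded pieces are quasi-coherent sheaves pushed forward from the affine scheme $S$ via the closed immersion $\sigma$, and the induction on $i$ goes through. Your reduction to $S$ affine is also fine.

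But the paper's proof is much shorter and handles the lemma exactly as stated, with no nilpotence hypothesis and no cohomological input. The observation is this: closed immersions are affine morphisms, so for any affine open $V\subset X$, the preimage $U:=\sigma^{-1}(V)$ is affine in $S$; but since $\pi$ and $\sigma$ are inverse homeomorphisms, $\pi^{-1}(U)=V$, which is affine by construction. As $V$ ranges over an affine cover of $X$, the corresponding $U$'s cover $S$, and we are done. (The paper also notes that this is a special case of \cite{sga3-1}*{VI$_{\mathrm B}$ 2.9.1}.) In contrast, your route invokes Serre's criterion, needs the finite filtration to terminate, and for the general statement falls back on an external result that is essentially what is being proved. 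The paper's trick of starting from affine opens of $X$ rather than of $S$ is the insight you are missing; it buys the full generality for free.
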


\begin{proof} 
Apply \cite{sga3-1}*{VI$_{\mathrm{B}}$ 2.9.1}, which says that for any morphism of schemes
$\pi$, if $\pi$ is a homeomorphism between the underlying 
topological spaces, then $\pi$ is an affine morphism.  Alternatively, we can
give a simple direct proof in the present situation by using that the inverse
homeomorphism $\sigma$ is a closed immersion.  Consider an open affine $V$ in
$X$.  Since a closed immersion is an affine morphism, $U := \sigma^{-1}(V)$ is
affine in $S$.  Then $\pi^{-1}(U) = V$ is affine.  But we can certainly cover
$S$ with open affines of the form $U$, so we're done.
\end{proof}

Let us now specialize to the case that $(X,\pi,\sigma)$ is an $n$-germ.

\begin{lem}\label{st:freesuccquots} 
Let $\I \subset \O_X$ be the sheaf of ideals corresponding to $\sigma$. Then the natural map
\[
   (\I/\I^2)^{\tensor i} \to \I^i/\I^{i+1}
\]
is an isomorphism for $1\leq i \leq n$.
\end{lem}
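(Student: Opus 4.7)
The plan is to localize on $S$ and reduce the assertion to an elementary fact: a surjective $A$-module map between two free $A$-modules of rank one is an isomorphism. The hypotheses and previous lemma do almost all the work; only some book-keeping is required.

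More precisely, the statement is Zariski local on $S$, and formation of both $\I^k$ and $\I^k/\I^{k+1}$ is compatible with Zariski localization on $S$ (pulling back along $\pi$). Hence I may assume $S = \Spec A$ is affine and, by strengthening condition \eqref{it:lf1} slightly after shrinking, that each $\sigma^*(\I^k/\I^{k+1})$ is genuinely free of rank one over $A$ for $1 \le k \le n$. By the preceding lemma \eqref{st:nbudaff}, $X = \Spec B$ is affine over $S$; writing $J \subset B$ for the ideal of $\sigma$, we have $B/J \cong A$, and each $J^k/J^{k+1}$ is a free $A$-module of rank one in the indicated range. Since $J$ annihilates $J^k/J^{k+1}$, the tensor product over $B$ coincides with that over $A$, and the map to be analyzed is the $A$-linear map
\[
   (J/J^2)^{\otimes_A i} \longrightarrow J^i/J^{i+1}
\]
induced by iterated multiplication in $B$.

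Surjectivity is essentially built into the definition: since $J^i = J \cdot J^{i-1}$, the map $J/J^2 \otimes_A J^{i-1}/J^i \twoheadrightarrow J^i/J^{i+1}$ is surjective, and a one-line induction on $i$ gives the general case. For injectivity, observe that the source is free of rank one (tensor powers of a free rank-one module are free of rank one), as is the target by our local reduction. But any surjective $A$-module map $A \twoheadrightarrow A$ sends $1$ to a generator of $A$ as an $A$-module, hence to a unit, and is therefore an isomorphism. This finishes the proof.

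There is no substantive obstacle here; the entire content is the surjectivity-implies-isomorphism step for free rank-one modules, made available by the two genuine inputs (LF1) and the affineness result \eqref{st:nbudaff}. The only care needed is the initial reduction to the genuinely-free-of-rank-one case on an affine base.
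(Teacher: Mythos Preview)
Your proof is correct and follows essentially the same approach as the paper's: the map is surjective, and both source and target are locally free of rank one, hence it is an isomorphism. The paper compresses this into a single sentence at the sheaf level (``an epimorphism of locally free sheaves of the same rank is an isomorphism''), so your explicit affine reduction and appeal to \eqref{st:nbudaff} are not strictly needed, but they do no harm.
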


\begin{proof} 
The map is an epimorphism of locally free sheaves of the same rank, hence an isomorphism.
\end{proof}

\begin{prop}\label{st:ngermloctriv}
Zariski locally on $S$, $X$ is trivial.
\end{prop}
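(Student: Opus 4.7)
The plan is to reduce to an affine situation and then construct an explicit trivialization by lifting a generator of the conormal line bundle on $S$. By \eqref{st:nbudaff}, $X$ is affine over $S$; write $X = \Spec_S \A$, and let $J \subset \A$ be the ideal corresponding to $\I$ under this identification, equivalently the kernel of the augmentation $\A \to \O_S$ induced by $\sigma$. In affine language the hypotheses on $X$ then read: $J^{n+1} = 0$ (from $n$-infinitesimality) and each successive quotient $J^i/J^{i+1}$, for $1 \leq i \leq n$, is a line bundle on $S$ (from \eqref{it:lf1}, using that $\sigma$ is a closed immersion and a homeomorphism so that $\sigma^*$ agrees with $\pi_*$ on $\O_X$-modules supported along the section).

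Next I would shrink $S$ Zariski locally so that every $J^i/J^{i+1}$, $1 \leq i \leq n$, is free of rank one; this is possible on a neighborhood of any point of $S$ since only finitely many line bundles are involved. Pick a generator $\bar t$ of $J/J^2$, lift it to an element $t \in J$, and consider the map of augmented $\O_S$-algebras
\[
   \varphi \colon \O_S[T]/(T)^{n+1} \longrightarrow \A, \qquad T \longmapsto t.
\]
The task is to show $\varphi$ is an isomorphism. Both sides carry descending filtrations --- by powers of $(T)$ on the source and of $J$ on the target --- that $\varphi$ respects, and on the $i$th associated graded piece $\varphi$ sends $T^i$ to $t^i$. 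By \eqref{st:freesuccquots}, the natural map $(J/J^2)^{\otimes i} \to J^i/J^{i+1}$ is an isomorphism, so $t^i$ generates the line bundle $J^i/J^{i+1}$. Hence the associated graded of $\varphi$ is an isomorphism of line bundles in each degree, and a routine descending induction on the filtration --- via the five lemma applied to the obvious short exact sequences on source and target --- upgrades this to $\varphi$ itself being an isomorphism.

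I do not anticipate a serious obstacle. The main content is the dictionary between the sheaf-theoretic formulation of \eqref{def:ngerm} and the affine counterpart supplied by \eqref{st:nbudaff}; with that in place, the construction and verification of $\varphi$ are standard commutative algebra. The only mildly subtle step is the lifting $\bar t \mapsto t$, but $J \twoheadrightarrow J/J^2$ is surjective and, after Zariski shrinking, a free generator lifts to any preimage of our choice.
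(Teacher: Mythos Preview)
Your argument is correct and essentially identical to the paper's: both use \eqref{st:nbudaff} to pass to an affine description, lift a generator of the conormal line to a section $t$, and then verify that $T\mapsto t$ induces an isomorphism by comparing associated gradeds via \eqref{st:freesuccquots}. One small redundancy: you shrink $S$ so that \emph{all} the $J^i/J^{i+1}$ become free, but since \eqref{st:freesuccquots} identifies $J^i/J^{i+1}$ with $(J/J^2)^{\otimes i}$, it suffices (as the paper does) to trivialize only $J/J^2$.
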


\begin{proof}
Localizing on $S$, we may assume by \eqref{st:nbudaff} that $S = \Spec A$, $X =
\Spec B$, the composition
\[
	A \xla{\sigma^\#} B \xla{\pi^\#} A
\]
is $\id_A$, and $I := \ker[B \to A]$ is an ideal such that $I^{n+1} = 0$ and
$I/I^2$ is a free $A$-module of rank $1$.

Let $\overline t$ generate $I/I^2$ as an $A$-module and choose any lift $t\in
I$. We claim that the map $\varphi\colon A[T]/(T)^{n+1} \to B$ sending $T
\mapsto t$ is an isomorphism.  Indeed, $\varphi$ is well-defined because $t\in
I$ and $I^{n+1} = 0$.  Let us consider $A[T]/(T)^{n+1}$ as filtered $T$-adically, and $B$ as filtered $I$-adically. Then $\varphi$ is a map of filtered rings.  But by \eqref{st:freesuccquots}, $\varphi$ induces an isomorphism on associated graded algebras. Hence $\varphi$ is an isomorphism.
\end{proof}

In light of the proposition, it will be important later to understand the 
automorphisms of $\TT_S$.

\begin{defn}\label{def:AutTT_n}
We define $\sEnd(\TT_n)$ to be the presheaf of monoids on \Sch
\[
   \sEnd(\TT_n) \colon S \mapsto \End_{\ngerms(S)}(\TT_{n,S})
\]
and $\sAut(\TT_n)$ to be the presheaf of groups on \Sch
\[
   \sAut(\TT_n)\colon S \mapsto \Aut_{\ngerms(S)}(\TT_{n,S}).
\]
% functor
% \[
%    \xymatrix@R=0ex{
%       \Sch^\opp \ar[r]  &  \Mnd\\
%     S \ar@{|->}[r]  &  \smash{\End_{\ngerms(S)}(\TT_{n,S})}\vphantom{S}
%       }
% \]
% and $\sAut(\TT_n)$ to be the functor
% \[
%  \xymatrix@R=0ex{
%     \Sch^\opp \ar[r]  &  ({\Gp})\\
%     S \ar@{|->}[r]  &  \smash{\Aut_{\ngerms(S)}(\TT_{n,S}).}\vphantom{S}
%     }
% \]
\end{defn}

Fix a base scheme $S$.  To give an endomorphism of $\TT_S$ is to give a map of
augmented $\O_S$-algebras $\O_S[T]/(T)^{n+1} \to \O_S[T]/(T)^{n+1}$;
this, in turn, is given by the image $a_1 T + \dotsb + a_nT^n$ of $T$ in
$\Gamma_n(S;T)$.  The endomorphism of $\TT_S$ is then an automorphism exactly when $a_1 T +
\dotsb + a_n T^n$ is invertible under composition in $\Gamma_n(S;T)$, that is,
when $a_1 \in \Gamma(S)^\times$.  We have shown the following.

\begin{prop}\label{st:AutTT=repble}
As set-valued functors, $\sEnd(\TT_n)$ is canonically represented by
\[
   \Spec\ZZ[a_1,a_2,\dotsc,a_n] = \AA_\ZZ^n,
\]
and $\sAut(\TT_n)$ is canonically represented by the open subscheme 
\[
   \Spec \ZZ[a_1,a_1^{-1},a_2,\dotsc, a_n]
\]
of $\AA_\ZZ^n$. \hfill $\square$
\end{prop}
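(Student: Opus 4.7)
The plan is largely to formalize the argument already sketched in the paragraph immediately preceding the statement; very little additional work is required. First, for a fixed scheme $S$, I would write down a natural bijection between $\End_{\ngerms(S)}(\TT_{n,S})$ and $\Gamma(S)^n$. An endomorphism of $\TT_{n,S}$ as a pointed affine $S$-scheme is the same data as a map of augmented $\O_S$-algebras $\O_S[T]/(T)^{n+1} \to \O_S[T]/(T)^{n+1}$; since such a map must send the augmentation ideal into itself, it is uniquely determined by the image of $T$, which has the form $a_1 T + a_2 T^2 + \dotsb + a_n T^n$ with $(a_1,\dotsc,a_n) \in \Gamma(S)^n$. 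Naturality in $S$ is automatic from functoriality of global sections and the compatibility of $\TT_{n,-}$ with base change, so this exhibits $\sEnd(\TT_n)$ as represented by $\AA^n_\ZZ = \Spec\ZZ[a_1,\dotsc,a_n]$.

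For automorphisms, I would spell out the composition rule in $\sEnd(\TT_n)(S)$: if $\varphi$ and $\psi$ correspond to $f(T) = a_1 T + \dotsb + a_n T^n$ and $g(T) = b_1 T + \dotsb + b_n T^n$ respectively, then $\varphi \circ \psi$ corresponds to $f(g(T))$, whose leading (degree $1$) coefficient in $\Gamma_n(S;T)$ is $a_1 b_1$. Consequently, a necessary condition for $\varphi$ to be invertible is that $a_1 \in \Gamma(S)^\times$. For sufficiency, given $a_1 \in \Gamma(S)^\times$, I would inductively solve for coefficients $b_1 := a_1^{-1}, b_2, \dotsc, b_n$ making $f(g(T)) = T$ in $\Gamma_n(S;T)$; each $b_i$ is determined by equating the coefficient of $T^i$ and is solvable because $a_1$ is a unit. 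One then observes that a right inverse $\psi$ must itself have invertible leading coefficient $b_1 = a_1^{-1}$, so by the same construction $\psi$ admits a right inverse, forcing $\psi$ to be a two-sided inverse of $\varphi$.

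Under the identification of the first paragraph, this condition $a_1 \in \Gamma(S)^\times$ cuts out exactly the maps $\Spec \Gamma(S) \to \AA^n_\ZZ$ that factor through the principal open subscheme $D(a_1) = \Spec\ZZ[a_1,a_1^{-1},a_2,\dotsc,a_n]$, which therefore represents $\sAut(\TT_n)$. There is no real obstacle here; the only point that needs any care is the compositional inversion in the non-commutative monoid $\End_{\ngerms(S)}(\TT_{n,S})$, and that is handled by the explicit inductive construction of the inverse power series.
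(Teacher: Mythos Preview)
Your proposal is correct and follows essentially the same approach as the paper: the paper's proof is precisely the short paragraph preceding the proposition, which identifies endomorphisms with truncated polynomials $a_1T+\dotsb+a_nT^n$ and asserts that invertibility under composition is equivalent to $a_1\in\Gamma(S)^\times$. You have simply supplied the details (the inductive construction of the compositional inverse) that the paper leaves implicit.
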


Explicitly, the monoid structure on $\Spec\ZZ[a_1,\dotsc,a_n] = \AA_\ZZ^n$ obtained from the identification with $\sEnd(\TT_n)$ is given by composition of polynomials $a_1T + \dotsb + a_nT^n$ in the truncated polynomial ring $\ZZ[a_1, \dotsc, a_n][T]/(T)^{n+1}$.

\begin{rk}\label{rk:AutTT_n_filt}
$\sAut(\TT_n)$ admits a decreasing filtration of closed sub-group schemes
\[
   \sAut(\TT_n) =: \A_0^{\TT_n}
      \supset \A_1^{\TT_n}
      \supset \dotsb
      \supset \A_{n-1}^{\TT_n}
      \supset \A_n^{\TT_n} := 1
\]
defined on $S$-valued points by
\[
   \A_i^{\TT_n}(S) = \{\,T + a_{i+1}T^{i+1} + \dotsb + a_nT^n \mid a_{i+1},\dotsc,a_n\in \Gamma(S)\,\}, \quad 1 \leq i \leq n-1.
\]
Said differently, $\A_i^{\TT_n}$ is just the kernel of the homomorphism 
$\sAut(\TT_n) \to \sAut(\TT_i)$ induced by the identification $\TT_n^{(i)} \ciso
\TT_i$, $0 \leq i \leq n$.  One verifies at once that the map on points
\[
   T+ a_{i+1}T^{i+1} + \dotsb + a_nT^n \mapsto a_{i+1}
\]
specifies an isomorphism of $\ZZ$-groups
\[
   \A_i^{\TT_n}/\A_{i+1}^{\TT_n} \isoarrow
   \begin{cases}
      \GG_m, & i=0;\\
      \GG_a, & 1\leq i \leq n-1.
   \end{cases}
\]
We shall return to the $\A_i^{\TT_n}$'s in \s\ref{ss:aut_buds_ht_h}.
\end{rk}

%\begin{rk}\label{rk:A_subgps_as_kernels}
%For $m \geq n$, the identification $\TT_{m,S}^{(n)} \ciso \TT_{n,S}$ induces a 
%homomorphism $\sAut(\TT_m) \to \sAut(\TT_n)$, the kernel of which is plainly 
%$\A^{\TT_m}_n$ \eqref{rk:AutTT_n_filt}.
%\end{rk}

\subsection{Buds}\label{ss:buds}

We now introduce the infinitesimal versions of the formal Lie groups, namely the
\emph{buds}; these are the algebro-geometric analogs of the bud laws.  Let $S$ 
be a scheme and $n \geq 0$.

\begin{defn} \label{def:nbud} 
A \emph{(commutative, $1$-parameter) $n$-bud over $S$} consists of an $n$-germ
$X$ over $S$ \eqref{def:ngerm} equipped with a morphism of $S$-schemes, which 
we think of as a multiplication map,
\[
   F\colon (X\fib S X)^{(n)} \to X,
\]
satisfying the constraints
\begin{enumerate}
\renewcommand{\theenumi}{I}
\item\label{it:id}
   (identity) the section $S \xra{\sigma} X$ is a left and right identity
   for $F$, i.e.\ both compositions in the diagram
   \[
      \xymatrix{
         X  \ar[r]^-{(\sigma\pi, \id_X)} \ar[d]_-{(\id_X, \sigma\pi)}
            & (X \fib S X)^{(n)} \ar[d]^-{F}\\
         (X \fib S X)^{(n)} \ar[r]^-F
            & X}
   \]
   equal $\id_X$;
\renewcommand{\theenumi}{A}
\item\label{it:assoc}
   (associativity) $F$ is associative on points of $(X \fib S X \fib S 
   X)^{(n)}$, i.e.\ the restrictions of $F \times \id_X$ and $\id_X \times F$ to
   $(X \fib S X \fib S X)^{(n)}$ yield a commutative diagram
   \[
      \xymatrix@C=1.2cm{
         (X \fib S X \fib S X)^{(n)} \ar[r]^-{\id_X \times F} \ar[d]_-{F \times \id_X}
            & (X \fib S X)^{(n)} \ar[d]^-{F}\\
         (X \fib S X)^{(n)} \ar[r]^-{F}
            & X;}
	\]
   and
\renewcommand{\theenumi}{C}
\item\label{it:comm}
   (commutativity) $F$ is commutative, i.e.\ letting $\tau\colon X \fib S X \to
   X \fib S X$ denote the transposition map $(x,y) \mapsto (y,x)$ and
   restricting $\tau$ to $(X\fib S X)^{(n)}$, $F$ is $\tau$-equivariant,
   i.e.\ the diagram
   \[
		\xymatrix@C=.2cm@R=4ex{
			(X \fib S X)^{(n)} \ar[rr]^-\tau \ar[rd]_-F
				& & (X \fib S X)^{(n)} \ar[ld]^-F\\
			& X
			}
	\]
	commutes.
\end{enumerate}
\end{defn}

Of course, the infinitesimal neighborhoods in the definition are all taken with
respect to the sections induced by $\sigma$. It is an immediate consequence of 
the functoriality of $\Inf$ that the diagrams in the definition are well 
defined.

\begin{rk}\label{rk:nbud.almost.group}
Heuristically, the multiplication map $F$ in the definition is almost a
commutative monoid scheme structure on $X$ over $S$, but $F$ is defined only on
points of a certain subfunctor of the product $X \fib S X$. On the other hand,
recall from \eqref{rk:inf_prods} that $(X \fib S X)^{(n)}$ is the honest
product of $X$ with itself in the category $\nInf(S)$.  Hence $n$-buds over
$S$ are commutative monoids in $\nInf(S)$.  In fact, we'll verify in
\s\ref{ss:nbuds=gps} that the $n$-buds are precisely the $n$-germs endowed with
a commutative \emph{group} structure in $\nInf(S)$.
\end{rk}

\begin{rk}\label{rk:general_nbuds}
Just as we noted for formal Lie groups \eqref{rk:general_fg}, one can certainly
consider $n$-buds on many parameters, or without the commutativity constraint
\eqref{it:comm}. But as always, we shall only be concerned with the commutative,
$1$-parameter case. So we shall always abusively understand ``$n$-bud'' in the
sense of \eqref{def:nbud}, except where explicitly stated otherwise.
\end{rk}

\begin{rk}
In the trivial case $n=0$, we have $X \isoarrow S$ and $F$ is forced to
be the canonical isomorphism $(X \fib S X)^{(0)} \isoarrow X$.
\end{rk}

\begin{eg}\label{eg:trivnbud}
Consider the $n$-germ $\TT$ over $S$ \eqref{eg:trivngerm}. Then, quite 
analogously to \eqref{eg:trivfg}, to give an $n$-bud structure 
$(\TT \fib S \TT)^{(n)} \to \TT$ with the given section as identity is to give
an $n$-bud law $F$ over $\Gamma(S)$ in the classical sense \eqref{def:nbud.law}.  Hence, to give an $n$-bud law is to give an $n$-bud with a choice of coordinate.  We write $\TT^F = \TT_S^F$ for the bud structure on $\TT$ obtained from $F$.
\end{eg}

\begin{eg}\label{eg:bud_eg's} 
Everything in \eqref{eg:fg_eg's} admits an obvious analog for buds.  In particular, let us signal the following.
\begin{itemize}
\item
   The \emph{additive $n$-bud $\GG_a^{(n)} = \GG_{a,S}^{(n)}$ over $S$} is the 
   $n$th infinitesimal neighborhood of $\GG_a$ at the identity, that is, the
   $n$-bud $\TT_{n,S}^F$ for
   \[
      F(T_1,T_2) = T_1 + T_2
   \]
   the additive bud law \eqref{eg:add&mult_laws}.
\item
   The \emph{multiplicative $n$-bud $\GG_m^{(n)} = \GG_{m,S}^{(n)}$ over $S$} is
   the $n$th infinitesimal neighborhood of $\GG_m$ at the identity, that is, the
   $n$-bud $\TT_S^F$ for
\[
	F(T_1,T_2) = T_1 + T_2 + T_1T_2
\]
the multiplicative bud law \eqref{eg:add&mult_laws}.
\end{itemize}
\end{eg}

\begin{rk}\label{rk:trunc_buds}
For $F$ a group law (resp., $m$-bud law with $m \geq n$), let us denote by
$F^{(n)}$ the $n$-bud law obtained from $F$ by discarding terms of degree $ \geq
n + 1$.  Then we have $(\wh\AA^F)^{(n)} \ciso \TT_{n}^{F^{(n)}}$ (resp., 
$(\TT_{m}^F)^{(n)} \ciso \TT_{n}^{F^{(n)}}$).  More generally,  any truncation of a formal Lie group or of a bud is a
bud, since truncation $\nInf[\infty](S) \to \nInf(S)$ and $\nInf[m](S) \to 
\nInf(S)$, $m\geq n$, preserves finite (including empty) products.
\end{rk}

There is an obvious notion of morphism of buds.

\begin{defn} \label{def:germbudmorph} 
A \emph{morphism} $f\colon X \to Y$ of $n$-buds over $S$ is a morphism of monoid
objects in $\nInf(S)$, that is, a morphism of the underlying $n$-germs such that
\[
   \xymatrix@C+4ex{
      (X \fib S X)^{(n)} \ar[r]^-{(f\times f)^{(n)}} \ar[d]
         & (Y \fib S Y)^{(n)} \ar[d]\\
      X \ar[r]^-{f}
         & Y
   }
\]
commutes.
\end{defn}

\begin{eg}\label{eg:morph.of.triv.buds}
Analogously to \eqref{eg:trivfg_morph}, to give a morphism of buds $\TT_S^F \to
\TT_S^G$ over $S$ is to give a homomorphism of bud laws $F \to G$
\eqref{def:nbud.law}.
%    
% Let $F(T_1,T_2)$, $G(T_1,T_2) \in \Gamma_n(S;T_1,T_2)$ be $n$-bud laws. Then a
% germ morphism $f\colon \TT_S \to \TT_S$ is a bud morphism $f\colon \TT_S^F \to
% \TT_S^G$ exactly when the diagram of maps on global sections
% \[
%  \xymatrix@C+4ex{
%     \Gamma_n(S;T_1,T_2)
%        & \Gamma_n(S;T_1,T_2) \ar[l]_-{(f\times f)^\#} \\
%     \Gamma_n(S;T) \ar[u]^-{F(T_1,T_2)}
%        & \Gamma_n(S;T) \ar[l]_-{f^\#} \ar[u]_-{G(T_1,T_2)}
%     }
% \]
% commutes, that is, when
% \[
%  f^\#\bigl( F(T_1,T_2) \bigr) = G\bigl( f^\#(T_1),f^\#(T_2) \bigr).
% \]
% Hence we recover the classical notion of a bud homomorphism $F \to G$
% \eqref{def:nbud.law}.
\end{eg}

\begin{defn}\label{def:(nbuds).and.Bn}
We define $\nbuds(S)$ to be the category of $n$-buds and bud morphisms over $S$.  We define $\Bn(S)$ to be the groupoid of $n$-buds and bud isomorphisms over $S$.
\end{defn}

\begin{rk}\label{rk:buds.base.change}
Given an $n$-bud $X$ over $S$ and a base change $f\colon S' \to S$, we know $X'
:= S' \fib S X$ is at least an $n$-germ by \eqref{st:ngerm.base.change}. But
then the base change of the multiplication map makes $X'$ into a bud, since
infinitesimal neighborhoods \eqref{rk:Inf_bc} and fibered products are
compatible with base change. Hence $\nbuds$ and $\Bn$ define a fibered category
and a CFG, respectively, over \Sch.

Note that when $X = \TT_S^F$, one has $X' \ciso \TT_{S'}^{F'}$, where $F'$ is 
the bud law over $\Gamma(S')$ obtained by applying $f^\#$ to the coefficients of
$F$.
\end{rk}

%
%
%
%
\begin{comment}
\subsection{Junk}

To prepare for our discussion of the moduli stacks of formal Lie varieties and of formal Lie groups in the next two sections, let us take a moment to make precise the notion of ``truncation'' in the geometric setting.  Let $m \geq n \geq 0$, and let $F$ be an $m$-bud law or group law.

\begin{defn}\label{def:trunc_law}
We denote by $F^{(n)}$ the $n$-bud law obtained from $F$ by discarding terms of degree $\geq n+1$.
\end{defn}

As the notation indicates, the process of truncating $F$ to $F^{(n)}$ is mirrored in the geometric setting by  \emph{taking infinitesimal neighborhoods}.  

\begin{rk}\label{rk:trunc}
Of course, our interest in truncation will be as it relates to formal Lie
varieties, formal Lie groups, and the like. For now, let us note just that it is
clear from the definitions that any truncation of a formal Lie variety or of a
germ is a germ. Moreover, any truncation of a formal Lie group or of a bud is a
bud, since one checks readily that truncation $\nInf[\infty](S) \to \nInf(S)$ or $\nInf[m](S) \to \nInf(S)$, $m\geq n$, preserves finite (including empty) products.
\end{rk}

\begin{eg}
For $m \geq n$, we have $\wh\AA ^{(n)} \ciso \TT_{m,S}^{(n)} \ciso \TT_{n,S}$.
For $F$ a group law or $m$-bud law over $\Gamma(S)$, we have
$(\wh\AA_S^F)^{(n)} \ciso \TT_{n,S}^{F^{(n)}}$ or $(\TT_{m,S}^F)^{(n)}
\ciso \TT_{n,S}^{F^{(n)}}$, respectively.
\end{eg}
\end{comment}
%
%
%
%

\subsection{Buds as group objects}\label{ss:nbuds=gps}

Fix a base scheme $S$. We remarked in \eqref{rk:nbud.almost.group} that
$n$-buds over $S$ are almost commutative monoids in the category of $S$-schemes,
and are honest commutative monoids in the category $\nInf(S)$ \eqref{def:nInf}. 
As promised, we'll now see that $n$-buds are honest \emph{group} objects in 
$\nInf(S)$.

\begin{prop}\label{st:nbuds=gps} 
The $n$-buds over $S$ are precisely the $n$-germs over $S$ endowed with a
commutative group structure in $\nInf(S)$. The $n$-bud morphisms over $S$ are
precisely the homomorphisms of group objects in $\nInf(S)$.
\end{prop}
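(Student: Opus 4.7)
The easy direction is essentially a tautology: by Remark~\ref{rk:inf_prods}, $(X\fib S X)^{(n)}$ is the product of $X$ with itself in $\nInf(S)$, so unwinding definitions, an $n$-bud structure on an $n$-germ $X$ is exactly the data of a commutative monoid structure on $X$ in $\nInf(S)$. Thus my only real task is to promote each such commutative monoid to a commutative group, i.e., to produce an inverse morphism $i\colon X \to X$ in $\nInf(S)$ satisfying $F \circ (\id_X, i) = \sigma \circ \pi$.

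The plan is to construct the inverse locally using the classical bud-law inverse, then glue by a uniqueness argument. By Proposition~\ref{st:ngermloctriv}, there is a Zariski open cover $\{U_\alpha\}$ of $S$ together with isomorphisms $X|_{U_\alpha} \iso \TT_{U_\alpha}$; transporting the multiplication along these trivializations, $X|_{U_\alpha} \iso \TT_{U_\alpha}^{F_\alpha}$ for some $n$-bud law $F_\alpha$ over $\Gamma(U_\alpha)$ in the sense of \eqref{def:nbud.law}. By Remark~\ref{rk:inverse}, $F_\alpha$ carries a unique inverse power series $i_{F_\alpha}(T)$; via \eqref{eg:morph.of.triv.buds}, this gives a morphism of $n$-buds $i_\alpha\colon X|_{U_\alpha} \to X|_{U_\alpha}$ characterized by $F_\alpha(T, i_{F_\alpha}(T)) = 0$.

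To glue the $i_\alpha$ into a global morphism $i\colon X \to X$, I invoke the general categorical fact that in a commutative monoid object, two-sided inverses are unique whenever they exist: if $i$ and $j$ both satisfy the inverse axiom with respect to $F$, then combining the identity, associativity, and commutativity diagrams of \eqref{def:nbud} gives
\[
   i = F \circ (i, \sigma\pi) = F \circ (i, F \circ (\id_X, j))
     = F \circ (F \circ (i, \id_X), j) = F \circ (\sigma\pi, j) = j.
\]
Applying this to the two restrictions $i_\alpha|_{U_\alpha \cap U_\beta}$ and $i_\beta|_{U_\alpha \cap U_\beta}$, both of which are inverses for the restricted multiplication, they agree on overlaps and descend to the required global $i$. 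This produces the sought-after commutative group structure on $X$ in $\nInf(S)$, and no obstruction to lifting (the main potential worry) appears because $\sAut$-type cocycle conditions collapse thanks to the uniqueness.

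For the statement about morphisms, the easy direction is again tautological: a group homomorphism in $\nInf(S)$ is in particular a homomorphism of underlying commutative monoids, hence an $n$-bud morphism in the sense of \eqref{def:germbudmorph}. Conversely, any morphism $f\colon X \to Y$ of monoid objects automatically commutes with inverses: applying $f$ to the identity $F_X \circ (\id_X, i_X) = \sigma_X \pi_X$ and using that $f$ preserves multiplication and the identity section yields $F_Y \circ (f, f \circ i_X) = \sigma_Y \pi_Y \circ f$, so $f \circ i_X$ is an inverse for $f$ on $Y$; by the uniqueness just established, $f \circ i_X = i_Y \circ f$, completing the proof. The only mildly subtle point throughout is the gluing, but it is handled entirely by categorical uniqueness of inverses rather than by any geometric input.
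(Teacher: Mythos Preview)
Your proof is correct and follows essentially the same approach as the paper: both reduce to producing an inverse morphism, invoke uniqueness of inverses in a monoid to work Zariski locally, trivialize via \eqref{st:ngermloctriv}, and then appeal to the classical inverse \eqref{rk:inverse} for bud laws. You spell out the gluing and the morphism statement more explicitly than the paper does, but the strategy is identical.
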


\begin{proof}
All we need to show is that every $n$-bud $X$ is automatically equipped
with an inverse morphism $X \to X$. Since the inverse is unique if it exists, it
suffices to find the inverse locally on $S$. Hence we may assume $X$ has trivial
underlying germ \eqref{st:ngermloctriv}, so that we may assume $X = \TT_S^F$ for
some $n$-bud law $F(T_1,T_2) \in \Gamma_n(S;T_1,T_2)$ \eqref{eg:trivnbud}.  Now
use (\ref{rk:inverse}, \ref{eg:morph.of.triv.buds}).
%
%But
%then $F$ has a unique inverse element $i(T) \in \Gamma_n(S;T)$ by
%\eqref{rk:inverse}. So the $\O_S$-algebra map
%\[
%	\xymatrix@R=0ex{
%		\O_S[T]/(T)^{n+1} \ar[r] & \O_S[T]/(T)^{n+1} \\
%		T\vphantom{()} \ar@{|->}[r] & i(T)
%		}
%\]
%gives the desired morphism of $S$-schemes
%\[
%	\xymatrix{
%		\TT_S \ciso \Spec\O_S[T]/(T)^{n+1} & \Spec\O_S[T]/(T)^{n+1} \ciso \TT_S. \ar[l]
%		}
%		\qedhere
%\]
\end{proof}

We obtain the following as a formal consequence.

\begin{cor}\label{st:nbudmorphs=abgp} 
For $n$-buds $X$ and $Y$ over $S$, the set of bud morphisms $X \to Y$ is
naturally an abelian group. Moreover, composition of bud morphisms is bilinear. 

\hfill $\square$
\end{cor}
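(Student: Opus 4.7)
The plan is to invoke the general categorical fact that, in any category $\C$ with finite products, the Hom-set $\Hom_{\Ab(\C)}(X,Y)$ between abelian group objects inherits a canonical abelian group structure, with respect to which composition is bilinear. By \eqref{st:nbuds=gps}, the category of $n$-buds over $S$ coincides with $\Ab(\nInf(S))$, and by \eqref{rk:inf_prods} the category $\nInf(S)$ admits finite products, given by $(X \fib S Y)^{(n)}$. So the corollary reduces to unwinding this piece of general nonsense in the present setting.

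Explicitly, given bud morphisms $f, g \colon X \to Y$, I define their sum $f +_Y g$ as the composition
\[
X \xra{\Delta_X} (X \fib S X)^{(n)} \xra{(f \times g)^{(n)}} (Y \fib S Y)^{(n)} \xra{m_Y} Y,
\]
where $m_Y$ is the multiplication of $Y$ and $\Delta_X$ is the diagonal of $X$ in $\nInf(S)$ (which makes sense because $(X \fib S X)^{(n)}$ is the categorical product in $\nInf(S)$). Commutativity of $m_Y$ together with the fact that both $f$ and $g$ respect multiplication force $f +_Y g$ to again be a bud morphism. Associativity and commutativity of the resulting operation on $\Hom(X, Y)$ are then direct consequences of the corresponding properties of $m_Y$; the additive identity is the zero map $X \to S \xra{\sigma_Y} Y$; and the inverse of $f$ is $i_Y \circ f$, where $i_Y \colon Y \to Y$ is the inverse morphism supplied by \eqref{st:nbuds=gps}.

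Bilinearity of composition is then automatic: for $f, g \colon X \to Y$ and $h \colon Y \to Z$, the identity $h \circ (f +_Y g) = (h \circ f) +_Z (h \circ g)$ expresses exactly that $h$ is a bud homomorphism, while $(g +_Z h) \circ f = (g \circ f) +_Z (h \circ f)$ follows at once from functoriality of the product and the construction of the sum. There is no real obstacle here; the whole statement is a formal corollary of \eqref{st:nbuds=gps} and \eqref{rk:inf_prods}. The only point that requires any thought is that the putative sum, identity, and inverse in $\Hom(X, Y)$ actually land in the subset of bud morphisms rather than merely $\nInf(S)$-morphisms, and this is immediate from commutativity of $Y$.
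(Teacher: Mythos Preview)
Your proof is correct and takes essentially the same approach as the paper: both treat the statement as a formal consequence of \eqref{st:nbuds=gps} and \eqref{rk:inf_prods}, namely the standard fact that homomorphisms between commutative group objects in a category with finite products form an abelian group under the target's operation. The paper merely says ``formal consequence'' and remarks that the content is that bud morphisms form a \emph{subgroup} of $\Hom_{\nInf(S)}(X,Y)$; you have spelled out the details of that general nonsense, but there is no substantive difference.
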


Explicitly, bud morphisms $X \to Y$ are added as elements of
$\Hom_{\nInf(S)}(X,Y)$ under the group structure coming from $Y$. The content
of the corollary is that bud morphisms form a \emph{subgroup} of
$\Hom_{\nInf(S)}(X,Y)$.

\begin{rk}
The category of $n$-buds over $S$ is not additive for $n\geq 1$,
since the product of $n$-germs, whether taken in the category of pointed sheaves
or $\nInf(S)$, is not again an $n$-germ.  But the problem is only that we've 
restricted to the $1$-parameter case:  commutative $n$-buds in the general sense of \eqref{rk:general_nbuds}, with no constraint on the number of parameters, do form an additive category.
\end{rk}

\section{Basic moduli theory}\label{s:moduli_theory}

We now begin to consider the basic moduli theory of the stacks of $n$-germs, 
$n$-buds, formal Lie varieties, and formal Lie groups.

\subsection{The stack of $n$-germs}\label{ss:ngerms=B(AT_n)}

In this section we show that the moduli stack of $n$-germs \Gn, $n \geq 1$ is equivalent to
the classifying algebraic stack $B\bigl(\sAut(\TT_n)\bigr)$, with $\sAut(\TT_n)$
as defined in \eqref{def:AutTT_n}. %Similarly, in the next section we show that
%the moduli stack of $n$-buds \Bn is equivalent to the quotient algebraic stack
%$\sAut(\TT_n)\bs \laz_n$, with $\laz_n$ as defined in \eqref{def:laz_n}.

\begin{prop}\label{st:ngerms=stack}
\Gn is a stack over \Sch for the fpqc topology.
\end{prop}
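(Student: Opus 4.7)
The plan is to reduce the stack condition to the classical theorem that affine schemes form a stack for the fpqc topology, using the fact that every $n$-germ is automatically affine over its base by Lemma \ref{st:nbudaff}.  Both conditions in the definition of a stack (descent of isomorphisms and descent of objects) then become an assembly of (i) faithfully flat descent for quasi-coherent modules/algebras and (ii) verifications that the additional data and constraints defining an $n$-germ are fpqc-local.

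First I would handle descent of morphisms. Given $n$-germs $X$ and $Y$ over $S$, both are affine over $S$, so the presheaf on $\Sch{}_{/S}$ sending $T\to S$ to the set of $T$-scheme isomorphisms $X_T\isoarrow Y_T$ is an fpqc sheaf by Grothendieck's fpqc descent for affine schemes.  The subpresheaf of isomorphisms compatible with the sections is cut out by the equation that a given arrow $X_T\to Y_T$, precomposed with $\sigma_X$, agree with $\sigma_Y$; this is a local condition, so the subpresheaf $\sIsom_{\G_n(\cdot)}(X,Y)$ is likewise an fpqc sheaf.

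Next I would establish descent of objects.  Let $\{S_i\to S\}$ be an fpqc cover and suppose we are given $n$-germs $(X_i,\pi_i,\sigma_i)$ over $S_i$ together with a cocycle of $n$-germ isomorphisms on pairwise pullbacks to $S_i\fib S S_j$.  By Lemma \ref{st:nbudaff} each $X_i$ is affine over $S_i$, so Grothendieck's fpqc descent for affine schemes yields an affine $S$-scheme $\pi\colon X\to S$ with canonical isomorphisms $S_i\fib S X\ciso X_i$ compatible with the cocycle.  The sections $\sigma_i$ are cocycle-compatible by hypothesis, and since a section of $\pi$ is the same datum as an augmentation of the associated quasi-coherent $\O_S$-algebra (which satisfies fpqc descent), the $\sigma_i$ descend to a section $\sigma\colon S\to X$.

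It remains to check that $(X,\pi,\sigma)$ is an $n$-germ.  Let $\I\subset\O_X$ be the quasi-coherent sheaf of ideals cut out by $\sigma$.  By Remark \ref{rk:Inf_bc}, formation of $\I$ is compatible with base change, so the pullback of $\I$ to each $X_i$ is the analogous ideal sheaf $\I_i$ there.  The $n$-infinitesimal condition $\I^{n+1}=0$ is an equality of quasi-coherent sheaves on $X$ and therefore fpqc-local on $S$, and by hypothesis it holds after pullback to each $S_i$; hence it holds.  Similarly, for each $i=1,\dotsc,n$, local freeness of rank $1$ of $\sigma^*(\I^i/\I^{i+1})$ is fpqc-local on $S$ by the standard descent theory for quasi-coherent modules, so condition \eqref{it:lf1} descends.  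This completes the verification that $(X,\pi,\sigma)\in\G_n(S)$ and thus that $\G_n$ is an fpqc stack.

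There is no serious obstacle in this argument; the only piece requiring attention is coordinating the various local-on-$S$ verifications, all of which reduce to standard fpqc-descent facts for quasi-coherent sheaves.  The substantive input is Lemma \ref{st:nbudaff}, which ensures that we may appeal throughout to descent for affine morphisms rather than having to treat $X$ as a mere sheaf.
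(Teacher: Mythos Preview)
Your proof is correct and follows essentially the same route as the paper: reduce to descent of affine schemes via Lemma \ref{st:nbudaff}, descend the section by descent of morphisms, and then verify the $n$-infinitesimal and \eqref{it:lf1} conditions using faithfully flat descent for quasi-coherent modules. The paper's proof differs only cosmetically (it first reduces to a single fpqc map $S'\to S$ after noting the Zariski case, and it is slightly more explicit that the augmentation ideal pulls back correctly via the splitting $\A\ciso\O_S\oplus\I$ and that \emph{flat} base change preserves powers of ideals---your citation of Remark \ref{rk:Inf_bc} for this step is a bit off, as that remark concerns $\Inf$ rather than $\I$, but the fact you need is elementary).
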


\begin{proof}
We have to check that objects and morphisms descend.  It is clear that \Gn is
a stack for the Zariski topology since we can glue schemes and morphisms of
schemes.   So we may restrict to the case of a base scheme $S$ and a faithfully
flat quasi-compact morphism $f\colon S' \to S$.  Our argument from here will be a
straightforward application of the descent theory in \cite{sga1}*{VIII}.

Descent for morphisms of germs along $f$ is an immediate consequence of descent 
for morphisms of schemes \cite{sga1}*{VIII 5.2}.  To check descent for objects,
%
%Let $X$ and $Y$ be $n$-germs over $S$, and let $X'$ and $Y'$ denote their
%respective pullbacks to $S'$.  Let $f'\colon X' \to Y'$ be a germ morphism
%compatible with the canonical descent data of $X'$ and $ Y'$.  Then $f'$ at
%least descends to a unique morphism of $S$-schemes $f\colon X \to Y$
%\cite{sga1}*{VIII 5.2}.  Moreover, by hypothesis the base change of the
%diagram
%\[
%	\xymatrix@C-3ex@R-1ex{
%		X \ar[rr]^f & & Y\\
%		& S \ar[ru] \ar[lu]
%		}
%\]
%by $S'$ commutes.  So, by uniqueness, the diagram itself commutes, and $f$ is a
%morphism of germs.
%
%Now 
let $X'$ be an $n$-germ over $S'$ equipped with a descent datum. Then $X'$
is affine over $S'$ \eqref{st:nbudaff}. So $X'$ descends to a scheme $X$ affine
over $S$ \cite{sga1}*{VIII 2.1}. By descent for morphisms, the section for $X'$
descends to a section $\sigma$ for $X$. It remains to show that $X$ is
$n$-infinitesimal along $\sigma$ and that constraint \eqref{it:lf1} in the 
definition of $n$-germ \eqref{def:ngerm} holds for $X$. We may assume $X = 
\Spec \A$ for some quasi-coherent $\O_S$-algebra \A. Let $\I := \ker[\A \to 
\O_S]$ be the ideal corresponding to $\sigma$. Then $\A \ciso \O_S \oplus \I$, 
so \I certainly pulls back to the ideal corresponding to the given section $S' 
\to X'$. Moreover, since arbitrary base change preserves quotients of 
quasi-coherent modules, and flat base change preserves powers of ideals, we 
conclude that the pullback of $\I^i/\I^{i+1}$, $i = 1,\dotsc$, $n$, is locally 
free of rank $1$, and that the pullback of $\I^{n+1}$ is $0$. Since $S' \to S$ 
is faithfully flat, we conclude that $\I^i/\I^{i+1}$ itself is locally free of 
rank $1$ \cite{sga1}*{VIII 1.10}, and that $\I^{n+1} = 0$.
\end{proof}

\begin{thm}\label{st:ngerms=B(AT_n)}
\Gn is equivalent to the classifying stack $B\bigl(\sAut(\TT_n)\bigr)$.  In
particular, \Gn is algebraic.
\end{thm}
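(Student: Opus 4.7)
The plan is to exhibit the equivalence via the standard torsor recipe: every $n$-germ is Zariski locally isomorphic to the trivial germ $\TT_S$ by \eqref{st:ngermloctriv}, so the sheaf $\sIsom_S(\TT_S, X)$ of germ isomorphisms should be a right torsor under $\sAut(\TT_n)$ acting by precomposition, and this assignment should furnish the desired equivalence $\G_n \to B\bigl(\sAut(\TT_n)\bigr)$.

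First, I would define the candidate morphism $\Phi\colon \G_n \to B\bigl(\sAut(\TT_n)\bigr)$ by sending an $n$-germ $X$ over $S$ to $\sIsom_S(\TT_S, X)$. To verify this is an $\sAut(\TT_n)$-torsor, I would note that \eqref{st:ngermloctriv} furnishes a Zariski cover of $S$ on which $\sIsom_S(\TT_S, X)$ acquires a section, and the right action by precomposition is then visibly simply transitive; since $\sAut(\TT_n)$ is affine over $\ZZ$ by \eqref{st:AutTT=repble}, so is every such torsor. Functoriality of $\Phi$ in isomorphisms of germs is immediate from post-composition.

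For the inverse, given a torsor $P$ over $S$, I would form the contracted product $\Psi(P) := P \times^{\sAut(\TT_n)} \TT_S$ as a quotient in fpqc sheaves. On any cover $S' \to S$ trivializing $P$, $\Psi(P)|_{S'}$ is canonically isomorphic to $\TT_{S'}$, which is an $n$-germ; invoking that $\G_n$ is an fpqc stack \eqref{st:ngerms=stack}, this local structure descends to endow $\Psi(P)$ with the structure of an $n$-germ over $S$. The compositions $\Phi \circ \Psi$ and $\Psi \circ \Phi$ come equipped with canonical natural transformations to the identity: the evaluation pairing yields an isomorphism
\[
   \sIsom_S(\TT_S, X) \times^{\sAut(\TT_n)} \TT_S \isoarrow X,
\]
checkable locally where $X$ trivializes, and the analogous map in the other direction is handled similarly.

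Finally, the algebraicity of $B\bigl(\sAut(\TT_n)\bigr)$ is formal from \eqref{st:AutTT=repble}: $\sAut(\TT_n)$ is a smooth affine group scheme of finite type over $\ZZ$, so the canonical $\Spec\ZZ \to B\bigl(\sAut(\TT_n)\bigr)$ is a smooth affine presentation with diagonal pullback $\sAut(\TT_n)$. The main obstacle is the essential surjectivity step, where one must verify that the contracted product $\Psi(P)$ is genuinely representable by an $n$-germ and not merely by an fpqc sheaf; this is handled by combining the explicit local trivialization with descent for affine schemes \`a la \cite{sga1}*{VIII 2.1}, exactly as in the proof of \eqref{st:ngerms=stack}, but it is the only place any real verification is required.
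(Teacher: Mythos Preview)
Your argument is correct, but it takes a more hands-on route than the paper. The paper observes that, by \eqref{st:ngermloctriv} and \eqref{st:ngerms=stack}, $\G_n$ is a \emph{neutral gerbe} over $\Spec\ZZ$ (locally nonempty, any two objects locally isomorphic, with a global section $\TT_\ZZ$), and then simply invokes the general result \cite{lamb00}*{3.21} that any neutral gerbe is equivalent to the classifying stack of the automorphism sheaf of a chosen global object. Algebraicity is then dispatched by citing \cite{lamb00}*{4.6.1}. Your construction of $\Phi$ and $\Psi$ via $\sIsom$ and contracted products is precisely what underlies \cite{lamb00}*{3.21}, so you have effectively re-derived that lemma in this special case. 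The paper's approach is shorter and emphasizes that the result is an instance of a general pattern (which it reuses verbatim for $\FLV$, $\B_n^h$, and $\FLG^h$); your approach has the virtue of being self-contained and making the equivalence explicit, at the cost of repeating a standard verification.
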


\begin{proof}
By \eqref{st:ngermloctriv} and the previous proposition, \Gn is a gerbe (for any topology between the Zariski and fpqc topologies, inclusive) over $\Spec\ZZ$.  Moreover, $\TT_\ZZ$ \eqref{eg:trivngerm} is an object of \Gn over $\Spec \ZZ$, so that \Gn is neutral.  The first assertion now follows from \cite{lamb00}*{3.21}.

As for algebraicity, we need note only that $\sAut(\TT_n)$ is a smooth,
separated group scheme of finite presentation over \ZZ by
\eqref{st:AutTT=repble}, and that the quotient of any algebraic space by such a
group scheme is algebraic \cite{lamb00}*{4.6.1}.  (More generally, one may
weaken the smoothness hypothesis by requiring only that, in its place, $G$ be 
flat and of finite presentation \cite{lamb00}*{10.13.1}.)
\end{proof}

\begin{rk}\label{rk:B(Aut(T)).indep.of.top}
Of course, for an arbitrary group sheaf $G$ on a site \C, the stack $B(G)$ 
depends on the topology on \C.  By the theorem, $B\bigl(\sAut(\TT_n)\bigr)$ is 
independent of the choice of topology on \Sch between the Zariski and fpqc topologies, 
inclusive.  In particular, every fpqc-torsor for $\sAut(\TT_n)$ is in fact a 
Zariski-torsor.
\end{rk}

\subsection{Bud structures on trivial germs}\label{ss:bud.str.on.germs}

To prepare for our discussion of the moduli stack of $n$-buds in the next
section, recall from 
\eqref{st:ngermloctriv} that every bud has locally trivial underlying germ.  
Hence the classification of bud structures on $\TT_S$ \eqref{eg:trivngerm} 
assumes an important 
role in the theory.  Let $n \geq 1$.

\begin{defn}\label{def:laz_n}
We define $L_n$ to be the presheaf of sets on \Sch
\[
   L_n\colon S \mapsto \{\text{$n$-bud structures on}\ \TT_{n,S}\}.
\]
% \[
%  \xymatrix@R=0ex{
%     \Sch^\opp \ar[r]  &  \Sets\\
%     S\vphantom{\{} \ar@{|->}[r]  &  \{\text{$n$-bud structures on}\ \TT_{n,S}\}.
%     }
% \]
\end{defn}

Fix a base scheme $S$. By \eqref{eg:trivnbud}, to give an $n$-bud structure on $\TT_S$ is to give an $n$-bud law $F \in \Gamma_n(S;T_1,T_2)$. So by Lazard's theorem \eqref{st:Laz.thm}, $\laz_n \iso \AA_\ZZ^{n-1}$ for $n\geq 1$, but the isomorphism is not canonical. In the trivial case $n=0$, we have $\laz_0 \ciso \Spec \ZZ$.

We have chosen the notation $L_n$ in honor of Lazard.

\begin{rk}\label{rk:AutTTn.acts.on.Ln}
The functor $\sAut(\TT_n)$ \eqref{def:AutTT_n} acts naturally on $L_n$ as
``changes of coordinate'': given a bud structure $\TT_S^F$ and a germ
automorphism $f$ of $\TT_S$, transport of structure along $f$ determines a bud
structure $\TT_S^G$, and $f$ is tautologically a bud isomorphism $\TT_S^F
\isoarrow \TT_S^G$. Explicitly, denoting by $f^\#$ the map on global sections of
$\TT_S$, we have $G(T_1,T_2) = f^\#\bigl[ F \bigl(f^\#{}^{-1}(T_1),
f^\#{}^{-1}(T_2) \bigr)\bigr]$.
\end{rk}

\subsection{The stack of $n$-buds}\label{ss:nbuds=AT_nbsBLT_n}

In this section we show that the moduli stack of $n$-buds \Bn, $n\geq 1$, is
equivalent to the quotient algebraic stack $\sAut(\TT_n)\bs \laz_n$, with the
schemes $\sAut(\TT_n)$ and $\laz_n$ as defined in \eqref{def:AutTT_n} and
\eqref{def:laz_n}, respectively, and with the action of $\sAut(\TT_n)$ on $L_n$
as described in \eqref{rk:AutTTn.acts.on.Ln}.

\begin{prop}\label{st:nbuds=stack}
\Bn is a stack over \Sch for the fpqc topology.
\end{prop}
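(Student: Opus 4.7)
The plan is to bootstrap from \eqref{st:ngerms=stack} and apply standard fpqc descent to the multiplication datum. Zariski descent for $\Bn$ is straightforward by gluing, so the content lies in descent along an arbitrary faithfully flat quasi-compact morphism $f\colon S' \to S$.

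For descent of morphisms, I would observe that a bud morphism is precisely a germ morphism making a certain square \eqref{def:germbudmorph} commute. Morphisms of germs descend by the proof of \eqref{st:ngerms=stack}, and the commutativity of a diagram of $S$-scheme morphisms can be tested after faithful flat base change. Combining these gives descent for bud morphisms.

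For descent of objects, let $X'$ be an $n$-bud over $S'$ carrying a descent datum. The underlying $n$-germ descends by \eqref{st:ngerms=stack} to some $(X,\pi,\sigma)$ over $S$. The key observation is that, by \eqref{rk:Inf_bc} together with compatibility of fibered products with arbitrary base change, one has a canonical identification of $(X' \fib{S'} X')^{(n)}$ with the pullback along $f$ of $(X \fib{S} X)^{(n)}$---compatibly with the descent datum on $X'$. Thus the multiplication $F'\colon (X' \fib{S'} X')^{(n)} \to X'$ is equipped with an induced descent datum and descends to a morphism $F\colon (X \fib{S} X)^{(n)} \to X$ by descent for scheme morphisms \cite{sga1}*{VIII 5.2}. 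The identity, associativity, and commutativity axioms \eqref{it:id}, \eqref{it:assoc}, \eqref{it:comm} hold after pullback to $S'$ by hypothesis, and since each such axiom is an equality of $S$-scheme morphisms that can be tested fpqc-locally, they descend.

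I do not anticipate any serious obstacle: the proof is essentially a careful bookkeeping exercise. The only subtlety is verifying that the descent data on the double and triple fibered products $(X \fib{S} X)^{(n)}$ and $(X \fib{S} X \fib{S} X)^{(n)}$ used to produce the multiplication and to verify associativity are induced in a cocycle-respecting way from that on $X'$; but this is a mechanical consequence of functoriality of infinitesimal neighborhoods \eqref{rk:Inf_fctrl} and of fibered products.
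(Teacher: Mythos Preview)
Your proposal is correct and follows essentially the same approach as the paper: reduce to the germ case via \eqref{st:ngerms=stack}, then use compatibility of infinitesimal neighborhoods \eqref{rk:Inf_bc} and fibered products with base change, together with descent for scheme morphisms \cite{sga1}*{VIII 5.2}, to descend the multiplication map and verify the axioms. The paper's proof is terser and does not spell out the cocycle bookkeeping you flag, but the argument is the same.
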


\begin{proof}
As in \eqref{st:ngerms=stack}, we have to check that morphisms and objects
descend.  The only new ingredient for $n$-buds is the multiplication map.

Descent for morphisms of buds is an immediate consequence of descent for
morphisms of schemes \cite{sga1}*{VIII 5.2} and of compatibility of infinitesimal
neighborhoods \eqref{rk:Inf_bc} and of fibered products with base change, which
imply that a germ morphism locally compatible with the multiplication maps 
is globally compatible.

%For morphisms, let $X$ and $Y$ be $n$-buds over $S$ and suppose we have a bud
%morphism $X \to Y$ defined fpqc-locally.  Then, by \eqref{st:ngerms=stack}, we
%at least get a unique morphism of $n$-germs $f\colon X \to Y$ over $S$. Since
%infinitesimal neighborhoods and fibered products are compatible with base
%change, the diagram in \eqref{def:germbudmorph} then commutes fpqc-locally.  By
%uniqueness, we conclude that the diagram itself commutes, and $f$ is a morphism
%of buds.

For objects, suppose we're given an $n$-bud fpqc-locally on $S$ equipped with a
descent datum.  By \eqref{st:ngerms=stack}, we get an $n$-germ $X$ over $S$. 
Again using that infinitesimal neighborhoods and fibered products are compatible
with base change, the local multiplication maps descend to a multiplication
map on $X$, and we're done.
\end{proof}

\begin{thm}\label{st:nbuds=AT_nbsBLT_n}
\Bn is equivalent to the quotient stack $\sAut(\TT_n)\bs \laz_n$.  In
particular, \Bn is algebraic.
\end{thm}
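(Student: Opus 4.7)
My plan is to exhibit $\Bn$ as a stack quotient by constructing an explicit $1$-morphism $\laz_n \to \Bn$ and verifying that it presents $\Bn$ as an $\sAut(\TT_n)$-torsor. First I would define the tautological $1$-morphism $p\colon \laz_n \to \Bn$ over \Sch which sends an $S$-valued point $F \in \laz_n(S)$ to the trivial bud $\TT_S^F$ of \eqref{eg:trivnbud}; functoriality under base change is immediate from \eqref{rk:buds.base.change}. The change-of-coordinate action \eqref{rk:AutTTn.acts.on.Ln} renders $p$ canonically $\sAut(\TT_n)$-invariant (up to a canonical $2$-isomorphism), and so $p$ descends to a $1$-morphism
\[
   \bar p\colon \sAut(\TT_n)\bs\laz_n \to \Bn.
\]

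The key step is to establish the $2$-Cartesian diagram
\[
   \xymatrix{
      \sAut(\TT_n) \times \laz_n \ar[r] \ar[d] & \laz_n \ar[d]^-p \\
      \laz_n \ar[r]^-p & \Bn,
      }
\]
in which the top horizontal arrow is the action of \eqref{rk:AutTTn.acts.on.Ln} and the left vertical arrow is projection to the second factor. An $S$-valued point of the $2$-fibered product $\laz_n \fib \Bn \laz_n$ is a triple $(F, G, f)$ with $F, G \in \laz_n(S)$ and $f\colon \TT_S^F \to \TT_S^G$ an isomorphism of buds. By \eqref{eg:morph.of.triv.buds}, $f$ is a bud law isomorphism $F \to G$, hence in particular an element of $\sAut(\TT_n)(S)$, and the bud law $G$ is forced to equal the transport of structure of $F$ along $f$. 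This yields the desired natural identification $\laz_n \fib \Bn \laz_n \ciso \sAut(\TT_n) \times \laz_n$.

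Combined with the surjectivity of $p$ in the Zariski topology --- a consequence of the Zariski-local triviality \eqref{st:ngermloctriv} of every $n$-bud --- the Cartesian square above exhibits $\bar p$ as an equivalence. Essential surjectivity of $\bar p$ is precisely that local triviality, and fully faithfulness follows from the fiber-product identification together with descent for bud morphisms from \eqref{st:nbuds=stack}. Algebraicity then follows exactly as in the proof of \eqref{st:ngerms=B(AT_n)}: $\laz_n \iso \AA_\ZZ^{n-1}$ is a finitely presented affine scheme over $\ZZ$, $\sAut(\TT_n)$ is a smooth, separated, finitely presented group scheme over $\ZZ$ by \eqref{st:AutTT=repble}, and the quotient stack of an algebraic space by such a group scheme is algebraic by \cite{lamb00}*{4.6.1}.

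The main obstacle I foresee is simply bookkeeping in the $2$-categorical setting --- in particular, verifying that the canonical $2$-isomorphism witnessing the $\sAut(\TT_n)$-invariance of $p$ is compatible with the identification of the fiber product, so that the induced $\bar p$ really is a well-defined $1$-morphism out of the quotient stack. But given \eqref{eg:morph.of.triv.buds} and the descent already in hand from \eqref{st:nbuds=stack}, this reduces to formal manipulations, with no genuinely geometric input beyond the two pieces $\laz_n \ciso \AA_\ZZ^{n-1}$ and \eqref{st:ngermloctriv} already at our disposal.
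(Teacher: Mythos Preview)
Your proposal is correct and follows essentially the same approach as the paper: both construct the tautological morphism $\laz_n \to \Bn$, verify Zariski-local essential surjectivity via \eqref{st:ngermloctriv}, identify $\laz_n \fib{\Bn} \laz_n \ciso \sAut(\TT_n) \times \laz_n$ via \eqref{eg:morph.of.triv.buds}, and deduce algebraicity from \cite{lamb00}*{4.6.1}. The paper simply packages the equivalence step by citing \cite{lamb00}*{3.8} directly, whereas you spell out the essential-surjectivity and full-faithfulness verification by hand.
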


\begin{proof}
We shall apply \cite{lamb00}*{3.8} to the tautological morphism $\pi\colon L_n \to \Bn$.  By \eqref{st:ngermloctriv} and \eqref{eg:trivnbud}, $\pi$ is locally essentially surjective (for the Zariski topology, hence for any finer topology).  Moreover, it is clear from the definitions that the maps 
\[
   \sAut(\TT_n) \times L_n \xrra{\pr_{L_n}}{a} L_n,
\]
where $a$ denotes the action map described in \eqref{rk:AutTTn.acts.on.Ln}, induce an isomorphism 
\[
   \sAut(\TT_n) \times L_n \isoarrow L_n \fib\Bn L_n.
\]
The first assertion now follows, and, as in the proof of \eqref{st:ngerms=B(AT_n)}, the algebraicity assertion is immediate from \cite{lamb00}*{4.6.1}.
\end{proof}

\begin{rk}
As in \eqref{rk:B(Aut(T)).indep.of.top}, we deduce from the theorem that the
quotient stack $\sAut(\TT_n) \bs L_n$ is independent of the topology on \Sch
between the Zariski and fpqc topologies, inclusive.
\end{rk}

\subsection{Ind-infinitesimal sheaves}\label{ss:inf_inf_sheaves}

To prepare for our discussion of the moduli stacks of formal Lie varieties and 
of formal Lie groups in the next two sections, it will be convenient to first
dispense with a few generalities on ind-infinitesimal sheaves \eqref{def:nInf}.
Let $S$ be a scheme, and recall the truncation functor 
\[
   \nInf[\infty](S) \to \nInf(S)
\]
of \eqref{rk:truncation}.
Since infinitesimal neighborhoods are compatible with base change
\eqref{rk:Inf_bc}, truncation preserves pullbacks (up to
isomorphism), so that it specifies a morphism of fibered categories
\[
   \nInf[\infty] \to \nInf.
\]
Given $m \geq n$, the diagram
\[
	\xymatrix{
		{\nInf[\infty]} \ar[r]^-{-^{(m)}} \ar@/_4ex/[rr]_-{-^{(n)}}
			& {\nInf[m]} \ar[r]^-{-^{(n)}}
			& {\nInf}
		}
\]
commutes up to canonical isomorphism (or possibly on the nose, depending on one's choice of definitions).  Similarly, truncation $\nInf \to \nInf$ is canonically isomorphic to the identity functor.  Hence we may form the limit $\ilim_n \nInf$ of the fibered categories $\nInf$, $n\geq 0$, with respect to the truncation functors, and we obtain a natural arrow
%\addtocounter{thm}{1}
\[\tag{$*$}\label{disp:Inf_lim_arrow}
   \nInf[\infty] \to \ilim_n \,\nInf.
\]
Let us emphasize that the limit $\ilim_n \nInf$ is taken in the sense of bicategories; see the appendix for a basic introduction, especially \s\ref{ss:lim_stacks} for limits of fibered categories.

\begin{prop}\label{st:lim_nInf}
The arrow \eqref{disp:Inf_lim_arrow} is an equivalence of fibered categories.
\end{prop}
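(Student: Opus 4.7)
The plan is to establish that the truncation functor $X \mapsto (X^{(n)})_n$ gives an equivalence by exhibiting a quasi-inverse via colimits. Throughout, I will work with sheaves on $\Sch_{/S}$ for a fixed scheme $S$ and some fixed topology between the Zariski and fpqc topologies.

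The key observation is that for any ind-infinitesimal $X$, the canonical maps $X^{(m)} \to X^{(n)}$ (for $m \leq n$) are monomorphisms, so that $X \ciso \clim_n X^{(n)}$ is a colimit (in sheaves) of an increasing chain. Dually, for an object of $\ilim_n \nInf(S)$, presented as a compatible family of sheaves $(X_n)_{n \geq 0}$ together with isomorphisms $\alpha_{m,n}\colon X_n^{(m)} \ciso X_m$ for $m \leq n$ satisfying the obvious cocycle conditions, the compositions
\[
X_m \xla[\sim]{\alpha_{m,n}} X_n^{(m)} \inj X_n
\]
assemble into a directed system of monomorphisms. One can therefore form $X := \clim_n X_n$ in the category of sheaves, and this will be the candidate quasi-inverse.

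The verification of essential surjectivity then rests on showing $X^{(n)} \ciso X_n$ as subsheaves of $X$. One inclusion is automatic: $X_n \inj X$ is a monomorphism and $X_n$ is $n$-infinitesimal, so $X_n \subseteq X^{(n)}$. For the reverse, given a $T$-point of $X^{(n)}$, use the definition \eqref{def:inf}: after passing to a cover, the $T$-point comes from some $T \to X_k$, and the same thickening $T' \inj T$ witnesses that this map lies in $X_k^{(n)}(T)$, which equals $X_n(T)$ under $\alpha_{n,k}$. Once this is established, $X$ is ind-infinitesimal because $\clim_n X^{(n)} \ciso \clim_n X_n = X$, and the canonical comparison with the given system is the collection of $\alpha_{n,n} = \id$.

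For full faithfulness, given $X, Y \in \nInf[\infty](S)$ one must show
\[
\Hom(X, Y) \xra{\sim} \lim_n \Hom(X^{(n)}, Y^{(n)}),
\]
where the right-hand side is the hom-set computed in the bicategorical limit (a set of compatible families of morphisms modulo the evident identifications). Using $X \ciso \clim_n X^{(n)}$, the universal property of the colimit identifies $\Hom(X, Y)$ with compatible families of maps $X^{(n)} \to Y$; and since each $X^{(n)}$ is $n$-infinitesimal, every map $X^{(n)} \to Y$ factors uniquely through $Y^{(n)}$ by functoriality \eqref{rk:Inf_fctrl}. This yields the required bijection. The hard part here will be carefully matching the morphism sets with the (bicategorical) hom-sets of $\ilim_n \nInf$, which forces one to track the choice of transition isomorphisms; but because the directed system is filtered and the transition maps are monomorphisms, the $2$-categorical subtleties collapse to the naive strict limit, and the argument goes through. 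The main obstacle is therefore the sheafification implicit in the definitions of $X^{(n)}$ and of $\clim X_n$: one must verify that the local factorizations through thickenings provided by \eqref{def:inf} patch compatibly with the local factorizations through the $X_n$ provided by the colimit, and this is where the bulk of the bookkeeping lies.
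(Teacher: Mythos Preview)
Your approach is essentially identical to the paper's: both arguments deduce full faithfulness from the colimit description $X \ciso \clim_n X^{(n)}$ together with the universal property, and handle essential surjectivity by forming $X := \clim_n X_n$, checking that each $X_n \inj X$ is a monomorphism factoring through $X^{(n)}$, and then verifying $X_n \isoarrow X^{(n)}$. The paper treats the sheafification and bicategorical bookkeeping more briskly than you do (it simply says ``it is now easy to verify''), but your more explicit account of the reverse inclusion via local lifts through some $X_k$ is correct and fills in exactly what the paper leaves implicit.
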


\begin{proof}
Full faithfulness is an immediate consequence of the definition of
ind-infini\-tesimal and the universal mapping property of a colimit.  For
essential surjectivity, recall (\ref{eg:lims_on_1-cats}, \ref{rk:Fib_obwise})
that an object in $\ilim_n \nInf$ over the scheme $S$ is a family
$(X_n,\varphi_{mn})$, where $X_n \in\ob\nInf(S)$ for all $n$, and
$\varphi_{mn}\colon X_m^{(n)} \isoarrow X_n$ for all $m\geq n$, subject to a
natural cocycle condition for every $l \geq m \geq n$.  For $m \geq n$, consider the composite
\[\tag{$**$}\label{disp:pre_colim}
   X_n \xyra[\sim]{\varphi_{mn}^{-1}} X_m^{(n)} \xinj{\can} X_m.
\]
As $m$ and $n$ vary, the cocycle condition on the $\varphi_{mn}$'s says that the various composites \eqref{disp:pre_colim} are organized into a diagram indexed on the totally ordered set $\NN\cup\{0\}$.  Let us take the colimit sheaf $X := \clim_n X_n$.  Since $X$ is a colimit of a filtered diagram of sheaves, and all transition morphisms in the diagram are monomorphisms, the canonical arrow $X_n \to X$ is a monomorphism for all $n$.  Since $X_n$ is $n$-infinitesimal, $X_n \inj X$ factors through $X^{(n)}$ for all $n$.  It is now easy to verify that $X_n \inj X^{(n)}$ is an isomorphism.  Hence $X$ is ind-infinitesimal.  Moreover, it is easy to verify that the arrows $X_n \isoarrow X^{(n)}$, $n\geq 0$, define an isomorphism from $(X_n,\varphi_{mn})$ to the image of $X$ in $\ilim_n \nInf$.
\end{proof}

% To fix ideas, let us  discuss the analogy in the context of bud laws.  There is a natural ``truncation'' functor
% \[
%  \nTh[m] \to \nTh
% \]
% defined on each $X \in \ob\nTh[m](S)$ by
% \[
%  X \mapsto X^{(n)} := \Inf_S^n(X).
% \]
% Since infinitesimal neighborhoods are compatible with base change
% \cite{mes72}*{II 1.03}, truncation functors preserve pullbacks
% (up to isomorphism), so that they are morphisms of fibered categories.
% 
% Given $l \geq m \geq n$, the diagram
% \[
%  \xymatrix{
%     {\nTh[l]} \ar[r]^-{-^{(m)}} \ar@/_4ex/[rr]_-{-^{(n)}}
%        & {\nTh[m]} \ar[r]^-{-^{(n)}}
%        & {\nTh}
%     }
% \]
% commutes not on the nose, but up to canonical isomorphism.  When $m = n$,
% truncation is canonically isomorphic to the identity functor.

\subsection{The stack of formal Lie varieties}\label{ss:flv_stack} 
% Let $F(T_1,T_2)$ be a formal group law over a ring $A$.  Then the image of $F$
% in $A[[T_1,T_2]]/(T_1,T_2)^{n+1}$ is an $n$-bud law, and we can think of $F$ as
% a ``limit'', in some sense, of such $n$-bud laws for larger and larger $n$.
% Analogously, we will define formal Lie groups in \eqref{def:fg} as ``limits'' of
% $n$-buds by mimicking this limiting process in a rather formal way.  We first
% introduce the analogous limits of $n$-germs, namely the formal Lie varieties.

% Classically, a (commutative, $1$-parameter) formal Lie group law over a base
% ring $R$ is a power series $F(X,Y)$ in two variables satisfying certain
% associativity, left and right identity, and commutativity axioms; see e.g.\
% \cite{froh68}*{I, \s3}.  Dividing out by $(X,Y)^{n+1}$, any $F$ gives an
% $n$-bud over $R$, and we can think of $F$ as a ``limit'' of these compatible
% $n$-buds for larger and larger $n$ (we do not mean limit in the usual sense,
% since we haven't defined morphisms from $m$-buds to $n$-buds for $m\neq n$).
% Analogously, we will define formal Lie groups in the scheme-theoretic context
% in \eqref{def:fg} below as suitable limits of $n$-buds.  We first define the
% analogous limits of $n$-germs, which we call formal Lie varieties.

We now come to the moduli stack of formal Lie varieties.  Let $S$ be a scheme.

\begin{defn}
We define $\FLV(S)$ to be the groupoid of formal Lie varieties and isomorphisms over $S$.
\end{defn}

\begin{rk}
It is clear that the base change of a formal Lie variety is again a formal Lie variety.  Hence \FLV defines a CFG over \Sch.  Moreover, it is clear from the definition of formal Lie variety \eqref{def:fv} that \FLV is a stack for the Zariski topology.  
% We shall see in \eqref{st:FLV=fpqc_stack} that, in fact, \FLV is a stack for the fpqc topology.
% It is not hard to prove in a direct fashion that, in fact, \FLV is a stack for the fpqc topology.  We shall deduce that \FLV is an fpqc stack in \eqref{st:FLV=fpqc_stack}
% 
In fact, \FLV is a stack for the fpqc topology; this would not be hard to prove in a direct fashion, but we shall deduce it in \eqref{st:FLV=fpqc_stack} from the analogous statement for the stack of $n$-germs \Gn \eqref{st:ngerms=stack}.
\end{rk}

Our first task for this section is to obtain the analog of \eqref{st:ngerms=B(AT_n)} for \FLV.  Recall the formal Lie variety $\wh\AA_S$ of \eqref{eg:whAA}.

\begin{defn}\label{def:Aut_whAA}
We define $\sAut(\wh\AA)$ to be the presheaf of groups on \Sch
\[
   \sAut(\wh\AA)\colon S \mapsto \Aut_{\FLV(S)}(\wh\AA_S).
\]
% \[
%  \xymatrix@R=0ex{
%     \Sch^\opp \ar[r]  &  ({\Gp})\\
%     S \ar@{|->}[r]  &  \smash{\Aut_{\FLV(S)}(\wh\AA_S).}\vphantom{S}
%     }
% \]
\end{defn}

We can describe $\sAut(\wh\AA)$ in a way quite analogous to \eqref{st:AutTT=repble}.  Indeed, in analogy with $\TT_{n,S}$, to give an automorphism of $\wh\AA_S$ is to give a power series $a_1T + a_2T^2 + \dotsb \in \Gamma(S)[[T]]$ with $a_1$ a unit.  So we deduce the following.

\begin{prop}\label{st:Aut_whAA=repble}
As a set-valued functor, $\sAut(\wh\AA)$ is canonically represented by the open subscheme $\Spec \ZZ[a_1,a_1^{-1},a_2,a_3\dotsc]$ of $\AA_\ZZ^\infty$.
\hfill $\square$
\end{prop}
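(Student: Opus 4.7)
The plan is to follow the pattern of the proof of Proposition \ref{st:AutTT=repble} (the analogous representability result for $\sAut(\TT_n)$), only now working with power series rather than truncated polynomials. The whole proposition reduces to unwinding what an automorphism of $\wh\AA_S$ over $S$ actually is, and then translating ``invertible under composition'' into a polynomial condition on the coefficients.

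First I would observe that, by definition, an automorphism of $\wh\AA_S$ in $\FLV(S)$ is an automorphism of the formal scheme $\Spf\O_S[[T]]$ over $S$ that fixes the $0$-section $\sigma$. Equivalently, it is a continuous $\O_S$-algebra automorphism $\varphi$ of $\O_S[[T]]$ compatible with the augmentation $T \mapsto 0$. Such a $\varphi$ is completely determined by the image of $T$; compatibility with the augmentation forces $\varphi(T)$ to have vanishing constant term, so $\varphi(T) = a_1 T + a_2 T^2 + \dotsb$ for a uniquely determined sequence $(a_1, a_2, \dotsc) \in \Gamma(S)^{\NN}$. Conversely, any such power series defines a continuous $\O_S$-algebra endomorphism of $\O_S[[T]]$ preserving the augmentation, yielding a natural bijection between $\End_{\FLV(S)}(\wh\AA_S)$ and $\Gamma(S)[[T]] \cdot T$.

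The next step is to characterize those $\varphi$ that are automorphisms. Standard formal power series manipulation shows that $f(T) = a_1T + a_2T^2 + \dotsb$ has a two-sided compositional inverse in $\Gamma(S)[[T]] \cdot T$ if and only if $a_1 \in \Gamma(S)^\times$: given invertibility of $a_1$, one constructs the inverse coefficients by the usual recursive procedure, and conversely, reducing mod $T^2$ shows that invertibility forces $a_1$ to be a unit. Hence the functor $\sAut(\wh\AA)$ sends $S$ to the set of sequences $(a_1, a_2, \dotsc)$ with $a_1$ a unit, and this is exactly the set of $S$-points of the open subscheme $\Spec\ZZ[a_1, a_1^{-1}, a_2, a_3, \dotsc]$ of $\AA_\ZZ^\infty = \Spec\ZZ[a_1, a_2, \dotsc]$. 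Naturality of the bijection in $S$ is immediate from the definitions, so one gets the claimed canonical isomorphism of set-valued functors.

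I do not anticipate a real obstacle: the proof is entirely parallel to that of Proposition \ref{st:AutTT=repble}, with the only mild subtlety being the standard check that invertibility of $f$ under composition in $\Gamma(S)[[T]]$ is equivalent to invertibility of its linear coefficient $a_1$ in $\Gamma(S)$. The group structure on $\sAut(\wh\AA)$ is not claimed to be represented by an ind-scheme here --- only the underlying set-valued functor is --- so no further work is required.
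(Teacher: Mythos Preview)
Your proposal is correct and follows exactly the approach the paper takes: the paper simply notes, just before the proposition, that ``to give an automorphism of $\wh\AA_S$ is to give a power series $a_1T + a_2T^2 + \dotsb \in \Gamma(S)[[T]]$ with $a_1$ a unit,'' and then marks the proposition with a $\square$. Your write-up fills in the details that the paper leaves implicit, but the argument is the same.
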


\begin{thm}\label{st:FLV=B()}
$\FLV \approx B\bigl(\sAut(\wh\AA)\bigr)$, where the right-hand side denotes the classifying stack with respect to the Zariski topology.
\end{thm}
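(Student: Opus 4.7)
The plan is to mimic the proof of Theorem \ref{st:ngerms=B(AT_n)}, substituting $\wh\AA$ for $\TT_n$ throughout, and to be careful that we only have local triviality at our disposal for the Zariski topology.

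First, I would verify (or appeal to the remark just above) that \FLV is a stack for the Zariski topology. This is essentially tautological from \eqref{def:fv}: a formal Lie variety is defined as a pointed fppf sheaf which is \emph{Zariski} locally isomorphic to $\Spf\O_S[[T]]$, and morphisms of pointed sheaves glue in the Zariski topology because sheaves and their morphisms do. (As noted in the excerpt, the stronger statement that \FLV is an fpqc stack will be deduced later from \eqref{st:ngerms=stack}; we do not need it here.)

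Next, by the very definition of a formal Lie variety, every object of \FLV is Zariski locally isomorphic to $\wh\AA_S$, and $\wh\AA_\ZZ \in \ob\FLV(\Spec\ZZ)$ furnishes a global section. Hence \FLV is a \emph{neutral gerbe} over $\Spec\ZZ$ for the Zariski topology. Applying \cite{lamb00}*{3.21} to this neutral Zariski gerbe, one gets the desired equivalence
\[
   \FLV \approx B\bigl(\sAut(\wh\AA)\bigr),
\]
the classifying stack being formed with respect to the Zariski topology, with band the automorphism sheaf of the chosen global section $\wh\AA_\ZZ$, which by definition \eqref{def:Aut_whAA} is exactly $\sAut(\wh\AA)$.

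The only subtlety to watch for is the topology: unlike in \eqref{st:ngerms=B(AT_n)}, we cannot claim independence from the choice of topology a priori, since the definition of formal Lie variety only guarantees Zariski-local triviality, and $\sAut(\wh\AA)$, while representable by an affine scheme \eqref{st:Aut_whAA=repble}, is not of finite type, so standard descent arguments of the form used in \eqref{rk:B(Aut(T)).indep.of.top} do not apply automatically. I do not expect any real obstacle here; the entire argument is essentially a direct invocation of the gerbe recognition principle \cite{lamb00}*{3.21}, with the work having already been done in setting up the Zariski-local model $\wh\AA$ and computing its automorphism sheaf in \eqref{st:Aut_whAA=repble}.
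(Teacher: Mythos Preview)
Your proposal is correct and follows essentially the same approach as the paper: the paper's proof is a one-line observation that \FLV is a neutral Zariski gerbe over $\Spec\ZZ$ with section $\wh\AA_\ZZ$, then invokes \cite{lamb00}*{3.21}, exactly as you do. Your additional remarks about the topology subtlety anticipate precisely the content of the paper's subsequent Remark~\ref{rk:B()_top_indep}.
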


\begin{proof}
The proof is essentially the same as that of \eqref{st:ngerms=B(AT_n)}:  \FLV is
plainly a gerbe over $\Spec\ZZ$ for the Zariski topology, and $\Spec\ZZ 
\xra{\wh\AA{}_\ZZ} \FLV$ defines a section.
\end{proof}

\begin{rk}\label{rk:B()_top_indep}
Once we see in \eqref{st:FLV=fpqc_stack} that \FLV is a stack for the fpqc topology, it will follow that $B\bigl(\sAut(\wh\AA)\bigr)$ is independent of the choice of topology between the Zariski and fpqc topologies, inclusive.  In particular, every fpqc-torsor for $\sAut(\wh\AA)$ is in fact a Zariski-torsor.
\end{rk}

\begin{rk}\label{rk:FLV_not_alg}
Unlike \Gn \eqref{st:ngerms=B(AT_n)}, \FLV is not algebraic: informally, the 
group $\sAut(\wh\AA)$ is ``too big'', i.e.\ it is not of finite type.
\end{rk}

Let us now turn to the relation between the stacks \FLV and \Gn, $n\geq 0$.  
Recall that the truncation functors induce an equivalence $\nInf[\infty] 
\xra{\approx} \ilim_n\nInf$  \eqref{st:lim_nInf}.  As noted in
\eqref{rk:trunc_germs}, this equivalence restricts to an arrow
\[\tag{$*$}\label{disp:FLV->lim}
   \FLV \to \ilim_n\Gn.
\]
As in the previous section, we emphasize that the limit in the display is taken 
in the sense of bicategories; see the appendix.

\begin{thm}\label{st:FLV=limGn}
The arrow \eqref{disp:FLV->lim} is an equivalence of stacks.
\end{thm}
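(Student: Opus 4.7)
The plan is to reduce to \eqref{st:lim_nInf}, verify that the induced arrow between the relevant sub-fibered-categories is an equivalence of CFG's, and then conclude the stack statement automatically from the fact that a limit of fpqc stacks is an fpqc stack. Full faithfulness comes essentially for free: $\FLV$ is a full sub-groupoid of $\nInf[\infty]$ and each $\Gn$ is a full sub-groupoid of $\nInf$, so $\Hom$-sets on either side of \eqref{disp:FLV->lim} coincide with their counterparts in $\nInf[\infty]$ and $\ilim_n \nInf$, respectively, which agree by \eqref{st:lim_nInf}.

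For essential surjectivity, fix a scheme $S$ and an object $(X_n, \varphi_{mn}) \in \ilim_n \Gn(S)$. Applying \eqref{st:lim_nInf} first produces an ind-infinitesimal sheaf $X$ over $S$ with compatible isomorphisms $X^{(n)} \ciso X_n$, so the content is that $X$ is Zariski-locally on $S$ isomorphic to $\wh\AA_S$. Since each $X_n$ is locally trivial by \eqref{st:ngermloctriv}, after passing to a Zariski cover of $S$ we may assume every $X_n$ admits an isomorphism $X_n \iso \TT_n$. If we can choose such isomorphisms $\psi_n\colon X_n \isoarrow \TT_n$ \emph{compatibly} with the truncation data, i.e.\ with $\psi_n \circ \varphi_{n+1,n} = \psi_{n+1}^{(n)}$ for all $n$, then
\[
   X \ciso \clim_n X_n \ciso \clim_n \TT_{n,S} \ciso \wh\AA_S,
\]
and we are done.

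The main step is the inductive construction of such a compatible family of trivializations, and the key input is the surjectivity of the truncation morphism $\sAut(\TT_{n+1}) \to \sAut(\TT_n)$ on $S$-points: by \eqref{st:AutTT=repble} this is represented by the obvious projection $\Spec \ZZ[a_1,a_1^{-1},a_2,\dotsc,a_{n+1}] \to \Spec\ZZ[a_1,a_1^{-1},a_2,\dotsc,a_n]$, which admits the section $a_{n+1}=0$. Starting from any $\psi_0$, suppose compatible trivializations $\psi_0,\dotsc,\psi_n$ have been selected. Pick any $\tilde\psi_{n+1}\colon X_{n+1} \isoarrow \TT_{n+1}$, set
\[
   \beta := \psi_n \circ \varphi_{n+1,n} \circ (\tilde\psi_{n+1}^{(n)})^{-1} \in \sAut(\TT_n)(S),
\]
lift $\beta$ to some $\tilde\beta \in \sAut(\TT_{n+1})(S)$, and define $\psi_{n+1} := \tilde\beta \circ \tilde\psi_{n+1}$. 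A direct check shows $\psi_{n+1}^{(n)} \circ \varphi_{n+1,n}^{-1} = \psi_n$, so compatibility is preserved. The obstacle one might worry about---producing a compatible trivialization of the limit out of only locally trivial pieces---is dissolved precisely by this explicit splitting of truncation on $\sAut(\TT_\bullet)$; the rest of the argument is formal manipulation of the colimit. Finally, since $\Gn$ is an fpqc stack by \eqref{st:ngerms=stack} and limits of stacks are stacks, the established equivalence of CFG's upgrades automatically to an equivalence of stacks.
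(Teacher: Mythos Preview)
Your argument is correct and follows essentially the same route as the paper's proof: reduce full faithfulness to \eqref{st:lim_nInf}, then for essential surjectivity localize and build a compatible tower of trivializations $X_n \iso \TT_n$ to identify the colimit with $\wh\AA_S$. The only cosmetic difference is packaging: the paper constructs the compatible trivializations directly by choosing a generator $t_1$ of the conormal module and lifting it successively to $t_n \in B_n$ along the surjections $B_{n+1} \twoheadrightarrow B_n$, whereas you pick arbitrary trivializations first and then correct them using the surjectivity of $\sAut(\TT_{n+1})(S) \to \sAut(\TT_n)(S)$; these are two phrasings of the same lifting step.
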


\begin{proof}
Let $(X_n,\varphi_{mn})$ be an object in $\ilim_n\Gn$, say over the scheme $S$. 
As in the proof of \eqref{st:lim_nInf}, the $\varphi_{mn}$'s organize the $X_n$'s 
into a diagram
\[\tag{$**$}\label{disp:Gn_colim_diag}
   S \ciso X_0 \inj X_1 \inj X_2 \inj \dotsb
\]
indexed on the totally ordered set $\NN\cup\{0\}$, such that the inclusion $X_n \inj X_m$ identifies $X_n \xra[\sim]{\varphi_{mn}^{-1}} X_m^{(n)}$ for all $m\geq n$.   In light of \eqref{st:lim_nInf} and its proof, all we have to show is that the colimit sheaf $X:= \clim_nX_n$ is a formal Lie variety.  

The proof is essentially the same as that of \eqref{st:ngermloctriv}.
Localizing (in the Zariski topology) on $S$, we may assume that $S$ is an affine scheme $\Spec A$ and that $X_1$ has trivial conormal bundle.  We claim that $X \iso \Spf A[[T]]$.  Indeed, by \eqref{st:nbudaff}, we may assume that $X_n = \Spec B_n$, $n\geq 0$, for $B_n$ an augmented $A$-algebra, say with augmentation ideal $I_n$ satisfying $I_n^{n+1} = 0$.  Then \eqref{disp:Gn_colim_diag} translates into a diagram of augmented $A$-algebras
\[
   A \ciso B_0 \twoheadleftarrow B_1 \twoheadleftarrow B_2 \twoheadleftarrow \dotsb
\]
such that $B_m \surj B_n$ identifies $B_m/I_m^{n+1} \isoarrow B_n$ for all $m\geq n$.  Now, by assumption the conormal module $I_1$ (here $I_1^2 = 0$) for $B_1$ is a free $A$-module of rank $1$.  Let $t_1$ be any generator.  Now successively choose a lift $t_2\in B_2$ of $t_1$, a lift $t_3 \in B_3$ of $t_2$, and so on.  Since $I_n/I_n^2 \ciso I_1$ is free for all $n$, the proof of \eqref{st:ngermloctriv} shows exactly that the map
\[
   \xymatrix@R=0ex{
      A[[T]] \ar[r] & B_n\\
      \smash{T}\vphantom{t_n} \ar@{|->}[r] & t_n
   }
\]
induces $A[[T]]/(T)^{n+1}\isoarrow B_n$.  Hence we obtain a topological isomorphism 
\[
   A[[T]] \isoarrow \ilim_n B_n.
\]
The theorem follows.
\end{proof}

\begin{cor}\label{st:FLV=fpqc_stack}
\FLV is a stack over \Sch for the fpqc topology.
\end{cor}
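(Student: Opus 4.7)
The plan is to obtain the corollary as a formal consequence of Theorem \ref{st:FLV=limGn} together with Proposition \ref{st:ngerms=stack}. Since \FLV is a CFG over \Sch, to show it is an fpqc stack it suffices to check descent of objects and of morphisms along an arbitrary faithfully flat quasi-compact morphism $S' \to S$.

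First I would invoke Theorem \ref{st:FLV=limGn} to replace \FLV by the equivalent fibered category $\ilim_n \Gn$, noting that being a stack for a given topology is invariant under equivalence of fibered categories. So it suffices to show that $\ilim_n \Gn$ is an fpqc stack. By Proposition \ref{st:ngerms=stack}, each \Gn is an fpqc stack, and the limit is taken in the sense of bicategories with respect to the truncation morphisms. The assertion then reduces to the general fact that a bicategorical limit of fpqc stacks is again an fpqc stack — a fact of the sort that should be established in the appendix on $2$-categorical limits.

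To make this concrete, I would unwind the description of objects and morphisms in $\ilim_n \Gn$ used in the proof of \eqref{st:lim_nInf}: an object over $S$ is a compatible system $(X_n,\varphi_{mn})$ of $n$-germs, and a morphism is a compatible system of germ morphisms. Given an fpqc descent datum on an object $(X_n',\varphi_{mn}')$ over $S'$, the $n$th component is a descent datum for an $n$-germ, which descends to some $n$-germ $X_n$ over $S$ by Proposition \ref{st:ngerms=stack}; the transition isomorphisms $\varphi_{mn}'$ are themselves morphisms in \Gn equipped with compatible descent data, hence descend to isomorphisms $\varphi_{mn}\colon X_m^{(n)} \isoarrow X_n$ by the morphism part of the same proposition. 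Uniqueness of descent for morphisms ensures the descended $\varphi_{mn}$ satisfy the cocycle condition over $S$, yielding an object of $\ilim_n \Gn$ over $S$ that pulls back to $(X_n',\varphi_{mn}')$. Descent for morphisms is handled identically level by level.

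The only subtle point is the book-keeping needed to see that the system of descended objects and transitions assembles coherently — this is exactly what the standard machinery for limits of stacks guarantees, and there is no genuine obstacle beyond the level-wise descent already provided by \eqref{st:ngerms=stack}.
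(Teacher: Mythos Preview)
Your proposal is correct and matches the paper's own argument exactly: the paper's proof is the single line ``Immediate from \eqref{st:ngerms=stack}, \eqref{st:lim_stacks}, and the theorem,'' where \eqref{st:lim_stacks} is precisely the appendix result you anticipated, that a bicategorical limit of stacks is again a stack. Your additional unwinding of objects and morphisms is just an explicit spelling-out of what \eqref{st:lim_stacks} encapsulates.
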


\begin{proof}
Immediate from \eqref{st:ngerms=stack}, \eqref{st:lim_stacks}, and the theorem.
\end{proof}

\begin{rk}
Our definition of formal Lie variety \eqref{def:fv} has a kind of built-in local triviality for the Zariski topology.  Though one may be tempted to consider formulating the local triviality condition with respect to other topologies, the corollary says that the notion of formal Lie variety is independent of the choice of topology for local triviality between the Zariski and fpqc topologies, inclusive.
\end{rk}

\subsection{The stack of formal Lie groups}\label{ss:flg_stack}

Now that we have discussed the stack of formal Lie varieties, we turn to the moduli stack of formal Lie groups.  Let $S$ be a scheme.

\begin{defn}
We define $\flg(S)$ to be the category of formal Lie groups and homomorphisms over $S$.
We define $\FLG(S)$ to be the groupoid of formal Lie groups and isomorphisms over $S$.
\end{defn}

\begin{rk}\label{rk:flg_base_change}
Since formal Lie varieties are stable under base change, it is clear that formal Lie groups are stable under base change.  Hence \flg defines a fibered category, and \FLG a CFG, over \Sch. 
We shall verify in \eqref{st:FLG=fpqc_stack} that \FLG is a stack for the fpqc
topology.  In some sense, this almost follows in a purely formal fashion from
the analogous result for \FLV \eqref{st:FLV=fpqc_stack}.  The only difficulties
arise when trying to glue locally given multiplication maps $X \fib S X \to X$: 
first, strictly speaking, the product of formal Lie varieties is not a
($1$-parameter) formal Lie variety, hence not an object of \FLV; and second,
\FLV only parametrizes \emph{isomorphisms} between objects, not arbitrary
morphisms.  Of course, these difficulties are merely artifacts of our choices to
work only with $1$-parameter formal Lie varieties and only with stacks of
groupoids.  We would have no trouble if we had worked from the outset with
pointed formal Lie varieties in the \emph{general} sense of \eqref{rk:general_fv}, with
no constraint on the number of parameters, and with stacks of \emph{categories}.

In analogy with \eqref{rk:buds.base.change}, given a group law $F$ over
$\Gamma(S)$ and a base change $f\colon S' \to S$, one has $\wh\AA_S^F \fib S S'
\ciso \wh\AA_{S'}^{F'}$, where $F'$ is the
group law over $\Gamma(S')$ obtained by applying $f^\#$ to the coefficients of
$F$.
\end{rk}

Our first goal in this section is to prove the analog of \eqref{st:nbuds=AT_nbsBLT_n} for \FLG.  Recall the formal Lie variety $\wh\AA_S$ of \eqref{eg:whAA}.

\begin{defn}
We define $L$ to be the presheaf of sets on \Sch
\[
   L\colon S \mapsto \{\text{formal Lie group structures on}\ \wh\AA_S\}.
\]
% \[
%  \xymatrix@R=0ex{
%     \Sch^\opp \ar[r]  
%        &  \Sets\\
%     S\vphantom{\{} \ar@{|->}[r]  
%        &  \smash{\{\text{formal Lie group structures on}\ \wh\AA_S\}.}\vphantom{\{}
%     }
% \]
\end{defn}

In analogy with $L_n$ \eqref{def:laz_n}, we have $L \iso \Spec\ZZ[a_1,a_2,\dotsc] = \AA_\ZZ^\infty$ by Lazard's theorem  \eqref{st:Laz.thm}, but the isomorphism is not canonical.  Just as in \eqref{rk:AutTTn.acts.on.Ln}, $\sAut(\wh\AA)$ acts naturally on $L$ as ``changes of coordinate''.  Just as in \eqref{st:nbuds=AT_nbsBLT_n}, we deduce the following.

\begin{thm}
$\FLG \approx \sAut(\wh\AA)\bs L$. \hfill $\square$
\end{thm}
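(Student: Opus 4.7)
The plan is to mimic the proof of Theorem \ref{st:nbuds=AT_nbsBLT_n}, replacing germs/buds with formal Lie varieties/formal Lie groups throughout. I would apply \cite{lamb00}*{3.8} to the tautological $1$-morphism $\pi \colon L \to \FLG$ sending a formal group law $F$ over $\Gamma(S)$ to the formal Lie group $\wh\AA_S^F$ of \eqref{eg:trivfg}.

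First, I would check that $\pi$ is locally essentially surjective for the Zariski topology. Given a formal Lie group $X$ over $S$, its underlying formal Lie variety is Zariski locally isomorphic to $\wh\AA_S$ by the very definition \eqref{def:fv}. After passing to such a Zariski cover, transport of the group structure along a chosen trivialization gives an isomorphism with some $\wh\AA_S^F$ by \eqref{eg:trivfg}, and this $F$ provides a preimage in $L$.

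Second, I would verify that the pair of morphisms
\[
   \sAut(\wh\AA) \times L \xrra{\pr_L}{a} L
\]
---where $a$ denotes the ``change of coordinate'' action of \eqref{rk:AutTTn.acts.on.Ln} (which carries over verbatim to the formal Lie variety setting, using \eqref{def:Aut_whAA})---induces an isomorphism $\sAut(\wh\AA) \times L \isoarrow L \fib{\FLG} L$. The $S$-valued points of $L \fib{\FLG} L$ are triples $(F, G, f)$ where $F, G$ are formal group laws over $\Gamma(S)$ and $f \colon \wh\AA_S^F \isoarrow \wh\AA_S^G$ is an isomorphism of formal Lie groups; forgetting the group structure, $f$ gives an automorphism of $\wh\AA_S$ and thus an element of $\sAut(\wh\AA)(S)$, and the point of \eqref{rk:AutTTn.acts.on.Ln} (or rather its formal-Lie-variety analog, which is immediate) is that $G$ is recovered as $f \cdot F$. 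This furnishes the inverse map.

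Having established both hypotheses of \cite{lamb00}*{3.8}, the theorem follows. I do not expect a serious obstacle: the two verifications are essentially identical to the bud case, only using $\wh\AA$ and $L$ in place of $\TT_n$ and $L_n$. I note that, in contrast to \eqref{st:nbuds=AT_nbsBLT_n}, there is no algebraicity conclusion to draw: $\sAut(\wh\AA)$ is not of finite type over $\ZZ$, and indeed \FLG is not algebraic as already remarked in \eqref{rk:FLV_not_alg} for \FLV.
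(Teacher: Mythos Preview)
Your proposal is correct and matches the paper's approach exactly: the paper simply writes ``Just as in \eqref{st:nbuds=AT_nbsBLT_n}, we deduce the following'' and marks the theorem with $\square$, indicating precisely the argument you have spelled out. Your observation that no algebraicity conclusion is available here is also apt and is echoed by the paper in the remark immediately following the corollary.
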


\begin{rk}
It is an immediate consequence of \eqref{rk:B()_top_indep} that the quotient stack $\sAut(\wh\AA)\bs L$ is independent of the choice of topology between the Zariski and fpqc topologies, inclusive.
\end{rk}

\begin{cor}\label{st:FLG=fpqc_stack}
\FLG is a stack for the fpqc topology. \hfill $\square$
\end{cor}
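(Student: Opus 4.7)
The plan is to reduce to Corollary \ref{st:FLV=fpqc_stack} together with standard faithfully flat descent for morphisms of sheaves, in a manner quite parallel to the proof of Proposition \ref{st:nbuds=stack}. Since \FLG is a stack for the Zariski topology by Remark \ref{rk:flg_base_change}, we may restrict attention to descent along a faithfully flat quasi-compact morphism $f\colon S' \to S$.

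For descent of morphisms, a morphism of formal Lie groups over $S$ is exactly a morphism of the underlying formal Lie varieties such that the obvious square involving the two multiplication maps commutes. The underlying morphism descends by Corollary \ref{st:FLV=fpqc_stack}, and the compatibility with multiplication is a local condition, since fibered products of sheaves commute with base change.

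For descent of objects, suppose $X'$ is a formal Lie group over $S'$ equipped with descent data. The underlying formal Lie variety of $X'$ descends via Corollary \ref{st:FLV=fpqc_stack} to a formal Lie variety $X$ over $S$. The multiplication $m'\colon X' \fib{S'} X' \to X'$ is a morphism of pointed fppf sheaves on $\Sch{}_{/S'}$, and the given descent datum on $X'$ induces by functoriality a descent datum on $m'$; standard faithfully flat descent of morphisms of sheaves, combined with the compatibility of fibered products with pullback, then descends $m'$ to a morphism $m\colon X \fib S X \to X$ of pointed sheaves over $S$. Associativity, commutativity, and the identity axiom each assert the equality of two morphisms of sheaves over $S$; each such equality holds after pullback along the faithfully flat cover $f$, and so holds over $S$.

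The only subtlety worth highlighting, already signaled in Remark \ref{rk:flg_base_change}, is that $X \fib S X$ is not a ($1$-parameter) formal Lie variety in the sense of Definition \ref{def:fv}, so Corollary \ref{st:FLV=fpqc_stack} cannot be invoked verbatim to descend $m$ itself. But this is only cosmetic: descent of $m$ is accomplished at the more primitive level of morphisms of pointed fppf sheaves of sets, where faithfully flat descent is entirely elementary.
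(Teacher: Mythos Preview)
Your argument is correct, but it is not the route the paper takes.  The paper deduces the corollary immediately from the preceding theorem $\FLG \approx \sAut(\wh\AA)\bs L$ together with the remark (using \eqref{rk:B()_top_indep}, hence ultimately \eqref{st:FLV=fpqc_stack}) that this quotient stack is independent of the topology between Zariski and fpqc; since a quotient stack for the fpqc topology is automatically an fpqc stack, the result follows.  You instead give a direct descent argument, parallel to the proof of Proposition~\ref{st:nbuds=stack}: descend the underlying formal Lie variety via Corollary~\ref{st:FLV=fpqc_stack}, then descend the multiplication map by ordinary fpqc descent for morphisms of fppf sheaves, and finally observe that the group axioms are equalities of morphisms and hence descend.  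This is exactly the approach the paper sketches in Remark~\ref{rk:flg_base_change} and explicitly sets aside; you have carried it out and correctly addressed the cosmetic obstacle noted there (that $X\fib S X$ is not a $1$-parameter formal Lie variety).  Your approach is more hands-on and avoids the quotient-stack machinery; the paper's is a one-line consequence of results already in hand.

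One small point: Remark~\ref{rk:flg_base_change} does not actually assert that \FLG is a Zariski stack, only that it is a CFG.  The claim is of course true and easy (it follows from \FLV being a Zariski stack together with Zariski gluing for morphisms of sheaves), but the citation is not quite right.
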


\begin{rk}
In analogy with \eqref{rk:FLV_not_alg}, unlike the moduli stack of $n$-buds \Bn
\eqref{st:nbuds=AT_nbsBLT_n}, \FLG is not algebraic.
\end{rk}

In analogy with the previous section, let us now turn to the relation between
the stacks \FLG and \Bn, $n\geq 0$. By \eqref{rk:trunc_buds}, we may form the limit
$\ilim_n \Bn$ of the $\Bn$'s with respect to the truncation functors, and we
obtain an arrow
\[\tag{$*$}\label{disp:FLG->lim}
   \FLG \to \ilim_n\Bn.
\]
As in previous sections, we emphasize that the limit is taken in the sense of bicategories;
see the appendix.

\begin{thm}\label{st:FLG=limBn}
The arrow \eqref{disp:FLG->lim} is an equivalence of stacks.
\end{thm}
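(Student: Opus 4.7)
The plan is to prove the theorem by reducing it to the already-established equivalence $\FLV \approx \ilim_n \Gn$ of \eqref{st:FLV=limGn}, together with the underlying equivalence $\nInf[\infty] \approx \ilim_n \nInf$ of \eqref{st:lim_nInf}, and by showing that the group structure part of the data passes to and from the limit without friction.

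For full faithfulness, fix formal Lie groups $X$ and $Y$ over a scheme $S$. A morphism $f\colon X \to Y$ of formal Lie groups is a morphism of the underlying formal Lie varieties satisfying the compatibility with multiplication expressed by the usual square in \eqref{def:germbudmorph}. By \eqref{st:FLV=limGn}, the underlying-FLV functor induces a bijection between morphisms (of FLV's) $X \to Y$ and compatible families of germ morphisms $X^{(n)} \to Y^{(n)}$. Since $X \fib S X$ is ind-infinitesimal (it is locally $\Spf \O_S[[T_1,T_2]]$) and $Y$ is ind-infinitesimal, an equality of morphisms $X \fib S X \to Y$ can be tested after restriction to each $(X \fib S X)^{(n)} \ciso (X^{(n)} \fib S X^{(n)})^{(n)}$. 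Hence the compatibility with multiplication for $f$ is equivalent to the compatibility with multiplication of each $f^{(n)}\colon X^{(n)} \to Y^{(n)}$, i.e.\ to the statement that each $f^{(n)}$ is a bud morphism. This yields the desired bijection between $\Hom_{\FLG(S)}(X,Y)$ and $\Hom_{\ilim_n \Bn(S)}\bigl((X^{(n)}),(Y^{(n)})\bigr)$.

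For essential surjectivity, take an object $(X_n, F_n, \varphi_{mn})$ in $\ilim_n \Bn(S)$. By \eqref{st:FLV=limGn} the underlying compatible system of $n$-germs is the image of a formal Lie variety $X$ over $S$, with canonical isomorphisms $X^{(n)} \ciso X_n$. It remains to construct a multiplication $F\colon X \fib S X \to X$ realizing the $F_n$. The key observation is that $X \fib S X$ is ind-infinitesimal, so $X \fib S X \ciso \clim_n (X \fib S X)^{(n)}$, and that $(X \fib S X)^{(n)} \ciso (X^{(n)} \fib S X^{(n)})^{(n)} \ciso (X_n \fib S X_n)^{(n)}$ by compatibility of $\Inf$ with truncation. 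Composing $F_n$ with the canonical inclusion $X_n \ciso X^{(n)} \hookrightarrow X$ and using the cocycle conditions on the $\varphi_{mn}$ gives a compatible family of morphisms $(X \fib S X)^{(n)} \to X$; these glue to the desired $F$.

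To finish, I need to verify that $(X, F)$ is in fact a formal Lie group, i.e.\ $F$ satisfies the identity, associativity, and commutativity constraints. Each of these is an equality of morphisms between ind-infinitesimal sheaves (built from $X$ and iterated products $X \fib S X$, $X \fib S X \fib S X$), so by \eqref{st:lim_nInf} each can be checked after truncation to every order $n$, where it reduces to the corresponding axiom for $F_n$, which holds by hypothesis. Finally, one verifies that the construction is inverse to the truncation functor up to canonical isomorphism, completing the equivalence.

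The main obstacle, and the one point where care is needed, is the identification $(X \fib S X)^{(n)} \ciso (X_n \fib S X_n)^{(n)}$ together with the ind-infinitesimality of $X \fib S X$. These are not special cases of any already-stated result in the excerpt, because $X \fib S X$ is an ``essentially $2$-parameter'' object that falls outside the pointed $1$-parameter framework in which \FLV has been developed; but both assertions reduce, after passing to a Zariski cover of $S$ on which $X$ is trivialized, to the straightforward identities $\Spf A[[T_1,T_2]] \ciso \clim_n \Spec A[T_1,T_2]/(T_1,T_2)^{n+1}$ and the compatibility of infinitesimal neighborhoods with truncation.
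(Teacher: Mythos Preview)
Your proof is correct and follows the direct route that the paper explicitly mentions but chooses not to carry out. The paper writes: ``We could prove the theorem in a fairly direct fashion entirely analogous to \eqref{st:FLV=limGn}. Instead, we can obtain the theorem as a more-or-less formal consequence of \eqref{st:FLV=limGn}.'' Your argument is precisely that direct fashion: you handle full faithfulness and essential surjectivity by hand, building the multiplication on the limit and checking the group axioms levelwise via ind-infinitesimality of $X \fib S X$.

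The paper's own argument is more categorical. It observes that each $\nInf(S)$ has finite products and that truncation between the $\nInf(S)$'s preserves them; hence the general result \eqref{st:gp_obs_in_lim} from the appendix gives $\Ab\bigl(\ilim_n \nInf(S)\bigr) \approx \ilim_n \Ab\bigl(\nInf(S)\bigr)$. Combining this with $\nInf[\infty] \approx \ilim_n \nInf$ \eqref{st:lim_nInf}, with $\FLV \approx \ilim_n \Gn$ \eqref{st:FLV=limGn}, and with the identification of $n$-buds as commutative group objects in $\nInf(S)$ supported on $n$-germs \eqref{st:nbuds=gps}, the result drops out. What this buys is that the delicate point you flag---the identification $(X \fib S X)^{(n)} \ciso (X_n \fib S X_n)^{(n)}$ and the ind-infinitesimality of the $2$-parameter object $X \fib S X$---never has to be confronted explicitly: it is absorbed into the statement that truncation preserves finite products in \nInf, already recorded in \eqref{rk:inf_prods} and \eqref{rk:trunc_buds}. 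Your approach, in turn, avoids dependence on the appendix machinery and makes the construction of the limit group structure completely transparent.
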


\begin{proof}
We could prove the theorem in a fairly direct fashion entirely analogous to 
\eqref{st:FLV=limGn}.  Instead, we can obtain the theorem as a more-or-less 
formal consequence of \eqref{st:FLV=limGn}.  Indeed, for any base scheme $S$ and
$n\geq 0$, the category $\nInf(S)$ contains finite products 
\eqref{rk:inf_prods}, and truncation between the $\nInf(S)$'s for varying $n$ 
preserves finite products.  Hence passing to the limit of the $\nInf(S)$'s 
commutes with taking commutative group objects \eqref{st:gp_obs_in_lim}.  Now 
use \eqref{st:FLV=limGn} and the identification of $\Bn(S)$ with objects of 
$\Gn(S)$ endowed with a commutative group structure \eqref{st:nbuds=gps}.
\end{proof}

\section{The height stratification: buds}\label{s:ht_strat_buds}

Fix a prime number $p$ once and for all.  We shall now begin to study
the algebraic geometry of the classical notion of
\emph{height} for formal group laws and bud laws over rings of
characteristic $p$.  The essential feature of the theory is a
\emph{stratification}, relative to $p$, on the moduli stack
$\FLG$ of formal Lie groups and on the stacks $\Bn$ of $n$-buds.  We shall begin
by working in the context of buds; we shall turn to formal Lie groups in
\s\ref{s:ht_strat_flg}.

In order to reduce clutter, we shall choose not to embed $p$ in our notation,
though one certainly obtains a different stratification for each choice 
of $p$.
% 
% Let us remark now that
% different choices of $p$ give different
% stratifications.  But, to avoid clutter, we will not embed $p$ in the
% notation.
% 
% 
% at the outset

\subsection{Review of formal group laws II}\label{ss:reviewII}

We devote this section to a brief survey of some of the classical algebraic
theory of formal group laws and bud laws related to height, as well as some
somewhat technical refinements to the material in \s\ref{ss:reviewI}.  We shall
use \cite{laz55} and \cite{froh68} as our main references, although there are
many other sources available.  As for \s\ref{ss:reviewI}, the reader may wish to
skip this section and refer back only as needed.

Let $A$ be a ring and $F$ an $n$-bud law, $n\geq 1$, or a formal group law.

\begin{defn}\label{def:[p]_F}
For $m \in \NN$, we define $[m]_F$ to be the element of $A[T]/(T)^{n+1}$ in the bud law case, and of $A[[T]]$ in the group law case,
\[
	[m]_F(T) := F( \cdots F(F(\underbrace{T,T),T), \cdots, T}_{m\ T\text{'s}}).
\]
\end{defn}

In other words, $[1]_F(T) = T$, and $[m+1]_F(T) = F\bigl([m]_F(T), T\bigr)$.

\begin{rk}\label{rk:[p].leading.coeff=p}
Since
\[
   F(T_1,T_2) = T_1 + T_2 + \text{(higher order terms)},
\]
we have
\[
   [m]_F(T) = mT + \text{(higher order terms)}.
\]
\end{rk}

We will be almost exclusively interested in $[p]_F$ for varying bud or group laws $F$.  If $A$ is of characteristic $p$, then we conclude from \eqref{rk:[p].leading.coeff=p} that $[p]_F = 0$ or has lowest degree term of degree $\geq 2$.  In fact, a much stronger statement holds.

\begin{prop}\label{st:[p].law.form}
Suppose $A$ has characteristic $p$.  Let $f\colon F \to G$ be a homomorphism of bud laws or formal group laws over $A$.  Then $f$ is $0$ or of the form $a_{p^h}T^{p^h} + a_{2p^h}T^{2p^h} + \dotsb$ for some nonnegative integer $h$ and some nonzero $a_{p^h}\in A$.  In particular, $[p]_F$ is $0$ or of the form $a_{p^h}T^{p^h} + a_{2p^h}T^{2p^h} + \dotsb$, $a_{p^h} \neq 0$.
\end{prop}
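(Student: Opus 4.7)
The plan is to apply a differentiation argument to the defining identity $f(F(T_1,T_2)) = G(f(T_1),f(T_2))$, and then iterate using Frobenius descent. Let $f'(T)$ denote the formal derivative of $f$ in $A[T]/(T)^{n+1}$ or $A[[T]]$, and let $F_2$, $G_2$ denote the partials with respect to the second variable. Differentiating the identity with respect to $T_2$ by the chain rule and then setting $T_2 = 0$, and invoking $F(T_1,0) = T_1$ and $f(0) = 0$, yields
\[
   f'(T_1) \cdot F_2(T_1,0) \;=\; f'(0) \cdot G_2(f(T_1),0).
\]
Since $F(T_1,T_2) = T_1 + T_2 + \text{(higher order terms)}$, the element $F_2(T_1,0)$ has constant term $1$ and is therefore a unit. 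Hence if $f'(0) = 0$, i.e.\ the coefficient of $T$ in $f$ vanishes, then $f'(T) \equiv 0$.

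In characteristic $p$, the relation $f'(T) = 0$ forces $a_i = 0$ for every $i$ with $p \nmid i$, since any such $i$ is a unit in $A$. So $f(T) = f_1(T^p)$ for some nonzero $f_1$ in the appropriate truncated polynomial or power series ring. I then verify that $f_1$ is itself a homomorphism from $F^{(p)}$ to $G$, where $F^{(p)}$ denotes the law obtained by applying the $p$-power Frobenius to the coefficients of $F$. This reduces to the identity $F(T_1,T_2)^p = F^{(p)}(T_1^p,T_2^p)$, which is valid in characteristic $p$, combined with the injectivity of the substitution $U_i \mapsto T_i^p$ into two algebraically independent variables. The conclusion of the previous paragraph therefore applies to $f_1$ in place of $f$.

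Iterating this descent, as long as $f_i'(0) = 0$ I write $f_i(T) = f_{i+1}(T^p)$, viewed as a homomorphism $F^{(p^{i+1})} \to G$. Since the order of the lowest nonzero term of $f_i$ drops by a factor of $p$ at each step, the process terminates after finitely many steps $h \geq 0$ at some $f_h$ with $f_h'(0) \neq 0$. Writing $f_h(T) = c_1 T + c_2 T^2 + \dotsb$ with $c_1 \neq 0$, I conclude
\[
   f(T) \;=\; f_h(T^{p^h}) \;=\; c_1 T^{p^h} + c_2 T^{2p^h} + \dotsb,
\]
which is the asserted form with $a_{p^h} = c_1 \neq 0$. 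The ``in particular'' clause then follows by specializing to $f = [p]_F \in \End_A(F)$; cf.\ \eqref{rk:End_A(F)}. The most error-prone step is the bookkeeping in the bud law case: each successive $f_i$ lives in a strictly smaller truncated ring (its degree bound shrinks by a factor of $p$), and one must check that the differentiation-and-descent argument respects these truncations. This is routine but is the main technicality.
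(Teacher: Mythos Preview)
Your argument is correct and is precisely the classical differentiation-and-Frobenius-descent proof; the paper does not give an independent argument but simply cites \cite{froh68}*{I \S3 Theorem 2(ii)}, whose proof is essentially the one you have written out. Your handling of the bud-law truncations is also fine: the key point, which you note, is that the substitution $U_i \mapsto T_i^p$ is injective from the $\lfloor n/p\rfloor$-truncated ring into the $n$-truncated ring.
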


\begin{proof}
\cite{froh68}*{I \s3 Theorem 2(ii)}, which works for bud laws as well as formal
group laws.  Note that $[p]_F$ is plainly a homomorphism $F \to F$.
\end{proof}

The proposition motivates the following definition.  Let $h$ be a nonnegative
integer.

\begin{defn}\label{def:law.height}
$F$ has \emph{height $h$} if $[p]_F$ is of the form $a_{p^h}T^{p^h} +
\text{(higher order terms)}$ for some $a_{p^h} \in A^\times$.
\end{defn}

\begin{rk}
In the bud law case, height $h$ only makes sense for $n \geq p^h$.
\end{rk}

\begin{rk}
By \eqref{rk:[p].leading.coeff=p}, bud laws and group laws of positive height only occur over rings of characteristic $p$.  If $F$ is of positive height $h$, then $F$ is a truncated polynomial in the bud law case, and power series in the group law case, in $T^{p^h}$ \eqref{st:[p].law.form}.  At the other extreme, $F$ has height $0$ $\iff$ $A$ is a $\ZZ[\frac 1 p]$-algebra.
\end{rk}

\begin{rk}
The definition of height in \eqref{def:law.height} differs from the one in the classical literature over rings of characteristic $p$; compare e.g.\ \cite{laz55}*{p.\ 266} or \cite{froh68}*{p.\ 27}.  Explicitly, $F$ has height $h$ in the classical sense if 
\[
   [p]_F(T) = a_{p^h}T^{p^h} + \text{(higher order terms)}
\] 
with $a_{p^h} \neq 0$, whereas we require, in addition, that $a_{p^h}$ be a
\emph{unit}.  In the classical sense, every $F$ is of finite height or satisfies
$[p]_F = 0$.  But in the sense of \eqref{def:law.height}, there certainly exist
$F$ for which $[p]_F \neq 0$ but the height is not defined.  The modified 
version of the classical definition allows one to obtain better behavior with
regard to change of base ring.
\end{rk}

\begin{rk}
In the group law case, it is customary to say that $F$ is of \emph{infinite
height} if $[p]_F = 0$.  For example, over a ring of characteristic $p$, the
additive law $F(T_1,T_2) = T_1 + T_2$ has infinite height.  We remark that this
notion would fit naturally into the geometric theory of formal Lie groups, but
we will not have occasion to introduce it here.
\end{rk}

Before continuing with our discussion of $[p]$, it is now convenient to digress
for a moment on some technical matters underpinning some of the results we
surveyed in \s\ref{ss:reviewI}.  Quite generally, let $P \in A[T_1,T_2]$ be any
polynomial.

\begin{defn}\label{def:SPC}
We say that $P$ is a \emph{symmetric polynomial $2$-cocycle}, or \emph{SPC} for
short, if
\begin{enumerate}
\renewcommand{\theenumi}{S}
\item\label{it:symmetric}
   $P(T_1,T_2) = P(T_2,T_1)$ in $A[T_1,T_2]$; and
\renewcommand{\theenumi}{C}
\item\label{it:cocycle}
   $P(T_2,T_3) - P(T_1+T_2,T_3) + P(T_1,T_2+T_3) - P(T_1,T_2) = 0$ in $A[T_1,T_2,T_3]$.
\end{enumerate}
\end{defn}

\begin{rk}\label{rk:gp_cohom}
To explain the terminology, recall that the group cohomology $H^\bullet(A,A)$
can be computed by the cochain complex $\bigl(C^\bullet(A,A),
\delta^\bullet\bigr)$, where $C^i := C^i(A,A) := \Hom_\Sets(A^i,A)$, $i\geq 0$,
and $\delta^i\colon C^i \to C^{i+1}$ sends
\[
   \begin{split}
   f(a_1,\dotsc,a_i) \overset{\delta^i}\mapsto {}& f(a_2,\dotsc,a_{i+1})\\
      &\qquad+ \sum_{j=1}^i (-1)^j f(a_1,\dotsc,a_{j-1}, a_j + a_{j+1},
               a_{j+2},\dotsc, a_{i+1})\\
      &\qquad\qquad + (-1)^{i+1} f(a_1,\dotsc,a_i).
   \end{split}
\]
For each $i$, the polynomial ring $A[T_1,\dotsc,T_i]$ maps in the obvious way to
$C^i$. Hence \eqref{it:cocycle} in the definition justifies ``polynomial
$2$-cocycle''.  Plainly \eqref{it:symmetric} justifes ``symmetric''.
\end{rk}

\begin{eg}\label{eg:B_m_and_C_m}
For $m\geq 2$, let
\[
   \lambda(m) :=
   \begin{cases}
      l, & \text{$l$ is a prime and $m$ is a power of $l$;}\\
      1, & \text{otherwise.}
   \end{cases}
\]
Then we define Lazard's polynomials $B_m\in \ZZ[T_1,T_2]$ by
\[
   B_m(T_1,T_2) := (T_1+T_2)^m - T_1^m -T_2^m
\]
and $C_m\in \ZZ[T_1,T_2]$ by
\[
   C_m(T_1,T_2) := \frac{1}{\lambda(m)} B_m(T_1,T_2).
\]
Since $\delta^1(-T^m) = B_m$, we deduce at once that $B_m$ and $C_m$ are SPC's
over \ZZ, homogenous of degree $m$.  Hence $B_m$ and $C_m$ define SPC's over
any ring.
\end{eg}

Note that over a ring of characteristic $p$ and for $m$ a power of $p$, we have
$B_m = 0$.  But the SPC $C_m$ is more interesting.  The following is one of
Lazard's most important technical results, valid for any ring $A$ and for any $m
\geq 2$.

\begin{thm}[Lazard]\label{st:mult_C_m}
Let $P$ be a homogenous SPC of degree $m$ over $A$.  Then there exists a unique
$a\in A$ such that $P = aC_m$.
\end{thm}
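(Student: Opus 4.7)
The plan is to first extract, by direct expansion of the cocycle condition \eqref{it:cocycle}, a family of linear relations on the coefficients $c_i$ of $P=\sum_i c_i T_1^iT_2^{m-i}$, then solve these over $\QQ$, and finally bootstrap to arbitrary $A$ using the classical combinatorial identity $\gcd_{0<i<m}\binom{m}{i}=\lambda(m)$. Extracting the coefficient of each monomial $T_1^aT_2^bT_3^c$ (with $a+b+c=m$ and $a,c\geq 1$) from the cocycle condition yields the system $c_a\binom{m-a}{b}=c_{a+b}\binom{a+b}{a}$, valid in $A$. Specializing $a=1$ gives $kc_k=c_1\binom{m-1}{k-1}$; combining with the mirror relation $(a=k,\,b=m-1-k)$ and symmetry $c_{m-1}=c_1$ yields, via Pascal's identity, the cleaner relations $mc_k=c_1\binom{m}{k}$ for $1\leq k\leq m-1$.

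Over a $\QQ$-algebra one divides by $m$ to get $c_k=(c_1/m)\binom{m}{k}$, so $P=(c_1/m)B_m=(c_1\lambda(m)/m)C_m$, establishing the theorem in that case. For an integer-coefficient SPC, the scalar $\alpha:=c_1\lambda(m)/m\in\QQ$ must be integral because the coefficients $d_i:=\binom{m}{i}/\lambda(m)$ of $C_m$ have integer gcd equal to $1$; this gcd assertion is the classical identity above, proved prime-by-prime via Kummer's theorem on $p$-adic valuations of binomials. The same gcd fact yields uniqueness over arbitrary $A$ at once: choose integers $n_i$ with $\sum_i n_id_i=1$ and observe that $aC_m=0$ in $A[T_1,T_2]$ forces $ad_i=0$ for every $i$, whence $a=\sum_i n_i(ad_i)=0$.

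For existence over general $A$, I set $a:=\sum_i n_ic_i\in A$ using the same integers $n_i$; the claim $P=aC_m$ is equivalent to $c_k=ad_k$ for each $k$, and since $ad_k=\sum_i n_ic_id_k$ the matter reduces to the \emph{diagonal} identity $c_id_k=c_kd_i$ in $A$, for then $ad_k=\sum_i n_ic_kd_i=c_k\sum_i n_id_i=c_k$. I expect to derive $c_id_k=c_kd_i$ by combining the cocycle relation $c_i\binom{m-i}{k-i}=c_k\binom{k}{i}$ (the instance $a=i,\,b=k-i$) with the Vandermonde identity $\binom{m}{i}\binom{m-i}{k-i}=\binom{m}{k}\binom{k}{i}$ and dividing through by $\lambda(m)$.

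The main obstacle will be this last integer-cancellation step: extracting $c_id_k=c_kd_i$ cleanly from the SPC relations without residual spurious multipliers such as $\binom{k}{i}$, which are not units in a general $A$. Conceptually the assertion is that the $\ZZ$-submodule of defining relations for homogeneous SPCs of degree $m$ is saturated inside $\ZZ^{m-1}$, equivalently that the scheme representing homogeneous SPCs of degree $m$ is the affine line $\AA^1_\ZZ$. I would handle this prime-by-prime, separating the generic case $l\nmid m$ (where $m$ is a unit modulo $l$ and the relations $mc_k=c_1\binom{m}{k}$ directly yield $c_k=ad_k$) from the ramified case $l=\lambda(m)$, in which one must argue directly from the original cocycle relations in characteristic $l$.
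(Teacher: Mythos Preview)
The paper does not prove this result: it notes that primitivity of $C_m$ gives uniqueness and then defers existence entirely to Lazard \cite{laz55}*{Lemme 3} and Fr\"ohlich \cite{froh68}*{III \S1 Theorem 1a}. Your outline is therefore already more substantive than the paper's own treatment, and the parts you do carry out are correct: the extraction of the relations $c_a\binom{m-a}{b}=c_{a+b}\binom{a+b}{a}$ from the cocycle condition, the consequence $mc_k=c_1\binom{m}{k}$, the rational case, the uniqueness argument via $\gcd_i d_i=1$, and the reformulation of existence as saturation of the relation lattice in $\ZZ^{m-1}$ (equivalently: the $\FF_l$-space of SPCs is one-dimensional for every prime $l$) are all on target and match the classical route.

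Two gaps remain. First, your case split is incomplete: when $m$ is not a prime power, $\lambda(m)=1$ is not a prime, so your ``ramified'' case is vacuous and you have silently omitted every prime $l\mid m$ (e.g.\ $m=6$, $l=2$); for such $l$ the relation $mc_k=c_1\binom{m}{k}$ collapses modulo $l$ and you must return to the full family of cocycle relations exactly as in the prime-power case. Second --- and this you already acknowledge --- the hard case is not actually done: showing $\dim_{\FF_l}N_{\FF_l}\le 1$ when $l\mid m$ is the real content of Lazard's lemma. The classical arguments proceed either by Lazard's direct inductive manipulation of the binomial relations, or (as in Fr\"ohlich) by comparing with the coboundary operator $\phi\mapsto\phi(T_1+T_2)-\phi(T_1)-\phi(T_2)$ and analyzing its image and cokernel degree by degree in characteristic $p$; neither is entirely routine, and your sketch does not yet supply either.
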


\begin{proof}
It is easy to compute directly that $C_m$ is \emph{primitive}, which implies the
unicity of $a$.  Existence of $a$ is much harder; see \cite{laz55}*{Lemme 3} or
\cite{froh68}*{III \s1 Theorem 1a}.
\end{proof}

SPC's occur in the theory of formal group laws and bud laws in the following
simple way; we shall encounter them in a somewhat different context in
\s\ref{ss:aut_buds_ht_h}. Let $m\geq 2$, and let $F$ and $G$ be group laws or
$n$-bud laws over $A$
whose respective $(m-1)$-truncations $F^{(m-1)}$ and $G^{(m-1)}$
\eqref{rk:trunc_buds} are equal, $m\leq n-1$ in the bud case.  Then a fairly
simple calculation shows that $F^{(m)}$ and $G^{(m)}$ differ by a homogenous SPC
of degree $m$.  Hence \eqref{st:mult_C_m} implies the following.

\begin{cor}[Lazard]\label{st:laz.comp.thm}
There exists a unique $a\in A$ such that
\begin{xxalignat}{3}
\phantom{\square}& & F^{(m)} &= G^{(m)} + aC_m. & \square
\end{xxalignat}
\end{cor}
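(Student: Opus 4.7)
The plan is to invoke Theorem \ref{st:mult_C_m} on the polynomial $P := F^{(m)} - G^{(m)}$, after showing that $P$ is a homogeneous symmetric polynomial $2$-cocycle of degree $m$. Since $F^{(m-1)} = G^{(m-1)}$, only the degree-$m$ parts of $F^{(m)}$ and $G^{(m)}$ can differ, so $P$ is homogeneous of degree $m$ (or zero). Symmetry \eqref{it:symmetric} of $P$ is immediate from the commutativity of $F$ and $G$. Uniqueness of $a$, once existence is established, will be immediate from the corresponding uniqueness in \eqref{st:mult_C_m}.

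The main work will be verifying the cocycle condition \eqref{it:cocycle} for $P$. The idea is to expand both sides of the associativity law for $F$ modulo terms of degree $\geq m+1$, rewriting $F$ in terms of $G$ and $P$. Writing
\[
   F(T_1,T_2) \equiv G(T_1,T_2) + P(T_1,T_2) \pmod{\text{deg} \geq m+1},
\]
a short Taylor-style calculation --- using that $G(X,Y) = X + Y + (\text{higher order})$ and that $P$ is homogeneous of degree $m$, so that any contribution involving $P^2$, or involving $P$ multiplied by a further positive-degree factor, lands in degree $\geq m+1$ --- yields
\[
   F\bigl(F(T_1,T_2), T_3\bigr)
   \equiv G\bigl(G(T_1,T_2), T_3\bigr) + P(T_1,T_2) + P(T_1+T_2, T_3)
   \pmod{\text{deg}\geq m+1},
\]
and symmetrically
\[
   F\bigl(T_1, F(T_2,T_3)\bigr)
   \equiv G\bigl(T_1, G(T_2,T_3)\bigr) + P(T_2,T_3) + P(T_1, T_2+T_3)
   \pmod{\text{deg}\geq m+1}.
\]
Applying associativity \eqref{it:gp.law.assoc} of $F$ and of $G$ and subtracting, the $G(G(\cdot,\cdot),\cdot)$ terms cancel and we obtain
\[
   P(T_1,T_2) + P(T_1+T_2, T_3) \equiv P(T_2,T_3) + P(T_1, T_2+T_3)
   \pmod{\text{deg}\geq m+1}.
\]
Both sides are homogeneous of degree $m$, so this is in fact an equality on the nose, which is precisely \eqref{it:cocycle}. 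Having shown that $P$ is a homogeneous SPC of degree $m$, Theorem \ref{st:mult_C_m} produces the unique $a \in A$ with $P = aC_m$.

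The main obstacle is the bookkeeping in the Taylor expansion: one must check carefully that all error terms land in degree $\geq m+1$, and verify that in the bud case the ambient truncated polynomial ring $A[T_1,T_2,T_3]/(T_1,T_2,T_3)^{n+1}$ is large enough to house the computation (which is guaranteed by the assumption $m \leq n-1$, so that the degree-$m$ identity is legitimately witnessed inside the truncation). Everything else is formal.
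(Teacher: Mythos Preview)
Your proposal is correct and follows exactly the approach the paper sketches: the paper simply asserts that ``a fairly simple calculation shows that $F^{(m)}$ and $G^{(m)}$ differ by a homogeneous SPC of degree $m$'' and then invokes \eqref{st:mult_C_m}, and you have supplied precisely that calculation. Your Taylor-style bookkeeping is accurate, and your remark about the truncation bound $m \leq n-1$ guaranteeing that the degree-$m$ identity is visible in $A[T_1,T_2,T_3]/(T_1,T_2,T_3)^{n+1}$ is on point.
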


% \begin{proof}
% It is easy to compute directly that $C_m$ is \emph{primitive}, which implies
% the
% unicity of $a$.  Existence of $a$ is much harder; see
% \cite{laz55}*{Proposition
% 2} or \cite{froh68}*{III \s1 Theorem 1}
% \end{proof}

The corollary plays an important role in Lazard's construction of a universal
group law $U$ over $\ZZ[t_1,t_2,\dotsc]$ we cited in \eqref{st:Laz.thm}.  We
will use the following refinement of \eqref{st:Laz.thm} for calculations in
\s\ref{ss:ht_>=h_buds}. It describes the group law constructed by Lazard in the
proofs of
\cite{laz55}*{Th\'eor\`emes II and III}.

\begin{thm}[Lazard]\label{st:Laz.thm.refinement}
The universal formal group law $U(T_1,T_2)$ of \eqref{st:Laz.thm} may be chosen 
such that for all $n\geq 1$,
\begin{enumerate}
\renewcommand{\theenumi}{\roman{enumi}}
\item\label{it:U^(n)_coeffs}
   the coefficients of the truncation $U^{(n)}$ involve only $t_1,\dotsc,
t_{n-1}$;
\item\label{it:U^(n)_univ}
   $U^{(n)}$, regarded as defined over $\ZZ[t_1,\dotsc,t_{n-1}]$ by \eqref{it:U^(n)_coeffs}, is a universal $n$-bud law; and
\item\label{it:U^(n)_smoothness}
   there is an equality of elements in $\ZZ[t_1,\dotsc,t_{n-1},s][T_1,T_2]/
(T_1,T_2)^{n+1}$,
% \[
%    U^{(n)}(t_1,\dotsc,t_{n-2}, t_{n-1} + s)(T_1,T_2)
%             - U^{(n)}(t_1,\dotsc,t_{n-1},)(T_1,T_2)
%       = sC_n(T_1,T_2).
% \]
% \end{enumerate}
% \hfill$\square$
\begin{xxalignat}{3}
\phantom{\square}& &
\begin{aligned}[b]
   U^{(n)}(t_1,\dotsc,t_{n-2}, t_{n-1} + s)(T_1,T_2)
     &- U^{(n)}(t_1,\dotsc,t_{n-1},)(T_1,T_2)\\
     &\qquad\qquad\qquad\qquad\qquad = sC_n(T_1,T_2).
\end{aligned}
& &\square
\end{xxalignat}
\end{enumerate}
\end{thm}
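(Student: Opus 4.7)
The plan is to construct $U$ by induction on $n$, defining $U^{(n)}$ over $\ZZ[t_1,\dotsc,t_{n-1}]$ at each stage and then assembling the results into a power series $U \in \ZZ[t_1,t_2,\dotsc][[T_1,T_2]]$ with $U^{(n)}$ as its $n$-truncation. The base case $n=1$ is trivial: the identity axiom forces any $1$-bud law to equal $T_1+T_2$, so we set $U^{(1)} := T_1+T_2$ over $\ZZ$ (the polynomial ring on no variables). The entire inductive step will rest on Lazard's comparison corollary \eqref{st:laz.comp.thm}, which says that any two $n$-bud laws with the same $(n-1)$-truncation differ by a unique scalar multiple of the homogeneous SPC $C_n$ of \eqref{eg:B_m_and_C_m}.

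For the inductive step, suppose $U^{(n-1)}$ has been constructed as a universal $(n-1)$-bud law over $\ZZ[t_1,\dotsc,t_{n-2}]$. The first key step is to produce \emph{some} lift of $U^{(n-1)}$ to an $n$-bud law $\widetilde U^{(n)}$ over $\ZZ[t_1,\dotsc,t_{n-2}]$. Granting such a lift, define
\[
   U^{(n)} := \widetilde U^{(n)} + t_{n-1}C_n \in \ZZ[t_1,\dotsc,t_{n-1}][T_1,T_2]/(T_1,T_2)^{n+1}.
\]
Since adding a homogeneous SPC of degree $n$ preserves the bud-law axioms, $U^{(n)}$ is again an $n$-bud law, and property \textup{(i)} is built into the construction. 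Property \textup{(iii)} follows immediately by substituting $t_{n-1}+s$ for $t_{n-1}$ and cancelling $\widetilde U^{(n)}$. For property \textup{(ii)}, given an $n$-bud law $F$ over a ring $A$, inductive universality of $U^{(n-1)}$ yields a unique $\varphi\colon \ZZ[t_1,\dotsc,t_{n-2}] \to A$ with $\varphi_* U^{(n-1)} = F^{(n-1)}$. Then $\varphi_*\widetilde U^{(n)}$ and $F$ have the same $(n-1)$-truncation, so by \eqref{st:laz.comp.thm} there is a unique $a \in A$ with $F = \varphi_*\widetilde U^{(n)} + aC_n$; extending $\varphi$ by $t_{n-1} \mapsto a$ gives the unique classifying map for $F$, which establishes \textup{(ii)}. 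Since $t_{n-1}C_n$ is purely of degree $n$, one has $(U^{(n)})^{(n-1)} = U^{(n-1)}$, so the $U^{(n)}$ assemble into a single power series $U$ over $\ZZ[t_1,t_2,\dotsc]$ whose universality as a formal group law follows at once from \textup{(ii)} applied degree-by-degree.

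The main obstacle is the very first step of the induction: the existence of the lift $\widetilde U^{(n)}$ of an arbitrary $(n-1)$-bud law to an $n$-bud law over the same ring. This is Lazard's ``extension'' or ``smoothness'' theorem for the passage from $(n-1)$-buds to $n$-buds (\cite{laz55}*{Th\'eor\`eme II}; see also the treatment in \cite{froh68}*{III \s1} or \cite{haz78}*{I \s5}), and it is genuinely nontrivial --- it is not a formal consequence of anything we have recalled in \s\ref{ss:reviewI}, since the obstruction a priori lives in a nonzero Hochschild-type group (see \eqref{rk:gp_cohom}). Its proof proceeds by constructing a candidate extension explicitly and then invoking \eqref{st:mult_C_m} to show that the associativity defect of the candidate is always a homogeneous SPC of degree $n$, which can then be absorbed by a further modification. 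Once this existence is in hand, the entire refinement reduces to bookkeeping with \eqref{st:laz.comp.thm}.
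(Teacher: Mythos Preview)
Your proposal is correct and is precisely Lazard's original inductive construction that the paper cites. The paper itself gives no proof beyond pointing to \cite{laz55}*{Th\'eor\`emes II and III} (note the $\square$ embedded in the statement), so you have in fact supplied the argument the paper omits: build $U^{(n)}$ from $U^{(n-1)}$ by first invoking Lazard's extension theorem to lift over the same ring, then adjoin the free parameter $t_{n-1}$ via $C_n$, and deduce universality from \eqref{st:laz.comp.thm}. Your identification of the extension step as the genuinely nontrivial input is exactly right.
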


% We shall need a somewhat detailed study of the coefficients of $[p]_F$ in
% \s\ref{ss:smoothness&dim}.  To explain the statement of the lemma we need, let
% us first recall Lazard's comparison theorem.

We now return to our discussion of $[p]$.  We will use the following lemma to
study the coefficients of $[p]$ in \s\ref{ss:ht_>=h_buds}. Let $F$, $G$, and
$m$ be as in
\eqref{st:laz.comp.thm}, so that $F^{(m)} = G^{(m)} + aC_{m}$ for a unique $a\in
A$.

\begin{lem}\label{st:[p].difference.lem}
For any $k\geq 1$,
\[
   [k]_{F^{(m)}}(T) = [k]_{G^{(m)}}(T) + \frac{k^m-k}{\lambda(m)} \cdot aT^m.
\]
In particular, for $k=p$ and $m$ of the form $p^h$, we have
\[
   [p]_{F^{(p^h)}}(T) = [p]_{G^{(p^h)}}(T) + \bigl(p^{p^h-1} - 1\bigr)
aT^{p^h}.
\]
\end{lem}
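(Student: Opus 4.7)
The natural plan is induction on $k$. The base case $k=1$ is immediate since $[1]_{F^{(m)}}(T)=T=[1]_{G^{(m)}}(T)$ and $(1^m-1)/\lambda(m)=0$. For the inductive step, set $c_k := (k^m-k)/\lambda(m)$ and assume $[k]_{F^{(m)}}(T)=[k]_{G^{(m)}}(T)+c_k\,aT^m$. Starting from the recursion
\[
   [k+1]_{F^{(m)}}(T)=F^{(m)}\bigl([k]_{F^{(m)}}(T),T\bigr)
      =G^{(m)}\bigl([k]_{F^{(m)}}(T),T\bigr)+a\,C_m\bigl([k]_{F^{(m)}}(T),T\bigr),
\]
I will analyze the two summands separately modulo $(T)^{m+1}$, which is where $m$-bud arithmetic takes place.

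For the $G^{(m)}$ summand, I use that $G^{(m)}(X,Y)=X+Y+(\text{terms of total degree}\geq 2)$. Since $[k]_{F^{(m)}}(T)=[k]_{G^{(m)}}(T)+c_k aT^m$ and the perturbation $c_k aT^m$ is already of degree $m$, every term in the expansion that contains this perturbation in a factor of total degree $\geq 2$ is killed mod $(T)^{m+1}$, so only the linear contribution survives:
\[
   G^{(m)}\bigl([k]_{F^{(m)}}(T),T\bigr)=G^{(m)}\bigl([k]_{G^{(m)}}(T),T\bigr)+c_k\,aT^m=[k+1]_{G^{(m)}}(T)+c_k\,aT^m.
\]
For the $C_m$ summand, the key point is that $C_m$ is homogeneous of degree $m$ (cf.\ \eqref{eg:B_m_and_C_m}) and $[k]_{F^{(m)}}(T)=kT+(\text{higher order})$ by \eqref{rk:[p].leading.coeff=p}; hence, modulo $(T)^{m+1}$, only the leading term of the first argument contributes, giving
\[
   C_m\bigl([k]_{F^{(m)}}(T),T\bigr)=C_m(kT,T)=\frac{(k+1)^m-k^m-1}{\lambda(m)}\,T^m.
\]

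Assembling the pieces, the inductive step reduces to the identity
\[
   c_k+\frac{(k+1)^m-k^m-1}{\lambda(m)}=\frac{(k+1)^m-(k+1)}{\lambda(m)}=c_{k+1},
\]
which is a direct algebraic check. The specialization is then a one-line computation: for $m=p^h$ one has $\lambda(p^h)=p$, so evaluating $c_k$ at $k=p$ gives $(p^{p^h}-p)/p=p^{p^h-1}-1$, yielding the displayed formula. The only subtle point in the whole argument is the truncation bookkeeping in the two substitutions, but both substitutions are governed by the degree of the perturbing term ($m$) matching the truncation level ($(T)^{m+1}$), so no genuine obstacle arises.
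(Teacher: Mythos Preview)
Your proof is correct and follows precisely the standard induction-on-$k$ argument given in the references the paper cites (\cite{laz55}*{Lemme 6}, \cite{froh68}*{III \s1 Lemma 4}); the paper itself does not write out this proof but simply points to those sources. In fact, the paper later spells out the identical inductive scheme for the closely related commutator lemma \eqref{st:[p]_commutator}, and your handling of the two summands --- the $G^{(m)}$ term via the degree-$m$ perturbation and the $C_m$ term via homogeneity and $[k]_F(T)=kT+\cdots$ --- matches that write-up essentially line for line.
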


\begin{proof}
\cite{laz55}*{Lemme 6} or \cite{froh68}*{III \s1 Lemma 4}.
\end{proof}

In later sections, we will be especially interested in group laws of a given
height for which $[p]$ is as simple as possible.
% In the rest of this section, we collect some results
% for use in \s\ref{ss:aut_buds_ht_h} and \s\ref{ss:hthauts}.
Fix $h\geq 1$.
% and let $q := p^h$.

\begin{thm}\label{st:[p]_H=T^p^h.over.Fp}
There exists a formal group law $H_h$ over $\FF_p$ such that $[p]_{H_h}(T) = T^{p^h}$.
\end{thm}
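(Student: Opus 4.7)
The plan is to construct $H_h$ by reducing mod $p$ a formal group law over $\ZZ_p$ whose logarithm is given explicitly. Set
\[
   \ell(T) := \sum_{n \geq 0} \frac{T^{p^{nh}}}{p^n} \in \QQ_p[[T]];
\]
since $\ell$ has linear coefficient $1$, it admits a compositional inverse $\ell^{-1}(T) \in \QQ_p[[T]]$ with vanishing constant term, and
\[
   F(T_1,T_2) := \ell^{-1}\bigl(\ell(T_1) + \ell(T_2)\bigr) \in \QQ_p[[T_1,T_2]]
\]
is tautologically a formal group law over $\QQ_p$ (with logarithm $\ell$).  I would take $H_h$ to be the reduction of $F$ modulo $p$, once its coefficients are known to lie in $\ZZ_p$.

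The integrality $F \in \ZZ_p[[T_1,T_2]]$ is the principal input, and would be obtained from Hazewinkel's functional equation lemma \cite{haz78}*{I 2.2} applied to the functional equation
\[
   \ell(T) - \tfrac{1}{p}\,\ell(T^{p^h}) = T \in \ZZ_p[[T]],
\]
which is immediate from the definition of $\ell$.  The same lemma also supplies the auxiliary integrality statements $[p]_F(T) = \ell^{-1}\bigl(p\,\ell(T)\bigr) \in \ZZ_p[[T]]$ and $\ell^{-1}(pT) \in p\,\ZZ_p[[T]]$.

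To identify $[p]_{H_h}$, I first observe the algebraic identity $p\,\ell(T) = pT + \ell(T^{p^h})$ in $\QQ_p[[T]]$, obtained by shifting the index of summation.  Applying $\ell^{-1}$ and using the addition law $\ell^{-1}(u+v) = F\bigl(\ell^{-1}(u), \ell^{-1}(v)\bigr)$ gives
\[
   [p]_F(T) = F\bigl(\ell^{-1}(pT),\,T^{p^h}\bigr),
\]
an identity in $\ZZ_p[[T]]$.  Reducing mod $p$, the first argument $\ell^{-1}(pT)$ becomes $0$, and the identity axiom for $H_h$ yields $[p]_{H_h}(T) = H_h(0,T^{p^h}) = T^{p^h}$, as desired.

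The principal obstacle is the integrality assertions, which are all packaged in the functional equation lemma; the remainder is purely formal manipulation.  An alternative route would be to build $H_h$ inductively through its bud-law truncations $H_h^{(n)}$ using \eqref{st:Laz.thm.refinement} and \eqref{st:[p].difference.lem}, but the inductive step at degrees $n = jp^h$ with $j > 1$ not a power of $p$ is problematic: the symmetric $2$-cocycle $C_n$ has vanishing effect on $[p]$ in characteristic $p$ at such degrees, so one cannot modify the offending coefficient via \eqref{st:[p].difference.lem} and must instead establish its vanishing by other means, which is precisely what the functional equation lemma achieves globally.
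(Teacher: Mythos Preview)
Your argument is correct and is essentially the classical construction of the Honda formal group law via Hazewinkel's functional equation lemma. The paper itself does not give a proof of this statement at all: it simply cites \cite{froh68}*{III \S2 Theorem~1}. So your proposal supplies substantially more content than the paper does. One small remark: rather than routing through the identity $[p]_F(T) = F\bigl(\ell^{-1}(pT),T^{p^h}\bigr)$ and the separate integrality of $\ell^{-1}(pT)$, you can invoke part~(iii) of the functional equation lemma directly---it gives $[p]_F(T) \equiv T^{p^h} \pmod p$ immediately from $\ell\bigl([p]_F(T)\bigr) - \ell(T^{p^h}) = p\,\ell(T) - \ell(T^{p^h}) = pT \equiv 0 \pmod p$. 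Your closing paragraph on the obstruction to a purely inductive construction via \eqref{st:Laz.thm.refinement} is also apt.
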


\begin{proof}
See, for example, \cite{froh68}*{III \s2 Theorem 1}.
\end{proof}

\begin{ntn}\label{ntn:H}
We fix once an for all a formal group law $H_h$ as in \eqref{st:[p]_H=T^p^h.over.Fp}.  As we shall often work with respect to fixed $h$, we often write just $H$ for $H_h$, provided no confusion seems likely.
\end{ntn}

If $A$ is of characteristic $p$, then we may view $H$ as defined over $A$.  Let $n\geq p^h$, and consider the $n$-truncation $H^{(n)}$.

\begin{prop}\label{st:ht.h.fin.et}
Let $F$ be an $n$-bud law (resp., formal group law) of height $h$ over $A$.
Then there exists a finite \'etale extension ring (resp., a countable 
ascending union of finite \'etale extension rings) $B$ of $A$ such that $F 
\iso H^{(n)}$ (resp., $F \iso H$) over $B$.
\end{prop}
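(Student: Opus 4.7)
We handle the $n$-bud case first; the formal group law version will follow by continuing the same inductive construction for all $n$ and forming the colimit, yielding a countable ascending union of finite \'etale extensions. Since $F$ has height $h \geq 1$, the coefficient of $T$ in $[p]_F$ is simultaneously $p$ (by Remark \ref{rk:[p].leading.coeff=p}) and $0$ (by the definition of height), so $p = 0$ in $A$ and we may assume $A$ has characteristic $p$.

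The strategy is to refine the classical construction of Lazard's height classification, originally carried out over separably closed fields, by tracking at each stage the extensions of $A$ forced upon us. We proceed by induction on $m \leq n$, maintaining the invariant that $F^{(m)} = H^{(m)}$ after the partial isomorphism constructed so far, defined over an increasingly large finite \'etale extension of $A$. At the inductive step, Lazard's comparison theorem \ref{st:laz.comp.thm} produces a unique $a \in A$ with $F^{(m+1)} - H^{(m+1)} = aC_{m+1}$, and our task is to eliminate this discrepancy. When $m+1$ is not a power of $p$, $\lambda(m+1)$ is a unit in characteristic $p$, so $B_{m+1} = \lambda(m+1)C_{m+1}$ is a unit multiple of $C_{m+1}$; the coordinate change $T \mapsto T + \bigl(a/\lambda(m+1)\bigr)T^{m+1}$ then modifies $F$ at level $m+1$ by $-aC_{m+1}$, over $A$ itself without enlargement.

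The harder case is $m+1 = p^l$, where $B_{m+1}$ vanishes in characteristic $p$. Applying Lemma \ref{st:[p].difference.lem} with $k = p$ and $m = p^l$, and noting that $p^{p^l-1} - 1 \equiv -1$ in characteristic $p$, we obtain $[p]_{F^{(p^l)}} - [p]_{H^{(p^l)}} = -aT^{p^l}$. For $l < h$, both $[p]$-series vanish in degree $p^l$ by Proposition \ref{st:[p].law.form}, so $a = 0$ automatically. For $l = h$, the identity forces $1 - a$ to equal the leading coefficient of $[p]_F$, which is a unit by the height hypothesis on $F$; we extract a $(p^h - 1)$-th root of $1 - a$ by passing to the finite \'etale extension $A[c, c^{-1}]/\bigl(c^{p^h - 1} - (1 - a)\bigr)$ (\'etale because $p^h - 1$ is a unit in characteristic $p$) and apply the rescaling $T \mapsto cT$. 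For $l > h$, a coordinate change of the form $T \mapsto T + bT^{p^{l-h}}$ suffices; the Frobenius identity $(T + bT^{p^{l-h}})^{p^h} = T^{p^h} + b^{p^h}T^{p^l}$ in characteristic $p$ translates the condition to eliminate $a$ into an Artin--Schreier equation $b^{p^h} - b = (\text{datum})$, which defines a finite \'etale extension of degree $p^h$.

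The main obstacle is the bookkeeping in the cases $l \geq h$: the corrective coordinate change acts at degree at most $m+1$, so it may in principle disturb the equalities $F^{(j)} = H^{(j)}$ previously established for $j < m+1$. The resolution invokes the Frobenius identity to show that coordinate changes at powers of $p$ have vanishing effect on intermediate non-power-of-$p$ levels, while new discrepancies appearing at intermediate power-of-$p$ levels fall under the same analysis and vanish by Proposition \ref{st:[p].law.form}. Equivalently, one may first normalize $[p]_F$ to equal $T^{p^h}$ exactly via a composite of the root extractions described above, and only then correct the bud law at the remaining non-power-of-$p$ levels without any further extensions. After finitely many steps the accumulated extensions compose to a single finite \'etale extension of $A$ over which $F \iso H^{(n)}$, and passing to the limit in $n$ gives the formal group law case.
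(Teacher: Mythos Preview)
Your two-step alternative at the end---first normalize $[p]_F$ to $T^{p^h}$ via finite \'etale extensions, then prove that any two bud laws with $[p] = T^{p^h}$ are isomorphic over the base ring with no further extension---is exactly the paper's approach. Both you and the paper identify the needed extensions as adjunctions of roots of separable polynomials (your $(p^h-1)$th-root and Artin--Schreier extensions), and both defer the details of the second step to the arguments in Fr\"ohlich.

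Your primary inductive line, however, does not work as written. The corrective changes you propose for $l \geq h$---the rescaling $T \mapsto cT$ when $l = h$, and the change $T \mapsto T + bT^{p^{l-h}}$ when $l > h$---act at degrees strictly below the current level $m+1 = p^l$, and therefore destroy the inductive hypothesis $F^{(m)} = H^{(m)}$. Your proposed resolution, that such changes ``have vanishing effect on intermediate non-power-of-$p$ levels,'' is false: for instance with $p = 2$, $h = 1$, and $H$ the multiplicative law, the change $\phi(T) = T + bT^2$ satisfies $(\phi_* H)^{(3)} - H^{(3)} = b(T_1^2T_2 + T_1T_2^2) \neq 0$, so level~$3$ is genuinely disturbed. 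This is precisely why the two-step order matters: normalizing $[p]$ first collects all the disruptive low-degree coordinate changes up front, and the second step then requires only degree-$(m+1)$ perturbations $T \mapsto T + cT^{m+1}$, which never touch lower degrees. Stick with your alternative formulation.
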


\begin{proof}
We'll proceed by extracting some arguments from the proofs of the statements 
leading up to the proof of Theorem 2 in \cite{froh68}*{III \s2}.  There is also
a somewhat cleaner version of the proof sketched in \cite{mil03}*{10.4}.

It suffices to consider the bud law case; the group law case then follows by
considering the various truncations $F^{(n)}$ for higher and higher $n$.  To
begin, the proof of \cite{froh68}*{III \s2 Lemma 3} shows that there exists a
finite \'etale extension ring $B$ of $A$ such that $F$ is isomorphic over $B$ to
an $n$-bud law $G$ for which $[p]_G(T) = T^{p^h}$.  A bit more explicitly, 
whereas \cite{froh68} takes $A$ to be a separably closed field and proceeds by 
finding solutions to certain (separable) equations in $A$, one can proceed in 
our case by formally adjoining solutions to certain (separable) equations to 
$A$, that is, one can obain $B$ as an iterated extension ring of (finitely many)
rings of the form $A[X]$ modulo a separable polynomial.

The remaining step is to show that over any ring, any two bud laws for which
$[p](T) = T^{p^h}$ are isomorphic.  This is probably best and most simply
seen via a direct argument, but it can be gleaned from \cite{froh68}*{III \s2}
by  combining arguments (suitably adapted to the bud case) in the proofs of Lemma 2, Proposition 3, and Theorem 2.
\end{proof}

%
%
\begin{comment}
\begin{prop}
Let $F$ be an $n$-bud of height $h$ over $A$.  Then there exists a finite \'etale extension ring $B$ of $A$ such that $F \iso H^{(n)}$ over $B$.
\end{prop}

\begin{proof}
We'll proceed by extracting some arguments from the proofs of the statements leading up to
the proof of Theorem 2 in \cite{froh68}*{III \s2}.

Let $m := np^h$. By (\ref{st:Laz.thm}\ref{it:Laz.thm.univ}), we may extend $F$
to an $m$-bud law $\wt F$. Of course, $\wt F$ is still of height $h$. The proof
of Lemma 3 in \cite{froh68}*{III \s2} shows that there exists a finite \'etale
extension ring $B$ of $A$ such that $\wt F$ is isomorphic over $B$ to an $m$-bud
law $G$ for which $[p]_G(T) = T^{p^h}$. Say $G(T_1,T_2) = \sum b_{ij}T_1^i
T_2^j$. Since $[p]_G$ is an endomorphism of $G$, we have $G(T_1,T_2)^{p^h} =
G\bigl(T_1^{p^h}, T_2^{p^h}\bigr)$, i.e.\
\[
	\sum b_{ij}^{p^h}T_1^{ip^h}T_2^{jp^h} = \sum b_{ij}T_1^{ip^h}T_2^{jp^h}
		\in A[T_1,T_2]/(T_1,T_2)^{m+1}.
\]
Hence $b_{ij}^{p^h} = b_{ij}$ whenever $(i+j)p^h \leq m$, that is, whenever
$i+j \leq n$.  The arguments in the proofs of Lemmas 2 and 5 in
\cite{froh68}*{III \s2} now show that $G^{(n)}$ and $H^{(n)}$ are isomorphic
to $n$-bud laws in ``normal form'' \cite{froh68}*{III \s2, p.\ 78} over $B$.
But the arguments in the proofs
of Proposition 3 and Theorem 2 in \cite{froh68}*{III \s2} show that any two
bud laws in normal form over any ring are isomorphic.
\end{proof}
\end{comment}
%
%

\begin{rk}
In particular, the ring $B$ in the theorem is faithfully flat over $A$.
\end{rk}

\subsection{Multiplication by $p$}\label{ss:mult.by.p}

Let $X$ be an $n$-bud or formal Lie group over the scheme $S$.  Then (using \eqref{st:nbudmorphs=abgp} in the bud case) the endomorphisms of $X$ form an abelian group.  So we may make the following definition.

\begin{defn}\label{def:[p]_X}
We define $[p]_X$ to be the element of $\End_{\nbuds(S)}(X)$ in the bud case, and of $\End_{\flg(S)}(X)$ in the formal Lie group case, $p\,\id_X$.
\end{defn}

\begin{rk}\label{rk:[p].for.triv.buds}
When $X = \TT_S^F$ for an $n$-bud law $F \in \Gamma_n(S;T_1,T_2)$ \eqref{eg:trivnbud}, the morphism
\[
	\xymatrix{
		X \ar[r]^-{[p]_X} & X
		}
\]
corresponds to the $\O_S$-algebra map
\[
	\xymatrix@R=0ex{
		\O_S[T]/(T)^{n+1} & \O_S[T]/(T)^{n+1} \ar[l] \\
		\smash{[p]_F(T)}\vphantom{(} & \smash{T,}\vphantom{(} \ar@{|->}[l]
		}
\]
with $[p]_F$ as defined in \eqref{def:[p]_F}.  There is an obvious analogous statement when $F$ is a group law.
\end{rk}

\begin{rk}\label{rk:[p].compat.truncation}
One checks at once that truncation functors are
additive functors on the category of formal Lie groups and on the various categories of buds.  Hence truncation preserves
$[p]$: if $X$ is a formal Lie group or an $m$-bud with $m\geq n$, then $[p]_X^{(n)} = [p]_{X^{(n)}}$.
\end{rk}

\begin{rk}\label{rk:[p].compat.bc}
Similarly, consider the category of $n$-buds or of formal Lie groups over $S$. Then for any morphism
$f\colon S' \to S$, the base change functor $-\fib S S'$ is additive, hence
preserves $[p]$.  When $X = \TT_S^F$ or $X = \wh\AA_S^F$, recall that the multiplication law on $X
\fib S S'$ is given by the law $F'$ obtained by applying $f^\#$ to the
coefficients of $F$ (\ref{rk:buds.base.change}, \ref{rk:flg_base_change}).  Hence $[p]_{F'}$ is obtained
by applying $f^\#$ to the coefficients of $[p]_F$.
\end{rk}

\subsection{Zero loci of line bundles}\label{ss:lb's}

In the next section we'll describe the height stratification on \Bn as arising
from a succession of \emph{zero loci of sections of line bundles}.  Our aim in
this section is to dispense with a few of the basic preliminaries.  The material
we shall discuss is general in nature and is independent of our earlier work.

Let \Vect[1] denote the fibered category on \Sch that assigns to each scheme $S$
the category of locally free $\O_S$-modules of rank $1$ and all module
homomorphisms (with the usual pullback functors).  Then \Vect[1] is an fpqc
stack \cite{sga1}*{VIII 1.12}.  Note that the underlying moduli stack of
\Vect[1], obtained by discarding the non-Cartesian morphisms, is just
$B(\GG_m)$.  Let \F be a fibered category over \Sch.

\begin{defn}\label{def:lb}
A \emph{line bundle} on \F is a $1$-morphism $\L \colon \F \to \Vect[1]$ in 
$\Fib\bigl(\Sch\bigr)$.  A \emph{morphism} $\L \to \L'$ of line bundles on \F is
a $2$-morphism $\L \to \L'$ in $\Fib\bigl(\Sch\bigr)$.
\end{defn}

When \F is an algebraic stack, one recovers the usual notion of line bundle on
\F; see \cite{lamb00}*{13.3} (strictly speaking, \cite{lamb00} would take 
$\Vect[1](S)$ to be the \emph{opposite} of the category of locally free 
$\O_S$-modules of rank $1$, but let us not belabor this point).

\begin{eg}\label{eg:O_X}
For any fibered category \F, we denote by $\O_\F$ the line bundle that assigns
to each $X\in\ob\F$ over the scheme $S$ the trivial line bundle $\O_S$,
and to each morphism $\mu\colon Y \to X$ over $f\colon T \to S$ the Cartesian morphism in \Vect[1] corresponding to the canonical isomorphism $\O_T \isoarrow f^* \O_S$.
\end{eg}

Let \L be a line bundle on  the fibered category \F.

\begin{defn}\label{def:sectionoflb}\hfill
\begin{enumerate}
\renewcommand{\theenumi}{\alph{enumi}}
\item
   A \emph{global section}, or just \emph{section, of \L} is a morphism
   $a\colon \O_\F \to \L$.
\item
   Given a section $a$ of \L, the \emph{zero locus of $a$} is the full
   subcategory  $V(a)$ of \F whose objects $X$ over the scheme $S$ are those
   for which $a_X\colon \O_S \to \L_X$ is the $0$ map.
\end{enumerate}
\end{defn}

Let $a$ be a section of \L.  The basic result of interest to us is the
following.  The proof is straightforward.

\begin{prop}\label{st:V(a)=clsdimm}\hfill
\begin{enumerate}
\renewcommand{\theenumi}{\roman{enumi}}
\item
   $V(a)$ is a sub-fibered category of \F, and the inclusion functor 
   $V(a) \to \F$ is a closed immersion.
\item
   If \F is a CFG, or stack, or algebraic stack, then so is $V(a)$. \hfill
   $\square$
\end{enumerate}
\end{prop}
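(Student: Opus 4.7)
The plan is to verify each claim by direct unwinding of the definitions, leveraging the naturality built into the 2-morphism $a\colon \O_\F \to \L$.

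For part (i), I would first check closure under pullbacks. Given $X \in \ob V(a)$ over $S$ and a morphism $\mu\colon Y \to X$ in \F over $f\colon T \to S$, the $2$-morphism structure of $a$ gives a commutative square relating $a_Y\colon \O_T \to \L_Y$ to the pullback $f^* a_X\colon f^*\O_S \to f^*\L_X$ via the canonical isomorphisms $\O_Y \ciso f^*\O_X$ and $\L_Y \ciso f^*\L_X$. Since $a_X = 0$, we get $a_Y = 0$, so $Y \in \ob V(a)$, and the pullback in $V(a)$ is inherited from \F. Hence $V(a)$ is a sub-fibered category. To see that $V(a) \hookrightarrow \F$ is a closed immersion, fix a scheme $S$ and a morphism $S \xra{X} \F$ (i.e., an object $X$ of $\F(S)$). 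Because $V(a)$ is a full subcategory of $\F$, the fiber product $V(a)_S := V(a) \fib \F S$ is up to equivalence a subpresheaf of $S$. Compute its $T$-points: $f\colon T \to S$ factors through $V(a)_S$ iff $a_{f^*X} = 0$, which, by the compatibility above, is equivalent to $f^* a_X = 0$, i.e., to $f$ factoring through the (honest) closed subscheme $Z(a_X) \subset S$ cut out by the vanishing of the section $a_X$ of the line bundle $\L_X$. Thus $V(a)_S \approx Z(a_X)$, a closed subscheme of $S$, proving representability and the closed immersion property.

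For part (ii), the CFG case is immediate: since $V(a)$ is a \emph{full} sub-fibered category of \F, every morphism in $V(a)$ is a morphism in \F between objects of $V(a)$, and when \F is a CFG every such morphism is Cartesian in \F, hence Cartesian in $V(a)$. For the stack and algebraic stack cases, the key observation is that by part (i), the inclusion $\iota\colon V(a) \to \F$ is representable by closed immersions, in particular separated and representable. When \F is a stack for a topology coarser than or equal to fpqc, one verifies descent for $V(a)$ by the following standard argument: given a covering sieve on $S$ and compatible local objects in $V(a)$, descent in \F produces an object $X$ of $\F(S)$, and the vanishing condition $a_X = 0$ is local on $S$ (since $\L_X$ is a sheaf and ``being the zero map'' is a local property), so $X \in \ob V(a)(S)$; descent for morphisms is inherited similarly from \F. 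For the algebraic stack case, one applies the criterion that a stack admitting a representable morphism to an algebraic stack which is itself representable by schemes (or algebraic spaces) with suitable properties is algebraic; concretely, a presentation $U \to \F$ of \F pulls back along $\iota$ to a morphism $U \fib \F V(a) \to V(a)$, and $U \fib \F V(a)$ is a scheme (being the zero locus of the pulled-back section on $U$), which gives the required smooth (in fact, closed) base for a presentation of $V(a)$ after composing with a presentation of the zero locus appropriately.

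The arguments are essentially routine; the only place that requires care is ensuring that the notion of ``zero locus of a section of a line bundle on \F'' interacts correctly with the $2$-categorical setup, i.e., that the coherence isomorphisms supplied by $\L$ being a $1$-morphism of fibered categories and $a$ being a $2$-morphism give a well-defined vanishing condition stable under all pullbacks. Once this is in hand, the representability assertion in (i) forces everything in (ii) to follow formally from the corresponding stability results for closed immersions and representable morphisms into stacks and algebraic stacks (compare \cite{lamb00}*{4.5(ii)}).
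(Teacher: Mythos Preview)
Your proposal is correct and follows precisely the straightforward argument the paper has in mind (the paper omits the proof, marking it with a $\square$ after noting ``the proof is straightforward''). Your treatment of part (i)---checking stability under pullback via the naturality square for $a$, then identifying $V(a)_S$ with the scheme-theoretic zero locus of $a_X$ on $S$---is exactly the expected argument, and for part (ii) your direct descent verification for the stack case and appeal to \cite{lamb00}*{4.5(ii)} for the algebraic case are the natural steps; the only minor wording slip is calling the pulled-back presentation ``smooth (in fact, closed)''---the map $U \fib\F V(a) \to V(a)$ is smooth (by base change), while it is the map $U \fib\F V(a) \to U$ that is the closed immersion.
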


\subsection{The height stratification on the stack of $n$-buds}
\label{ss:nbudheight}

In this section, we translate the classical notion of height for bud laws to
the geometric setting by defining the \emph{height stratification} on the stack
\Bn.  We shall define the analogous stratification on the stack of formal Lie
groups in \s\ref{ss:ht_strat_flg}.

Let $X$ be an $n$-bud over the scheme $S$ with section $\sigma\colon S \to X$.
Let
\[
   \I_X := \ker\bigl[\sigma^{-1}\O_X  \xra{\sigma^\sharp} \O_S\bigr].
\]
Then the endomorphism $[p]_X$ of $X$ \eqref{def:[p]_X} determines an
endomorphism $[p]_{\I_X}$ of $\I_X$.

Now let $h$ be a nonnegative integer, and assume $n \geq p^h$.  

\begin{defn}\label{def:bud.ht.>=h}
We say $X$ has \emph{height $\geq h$} if the endomorphism $[p]_{\I_X}\colon \I_X \to
\I_X$ has image in $\I_X^{p^h}$.  We denote by $\Bn^{\geq h}$ the full
subcategory of \Bn of $n$-buds of height $\geq h$.
\end{defn}

\begin{eg}\label{eg:triv_bud_ht>=h}
Let $X = \TT_S^F$ for some $n$-bud law $F$ over $\Gamma(S)$ \eqref{eg:trivnbud}.
Then we see from \eqref{rk:[p].for.triv.buds} that $X$ has height $\geq h$
$\iff$ $[p]_F \in T^{p^h} \cdot \Gamma_n(S; T)$.  So our terminology is
compatible with \eqref{def:law.height}.
\end{eg}

\begin{rk}
Recall that $[p]$ \eqref{rk:[p].compat.bc}
%and formation of $\I_X$ \eqref{st:I.compat.bc} are
is compatible with base
change. Hence height $\geq h$
is stable under base change.  Hence $\Bn^{\geq h}$ is fibered over \Sch.
\end{rk}

\begin{rk}\label{rk:ht.>=h.compat.truncation}
Similarly, height $\geq h$ is stable under truncation (provided we don't truncate below $n = p^h$).  More precisely, $X$ has height $\geq h$ $\iff$ $X^{(p^h)}$ has height $\geq h$.
\end{rk}

\begin{rk}
Of course, for fixed $n$, there are only finitely many values (possibly none) of $h$ for which height $\geq h$ makes sense.  So we get a finite decreasing chain $\Bn = \Bn^{\geq 0} \ctnsneq \Bn^{\geq 1} \ctnsneq \Bn^{\geq 2} \ctnsneq \dotsb$.
\end{rk}

We shall see in a moment that the inclusion $\Bn^{\geq h} \inj \Bn$ is a
\emph{closed immersion}.  First, some notation.

By definition of $n$-germ \eqref{def:ngerm} for $n\geq 1$, the conormal sheaf $\I_X/\I_X^2$
associated to the section $S \to X$ is a line bundle on $S$. Moreover, since the
conormal sheaf associated to a section is compatible with base change on $S$
\cite{egaIV4}*{16.2.3(ii)}, formation of the conormal bundle defines a line
bundle on \Bn.

\begin{defn}\label{def:L}
We denote by \L the line bundle on \Bn associating to each bud its associated conormal sheaf.  We define $\L_h := \L|_{\Bn^{\geq h}}$.
\end{defn}

\begin{rk}
Clearly, the same construction defines a line bundle $\L'$ on the stack \Gn
of $n$-germs, and \L is just the pullback of $\L'$ along the natural forgetful
morphism $\Bn \to \Gn$.
\end{rk}

\begin{rk}\label{rk:L_indep_of_n}
Strictly speaking, \L and $\L_h$ depend on $n$.  But the dependence on $n$ is
largely superficial:  since the conormal sheaf of an immersion
depends only on the $1$st infinitesimal neighborhood, formation
of \L is compatible with truncation $\Bn[m] \to \Bn$, and similarly for $\L_h$.
In other words, up to canonical isomorphism, we may construct \L on $\Bn$ by
first constructing \L on $\Bn[1]$ and then pulling back along $\Bn \to \Bn[1]$; and similarly for $\L_h$, replacing $\Bn[1]$ by $\Bn[p^h]$.
So, to avoid clutter, we shall abuse notation and simply suppress the $n$ when
writing \L and $\L_h$.
\end{rk}

When $X$ has height $\geq h$, the map $[p]_{\I_X}\colon \I_X \to \I_X^{p^h}$
induces a map of line bundles
\[\tag{$*$}\label{disp:pre-v's}
   \I_X/\I_X^2 \to \I_X^{p^h}/\I_X^{p^h+1}.
\]
But $\I_X^{p^h}/\I_X^{p^h+1} \ciso (\I_X/\I_X^2)^{\tensor p^h}$
\eqref{st:freesuccquots}.  So we may express \eqref{disp:pre-v's} as
\[
	(\L_h)_X \to (\L_h)_X^{\tensor p^h},
\]
or as a section
\[\tag{$**$}\label{disp:(v_h)_X}
	(v_h)_X \colon \O_S \to (\L_h)_X^{\tensor p^h-1}.
\]
Since $[p]$ is compatible with pullbacks, we may make the following
definition.

\begin{defn}\label{def:v_h}
We denote by $v_h$ the section of $\L_h^{\tensor p^h-1}$ defined by
\eqref{disp:(v_h)_X}.
\end{defn}

\begin{eg}\label{eg:a_0}
By \eqref{rk:[p].leading.coeff=p}, $v_0$ is just the section $p$ of $\O_{\Bn}$.
\end{eg}

\begin{eg}\label{rk:triv_bud_v_h}
Of course, we've taken pains to express $v_h$ in a coordinate-free way, so that
it is, in some sense, canonical.  But when $X$ admits a coordinate, $\L_X$
is trivial, and $v_h$ can be understood more concretely.  Precisely, suppose $X
= \TT_S^F$ for some $n$-bud law $F$ over $\Gamma(S)$.  Then $\I_X = T\cdot
\O_S[T]/(T)^{n+1}$, and there is an obvious trivialization
\[
   \xymatrix@R=0ex{
      \O_S \ar[r]^-\sim  & \I_X/\I_X^2 = \L_X\\
      1 \ar@{|->}[r]     & \smash[b]{\text{image of}}\ T.
   }
\]
The displayed trivialization induces a natural trivialization of
$\L_X^{\tensor p^h-1}$.

Now suppose $X$ has height $\geq h$, so that  $[p]_F$ is of the form
\[
   a_{p^h}T^{p^h} + \text{(higher order terms)} \qquad \eqref{eg:triv_bud_ht>=h}.
\]
Then, under our trivialization of $(\L_h)_X^{\tensor p^h-1} = \L_X^{\tensor
p^h-1}$, $(v_h)_X$ corresponds exactly to $a_{p^h} \in \Gamma(S)$.
\end{eg}

\begin{rk}\label{rk:v_h_indep_of_n}
Just as for $\L_h$, $v_h$ implicitly depends on $n$.  But $v_h$ is essentially independent of $n$ in the same sense as $\L_h$ is \eqref{rk:L_indep_of_n}.
\end{rk}

\begin{prop}\label{st:nbuds^>=h=V(a_h)}
Assume $h \geq 1$.  Then $\Bn^{\geq h}$ is the zero locus
\eqref{def:sectionoflb} in $\Bn^{\geq h-1}$ of the section $v_{h-1}$.
\end{prop}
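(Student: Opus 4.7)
The plan is to check, for every scheme $S$ and every $X \in \Bn^{\geq h-1}(S)$, the equivalence
\[
   X \in \Bn^{\geq h}(S) \iff (v_{h-1})_X = 0.
\]
The forward implication is essentially by construction: since $h \geq 1$ and $p \geq 2$ force $p^h \geq p^{h-1}+1$, any $[p]_{\I_X}$ with image in $\I_X^{p^h}$ also has image in $\I_X^{p^{h-1}+1}$, and so the induced map $\I_X/\I_X^2 \to \I_X^{p^{h-1}}/\I_X^{p^{h-1}+1}$ defining $(v_{h-1})_X$ is the zero map.

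For the reverse implication I would first reduce to a trivialized bud.  Both conditions are Zariski-local on $S$ (containment of a sheaf map in a subsheaf and vanishing of a section of a line bundle are each local properties), so by \eqref{st:ngermloctriv} I may pass to an open cover of $S$ on which $X \iso \TT_S^F$ for some $n$-bud law $F \in \Gamma_n(S; T_1, T_2)$.  Under the trivialization of $\L_X$ and its tensor powers coming from the coordinate $T$, \eqref{rk:triv_bud_v_h} identifies $(v_{h-1})_X$ with the coefficient of $T^{p^{h-1}}$ in $[p]_F \in \Gamma_n(S;T)$, while by \eqref{eg:triv_bud_ht>=h} the conditions height $\geq h-1$ and height $\geq h$ translate respectively into vanishing of the coefficients of $T^i$ in $[p]_F$ for $i < p^{h-1}$ and for $i < p^h$.

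Assuming $(v_{h-1})_X = 0$, every coefficient of $T^i$ in $[p]_F$ with $i \leq p^{h-1}$ then vanishes.  By \eqref{rk:[p].leading.coeff=p} the linear coefficient of $[p]_F$ is $p$, so $\Gamma(S)$ has characteristic $p$, and \eqref{st:[p].law.form} applies: $[p]_F$ is either $0$ or of the form $a_{p^{h'}} T^{p^{h'}} + a_{2p^{h'}} T^{2p^{h'}} + \cdots$ with $a_{p^{h'}} \neq 0$ for some $h' \geq 0$.  The vanishing of every coefficient of degree $\leq p^{h-1}$ excludes $h' \leq h-1$, forcing $h' \geq h$, so every nonzero term of $[p]_F$ lies in degree $\geq p^h$.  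Translating back, $X$ has height $\geq h$.

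The principal technical input is the classical structural fact \eqref{st:[p].law.form}: a nonzero endomorphism in characteristic $p$ has its leading exponent a power of $p$.  Without this bridge between the single coefficient encoded by $v_{h-1}$ and the full collection of low-degree coefficients needed to certify height $\geq h$, the proposition would fail; everything else is a matter of dictionary between the coordinate-free conormal-sheaf language used to define $v_{h-1}$ and the classical language of bud laws.
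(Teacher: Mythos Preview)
Your proof is correct and follows essentially the same approach as the paper: reduce locally to a trivialized bud $\TT_S^F$, observe that $(v_{h-1})_X$ is the $T^{p^{h-1}}$-coefficient of $[p]_F$, and invoke the structural fact \eqref{st:[p].law.form} to bridge from vanishing of that one coefficient (together with height $\geq h-1$) to vanishing of all coefficients below degree $p^h$. The paper's version is terser---it simply points to \eqref{st:[p].law.form} and \eqref{rk:[p].for.triv.buds} for the nontrivial direction---but the content is identical, and your explicit handling of the characteristic-$p$ reduction is a helpful elaboration.
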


\begin{proof}
Let $X$ be an $n$-bud of height $\geq h-1$.  We claim
\begin{align*}
   X \in \ob V(v_{h-1})
      &\iff (v_{h-1})_X = 0\\
      &\iff \text{$[p]_{\I_X}$ carries $\I_X$ into $\I_X^{p^{h-1}+1}$}\\
      &\overset{(\dag)}{\iff} \text{$[p]_{\I_X}$ carries $\I_X$ into
                   $\I_X^{p^h}$}\\
      &\iff X\in\ob \Bn^{\geq h}.
\end{align*}
Only the ``$\Longleftrightarrow$'' marked $(\dag)$ requires proof.
For this, the implication $\Longleftarrow$ is trivial.  For the implication
$\Longrightarrow$, the assertion is local on $S$, so we may assume $X$ is
of the form $\TT_S^F$ \eqref{eg:trivnbud}.  Now use \eqref{st:[p].law.form} and
\eqref{rk:[p].for.triv.buds}.
\end{proof}

\begin{cor}\label{st:ht.>=h.clsd.imm}
$\Bn^{\geq h}$ is an algebraic stack for the fpqc toplogy, and the inclusion functor $\Bn^{\geq h} \to \Bn$
is a closed immersion.
\end{cor}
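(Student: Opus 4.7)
The plan is to induct on $h$, using Proposition \ref{st:nbuds^>=h=V(a_h)} to realize each $\Bn^{\geq h}$ as a zero locus of a line bundle section inside the previous one, and then invoking the general machinery of \S\ref{ss:lb's}.

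First, the base case $h = 0$: here $\Bn^{\geq 0} = \Bn$ (with the convention $v_0 = p$, cf.\ \eqref{eg:a_0}, if one prefers a uniform setup), and this stack is already known to be algebraic for the fpqc topology by \eqref{st:nbuds=AT_nbsBLT_n}. The inclusion is the identity, which is trivially a closed immersion.

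For the inductive step, suppose $\Bn^{\geq h-1}$ is an algebraic stack and $\Bn^{\geq h-1} \inj \Bn$ is a closed immersion. By \eqref{st:nbuds^>=h=V(a_h)}, we have $\Bn^{\geq h} = V(v_{h-1})$ inside $\Bn^{\geq h-1}$, where $v_{h-1}$ is a section of the line bundle $\L_{h-1}^{\tensor p^{h-1} - 1}$ on $\Bn^{\geq h-1}$ (this uses the hypothesis $n \geq p^h$, which guarantees that $v_{h-1}$ is defined on $\Bn^{\geq h-1}$). Applying \eqref{st:V(a)=clsdimm} to this section, we conclude that $\Bn^{\geq h}$ is a sub-fibered category of $\Bn^{\geq h-1}$, that the inclusion $\Bn^{\geq h} \inj \Bn^{\geq h-1}$ is a closed immersion, and that $\Bn^{\geq h}$ inherits the property of being an algebraic stack.

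Finally, the composite $\Bn^{\geq h} \inj \Bn^{\geq h-1} \inj \Bn$ is a closed immersion because closed immersions are stable under composition (which is immediate from the definition of a closed immersion of fibered categories: closedness is checked after base change to schemes, where stability under composition is classical). I do not anticipate any real obstacle here; the substance of the corollary is entirely packaged into \eqref{st:nbuds^>=h=V(a_h)} and \eqref{st:V(a)=clsdimm}, and this proof is purely a matter of assembling the induction.
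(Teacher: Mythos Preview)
Your proof is correct and takes essentially the same approach as the paper: both argue by induction on $h$, using \eqref{st:nbuds^>=h=V(a_h)} to exhibit each $\Bn^{\geq h}$ as a zero locus in $\Bn^{\geq h-1}$ and then invoking \eqref{st:V(a)=clsdimm} (with \eqref{st:nbuds=stack} and \eqref{st:nbuds=AT_nbsBLT_n} supplying the base case). The paper simply compresses this into a one-line list of references, whereas you have spelled the induction out explicitly.
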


\begin{proof}
Apply \eqref{st:nbuds=stack}, \eqref{st:nbuds=AT_nbsBLT_n}, 
\eqref{st:V(a)=clsdimm}, and the proposition.
\end{proof}

\begin{rk}\label{rk:Bn^>=1=Bn.x.FFp}
Combining \eqref{eg:a_0} and \eqref{st:nbuds^>=h=V(a_h)}, we see that $\Bn^{\geq
1}$ is precisely the stack of $n$-buds over $\FF_p$-schemes.
\end{rk}

\begin{rk}\label{rk:>=h_refinement}
The proposition says that the property of height $\geq h$ depends only on a
bud's $p^{h-1}$-truncation.  So we could extend the notion of height $\geq h$
to $n$-buds for $n \geq p^{h-1}$, but this added bit of generality would offer
no real advantage to us.
\end{rk}

\subsection{The stack of height $\geq h$ $n$-buds}\label{ss:ht_>=h_buds}

Let $h\geq 1$ and $n \geq p^h$.  Our aim in this section is to obtain a
description of $\Bn^{\geq h}$ \eqref{def:bud.ht.>=h} analogous to the
description of \Bn in \eqref{st:nbuds=AT_nbsBLT_n}.

As a warm-up, note first that the case $h = 1$ is easy: $\Bn^{\geq 1} \approx \Bn \tensor \FF_p$ by \eqref{rk:Bn^>=1=Bn.x.FFp}, so $\Bn^{\geq 1} \approx \sAut(\TT_n)_{\FF_p} \bs (L_n)_{\FF_p}$ by \eqref{st:nbuds=AT_nbsBLT_n}, where $(L_n)_{\FF_p} := L_n\tensor \FF_p \iso \AA_{\FF_p}^{n-1}$ and $\sAut(\TT_n)_{\FF_p} := \sAut(\TT_n) \tensor \FF_p$ is an open subscheme of $\AA_{\FF_p}^n$.

% As a first step towards the proof of the theorem, note that the case $h = 1$
% follows at once from \eqref{st:nbuds=sm/Z}, since $\Bn^{\geq 1} \approx \Bn
% \times \Spec\FF_p$ \eqref{rk:Bn^>=1=Bn.x.FFp}.  The general strategy is to now
% induct on $h$.

To treat the case of general $h$, let us begin by constructing a convenient
presentation of $\Bn^{\geq h}$. Let $A$ be the polynomial ring
$\ZZ_{(p)}[t_1,\dotsc,t_{n-1}]$, and let $F$ be a universal (for
$\ZZ_{(p)}$-algebras) $n$-bud law over $A$ such that the truncation $F^{(n')}$
satisfies \eqref{it:U^(n)_coeffs}--\eqref{it:U^(n)_smoothness} in
\eqref{st:Laz.thm.refinement} for all $n'\leq n$. Let $a_h$ denote the
coefficient of $T^{p^h}$ in
$[p]_F(T) \in A[T]/(T)^{n+1}$ \eqref{def:[p]_F}.
% 
% 
% 
% 
% 
% Let $h\geq
% 1$. Let $A$ be the polynomial ring $\FF_p[t_1,\dotsc,t_{n-1}]$, and let $F$ be
% a
% universal (for $\FF_p$-algebras) $n$-bud law over $A$ \eqref{st:Laz.thm}. By
% \eqref{st:Laz.thm.refinement}, we may choose $F$ such that the coefficients of
% its degree $\leq m$ terms involve only $t_1,\dotsc$, $t_{m-1}$. The choice of
% $F$ determines an $n$-bud $X := \TT_A^F$ \eqref{eg:trivnbud} over $\Spec A =
% \AA_{\FF_p}^{n-1}$ and (up to isomorphism) a presentation $\Spec A \xra X
% \Bn^{\geq 1}$.
% 
% The coefficient $a_h$ is related to the section $v_h$ of $\smash[t]{\L_h^{
% \tensor p^h-1}}$ \eqref{def:v_h} in the following simple way. By definition
% of \L \eqref{def:sL}, $\L_X \ciso {[(T)/(T)^2]}\sptilde$, where $(T)$ is the
% ideal generated by $T$ in $A[T]/(T)^{n+1}$. Of course, $(T)/(T)^2$ is a free
% $A$-module of rank $1$, and we have the obvious trivialization $A \isoarrow
% (T)/(T)^2$ sending $1 \mapsto T$. This trivialization fixes a trivialization
% of
% $\L_X^{\tensor p^h-1}$, hence a trivialization of $\L_{X_h}^{\tensor p^h-1}$.
% Then, so identifying the sections of $\L_{X_h}^{\tensor p^h-1}$ with
% $\Gamma(\Spec A_h) = A_h$, the section $(v_h)_{X_h}$ corresponds exactly to
% the image of $a_h$ in $A_h$. We deduce from \eqref{st:nbuds^>=h=V(a_h)} that
% the
% diagram
% 
% is Cartesian.  Hence, by induction, $\Spec A_h \to \Bn^{\geq h}$ is a
% presentation.
For $h\geq 1$, we define
\begin{itemize}
\item
   $A_h := A/(p,a_1,\dotsc,a_{h-1})$;
\item
   $F_h := \text{the reduction of $F$ over $A_h$}$; and
\item
   $X_h := \TT_{A_h}^{F_h}$.
\end{itemize}
By \eqref{st:[p].law.form} and \eqref{eg:triv_bud_ht>=h}, $X_h$ has height
$\geq h$.  So $X_h$ determines (up to isomorphism) a classifying map
\[\tag{$*$}\label{disp:Ah_pres}
   \Spec A_h \xra{X_h} \Bn^{\geq h}.
\]

\begin{thm}\label{Bn^>=h_quot}
$\sAut(\TT_n)_{\FF_p}$ acts naturally on $\Spec A_h$, and \eqref{disp:Ah_pres} identifies 
\[
   \sAut(\TT_n)_{\FF_p} \bs \Spec A_h \approx \Bn^{\geq h}.
\]
\end{thm}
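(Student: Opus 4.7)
The plan is to adapt the proof of \eqref{st:nbuds=AT_nbsBLT_n} by pulling back the known groupoid presentation
\[
   \sAut(\TT_n)_{\FF_p} \times (L_n)_{\FF_p} \rra (L_n)_{\FF_p} \to \Bn \tensor \FF_p
\]
along the closed immersion $\Bn^{\geq h} \inj \Bn \tensor \FF_p$ from \eqref{st:ht.>=h.clsd.imm}.  The key technical claim to verify is that the scheme-theoretic pullback of $\Bn^{\geq h}$ to $(L_n)_{\FF_p}$ is precisely $\Spec A_h$; given this, a standard descent argument along $\Spec A_h \inj (L_n)_{\FF_p}$ yields the theorem.

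To establish the claim, I would characterize $\Spec A_h$ functorially on $\FF_p$-algebras $B$.  Using the universal bud law $F$ to identify $\Spec A \ciso L_n \tensor \ZZ_{(p)}$, the closed subscheme $\Spec A_h \subset (L_n)_{\FF_p}$ consists of those $n$-bud laws $G$ over $B$ for which the coefficients of $T^{p^i}$ in $[p]_G$ vanish for $i = 0, 1, \dotsc, h-1$.  Since $B$ has characteristic $p$, \eqref{st:[p].law.form} forces $[p]_G$ to be either zero or a polynomial in $T^{p^j}$ with $j \geq 0$ minimal such that the coefficient of $T^{p^j}$ is nonzero; together with the vanishing of the first $h$ such coefficients, this forces $[p]_G \in T^{p^h} \cdot B[T]/(T)^{n+1}$, so $\TT_B^G$ has height $\geq h$ by \eqref{eg:triv_bud_ht>=h}.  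Conversely, any height $\geq h$ bud law over $B$ automatically satisfies these vanishings.  Hence $\Spec A_h \subset (L_n)_{\FF_p}$ is exactly the locus of height $\geq h$ bud laws.

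Since height is an isomorphism invariant, the $\sAut(\TT_n)_{\FF_p}$-action on $(L_n)_{\FF_p}$ by changes of coordinate \eqref{rk:AutTTn.acts.on.Ln} restricts to an action on $\Spec A_h$.  Moreover, the tautological square
\[
   \xymatrix{
      \Spec A_h \ar[r] \ar[d]
         & (L_n)_{\FF_p} \ar[d] \\
      \Bn^{\geq h} \ar[r]
         & \Bn \tensor \FF_p
   }
\]
is Cartesian by the functorial characterization, and pulling back the groupoid $\sAut(\TT_n)_{\FF_p} \times (L_n)_{\FF_p} \rra (L_n)_{\FF_p}$ along the closed immersion $\Spec A_h \inj (L_n)_{\FF_p}$ yields a groupoid $\sAut(\TT_n)_{\FF_p} \times \Spec A_h \rra \Spec A_h$ presenting $\Bn^{\geq h}$.

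The main obstacle is the functorial characterization of $\Spec A_h$: the nontrivial step is deducing vanishing of \emph{every} coefficient of $T^j$ in $[p]_G$ for $j < p^h$ from the vanishing of just the $h$ coefficients at the pure powers of $p$.  This relies crucially on the structural restriction in \eqref{st:[p].law.form}, which severely limits the shape of $[p]_G$ once one knows its lowest-degree nonzero term.
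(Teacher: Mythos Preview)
Your argument is correct, and it is precisely the alternative route the paper itself points to in \eqref{rk:univ_ht_>=h_bud}: you prove directly that $F_h$ is universal among $n$-bud laws of height $\geq h$, i.e.\ that $\Spec A_h$ is the fiber product $(L_n)_{\FF_p} \times_{\Bn \tensor \FF_p} \Bn^{\geq h}$, and then read off the groupoid presentation by base change.  The paper instead argues by induction on $h$: the case $h=1$ is immediate since $\Spec A_1 \ciso (L_n)_{\FF_p}$, and the inductive step is that the square
\[
   \xymatrix{
      \Spec A_{h+1} \ar@{^{(}->}[r] \ar[d]
         & \Spec A_h \ar[d]\\
      \Bn^{\geq h+1} \ar@{^{(}->}[r]
         & \Bn^{\geq h}
   }
\]
is Cartesian, which follows at once from \eqref{rk:triv_bud_v_h} and \eqref{st:nbuds^>=h=V(a_h)} because $\Bn^{\geq h+1} = V(v_h)$ and $v_h$ pulls back to $a_h$ on $\Spec A_h$.

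The tradeoff: the paper's induction never needs \eqref{st:[p].law.form}; it uses only the one-coefficient-at-a-time description of the stratification already built into \s\ref{ss:nbudheight}, so each step is a trivial zero-locus computation.  Your approach front-loads the work into the single nontrivial implication (vanishing at the $h$ exponents $1, p, \dotsc, p^{h-1}$ forces vanishing below $p^h$), which you correctly extract from \eqref{st:[p].law.form}.  In exchange you get the modular interpretation of $A_h$ as a byproduct rather than a remark, and the argument is closer in spirit to the original proof of \eqref{st:nbuds=AT_nbsBLT_n}.
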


\begin{proof}
The theorem is clear for $h = 1$, since plainly $X_1$ identifies $\Spec A_1 \isoarrow (L_n)_{\FF_p}$.  Now, for any $h$, the theorem is equivalent by \cite{lamb00}*{3.8} to
\begin{itemize}
\item \eqref{disp:Ah_pres} is locally essentially surjective;
\item $\sAut(\TT_n)_{\FF_p}$ acts on $\Spec A_h$; and
\item the $\sAut(\TT_n)_{\FF_p}$-action induces
   \[
       \sAut(\TT_n)_{\FF_p} \times \Spec A_h 
          \ciso \Spec A_h \fib{\Bn^{\geq h}} \Spec A_h.
   \]
\end{itemize}
So we need note just that the diagram
\[
   \vcenter{
   \xymatrix{
      \Spec A_{h+1} \ar@{^{(}->}[r] \ar[d]_-{X_{h+1}}
         & \Spec A_{h} \ar[d]^-{X_{h}}\\
      \Bn^{\geq h+1} \ar@{^{(}->}[r]
         & \Bn^{\geq h}
      }
      }
\]
is Cartesian by \eqref{rk:triv_bud_v_h} and \eqref{st:nbuds^>=h=V(a_h)}, and everything follows by induction on $h$.
\end{proof}

\begin{rk}\label{rk:univ_ht_>=h_bud}
Though we didn't need it explicitly in the proof of the theorem, the ring $A_h$ and the bud law $F_h$ admit an obvious modular interpretation:  namely, $F_h$ is \emph{universal} amongst $n$-bud buds of height $\geq h$.  On the other hand, one could use universality of $F_h$ to give an alternative proof of the theorem quite along the lines of \eqref{st:nbuds=AT_nbsBLT_n}, without appealing to induction.
\end{rk}

The theorem leads us to consider closely the ring $A_h$.  The essential observation is the following result on $A$.

\begin{prop}\label{st:poly_rg}
The map of polynomial rings $\ZZ_{(p)}[u_1,\dotsc,u_{n-1}] \to A$ determined by
\[\label{disp:u_i_mapsto}\tag{$**$}
   u_i \mapsto
   \begin{cases}
      a_h & i = p^h-1\ \text{for $h = 1$, $2,\dotsc$;}\\
      t_i & \text{otherwise}
   \end{cases}
\]
is an isomorphism.  In particular, $A/(a_1,\dotsc,a_{h-1})$ is a polynomial ring
over $\ZZ_{(p)}$ on $n-h$ variables.
\end{prop}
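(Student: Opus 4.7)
The plan is to exhibit the map as a triangular change of variables with unit diagonal, so that it is an isomorphism by a straightforward linear-algebraic argument. The key input will be Lazard's refinement \eqref{st:Laz.thm.refinement} together with the perturbation formula \eqref{st:[p].difference.lem}.

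First I would analyze each coefficient $a_h$ separately. By part \eqref{it:U^(n)_coeffs} of \eqref{st:Laz.thm.refinement}, $F^{(p^h)}$ involves only $t_1,\dotsc,t_{p^h-1}$, so $a_h$ is a polynomial in $t_1,\dotsc,t_{p^h-1}$ alone. To isolate its dependence on $t_{p^h-1}$, let $G$ denote the bud law obtained from $F^{(p^h)}$ by setting $t_{p^h-1}=0$. Then part \eqref{it:U^(n)_smoothness} of \eqref{st:Laz.thm.refinement}, with $n=p^h$ and the indeterminate $s$ specialized to $t_{p^h-1}$, gives
\[
   F^{(p^h)} \;=\; G \;+\; t_{p^h-1}\, C_{p^h}.
\]
Applying \eqref{st:[p].difference.lem} with $m=p^h$ and $a = t_{p^h-1}$ then yields
\[
   [p]_{F^{(p^h)}}(T) \;=\; [p]_G(T) \;+\; \bigl(p^{p^h-1}-1\bigr)\, t_{p^h-1}\, T^{p^h}.
\]
Extracting the coefficient of $T^{p^h}$ on both sides, and observing that the coefficient of $T^{p^h}$ in $[p]_F$ coincides with that in $[p]_{F^{(p^h)}}$, I obtain
\[
   a_h \;=\; c_h(t_1,\dotsc,t_{p^h-2}) \;+\; \bigl(p^{p^h-1}-1\bigr)\, t_{p^h-1},
\]
for some polynomial $c_h$ in $t_1,\dotsc,t_{p^h-2}$.

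Next I note that $p^{p^h-1}-1 \equiv -1 \pmod p$, so this scalar is a unit in $\ZZ_{(p)}$. Consequently, ordering the variables $t_1,t_2,\dotsc,t_{n-1}$ linearly, the map $\ZZ_{(p)}[u_1,\dotsc,u_{n-1}] \to A$ defined by \eqref{disp:u_i_mapsto} is triangular with respect to this order: the image of $u_i$ lies in $\ZZ_{(p)}[t_1,\dotsc,t_i]$, and its leading term (in $t_i$) is $t_i$ when $i$ is not of the form $p^h-1$, and a unit multiple of $t_i$ when $i=p^h-1$. A standard inductive argument then shows that this map is an isomorphism of polynomial rings: one solves iteratively for $t_1$, then $t_2$, and so on, expressing each $t_i$ as a polynomial in $u_1,\dotsc,u_i$, so the map admits a two-sided inverse.

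The ``in particular'' statement is then immediate: under the isomorphism, the ideal $(a_1,\dotsc,a_{h-1})$ corresponds to $(u_{p-1},u_{p^2-1},\dotsc,u_{p^{h-1}-1})$, and killing these $h-1$ of the $n-1$ generators yields a polynomial ring on the remaining $n-h$ variables. No step here looks serious; the only mildly subtle point is the triangularity observation, which rests entirely on the bookkeeping in \eqref{st:Laz.thm.refinement}\eqref{it:U^(n)_coeffs} together with the explicit perturbation given by \eqref{st:[p].difference.lem}.
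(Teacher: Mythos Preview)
Your proof is correct and follows essentially the same route as the paper: both combine \eqref{st:Laz.thm.refinement}\eqref{it:U^(n)_smoothness} with \eqref{st:[p].difference.lem} to show that $a_h = (p^{p^h-1}-1)\,t_{p^h-1} + (\text{terms in }t_1,\dotsc,t_{p^h-2})$ with unit leading coefficient, and then conclude via the resulting triangular change of variables. The only cosmetic difference is that you specialize $t_{p^h-1}\mapsto 0$ before perturbing, whereas the paper reads off the same linearity directly from the difference formula.
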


\begin{proof}
Combining \eqref{st:[p].difference.lem} with the form of $F$ described in
\eqref{st:Laz.thm.refinement}, we deduce that
\[
   a_h(t_1,\dotsc,t_{p^j-2},t_{p^h-1} + s) - a_h(t_1,\dotsc,t_{p^j-1})
   = (p^{p^h-1}-1)s
\]
as elements in $\ZZ_{(p)}[t_1,\dotsc,t_{p^h-1},s]$.  Hence
\[
   a_h(t_1,\dotsc,t_{p^h-1}) = (p^{p^h-1}-1)t_{p^h-1} + (\text{terms involving}\
t_1,\dotsc,t_{p^h-2}).
\]
But, for $h\geq 1$, $p^{p^h-1}-1$ is a \emph{unit} in any $\ZZ_{(p)}$-algebra.
The proposition now follows easily.
\end{proof}

\begin{cor}\label{st:A_h=aff}
$\Spec A_h \iso \AA_{\FF_p}^{n-h}$.\hfill $\square$
\end{cor}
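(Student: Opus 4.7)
The plan is to deduce the corollary as an immediate consequence of Proposition \ref{st:poly_rg} by reducing modulo $p$. First I would observe the elementary identification
\[
   A_h = A/(p, a_1, \ldots, a_{h-1}) \ciso \bigl[A/(a_1, \ldots, a_{h-1})\bigr] \tensor_{\ZZ_{(p)}} \FF_p,
\]
which is just commutation of quotients. Then, by Proposition \ref{st:poly_rg}, the bracketed ring is a polynomial ring over $\ZZ_{(p)}$ on $n-h$ variables, so tensoring with $\FF_p$ produces a polynomial ring over $\FF_p$ on $n-h$ variables. Taking $\Spec$ gives $\Spec A_h \iso \AA_{\FF_p}^{n-h}$.

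All the real substance is already contained in Proposition \ref{st:poly_rg}, namely the fact that $a_h$ has the form $(p^{p^h-1}-1)t_{p^h-1} + \text{(terms in }t_1,\ldots,t_{p^h-2}\text{)}$ with leading coefficient a unit in $\ZZ_{(p)}$, so that the $a_h$'s can be used as part of a polynomial generating set for $A$ over $\ZZ_{(p)}$. Consequently there is no genuine obstacle at this step; the corollary is merely a change-of-scalars observation.
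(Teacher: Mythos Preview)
Your proposal is correct and is precisely the intended argument: the paper marks this corollary with a bare $\square$ because it follows immediately from Proposition~\ref{st:poly_rg} by reducing mod~$p$, exactly as you describe.
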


\begin{rk}\label{rk:univ_ht_>=h_law}
There are natural analogs of \eqref{st:poly_rg} and \eqref{st:A_h=aff} in the group law setting: if
$U$ is a universal
(for $\ZZ_{(p)}$-algebras) formal group law over $\ZZ_{(p)}[t_1,t_2,\dotsc]$ of
the form described in \eqref{st:Laz.thm.refinement}, and we again denote by
$a_h$ the coefficient of $T^{p^h}$ in $[p]_U(T)$, then
\begin{itemize}
\item
   the map
$\ZZ_{(p)}[u_1,u_2,\dotsc] \to \ZZ_{(p)}[t_1,t_2,\dotsc]$ specified by
\eqref{disp:u_i_mapsto} is an isomorphism of polynomial rings; and
\item
   $\ZZ_{(p)}[t_1,t_2,\dotsc]/(a_1,\dotsc,a_{h-1})$ is a polynomial ring over
$\ZZ_{(p)}$ on the images of the $t_i$ for $i\neq p^1-1$, $p^2-1,\dotsc$,
$p^{h-1}-1$.
\end{itemize}
Moreover, in analogy with \eqref{rk:univ_ht_>=h_bud}, the reduction of $U$ over
\[
   \ZZ_{(p)}[t_1,t_2,\dotsc]/(p,a_1,\dotsc,a_{h-1})
      \ciso \FF_p[t_1,t_2,\dotsc]/(\ol a _1,\dotsc,\ol a _{h-1}),
\]
where $\ol a _i$ denotes the reduction of $a_i$ mod $p$, is plainly of height
$\geq h$, and indeed \emph{universal} amongst group laws of
height $\geq h$.
\end{rk}

\begin{rk}
It would also be possible to obtain \eqref{st:A_h=aff} essentially from
Landweber's classification of invariant prime ideals in $MU_*$
\cite{land73}*{2.7}, or by considering \emph{$p$-typical} group laws over $\ZZ_{(p)}$.
We have chosen our approach for its more direct emphasis on elementary 
properties of $[p]$.
\end{rk}

% We now complete the proof of the main theorem.
% 
% 
% 
% Let us first note that the case $h=1$ is obvious since $\Spec A_1 =
% \AA_{\FF_p}^{n-1}$.  For $h > 1$, consider the surjection $A_1 \surj A_h$ and
% the associated exact sequence
% \[
%  \frac{(a_1,\dotsc,a_{h-1})}{(a_1,\dotsc,a_{h-1})^2}
%     \to \Omega_{A/\FF_p}^1 \tensor_{A_1} A_h
%     \to \Omega_{A_h/\FF_p}^1
%     \to 0.
% \]
% Since $A_1 = \FF_p[t_1,\dotsc,t_{n-1}]$, $\Omega_{A_1/\FF_p}$ is free of rank
% $n-1$ with basis given by the elements $dt_i$ for $i=1,\dotsc$, $n-1$.  By
% \cite{egaIV4}*{(17.12.1), a) $\iff$ b)} and \cite{egaIV4}*{(17.10.2)},
% everything reduces to proving the claim: \emph{the $h-1$ elements $da_1\tensor
% 1,\dotsc$, $da_{h-1}\tensor 1$ and the $n-h$ elements $dt_i\tensor 1$ for $i
% \neq p-1$, $p^2-1,\dotsc$, $p^{h-1}-1$ form a basis for $\Omega_{A/\FF_p}^1
% \tensor A_h$}.  Indeed, for each $j=1,\dotsc$, $h-1$, recall that $a_j$ is the
% coefficient of $T^{p^j}$ in $[p]_F(T)$.  Since the coefficients of the degree
% $\leq p^j$ terms in $F$ involve only $t_1,\dotsc$, $t_{p^j-1}$, the
% coefficient
% $a_j$ must involve only $t_1,\dotsc$, $t_{p^j-1}$ as well.  Hence $da_j =
% \sum_{r=1}^{p^j-1} \frac{\partial a_j}{\partial
% t_r}(t_1,\dotsc,t_{p^j-1})dt_r$.  Now, \eqref{st:[p].difference.lem} says
% exactly that
% \[
%  a_j(t_1,\dotsc,t_{p^j-2},t_{p^j-1} + s) - a_j(t_1,\dotsc,t_{p^j-2},t_{p^j-1})
% = -s
% \]
% as an element in $\FF_p[t_1,\dotsc,t_{p^j-1},s]$.  Hence $\partial
% a_j/\partial
% t_{p^j-1} = -1$, and the claim follows.

We will now conclude the section by applying some of our results thus far to
considerations of smoothness and dimension of the algebraic stacks \Bn and
$\Bn^{\geq h}$. We shall use freely the language of \cite{lamb00}, but let us
state here explicitly the notion of \emph{relative dimension} of a morphism. We
will not (and \cite{lamb00} does not) attempt to define the relative dimension
of an arbitrary locally finite type morphism of algebraic stacks $f\colon \X \to
\Y$. We can, however, give a satisfactory definition when $f$ is smooth. Indeed,
if $\xi$ is a point of \X \cite{lamb00}*{5.2}, then let $\Spec L \to \Y$ be any
representative of $f(\xi)$, set $\X_L := \Spec L \fib\Y \X$, and let $\wt\xi$ be
any point of $\X_L$ lying over $\xi$. Then $\X_L$ is a locally Noetherian
algebraic stack, and the \emph{relative dimension of $f$ at $\xi$} is
the integer $\dim_\xi f := \dim_{\wt\xi} \X_L$ \cite{lamb00}*{11.14}. It is
straightforward to verify that the definition is independent of the choices
made.

\begin{thm}\label{st:nbuds_sm}
For $n \geq 1$, \Bn is smooth over $\Spec\ZZ$ of relative dimension $-1$ at every point.  For $h\geq 1$ and $n \geq p^h$, $\Bn^{\geq h}$ is smooth over $\Spec \FF_p$ of relative dimension $-h$ at every point.
\end{thm}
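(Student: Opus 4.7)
The plan is to read off both smoothness statements from the explicit presentations already established, namely
\[
   \Bn \approx \sAut(\TT_n)\bs\laz_n
   \qquad\text{and}\qquad
   \Bn^{\geq h} \approx \sAut(\TT_n)_{\FF_p} \bs \Spec A_h
\]
from \eqref{st:nbuds=AT_nbsBLT_n} and \eqref{Bn^>=h_quot}. In each case the total space and the acting group are both smooth affine schemes of known dimension: by \eqref{st:AutTT=repble}, $\sAut(\TT_n)$ is an open subscheme of $\AA_\ZZ^n$ and hence smooth of relative dimension $n$ over $\Spec\ZZ$, while by definition $\laz_n \iso \AA_\ZZ^{n-1}$ is smooth of relative dimension $n-1$ over $\Spec\ZZ$. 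After base change to $\FF_p$, $\sAut(\TT_n)_{\FF_p}$ is still smooth of relative dimension $n$ over $\Spec\FF_p$, and by \eqref{st:A_h=aff} we have $\Spec A_h \iso \AA_{\FF_p}^{n-h}$, smooth of relative dimension $n-h$ over $\Spec\FF_p$.

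With these ingredients in hand, the first step is to invoke the general fact that if $G$ is a smooth affine group scheme over a base $S$ acting on an $S$-smooth scheme $X$, then the quotient stack $G\bs X$ is smooth over $S$, and its relative dimension at any point is computed as $\dim_S X - \dim_S G$. This is a direct application of \cite{lamb00}*{4.14} (smoothness is preserved under smooth covers) together with the definition of relative dimension via a smooth presentation. I would apply this first to $G = \sAut(\TT_n)$ acting on $X = \laz_n$ over $\Spec\ZZ$, obtaining that \Bn is smooth over $\Spec\ZZ$ of relative dimension $(n-1) - n = -1$ at every point; and then to $G = \sAut(\TT_n)_{\FF_p}$ acting on $X = \Spec A_h$ over $\Spec\FF_p$, obtaining that $\Bn^{\geq h}$ is smooth over $\Spec\FF_p$ of relative dimension $(n-h)-n = -h$ at every point.

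To make the relative dimension computation rigorous, I would use the characterization recalled just before the theorem: pick a point $\xi$ of $\Bn$ (resp.\ of $\Bn^{\geq h}$), choose a field representative $\Spec L \to \Spec\ZZ$ (resp.\ $\Spec\FF_p$) of its image, base change the presentation, and observe that the smooth cover $\laz_{n,L} \to \Bn \fib{\Spec\ZZ} \Spec L$ (resp.\ $\Spec A_{h,L} \to \Bn^{\geq h} \fib{\Spec\FF_p} \Spec L$) is surjective with smooth fibers of the stated dimension, so that subtracting $\dim G = n$ gives the asserted value of $\dim_\xi$. The only subtlety—and the main (very mild) obstacle—is the bookkeeping ensuring that the presentation remains smooth and surjective after base change, which is immediate since each of $\sAut(\TT_n)$, $\laz_n$, $\sAut(\TT_n)_{\FF_p}$, and $\Spec A_h$ is smooth and affine over its respective base and the action maps are flat; no deeper input is required beyond the presentations already constructed.
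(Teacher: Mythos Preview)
Your proposal is correct and follows essentially the same approach as the paper: both read off smoothness and relative dimension directly from the quotient presentations $\Bn \approx \sAut(\TT_n)\bs\laz_n$ and $\Bn^{\geq h} \approx \sAut(\TT_n)_{\FF_p}\bs\Spec A_h$, using that $\laz_n \iso \AA_\ZZ^{n-1}$, $\sAut(\TT_n)$ is open in $\AA_\ZZ^n$, and $\Spec A_h \iso \AA_{\FF_p}^{n-h}$. The paper's proof is more terse, simply citing these ingredients without spelling out the dimension subtraction or the base-change bookkeeping you include.
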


\begin{proof}
The assertion for $\Bn$ is immediate from the definitions and from $\Bn\approx 
\sAut(\TT_n)\bs \laz_n$ \eqref{st:nbuds=AT_nbsBLT_n}, since $\laz_n \iso 
\AA_\ZZ^{n-1}$ and $\sAut(\TT_n)$ is an open subscheme of $\AA_\ZZ^n$ 
\eqref{st:AutTT=repble}.  The assertion for $\Bn^{\geq h}$ is similarly
immediate from \eqref{Bn^>=h_quot} and \eqref{st:A_h=aff}.
\end{proof}

\subsection{The stratum of height $h$ $n$-buds}\label{ss:ht_h_buds}

In this section we study the \emph{strata} of the height stratification on \Bn, or, in other words, the notion of (exact) height for buds.  Let $h \geq 1$ and $n \geq p^{h+1}$.

\begin{defn}\label{def:bud.ht.h}
We denote by $\Bn^h$ the algebraic stack obtained as the open complement of
$\Bn^{\geq h+1}$ in $\Bn^{\geq h}$.  We call the objects of $\Bn^h$ the $n$-buds
of \emph{height $h$}, or sometimes of \emph{exact height $h$}.
\end{defn}

\begin{rk}\label{rk:ht.h.compat.truncation}
Since formation of infinitesimal neighborhoods is compatible with base change,
it's clear from the definitions that height $h$ is stable under truncation
(provided we don't truncate below $n = p^{h+1}$).  More precisely, an $n$-bud $X$ has height $h$ $\iff$ $X^{(p^{h+1})}$ has height $h$.
\end{rk}

Let $S$ be a scheme. We can give a more concrete description of the $n$-buds of
height $h$ over $S$ as follows. Since $\Bn^h$ is a stack, height $h$ is a local
condition. So, by \eqref{st:ngermloctriv}, the essential case to consider is the
$n$-bud $\TT_S^F$ \eqref{eg:trivnbud} for some bud law $F$ over $\Gamma(S)$. In
this case, we have the pleasing result that the notion of height in
\eqref{def:bud.ht.h} agrees exactly with that for bud laws
\eqref{def:law.height}.

\begin{prop}\label{st:hthtrivgerm}
$\TT_S^F$ has height $h$ $\iff$ $[p]_F$ \eqref{def:[p]_F} is of the form
\[
   [p]_F(T) = a_{p^h}T^{p^h} + a_{2p^h}T^{2p^h} + \dotsb,
      \quad a_{p^h} \in \Gamma(S)^\times.
\]
\end{prop}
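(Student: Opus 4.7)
The plan is to use the zero-locus description of $\Bn^{\geq h+1}$ inside $\Bn^{\geq h}$ to translate the defining condition of $\Bn^h$ into an explicit condition on the coefficients of $[p]_F$.

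First I would argue that $\TT_S^F \in \Bn^h(S)$ if and only if $\TT_S^F$ has height $\geq h$ and the coefficient $a_{p^h}$ of $T^{p^h}$ in $[p]_F$ is a unit in $\Gamma(S)$. Since $\Bn^h$ is the open complement of $\Bn^{\geq h+1}$ in $\Bn^{\geq h}$, the condition amounts to the pullback $\Bn^{\geq h+1}_S$, viewed as a closed subscheme of $S$, being empty. By \eqref{st:nbuds^>=h=V(a_h)} applied with $h+1$ in place of $h$, $\Bn^{\geq h+1}$ is cut out of $\Bn^{\geq h}$ by the section $v_h$ of $\L_h^{\tensor p^h-1}$, so $\Bn^{\geq h+1}_S$ is the vanishing locus in $S$ of $(v_h)_{\TT_S^F}$. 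By \eqref{rk:triv_bud_v_h}, under the coordinate-induced trivialization of $\L_{\TT_S^F}^{\tensor p^h-1}$, this section is exactly $a_{p^h} \in \Gamma(S)$. Hence $\Bn^{\geq h+1}_S = \emptyset$ if and only if $a_{p^h}$ is a unit.

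Next, to upgrade this to the stated form of $[p]_F$, I would invoke \eqref{st:[p].law.form}. Height $\geq 1$ forces $S$ to lie over $\FF_p$ by \eqref{rk:Bn^>=1=Bn.x.FFp}, so $\Gamma(S)$ has characteristic $p$; and then \eqref{st:[p].law.form}, applied to the endomorphism $[p]_F$ of $F$, says that $[p]_F$ is either zero or of the form $c_{p^k}T^{p^k} + c_{2p^k}T^{2p^k} + \dotsb$ for some $k \geq 0$ with $c_{p^k} \neq 0$. Since $a_{p^h}$ is a unit and in particular nonzero, $[p]_F \neq 0$; combined with height $\geq h$ (vanishing of the coefficients of $T^i$ for $i < p^h$) and $c_{p^h} = a_{p^h} \neq 0$, we must have $k = h$, yielding $[p]_F = a_{p^h}T^{p^h} + a_{2p^h}T^{2p^h} + \dotsb$.

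The converse is immediate from the same characterization: if $[p]_F$ has the stated form with $a_{p^h}$ a unit, then $[p]_F \in T^{p^h}\cdot\Gamma_n(S;T)$ gives height $\geq h$ via \eqref{eg:triv_bud_ht>=h}, and the unit hypothesis on $a_{p^h}$ forces $\Bn^{\geq h+1}_S = \emptyset$ by the first paragraph, so $\TT_S^F \in \Bn^h(S)$. There is no real obstacle here once the zero-locus picture and the coordinate trivialization of $v_h$ are granted; the only delicate point is remembering to appeal to characteristic $p$ and \eqref{st:[p].law.form} to kill the coefficients of $T^i$ at indices $i$ not divisible by $p^h$.
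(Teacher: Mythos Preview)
Your proof is correct and follows essentially the same route as the paper: both identify the pullback of $\Bn^{\geq h+1}$ to $S$ with $\Spec \O_S/a_{p^h}\O_S$ via \eqref{rk:triv_bud_v_h} and \eqref{st:nbuds^>=h=V(a_h)}, conclude that the open-complement condition is equivalent to $a_{p^h}$ being a unit, and invoke \eqref{st:[p].law.form} for the shape of $[p]_F$. Your version is slightly more explicit about the passage to characteristic $p$ before applying \eqref{st:[p].law.form}, which the paper leaves implicit.
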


\begin{proof}
Let $X := \TT_S^F$.  By \eqref{st:[p].law.form} and \eqref{eg:triv_bud_ht>=h}, $X$ has height $\geq
h$ $\iff$ $[p]_F$ is of the asserted form, only with no constraint on $a_{p^h}$.
Now, \eqref{rk:triv_bud_v_h} and \eqref{st:nbuds^>=h=V(a_h)} say that the closed
subscheme $Z := \Spec\O_S/ a_{p^h}\O_S$ of $S$ is \emph{universal} amongst all
$S$-schemes $S'$ with the property that $S' \fib S X$ is an $n$-bud of height
$\geq h+1$.  So $X$ is of height $h$ $\iff$ $\Spec \O_S/ a_{p^h}\O_S =
\emptyset$ $\iff$ $a_{p^h}$ is a unit.
\end{proof}

\begin{rk}
In our discussion of line bundles in \s\ref{ss:lb's}, we could have introduced
the notion of principal open substack associated to a section of a line bundle
as a sort-of complement to the notion of zero locus of a section of a line
bundle.  Then \eqref{st:hthtrivgerm} would show, in essence, that $\Bn^h$ can be
obtained as the principal open substack in $\Bn^{\geq h}$ associated to $v_h$
\eqref{def:v_h}.  The details are straightforward, but we shall decline to
pursue them further.
\end{rk}

\begin{rk}\label{rk:ht_h_need_n>=p^h}
In analogy with \eqref{rk:>=h_refinement}, \eqref{st:hthtrivgerm}
allows us to extend the notion of exact height $h$ to $n$-buds for $n \geq p^h$,
but the added bit of generality again offers no real advantage to us.
\end{rk}

\begin{rk}\label{rk:ht_confusion}
We caution that potential confusion lurks in definitions \eqref{def:bud.ht.>=h}
and \eqref{def:bud.ht.h}: to say that a bud has ``height $\geq h$'' is
\emph{not} to say that it has ``height $h'$ for some $h' \geq h$''.  For
example, if $[p]_F(T) = a_{p^h}T^{p^h} + a_{2p^h}T^{2p^h} + \dotsb$ with 
$a_{p^h}$ a nonzero nonunit, then $\TT_S^F$ will have height $\geq h$ but will
not have a well-defined height.
\end{rk}

Our goal for the remainder of the section is to obtain a characterization of
$\Bn^h$.  Recall the formal group law $H = H_h$ of \eqref{ntn:H} and its
$n$-truncation $H^{(n)}$ \eqref{rk:trunc_buds}.  By \eqref{st:hthtrivgerm}, the 
$n$-bud $\TT_S^{H^{(n)}}$ has height $h$.

\begin{defn}\label{def:AutH^(n)}
We define $\sAut(H^{(n)})$ to be the presheaf of groups on $\Sch{}_{/\FF_p}$
\[
   \sAut(H^{(n)})\colon S \mapsto 
     \Aut_{\Gamma(S)}(H^{(n)})
     \ciso \Aut_{\nbuds(S)}\bigl(\TT_{n,S}^{H^{(n)}}\bigr).
\]
\end{defn}

Our theorem will assert that $\Bn^h$ is the classifying stack of
$\sAut(H^{(n)})$.  Now, up to this point, the classifying stacks we've
encountered have been essentially independent of the choice of topology; see
\eqref{rk:B(Aut(T)).indep.of.top} and \eqref{rk:B()_top_indep}.  But our theorem
would fail if we only considered $\sAut(H^{(n)})$-torsors for, for example, the
Zariski topology.  It will be convenient to formulate the theorem in terms of
the finite \'etale topology \cite{sga3-1}*{IV 6.3} instead.  Quite generally,
given a group $G$ over $\Spec \FF_p$, we write $B_\fet(G)$ for the stack over
$\Sch{}_{/\FF_p}$ of $G$-torsors for the finite \'etale topology.

\begin{thm}\label{st:Bn^h=B(AutH^(n))}
$\Bn^h \approx B_\fet\bigl(\sAut(H^{(n)})\bigr)$.
\end{thm}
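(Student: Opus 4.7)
The plan is to mimic the proof of Theorem \ref{st:ngerms=B(AT_n)}: show that $\Bn^h$ is a neutral gerbe over $\Spec \FF_p$ for the finite \'etale topology, with neutralizing object having automorphism group $\sAut(H^{(n)})$, and then invoke the standard principle (e.g.\ \cite{lamb00}*{3.21}) that such a gerbe is equivalent to the classifying stack $B_\fet(G)$ of the relevant group.

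First I would observe that $\Bn^h$ is a stack for the finite \'etale topology on $\Sch{}_{/\FF_p}$. It lives over $\FF_p$ by Remark \ref{rk:Bn^>=1=Bn.x.FFp} (any height $\geq 1$ bud is defined over an $\FF_p$-scheme), and it inherits the stack property from Proposition \ref{st:nbuds=stack} together with the fact that it is a locally closed substack of $\Bn$. Next, Proposition \ref{st:hthtrivgerm} applied to $H$ (whose $[p]$-series is $T^{p^h}$ by \ref{st:[p]_H=T^p^h.over.Fp}) shows that the tautological bud $\TT_{\FF_p}^{H^{(n)}}$ is a global section of $\Bn^h$ over $\Spec \FF_p$. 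By definition \ref{def:AutH^(n)}, its automorphism sheaf is exactly $\sAut(H^{(n)})$, which will give the neutralization.

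The key remaining step is local triviality: every object of $\Bn^h$ is, locally for the finite \'etale topology, isomorphic to a pullback of $\TT_{\FF_p}^{H^{(n)}}$. Let $X \in \ob\Bn^h(S)$. By Proposition \ref{st:ngermloctriv}, after replacing $S$ by a Zariski cover I may assume that the underlying germ of $X$ is trivial, so that $X \iso \TT_S^F$ for some $n$-bud law $F$ over $\Gamma(S)$; by Proposition \ref{st:hthtrivgerm}, $F$ has classical height $h$. Proposition \ref{st:ht.h.fin.et} now furnishes a faithfully flat, finite \'etale $\Gamma(S)$-algebra $B$ over which $F$ is isomorphic to $H^{(n)}$; correspondingly $X$ becomes isomorphic over $\Spec B$ to the pullback of $\TT_{\FF_p}^{H^{(n)}}$. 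Thus any two objects of $\Bn^h$ are finite-\'etale locally isomorphic, which combined with the existence of the global section exhibits $\Bn^h$ as a neutral $\sAut(H^{(n)})$-gerbe for the finite \'etale topology, giving the claimed equivalence.

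The hard part is the local triviality statement, but essentially all the work is packaged into the classical Proposition \ref{st:ht.h.fin.et}: the crucial passage from a height-$h$ bud law over an arbitrary base to the fixed normal form $H^{(n)}$ requires extracting roots of separable polynomials, which is why the finite \'etale topology (and not a coarser topology like Zariski or even \'etale with local sections) is the natural choice. Once this ingredient is in hand, the rest of the argument is formal and parallels \ref{st:ngerms=B(AT_n)} and \ref{st:FLV=B()} verbatim.
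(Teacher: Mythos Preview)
Your proposal is correct and follows essentially the same approach as the paper's proof: show that $\Bn^h$ is a neutral gerbe over $\Spec\FF_p$ for the finite \'etale topology, with section $\TT_{\FF_p}^{H^{(n)}}$, and apply \cite{lamb00}*{3.21}. The paper's proof is a one-line invocation of \eqref{st:ngermloctriv}, \eqref{eg:trivnbud}, \eqref{st:[p]_H=T^p^h.over.Fp}, and \eqref{st:ht.h.fin.et}; you have simply unpacked these references with a bit more care (and cited \eqref{st:hthtrivgerm} where the paper cites \eqref{eg:trivnbud}, which is arguably more to the point).
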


\begin{proof}
By \eqref{st:ngermloctriv}, \eqref{eg:trivnbud}, \eqref{st:[p]_H=T^p^h.over.Fp}, and \eqref{st:ht.h.fin.et}, $\Bn^h$ is a neutral gerbe over $\Spec \FF_p$ for the finite \'etale topology, with section provided by $\TT_{\FF_p}^{H^{(n)}}$.  So apply \cite{lamb00}*{3.21}.
\end{proof}

\begin{rk}
Since \Bn is a stack for the fpqc topology \eqref{st:nbuds=stack}, so is its
locally closed substack $\Bn^h$.  Hence we deduce that 
$B\bigl(\sAut(H^{(n)})\bigr)$ is independent of the topology on 
$\Sch{}_{/\FF_p}$ between the finite \'etale and fpqc topologies, inclusive.  
In particular, every fpqc-torsor for $\sAut(H^{(n)})$ is in fact a 
finite-\'etale-torsor.
\end{rk}

\subsection{Automorphisms and endomorphisms of buds of height
$h$}\label{ss:aut_buds_ht_h}

Let $h\geq 1$ and let $S$ be a scheme.  For $n \geq p^{h+1}$, by
\eqref{st:Bn^h=B(AutH^(n))}, every $n$-bud of height $h$ over $S$ is isomorphic
finite-\'etale locally to $\TT_S^{H^{(n)}}$, with $H = H_h$ the formal group law
of \eqref{ntn:H} and $H^{(n)}$ its $n$-trun\-ca\-tion \eqref{rk:trunc_buds}.  So
we are naturally led to consider closely the group $\sAut(H^{(n)})$
\eqref{def:AutH^(n)}.  We shall devote this section to investigating some
aspects of its structure.  It will be convenient, especially in later sections,
to work as much as possible with regard to any $n \geq 1$; but our main results
here will require $n \geq p^{h+1}$ (or at least $n \geq p^h$, granting
\eqref{rk:ht_h_need_n>=p^h}), so that height $h$ makes sense.

To begin, let $n \geq 1$, and recall the schemes $\sAut(\TT_n)_{\FF_p}$ and
$(L_n)_{\FF_p}$ from \s\ref{ss:ht_>=h_buds} obtained by reducing $\sAut(\TT_n)$
\eqref{def:AutTT_n} and $L_n$ \eqref{def:laz_n}, respectively, mod $p$.

% Recall the schemes $\laz_n$ \eqref{def:laz_n} and $\sAut(\TT_n)$
% \eqref{def:AutTT_n} from earlier. Reducing them mod $p$, we get schemes
% parametrizing all $n$-bud structures on, and all automorphisms of, respectively,
% $\TT_n$ in the category of $\FF_p$-schemes.

\begin{lem}\label{st:AutH^(n)=clsd_sbgp}
$\sAut(H^{(n)})$ is canonically represented by a closed sub-group scheme of
$\sAut(\TT_n)_{\FF_p}$.
\end{lem}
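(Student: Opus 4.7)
The plan is to exhibit $\sAut(H^{(n)})$ as the closed subscheme of $\sAut(\TT_n)_{\FF_p}$ cut out by the finitely many polynomial equations expressing compatibility with $H^{(n)}$. By \eqref{st:AutTT=repble}, $\sAut(\TT_n)_{\FF_p}$ is represented by $\Spec \FF_p[a_1, a_1^{-1}, a_2, \dotsc, a_n]$, and the universal $S$-point corresponds to a polynomial $f(T) = a_1 T + a_2 T^2 + \dotsb + a_n T^n \in \Gamma_n(S;T)$ with $a_1 \in \Gamma(S)^\times$. By \eqref{eg:morph.of.triv.buds}, such an $f$ defines an automorphism of $\TT_S^{H^{(n)}}$ precisely when
\[
   f\bigl(H^{(n)}(T_1,T_2)\bigr) = H^{(n)}\bigl(f(T_1), f(T_2)\bigr)
\]
holds in $\Gamma_n(S;T_1,T_2)$. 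This is a subfunctor condition on $\sAut(\TT_n)_{\FF_p}$, so it gives a canonical monomorphism $\sAut(H^{(n)}) \inj \sAut(\TT_n)_{\FF_p}$ of presheaves of sets.

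Next I would show the condition is closed. Equating coefficients of each monomial $T_1^i T_2^j$ with $1 \leq i+j \leq n$ in the displayed identity yields a finite system of polynomial equations in the variables $a_1, \dotsc, a_n$ with coefficients in $\FF_p$ (the coefficients come from those of $H^{(n)}$, which lies in $\FF_p[T_1,T_2]/(T_1,T_2)^{n+1}$). Let $J \subset \FF_p[a_1, a_1^{-1}, a_2, \dotsc, a_n]$ be the ideal generated by these polynomials; then $\sAut(H^{(n)})$ is the subfunctor represented by $\Spec \FF_p[a_1, a_1^{-1}, a_2, \dotsc, a_n]/J$, which is a closed subscheme of $\sAut(\TT_n)_{\FF_p}$.

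Finally, to upgrade this to a closed \emph{sub-group} scheme, it suffices to observe that the subfunctor is stable under the group operation: for any $\FF_p$-scheme $S$, the set $\sAut(H^{(n)})(S) = \Aut_{\nbuds(S)}\bigl(\TT_{n,S}^{H^{(n)}}\bigr)$ is a subgroup of $\sAut(\TT_n)(S) = \Aut_{\ngerms(S)}(\TT_{n,S})$, since composition and inverses of bud automorphisms are again bud automorphisms. By Yoneda, the composition, inverse, and identity morphisms of $\sAut(\TT_n)_{\FF_p}$ factor through the closed subscheme $\sAut(H^{(n)})$, making it a closed sub-group scheme. There is no real obstacle here; the only thing to check carefully is that the compatibility equation truly amounts to a \emph{finite} system of polynomial equations, which is immediate from the truncation to degree $\leq n$.
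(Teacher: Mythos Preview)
Your proof is correct. You directly write out the homomorphism condition $f\bigl(H^{(n)}(T_1,T_2)\bigr) = H^{(n)}\bigl(f(T_1),f(T_2)\bigr)$, extract finitely many polynomial equations in $a_1,\dotsc,a_n$, and observe that the resulting closed subscheme is a subgroup functor.

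The paper takes a slightly more conceptual route: it identifies $\sAut(H^{(n)})$ as the \emph{stabilizer} of the point $H^{(n)} \in (L_n)_{\FF_p}$ under the action of $\sAut(\TT_n)_{\FF_p}$ on $(L_n)_{\FF_p}$ described in \eqref{rk:AutTTn.acts.on.Ln}, and then reads off closedness from the Cartesian square
\[
   \xymatrix{
      \sAut(H^{(n)}) \ar[r] \ar[d]
         & \sAut(\TT_n)_{\FF_p} \ar[d]^-{f\,\mapsto\, f\cdot H^{(n)}} \\
      \Spec \FF_p \ar[r]_-{H^{(n)}}
         & (L_n)_{\FF_p}.
   }
\]
Since $(L_n)_{\FF_p}$ is affine, the bottom arrow is a closed immersion, and the result follows by base change. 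This packaging avoids explicitly unpacking the coefficient equations and makes the subgroup structure automatic (stabilizers are subgroups). Your approach and the paper's are really the same computation; the paper just names the underlying structure.
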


\begin{proof}
Roughly, the point is just that $\sAut(H^{(n)})$ naturally sits inside $\sAut(\TT_n)_{\FF_p}$ as the stabilizer of $H^{(n)}$.  Precisely,  $\sAut(\TT_n)_{\FF_p}$ acts naturally on $(L_n)_{\FF_p}$ by \eqref{rk:AutTTn.acts.on.Ln}, and we have a Cartesian square
\[
   \xymatrix{
      \sAut(H^{(n)}) \ar[r] \ar[d] \ar@{}[rd] |*{\square}
         & \sAut(\TT_n)_{\FF_p} \ar[d] \\
      \Spec \FF_p \ar[r]_-{H^{(n)}}
         & (L_n)_{\FF_p},
   }
\]
where the bottom horizontal arrow is the classifying map determined by $H^{(n)}$ and the right vertical arrow is defined on points by $f \mapsto f \cdot H^{(n)}$.
\end{proof}

By \eqref{st:nbuds_sm} and \eqref{st:Bn^h=B(AutH^(n))}, for $n \geq p^{h+1}$,
the classifying stack $B\bigl( \sAut(H^{(n)}) \bigr)$ %of $\sAut(H^{(n)})$ 
is an open substack of a stack smooth of relative dimension $-h$ over
$\Spec\FF_p$.  Hence $B\bigl( \sAut(H^{(n)}) \bigr)$ is itself smooth of
relative dimension $-h$ over $\Spec \FF_p$.  Hence it natural to ask if
$\sAut(H^{(n)})$ is smooth of dimension $h$ over $\Spec \FF_p$.

We shall answer the question in the affirmative in \eqref{st:AutH^(n)=sm_dim_h}
below.  To prepare, let $n \geq 1$, and recall the
$\A_\bullet^{\TT_n}$-filtration on $\sAut(\TT_n)$ from \eqref{rk:AutTT_n_filt}.
Let $(\A_\bullet^{\TT_n})_{\FF_p}$ denote the filtration on $\sAut(\TT_n)_{\FF_p}$
obtained by base change to $\FF_p$.
%We obtain an induced filtration on $\sAut(H^{(n)})$ by intersecting.

\begin{defn}\label{def:A^H^(n)}
We define $\A_\bullet^{H^{(n)}}$ to be the intersection filtration on $\sAut(H^{(n)})$:
\[
	\A_i^{H^{(n)}} := \sAut(H^{(n)}) \fib{\sAut(\TT_n)_{\FF_p}} (\A_i^{\TT_n})_{\FF_p},
	\qquad i= 0,\ 1,\dotsc,\ n.
\]
\end{defn}

Concretely, $\A_0^{H^{(n)}} = \sAut(H^{(n)})$, and $\A_i^{H^{(n)}}$ is given on
points by
\[
   \A^{H^{(n)}}_i(S) := \biggl\{\, f \in \Aut_{\Gamma(S)}(H^{(n)}) \biggm|
   	\parbox{37ex}{\centering $f(T)$ is of the form \\
				   	  $T + a_{i+1}T^{i+1} + a_{i+2}T^{i+2} + \dotsb + a_nT^n$} 
				   	  \,\biggr\}.
\]
By \eqref{rk:AutTT_n_filt} and \eqref{st:AutH^(n)=clsd_sbgp}, $\A^{H^{(n)}}_0 /
\A^{H^{(n)}}_1$ embeds as a closed subscheme of $\GG_m = \GG_{m,\FF_p}$, and
$\A^{H^{(n)}}_i / \A^{H^{(n)}}_{i+1}$ embeds as a closed subscheme of $\GG_a =
\GG_{a,\FF_p}$ for $i = 1$, $2,\dotsc$, $n-1$.

Our main result for the section is following calculation of the successive
quotients of the $\A^{H^{(n)}}_\bullet$-filtration for $n \geq p^{h+1}$.
Let $l$ be the nonnegative integer such that $p^l \leq n < p^{l+1}$.

\begin{thm}\label{st:AutH^(n)_succ_quot}
We have an identification of presheaves
\[
   \A^{H^{(n)}}_i / \A^{H^{(n)}}_{i+1} \ciso
   \begin{cases}
      \mu_{p^h-1}, & i = 0;\\
      \GG_a^{\Fr_{p\smash{^h}}}, & i = p-1,\ p^2-1,\dotsc,\ p^{l-h}-1;\\
      \GG_a, & i = p^{l-h+1}-1,\ p^{l-h+2}-1, \dotsc,\ p^l-1;\\
      0, & \text{otherwise};
   \end{cases}
\]
where $\mu_{p^h-1} \subset \GG_m$ is the sub-group scheme of $(p^h-1)$th roots
of unity, and \linebreak $\GG_a^{\Fr_{p\smash{^h}}} \subset \GG_a$ is the
sub-group scheme of fixed points for the $p^h$th-power Frobenius operator.
\end{thm}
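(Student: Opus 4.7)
The plan is to sandwich each quotient $\A_i^{H^{(n)}}/\A_{i+1}^{H^{(n)}}$ between an upper bound, coming from necessary conditions on automorphisms of $H^{(n)}$, and a lower bound, coming from explicit constructions. By \eqref{rk:AutTT_n_filt} and \eqref{def:A^H^(n)}, the leading-coefficient map $f \mapsto a_{i+1}$ realizes $\A_i^{H^{(n)}}/\A_{i+1}^{H^{(n)}}$ as a closed sub-group scheme of $\GG_m$ for $i = 0$ and of $\GG_a$ for $i \geq 1$; the task is to identify this closed subscheme.

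For the upper bound I use two necessary conditions. First, since any $f \in \sAut(H^{(n)})$ commutes with $[p]_{H^{(n)}} = T^{p^h}$ \eqref{st:[p]_H=T^p^h.over.Fp}, we have $f(T^{p^h}) = f(T)^{p^h}$ in $\Gamma_n(S;T)$; expanding in characteristic $p$ and comparing coefficients of $T^{jp^h}$ yields $a_j^{p^h} = a_j$ for every $j$ with $jp^h \leq n$, which for $j = p^k$ is the condition $k \leq l-h$. Second, the automorphism axiom $f(H(T_1,T_2)) = H(f(T_1), f(T_2))$, applied to $f \in \A_i^{H^{(n)}}$ and restricted to the homogeneous degree-$(i+1)$ part modulo $\A_{i+1}^{H^{(n)}}$, yields $a_{i+1}\cdot B_{i+1}(T_1,T_2) = 0$, where $B_m := (T_1+T_2)^m - T_1^m - T_2^m$ is the binomial SPC of \eqref{eg:B_m_and_C_m}. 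By Kummer's theorem on binomial coefficients, $B_m$ has a coefficient invertible in $\FF_p$ unless $m$ is a power of $p$, so $a_{i+1} = 0$ when $i+1$ is not a prime power. Combining the two conditions---with the observation that $a_1 \in \GG_m$ upgrades $a_1^{p^h} = a_1$ to $a_1 \in \mu_{p^h-1}$---yields all the stated upper bounds, including the vanishing in the ``otherwise'' cases.

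For the lower bound I draw on two sources of explicit automorphisms. Since $H$ has $\FF_p$-coefficients, the Frobenius $F(T) := T^p$ is an endomorphism of $H$ over $\FF_p$ (indeed $F \circ H = H^p = H(T_1^p, T_2^p) = H \circ (F, F)$), and hence so is $F^k(T) = T^{p^k}$ for each $k \geq 0$. Over $\FF_{p^h}$, Lubin's theorem (\cite{lub64}*{5.1.3}; cf.\ \eqref{rk:O_D=End(H)}) provides an embedding $W(\FF_{p^h}) \hookrightarrow \End_{\FF_{p^h}}(H)$ whose Teichm\"uller representatives $[\omega]$ of elements $\omega \in \FF_{p^h}^\times$ are automorphisms of $H$ with leading coefficient $\omega$. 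For $k = 0$, the $[\omega]$ themselves exhibit every $\FF_{p^h}$-point of $\mu_{p^h-1}$ in the image; for $k \geq 1$, the identity \eqref{rk:+.in.End_A(F)} shows that $\id +_H ([\omega] \cdot F^k) \in \sAut(H^{(n)})(\FF_{p^h})$ has the form $T + \omega T^{p^k} + O(T^{p^k+1})$, so letting $\omega$ range over $\FF_{p^h}$ for $1 \leq k \leq l-h$ exhibits every $\FF_{p^h}$-point of $\GG_a^{\Fr_{p^h}}$. Since both target groups are finite \'etale over $\FF_p$ and become constant over $\FF_{p^h}$, fpqc descent upgrades each closed immersion to an isomorphism over $\FF_p$.

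The main obstacle is the range $l-h+1 \leq k \leq l$, where the upper bound is all of $\GG_a$ and the Teichm\"uller construction produces only $\FF_{p^h}$-many elements. The cleanest finish is a dimension count: from $\Bn^h \approx B\bigl(\sAut(H^{(n)})\bigr)$ \eqref{st:Bn^h=B(AutH^(n))} together with the smoothness of $\Bn^h$ of dimension $-h$ over $\FF_p$ (being open in $\Bn^{\geq h}$, which is smooth by \eqref{st:nbuds_sm}), one extracts that $\sAut(H^{(n)})$ is smooth of dimension $h$ over $\FF_p$. The upper bounds already established have total dimension exactly $h$---contributed by the $h$ indices $k \in [l-h+1, l]$ and zero from the rest---so the sum of successive-quotient dimensions along the normal filtration $\A_\bullet^{H^{(n)}}$ forces each $\A_{p^k-1}^{H^{(n)}}/\A_{p^k}^{H^{(n)}}$ in the last range to be $1$-dimensional, hence equal to $\GG_a$ itself (the only $1$-dimensional closed subscheme of $\GG_a$, since every nonzero additive polynomial in one variable has $0$-dimensional zero locus). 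Should one wish to avoid the dimension count, the alternative is an inductive construction of such automorphisms degree by degree, solving $a_d B_d = Q_d(\text{earlier})$ via \eqref{st:mult_C_m} when $d$ is not a prime power and verifying a consistency condition $Q_d = 0$ at $d$ a power of $p$; the degree bound needed to propagate the Teichm\"uller solutions to all $c$ is where the most delicate bookkeeping would lie.
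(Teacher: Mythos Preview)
Your overall strategy---upper bounds from necessary conditions, lower bounds from explicit automorphisms, then a dimension count---is sound in spirit but takes a genuinely different route from the paper and leaves a real gap. The paper first passes from the $\A^{H^{(n)}}_\bullet$-filtration to the endomorphism filtration $\I^{H^{(n)}}_\bullet$ via the elementary device $f \mapsto \id +_{H^{(n)}} f$ (Lemmas \ref{st:I^i->A^i_isom}--\ref{st:A_succ_quot_vs_I_succ_quot}), and then proves \eqref{st:EndH^(n)_succ_quot} by an explicit degree-by-degree construction of endomorphisms over an \emph{arbitrary} $\FF_p$-algebra $A$. That last point is the crux: the theorem asserts an identification of \emph{presheaves}, so one must show that for every $A$ and every admissible leading coefficient an actual endomorphism over $A$ exists.

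Your dimension count does not deliver this. From $\Bn^h \approx B\bigl(\sAut(H^{(n)})\bigr)$ and $\dim \Bn^h = -h$ you can extract $\dim \sAut(H^{(n)}) = h$ (though not smoothness---$B(\mu_p)$ is smooth in characteristic $p$ while $\mu_p$ is not; fortunately you only use the dimension). The count then forces each fppf quotient $\A_{p^k-1}/\A_{p^k}$ in the top range to be a $1$-dimensional closed subgroup of $\GG_a$, hence equal to $\GG_a$ \emph{as a scheme}. But scheme-theoretic surjectivity of $\A_{p^k-1} \to \GG_a$ does not give surjectivity on $S$-points for all $S$: the fibers are $\A_{p^k}$-torsors, and you have not shown these are trivial. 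One can patch the top range by downward induction together with $H^1(\text{affine},\GG_a)=0$, since there the kernels are iterated extensions of $\GG_a$; but you did not include that step, and it does not extend to the lower range where $\GG_a^{\Fr_{p^h}}$ occurs as a constituent of the kernel. Similarly, your Teichm\"uller lifts via Lubin's theorem produce automorphisms only over $\FF_{p^h}$; the appeal to descent then shows the scheme-theoretic image is all of $\mu_{p^h-1}$ or $\GG_a^{\Fr_{p^h}}$, but again not presheaf-level surjectivity.

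The constructive alternative you defer as ``delicate bookkeeping'' is exactly what the paper carries out, and it is cleaner than you suggest: the key device is \eqref{st:[p]_commutator}, which reduces the obstruction at a prime-power degree to checking that the partial endomorphism commutes with $[p]_{H^{(n)}}(T)=T^{p^h}$. In the low range this holds because the uniquely determined coefficients stay in $A^{\Fr_{p^h}}$ (as $H$ has $\FF_p$-coefficients); in the top range it holds trivially because both $g\circ[p]$ and $[p]\circ g$ land in degrees $\geq p^{k+h} > n$. This is more elementary than invoking Lubin and gives the presheaf statement directly.
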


In other words, for any $\FF_p$-scheme $S$, we have
\[
   \mu_{p^h-1}(S) = \bigl\{\, a\in \Gamma(S)^\times \mid a^{p^h-1} = 1 \,\bigr\}
   \;\quad \text{and} \;\quad
   \GG_a^{\Fr_{p\smash{^h}}}(S) = \bigl\{\, a \in\Gamma(S) \mid a^{p^h} = a \,\bigr\}.
\]
Hence $\mu_{p^h-1}$ and $\GG_a^{\Fr_{p\smash{^h}}}$ are respresented,
respectively, by
\[
   \Spec \FF_p[T]/(T^{p^h-1}-1)
   \;\quad \text{and} \;\quad
   \Spec \FF_p[T]/(T^{p^h}-T).
\]
Hence both $\mu_{p^h-1}$ and $\GG_a^{\Fr_{p\smash{^h}}}$ are finite \'etale
groups over $\Spec \FF_p$.

Before proceeding to the proof of the theorem, let us first signal an immediate
consequence.  We continue with $n \geq p^{h+1}$.

\begin{cor}\label{st:AutH^(n)=sm_dim_h}
$\sAut(H^{(n)})$ is smooth of dimension $h$ over $\Spec \FF_p$.
\end{cor}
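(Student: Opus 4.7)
The plan is to deduce the corollary directly from Theorem~\ref{st:AutH^(n)_succ_quot} by a descending induction on the filtration
\[
   \sAut(H^{(n)}) = \A_0^{H^{(n)}} \supset \A_1^{H^{(n)}} \supset \dotsb \supset \A_n^{H^{(n)}} = 1.
\]
Since $\sAut(H^{(n)})$ is a closed subscheme of the affine scheme $\sAut(\TT_n)_{\FF_p}$ by \eqref{st:AutH^(n)=clsd_sbgp}, and each $\A_i^{H^{(n)}}$ is cut out by a further closed condition, each $\A_i^{H^{(n)}}$ is representable by an affine group scheme over $\FF_p$.

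First I would observe that, by Theorem~\ref{st:AutH^(n)_succ_quot}, every successive quotient $\A_i^{H^{(n)}}/\A_{i+1}^{H^{(n)}}$ is one of $\mu_{p^h-1}$, $\GG_a^{\Fr_{p\smash{^h}}}$, $\GG_a$, or trivial. Each of these is representable by a smooth affine group scheme over $\FF_p$, of dimension $1$ in the $\GG_a$ case and of dimension $0$ in the remaining cases; so in particular the fppf sheaf quotient is representable, and the quotient map $\A_i^{H^{(n)}} \to \A_i^{H^{(n)}}/\A_{i+1}^{H^{(n)}}$ is an fppf torsor under $\A_{i+1}^{H^{(n)}}$.

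Next I would argue inductively on decreasing $i$ that $\A_i^{H^{(n)}}$ is smooth over $\FF_p$, with $\dim \A_i^{H^{(n)}} = \sum_{j=i}^{n-1} \dim \bigl(\A_j^{H^{(n)}}/\A_{j+1}^{H^{(n)}}\bigr)$. The base case $i = n$ is immediate. For the inductive step, the torsor $\A_i^{H^{(n)}} \to \A_i^{H^{(n)}}/\A_{i+1}^{H^{(n)}}$ becomes, after a suitable fppf base change, a trivial projection from a product with $\A_{i+1}^{H^{(n)}}$, which is smooth since $\A_{i+1}^{H^{(n)}}$ is smooth by induction. Smoothness descending along fppf covers, the torsor map is smooth; composing with the smooth structure map of the quotient and using additivity of relative dimension for smooth morphisms yields both the smoothness of $\A_i^{H^{(n)}}$ over $\FF_p$ and the claimed dimension formula.

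Finally, tallying: the indices $i$ for which the quotient has dimension $1$ are exactly $i = p^{l-h+1}-1,\ p^{l-h+2}-1,\ \dotsc,\ p^l-1$, which is $h$ values (and these are well defined because $n \geq p^{h+1}$ forces $l \geq h+1$); all other indices contribute $0$. Hence $\sAut(H^{(n)}) = \A_0^{H^{(n)}}$ is smooth over $\Spec\FF_p$ of dimension $h$. The main obstacle is really only the bookkeeping around the torsor/extension step: confirming that Theorem~\ref{st:AutH^(n)_succ_quot}, together with the standard existence of fppf quotients of affine group schemes over a field by closed normal subgroups, lets one treat the filtration as an iterated sequence of fppf torsors under smooth affine groups---after which the inductive smoothness propagation is automatic.
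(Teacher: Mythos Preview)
Your argument is correct and follows essentially the same route as the paper: both deduce smoothness and the dimension from Theorem~\ref{st:AutH^(n)_succ_quot} by viewing $\sAut(H^{(n)})$ as an iterated extension of the smooth groups $\mu_{p^h-1}$, $\GG_a^{\Fr_{p\smash{^h}}}$, $\GG_a$, and counting the $h$ copies of $\GG_a$. The paper is terser---it simply asserts that an iterated extension of smooth groups is smooth and cites \cite{demga70}*{III \S3 5.5(a)} for the dimension count---whereas you spell out the torsor/descent mechanism explicitly.
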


\begin{proof}
By the theorem, $\sAut(H^{(n)})$ is obtained as an iterated extension of smooth
groups, so is smooth. Moreover, the $\A_\bullet^{H^{(n)}}$-filtration has
precisely $h$ successive quotients of dimension $1$, and all other successive
quotients of dimension $0$. So the dimension assertion follows from
\cite{demga70}*{III \s3 5.5(a)}.
\end{proof}

We shall devote the rest of the section to the proof of
\eqref{st:AutH^(n)_succ_quot}.  One could extract the proof from a careful
analysis of some of the statements and arguments in \cite{laz55}*{\s IV} or in
\cite{froh68}*{I \s3, III \s2}.  But, for sake of clarity, it seems preferable
to give a reasonably self-contained proof here.  Our presentation has profited
significantly from notes we received from Spallone on a course of Kottwitz.

To prove \eqref{st:AutH^(n)_succ_quot}, it will be somewhat more convenient to
translate the problem into one concerning endomorphisms of $H^{(n)}$, as opposed
to automorphisms.  Let $n \geq 1$.

\begin{defn}
We define $\sEnd(H^{(n)})$ to be the presheaf of (noncommutative) rings on
$\Sch{}_{/\FF_p}$
\[
   \sEnd(H^{(n)})\colon
      S \mapsto \End_{\Gamma(S)}(H^{(n)})
      \ciso \End_{\nbuds(S)}\bigl(\TT_{n,S}^{H^{(n)}}\bigr).
\]
\end{defn}

Recall that the $S$-points of $\sEnd(H^{(n)})$ are the truncated polynomials
$f(T) \in \Gamma_n(S;T)$ that ``commute'' with $H^{(n)}$ in the sense of
\eqref{def:nbud.law}.  The ring structure on points of $\sEnd(H^{(n)})$ is
described explicitly in terms of $H^{(n)}$ in \eqref{rk:End_A(F)}.

\begin{defn}\label{def:I^H^(n)}
For $i=0$, $1,\dotsc$, $n$, we denote by $\I^{H^{(n)}}_i$ the subpresheaf of $\sEnd(H^{(n)})$ defined on points by
\[
   \I^{H^{(n)}}_i(S) := \biggl\{\, f \in \End_{\Gamma(S)}(H^{(n)}) \biggm|
      \parbox{33ex}{\centering $f(T)$ is of the form \\
                    $a_{i+1}T^{i+1} + a_{i+2}T^{i+2} + \dotsb + a_nT^n$} 
                    \,\biggr\}.
\]
\end{defn}

One verifies immediately that $\I^{H^{(n)}}_i$ is a presheaf of \emph{$2$-sided ideals} in $\sEnd(H^{(n)})$ for all $i$, and we have a decreasing filtration
\[
   \sEnd(H^{(n)}) = \I^{H^{(n)}}_0 \supset \I^{H^{(n)}}_1 
      \supset \dotsb \supset \I^{H^{(n)}}_{n-1} \supset \I^{H^{(n)}}_n = 0.
\]

For any $n \geq 1$, we can relate the $\I^{H^{(n)}}_\bullet$-filtration of
$\sEnd(H^{(n)})$ to the $\A^{H^{(n)}}_\bullet$-filtration of $\sAut(H^{(n)})$ as
follows.  By \eqref{rk:+.in.End_A(F)}, the map on points
\[
   f \mapsto \id +_{H^{(n)}} f,
\]
where $\id(T) = T$ \eqref{rk:End_A(F)}, defines a morphism of \emph{set}-valued presheaves
\[\tag{$*$}\label{disp:I^i->A^i}
   \I_i^{H^{(n)}} \to \A_i^{H^{(n)}}, \qquad 1 \leq i \leq n.
\]

\begin{lem}\label{st:I^i->A^i_isom}
The arrow \eqref{disp:I^i->A^i} is an isomorphism of presheaves of sets.
\end{lem}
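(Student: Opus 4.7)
The plan is to construct an explicit two-sided inverse $g \mapsto g -_{H^{(n)}} \id$ to the given map, exploiting the ring structure on $\sEnd(H^{(n)})(S)$ afforded by \eqref{rk:End_A(F)}. Since this ring structure already makes translation $\psi\colon f \mapsto \id +_{H^{(n)}} f$ a bijection of the whole ambient set $\sEnd(H^{(n)})(S)$ to itself with manifest inverse $g \mapsto g -_{H^{(n)}} \id$, the only real content is to verify that $\psi$ restricts to a bijection between the named subsets; naturality in $S$ is automatic from the construction.

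For the direction $\psi(\I_i^{H^{(n)}}(S)) \subseteq \A_i^{H^{(n)}}(S)$, I would apply remark \eqref{rk:+.in.End_A(F)} to $\id$ (of degree $1$ with leading coefficient $1$) and to an arbitrary $f \in \I_i^{H^{(n)}}(S)$ (whose leading term has degree $\geq i+1 \geq 2$). The remark yields at once $(\id +_{H^{(n)}} f)(T) = T + a_{i+1}T^{i+1} + \dotsb + a_n T^n$ for appropriate $a_j \in \Gamma(S)$, and the leading coefficient $1$ being a unit shows this is an automorphism, hence lies in $\A_i^{H^{(n)}}(S)$.

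The main obstacle is the reverse inclusion $\psi^{-1}(\A_i^{H^{(n)}}(S)) \subseteq \I_i^{H^{(n)}}(S)$. A bare-hands computation is unpleasant because the operation $-_{H^{(n)}}$ is itself defined in terms of the bud law $H^{(n)}$, so there is no apparent reason the low-degree terms of $g -_{H^{(n)}} \id$ should cancel. The cleanest approach is to reduce matters to truncation: the additive functoriality of truncation (see the preamble to \eqref{rk:[p].compat.truncation}), together with its evident multiplicativity for composition, promotes truncation to a ring homomorphism $\tau\colon \sEnd(H^{(n)})(S) \to \sEnd(H^{(i)})(S)$. By construction $\I_i^{H^{(n)}}(S) = \ker\tau$, while $\A_i^{H^{(n)}}(S) = \tau^{-1}(\id)$. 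Hence for any $g \in \A_i^{H^{(n)}}(S)$ one finds $\tau(g -_{H^{(n)}} \id) = \id -_{H^{(i)}} \id = 0$ in $\sEnd(H^{(i)})(S)$, so that $g -_{H^{(n)}} \id$ indeed lies in $\I_i^{H^{(n)}}(S)$, completing the argument.
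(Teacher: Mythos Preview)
Your proposal is correct and takes essentially the same approach as the paper: both identify the inverse as $g \mapsto g +_{H^{(n)}} i_{H^{(n)}}$ (equivalently, $g -_{H^{(n)}} \id$). The paper's proof is a single sentence (``The inverse is given by addition with $i_{H^{(n)}}$''), leaving the verification that this inverse carries $\A_i^{H^{(n)}}$ back into $\I_i^{H^{(n)}}$ entirely implicit; your truncation argument via the ring homomorphism $\tau\colon \sEnd(H^{(n)})(S) \to \sEnd(H^{(i)})(S)$ is a clean way to make that step explicit, but it is elaboration rather than a different route.
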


\begin{proof}
The inverse is given by addition with $i_{H^{(n)}}$ \eqref{rk:inverse}.
\end{proof}

In a moment, we shall exploit the lemma to express the successive quotients of the $\A^{H^{(n)}}_\bullet$-filtration in terms of the successive quotients of the $\I^{H^{(n)}}_\bullet$-filtration.  But we first need another lemma.  Quite generally, let $R$ be a possibly noncommutative ring with unit, and let $I \subset R$ be a $2$-sided ideal such that $1+I\subset R^\times$.

\begin{lem}\label{st:noncomm}\hfill
\begin{enumerate}
\renewcommand{\theenumi}{\roman{enumi}}
\item\label{it:R/(1+I)}
   The natural map $R^\times/(1+I) \to (R/I)^\times$ is an isomorphism of
   groups.
\item\label{it:(1+I)/(1+J)}
   Let $J$ be a $2$-sided ideal such that $I^2\subset J \subset I$.  Then the map 
   \[\tag{$**$}\label{disp:(1+I)/(1+J)}
      i \mapsto 1 + i \mod 1 + J
   \]
   induces an isomorphism of groups $I/J \isoarrow (1+I)/(1+J)$.
\end{enumerate}
\end{lem}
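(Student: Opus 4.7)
The plan is to prove the two parts separately by direct arguments, each reducing to a short calculation.

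For (i), I would first observe that the natural composite $R^\times \to R \to R/I$ lands in $(R/I)^\times$, and its kernel consists of those units of $R$ whose image in $R/I$ is $1$, that is, $R^\times \cap (1+I)$. By hypothesis $1+I \subset R^\times$, so this intersection is just $1+I$; moreover, the same hypothesis, together with $I$ being a two-sided ideal, makes $1+I$ a genuine subgroup of $R^\times$. Thus the content of (i) is surjectivity onto $(R/I)^\times$. Given $\bar u \in (R/I)^\times$, I would pick any lifts $u,v \in R$ of $\bar u$ and $\bar u^{-1}$, respectively. Then $uv, vu \in 1+I \subset R^\times$, so $u$ admits a right inverse $v(uv)^{-1}$ and a left inverse $(vu)^{-1}v$; by the standard one-line argument these coincide, so $u \in R^\times$.

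For (ii), I would introduce the map $\varphi \colon I \to (1+I)/(1+J)$ defined on elements by $i \mapsto (1+i)(1+J)$. Surjectivity is tautological, and the kernel is tautologically $J$, since $1+i \in 1+J$ precisely when $i \in J$. The only point that needs real work is that $\varphi$ is a group homomorphism from the additive group $(I,+)$ to the multiplicative quotient $(1+I)/(1+J)$, and this is where the hypothesis $I^2 \subset J$ enters. Expanding
\[
   (1+i)(1+i') = 1 + (i+i') + ii',
\]
the essential observation is $ii' \in I^2 \subset J$. Since $1+(i+i') \in R^\times$ by hypothesis, there is a unique $j' \in J$ with $(1+i+i')j' = ii'$ (one uses that $J$ is a two-sided ideal to see $j' \in J$); therefore
\[
   (1+i)(1+i') = \bigl(1+(i+i')\bigr)(1+j') \equiv 1+(i+i') \pmod{1+J},
\]
which is exactly $\varphi(i)\varphi(i') = \varphi(i+i')$. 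Along the way, a similar short calculation using that $J$ is a two-sided ideal contained in $I$ shows $(1+i)(1+j)(1+i)^{-1} \in 1+J$, confirming that $1+J$ is normal in $1+I$ so the quotient group is well-defined.

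The calculations are elementary; I expect the only mild obstacle is bookkeeping in the noncommutative setting --- specifically, checking that the element $j'$ produced in the homomorphism calculation does lie in $J$ and not merely in $I$. This is guaranteed because $J$, being two-sided, absorbs multiplication from both sides, so $(1+i+i')^{-1}(ii') \in J$ as required.
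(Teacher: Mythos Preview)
Your proposal is correct and follows essentially the same approach as the paper. The paper dismisses (i) as ``Immediate'' and for (ii) reduces to showing the map is a group homomorphism by producing exactly the element $j=(1+i+i')^{-1}ii'$ that you found; you have simply spelled out more of the details (surjectivity in (i), normality of $1+J$, and why $j'\in J$) than the paper does.
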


\begin{proof}
\eqref{it:R/(1+I)} Immediate.

\eqref{it:(1+I)/(1+J)} It plainly suffices to show that \eqref{disp:(1+I)/(1+J)}
defines a group homomorphism $I \to (1+I)/(1+J)$.  That is, given $i$ and $i'\in
I$, we must find $j\in J$ such that $(1+i+i')(1+j) = (1+i)(1+i')$.  Take
$j:=(1+i+i')^{-1}ii'$.
\end{proof}

The two previous lemmas yield the following as an immediate consequence.

\begin{lem}\label{st:A_succ_quot_vs_I_succ_quot}
The natural arrow 
\[
   \A^{H^{(n)}}_0/\A^{H^{(n)}}_1 = \sAut(H^{(n)})/\A^{H^{(n)}}_1 
      \to 
      \bigl( \sEnd(H^{(n)})/\I^{H^{(n)}}_1 \bigr)^\times = (\I^{H^{(n)}}_0/\I^{H^{(n)}}_1)^\times
\]
is an isomorphism of presheaves of abelian groups.  For $1\leq i \leq n-1$, the arrow \eqref{disp:I^i->A^i} induces an isomorphism of presheaves of abelian groups 
\begin{xxalignat}{3}
   &\phantom{\square} & \I_i^{H^{(n)}}/\I_{i+1}^{H^{(n)}}
        & \isoarrow \A^{H^{(n)}}_i/\A^{H^{(n)}}_{i+1}.
   & \square
\end{xxalignat}
\end{lem}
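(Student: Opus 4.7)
The plan is to apply Lemma \ref{st:noncomm} pointwise, to the noncommutative ring $R := \sEnd(H^{(n)})(S)$ for each $\FF_p$-scheme $S$, and then translate the resulting isomorphisms of sub/quotients of $R$ and $R^\times$ into the asserted isomorphisms of presheaves using Lemma \ref{st:I^i->A^i_isom}. First I would verify the hypotheses of \eqref{st:noncomm}. Set $I_i := \I_i^{H^{(n)}}(S)$. By \eqref{rk:+.in.End_A(F)}, an element of the form $\id +_{H^{(n)}} f$ with $f \in I_1$ is a truncated polynomial of the shape $T + (\text{terms of degree} \geq 2)$, hence has unit leading coefficient and so is invertible under composition; thus $1+I_1 \subset R^\times$, and a fortiori $1 + I_i \subset R^\times$ for all $i \geq 1$. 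Moreover, composition of truncated polynomials of orders $\geq i+1$ and $\geq j+1$ has order $\geq (i+1)(j+1)$, so $I_i \cdot I_j \subset I_{(i+1)(j+1)-1}$; in particular $I_i^2 \subset I_{i+1}$ whenever $i \geq 1$.

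For the first isomorphism, I would apply \eqref{st:noncomm}\eqref{it:R/(1+I)} with $I = I_1$, yielding $R^\times/(1+I_1) \isoarrow (R/I_1)^\times$. Now $R^\times = \sAut(H^{(n)})(S) = \A_0^{H^{(n)}}(S)$, since an endomorphism of $H^{(n)}$ is a bud automorphism precisely when it is invertible as a truncated polynomial under composition; $1 + I_1 = \A_1^{H^{(n)}}(S)$ via \eqref{st:I^i->A^i_isom}; and $R/I_1 = \I_0^{H^{(n)}}(S)/\I_1^{H^{(n)}}(S)$ by the definition of the $\I_\bullet^{H^{(n)}}$-filtration \eqref{def:I^H^(n)}. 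Substituting these identifications gives the first claim, naturally in $S$.

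For the second isomorphism, fix $1 \leq i \leq n-1$ and apply \eqref{st:noncomm}\eqref{it:(1+I)/(1+J)} with $I = I_i$ and $J = I_{i+1}$; the hypotheses $I_i^2 \subset I_{i+1}$ and $1 + I_i \subset R^\times$ were checked above. This yields an isomorphism $I_i/I_{i+1} \isoarrow (1+I_i)/(1+I_{i+1})$ given on representatives by $f \mapsto \id +_{H^{(n)}} f$. By \eqref{st:I^i->A^i_isom}, the right-hand side is identified with $\A_i^{H^{(n)}}(S)/\A_{i+1}^{H^{(n)}}(S)$, and the map on representatives is precisely the restriction of \eqref{disp:I^i->A^i}. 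All constructions are visibly functorial in $S$, so we obtain the asserted isomorphism of presheaves of abelian groups.

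The only real subtlety — really just a matter of notational hygiene rather than a genuine obstacle — is remembering that ``multiplication'' in $R = \sEnd(H^{(n)})(S)$ denotes composition of truncated polynomials while ``addition'' denotes $+_{H^{(n)}}$ (cf.\ \eqref{rk:End_A(F)}), so that the abstract symbol $1 + \I_i^{H^{(n)}}$ in the hypothesis of Lemma \ref{st:noncomm} literally means $\id +_{H^{(n)}} \I_i^{H^{(n)}}$, which is exactly the map of \eqref{disp:I^i->A^i}. Once this identification is in place, the proof is entirely formal.
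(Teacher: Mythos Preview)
Your proof is correct and follows exactly the route the paper intends: the paper states the lemma as an immediate consequence of \eqref{st:I^i->A^i_isom} and \eqref{st:noncomm}, and your write-up simply makes explicit the verification of the hypotheses of \eqref{st:noncomm} (namely $1+I_1 \subset R^\times$ and $I_i^2 \subset I_{i+1}$) that the paper leaves to the reader.
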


This last lemma reduces \eqref{st:AutH^(n)_succ_quot} to the calculation of the
successive quotients of the $\I^{H^{(n)}}_\bullet$-filtration.  To prepare, it
will be convenient to formulate a few general lemmas on homomorphisms between
bud laws.  We continue with $n \geq 1$.

Let $A$ be a ring, and fix $n$-bud laws $F$ and $G$ over $A$.  Let $f\in
A[T]/(T)^{n+1}$ have $0$ constant term. We define $\partial f \in
A[T_1,T_2]/(T_1,T_2)^{n+1}$ to measure the failure of $f$ to be a homomorphism
$F \to G$:
\[\tag{$\dag$}\label{disp:partial_f}
   (\partial f) (T_1,T_2) := f\bigl(F(T_1,T_2)\bigr) -
                              G\bigl(f(T_1),f(T_2)\bigr).
\]
In keeping with our notation for truncated bud and group laws \eqref{rk:trunc_buds}, we write $f^{(m)}$ for the image of $f$
in $A[T]/(T)^{m+1}$, and $(\partial f)^{(m)}$ for the image of $\partial f$ 
in $A[T_1,T_2]/(T_1,T_2)^{m+1}$, $m\leq n$.

As an easy first lemma, we shall consider the effect of perturbations to $f$ on
$\partial f$.  Let $a\in A$, and put $g(T) := f(T) + aT^m$.  Recall the
polynomial $B_m$ of \eqref{eg:B_m_and_C_m}.

\begin{lem}\label{st:partial_f_perturb}
$(\partial g)^{(m)} = (\partial f)^{(m)} + aB_m$.
\end{lem}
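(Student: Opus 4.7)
The plan is to unwind the definition of $\partial$ and use the fact that $F$ and $G$ both have the form $T_1+T_2+(\text{higher})$ to show that the perturbation $aT^m$ contributes in a controlled way modulo $(T_1,T_2)^{m+1}$.

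First I would write out
\[
   (\partial g)(T_1,T_2) = g\bigl(F(T_1,T_2)\bigr) - G\bigl(g(T_1),g(T_2)\bigr)
\]
and expand each side. Since $g(T) = f(T) + aT^m$, the left summand becomes
\[
   g\bigl(F(T_1,T_2)\bigr) = f\bigl(F(T_1,T_2)\bigr) + a\,F(T_1,T_2)^m.
\]
Because $F(T_1,T_2) \equiv T_1 + T_2 \pmod{(T_1,T_2)^2}$, raising to the $m$th power gives $F(T_1,T_2)^m \equiv (T_1+T_2)^m \pmod{(T_1,T_2)^{m+1}}$.

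Next I would analyze $G\bigl(g(T_1),g(T_2)\bigr)$ by writing $G(U,V) = U+V+\sum_{i+j\geq 2} c_{ij} U^i V^j$ and substituting $U = f(T_1)+aT_1^m$, $V = f(T_2)+aT_2^m$. The linear part contributes $f(T_1)+f(T_2)+a(T_1^m+T_2^m)$. For the higher-order part, here is the main (and really only) point of the argument: since $f$ has zero constant term, $f(T_i)$ lies in $(T_i)$, so each $U^iV^j$ with $i+j\geq 2$ already lies in $(T_1,T_2)^2$ before the perturbation; and any monomial in the expansion that uses one of the perturbations $aT_k^m$ picks up an extra factor of degree $m$ on top of a factor of degree $\geq 1$ coming from the other variable or from another copy of $U$ or $V$, hence lies in $(T_1,T_2)^{m+1}$. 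Thus modulo $(T_1,T_2)^{m+1}$,
\[
   G\bigl(g(T_1),g(T_2)\bigr) \equiv G\bigl(f(T_1),f(T_2)\bigr) + a(T_1^m + T_2^m).
\]

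Combining the two expansions and subtracting gives
\[
   (\partial g)^{(m)} = (\partial f)^{(m)} + a\bigl[(T_1+T_2)^m - T_1^m - T_2^m\bigr] = (\partial f)^{(m)} + aB_m,
\]
by the definition of $B_m$ from \eqref{eg:B_m_and_C_m}. The only slightly delicate step is the degree count ensuring that the perturbation $aT_k^m$ does not interact with the higher-order coefficients of $G$ modulo $(T_1,T_2)^{m+1}$; but this is immediate from the observation that every monomial in $\sum_{i+j\geq 2}c_{ij}U^iV^j$ contributes degree $\geq 2$ even before inserting the perturbation, so any use of a perturbation term pushes the total degree past $m$.
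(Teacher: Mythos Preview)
Your proof is correct and follows essentially the same approach as the paper's: expand $g(F(T_1,T_2))$ and $G(g(T_1),g(T_2))$ separately, use that $F$ and $G$ are of the form $T_1+T_2+(\text{higher})$ to control the perturbation modulo $(T_1,T_2)^{m+1}$, and subtract. The only cosmetic difference is that the paper first reduces to the case $m=n$, so that the congruences modulo $(T_1,T_2)^{m+1}$ become equalities in $A[T_1,T_2]/(T_1,T_2)^{n+1}$; your explicit degree-counting argument accomplishes the same thing.
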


\begin{proof}
Without loss of generality, we may assume $m = n$.  Then we just compute
\[
   g\bigl(F(T_1,T_2)\bigr)
      = f\bigl(F(T_1,T_2)\bigr) + aF(T_1,T_2)^n
      = f\bigl(F(T_1,T_2)\bigr) + a(T_1+T_2)^n
\]
and
\[
   G\bigl(g(T_1),g(T_2)\bigr)
      = G\bigl(f(T_1)+aT^n,f(T_2)+aT^n\bigr)
      = G\bigl(f(T_1),f(T_2)\bigr) + aT_1^n + aT_2^n
\]
and subtract.
\end{proof}

For the next two lemmas, assume $m\geq 2$, and suppose that $f^{(m-1)}$ is a
homomorphism $F^{(m-1)} \to G^{(m-1)}$, so that $(\partial f)^{(m-1)} = 0$.
Recall the polynomial $C_m$ of \eqref{eg:B_m_and_C_m}.

\begin{lem}\label{st:partial_f_trunc}
There exists a unique $a\in A$ such that $(\partial f)^{(m)} = aC_m$.
\end{lem}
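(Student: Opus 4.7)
The plan is to show that $(\partial f)^{(m)}$ is a homogeneous symmetric polynomial $2$-cocycle of degree $m$, and then invoke Lazard's theorem \eqref{st:mult_C_m}.

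First I would observe that since $f^{(m-1)}$ is a homomorphism $F^{(m-1)} \to G^{(m-1)}$, the defining identity \eqref{disp:partial_f} shows that $\partial f$ has no terms of degree $\leq m-1$. Hence $(\partial f)^{(m)}$ is homogeneous of degree $m$ in $A[T_1,T_2]$, and the uniqueness of $a$ is immediate from the primitivity of $C_m$ noted in \eqref{st:mult_C_m}.

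For existence, I would check the two conditions in \eqref{def:SPC}. Symmetry \eqref{it:symmetric} is a direct consequence of the commutativity of both $F$ and $G$: each of $f(F(T_1,T_2))$ and $G(f(T_1),f(T_2))$ is symmetric in $T_1,T_2$, so their difference $\partial f$ is symmetric, and thus so is its homogeneous degree-$m$ part.

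The main computation is the cocycle condition \eqref{it:cocycle}. I would start from the two expressions
\[
   f\bigl(F(F(T_1,T_2),T_3)\bigr) \quad\text{and}\quad f\bigl(F(T_1,F(T_2,T_3))\bigr),
\]
which are equal by associativity of $F$. Expanding each using \eqref{disp:partial_f} twice and then applying associativity of $G$, the two ``main'' terms involving only $G$ and $f$ cancel, leaving the identity
\[
   (\partial f)(T_1,T_2) + (\partial f)\bigl(F(T_1,T_2),T_3\bigr)
      = (\partial f)(T_2,T_3) + (\partial f)\bigl(T_1,F(T_2,T_3)\bigr)
\]
in $A[T_1,T_2,T_3]/(T_1,T_2,T_3)^{n+1}$, up to terms that are already killed by the vanishing of $(\partial f)^{(m-1)}$ together with the fact that $G(a,b)\equiv a+b$ modulo products of its arguments. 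Now I would pass to the homogeneous degree-$m$ part: since $(\partial f)^{(m)}$ is homogeneous of degree $m$ and $F(T_1,T_2)=T_1+T_2+(\text{higher})$, the substitutions $F(T_1,T_2)$ and $F(T_2,T_3)$ may be replaced by $T_1+T_2$ and $T_2+T_3$ modulo degree $m+1$, yielding exactly the cocycle identity \eqref{it:cocycle} for $(\partial f)^{(m)}$.

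With symmetry and the cocycle condition established, \eqref{st:mult_C_m} produces the unique $a\in A$ with $(\partial f)^{(m)}=aC_m$. The main subtlety to watch carefully is the bookkeeping in the expansion step: one must verify that every ``cross'' term produced by $\partial f$ inside another application of $G$ or $f$ has degree strictly greater than $m$, so that it does not contribute to $(\partial f)^{(m)}$. This is where the hypothesis $(\partial f)^{(m-1)}=0$ is essential.
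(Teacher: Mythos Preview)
Your proposal is correct and follows essentially the same approach as the paper: reduce to showing $(\partial f)^{(m)}$ is a homogeneous SPC of degree $m$ and invoke \eqref{st:mult_C_m}, with symmetry from commutativity of $F$ and $G$, homogeneity from the hypothesis $(\partial f)^{(m-1)}=0$, and the cocycle identity extracted from the associativity of $F$ and $G$. The only cosmetic difference is organizational: the paper substitutes directly into the defining relation \eqref{disp:partial_f} (replacing $T_2$ by $F(T_2,T_3)$, then $T_1$ by $F(T_1,T_2)$) and simplifies, whereas you start from $f\bigl(F(F(T_1,T_2),T_3)\bigr)=f\bigl(F(T_1,F(T_2,T_3))\bigr)$ and expand; both routes produce the same identity and the same degree-$m$ truncation argument.
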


\begin{proof}
We shall show that $(\partial f)^{(m)}$ is a homogenous SPC \eqref{def:SPC} of
degree $m$ and appeal to \eqref{st:mult_C_m}. Without loss of generality, we may
assume $m = n$.  Since $F$ and $G$ are commutative, it is clear that $\partial
f$ is symmetric. Since $f^{(n-1)}$ is a homomorphism, it is clear that $\partial
f$ is homogenous of degree $n$. So we just have to show that $\partial f$
satisfies the cocycle condition (\ref{def:SPC}\ref{it:cocycle}). We shall do so
by exploiting the associativity of $F$ and $G$.

Replacing $T_2$ by $F(T_2,T_3)$ in \eqref{disp:partial_f}, we obtain an equality
of elements in the ring $A[T_1,T_2,T_3]/(T_1,T_2,T_3)^{n+1}$,
\[
   (\partial f)\bigl(T_1,F(T_2,T_3)\bigr) 
      = f\Bigl(F\bigl(T_1,F(T_2,T_3)\bigr)\Bigr) 
               - G\Bigl(f(T_1),f\bigl(F(T_2,T_3)\bigr)\Bigr).
\]
Replacing $f\bigl( F(T_2,T_3)\bigr)$ with $G\bigl(f(T_2),f(T_3)\bigr) + (\partial f)(T_2,T_3)$ in the display, we obtain
\begin{multline*}
   (\partial f)\bigl(T_1,F(T_2,T_3)\bigr) 
      = f\Bigl(F\bigl(T_1,F(T_2,T_3)\bigr)\Bigr)\\ 
      - G\Bigl(f(T_1),G\bigl(f(T_2),f(T_3)\bigr)
                            +(\partial f)(T_2,T_3)\Bigr).
\end{multline*}
Since $\partial f$ is homogenous of degree $n$, the left-hand side of this last
display is
\[
   (\partial f)(T_1,T_2+T_3),
\]
and the right-hand side is
\[
   f\Bigl(F\bigl(T_1,F(T_2,T_3)\bigr)\Bigr)
   - G\Bigl(f(T_1),G\bigl(f(T_2),f(T_3)\bigr)\Bigr)
   - (\partial f)(T_2,T_3).
\]

Analogously, replacing $T_1$ by $F(T_1,T_2)$ and $T_2$ by $T_3$ in
\eqref{disp:partial_f}, one obtains  a second equality in
$A[T_1,T_2,T_3]/(T_1,T_2,T_3)^{n+1}$,
\begin{multline*}
   (\partial f)(T_1+T_2,T_3)
      = f\Bigl(F\bigl(F(T_1,T_2),T_3\bigr)\Bigr)\\
      - G\Bigl(G\bigl(f(T_1),f(T_2)\bigr),f(T_3)\Bigr)
      - (\partial f)(T_1,T_2).
\end{multline*}
Now subtract equations and use the associativity of $F$ and $G$.
\end{proof}

We continue with $a$ as in the previous lemma.  Recall $\lambda(m)$ from
\eqref{eg:B_m_and_C_m}.

\begin{lem}\label{st:[p]_commutator}
For any $k\geq 1$,
\[
   (f^{(m)} \circ [k]_{F^{(m)}})(T) = ([k]_{G^{(m)}}\circ f^{(m)})(T) +
\frac{k^m-k}{\lambda(m)} \cdot aT^m.
\]
In particular, for $k=p$, $m$ of the form $p^j$, and $A$ of characteristic $p$,
we have
\[
   (f^{(p^j)} \circ [p]_{F^{(p^j)}})(T) = ([p]_{G^{(p^j)}}\circ f^{(p^j)})(T) -
aT^m.
\]
\end{lem}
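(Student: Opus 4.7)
The plan is to prove the general identity by induction on $k$, then derive the characteristic-$p$ special case by a direct arithmetic simplification. Throughout the argument, all computations happen in $A[T]/(T)^{m+1}$ (so without loss of generality we may replace $n$ by $m$), and we use three standing facts: (i) by the hypothesis of \eqref{st:partial_f_trunc} and its conclusion, $f\bigl(F(T_1,T_2)\bigr) = G\bigl(f(T_1),f(T_2)\bigr) + aC_m(T_1,T_2)$ modulo $(T_1,T_2)^{m+1}$; (ii) $C_m$ is homogeneous of degree $m$; and (iii) whenever $\epsilon\in A[T]/(T)^{m+1}$ has no terms of degree $<m$ and $Y$ has $0$ constant term, one has $G(X+\epsilon,Y)\equiv G(X,Y)+\epsilon$ modulo $(T)^{m+1}$, because $G(X_1,X_2)=X_1+X_2+\text{(terms of total degree}\geq 2)$ and the cross terms involving $\epsilon$ lie in degree $\geq m+1$.

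For the base case $k=1$, both sides equal $f^{(m)}(T)$ since $1^m-1=0$. For the inductive step, apply $f$ to $[k+1]_F(T)=F\bigl([k]_F(T),T\bigr)$ and use (i) to get
\[
f\bigl([k+1]_F(T)\bigr) \equiv G\!\bigl(f([k]_F T),\,f(T)\bigr) + aC_m\!\bigl([k]_F T,\,T\bigr) \pmod{(T)^{m+1}}.
\]
Since $[k]_F(T)=kT+\text{(higher order)}$ and $C_m$ is homogeneous of degree $m$, we may replace $[k]_F T$ by $kT$ inside $C_m$, obtaining
\[
C_m\!\bigl([k]_F T, T\bigr) \equiv C_m(kT,T) = \frac{(k+1)^m-k^m-1}{\lambda(m)}\,T^m \pmod{(T)^{m+1}}.
\]
By the inductive hypothesis, $f\bigl([k]_F T\bigr) = [k]_G f(T) + \frac{k^m-k}{\lambda(m)} aT^m$. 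Invoking (iii) with $X=[k]_G f(T)$, $\epsilon=\frac{k^m-k}{\lambda(m)} aT^m$, and $Y=f(T)$, we may push $\epsilon$ out of $G$:
\[
G\!\bigl(f([k]_F T), f(T)\bigr) \equiv [k+1]_G f(T) + \frac{k^m-k}{\lambda(m)} aT^m \pmod{(T)^{m+1}}.
\]
Adding the two contributions and using the telescoping identity $(k^m-k)+\bigl((k+1)^m-k^m-1\bigr)=(k+1)^m-(k+1)$ completes the induction.

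For the particular case $k=p$ and $m=p^j$, one has $\lambda(m)=p$, so the coefficient becomes
\[
\frac{p^{p^j}-p}{p} \,=\, p^{\,p^j-1}-1,
\]
and in characteristic $p$, since $p^j-1\geq 1$, this reduces to $-1$. This yields the displayed formula with $-aT^m$. The only real obstacle is bookkeeping in step (iii), i.e.\ being careful about the degrees so that the perturbation $\epsilon$ can be extracted linearly from $G$; aside from this, the argument is a direct expansion.
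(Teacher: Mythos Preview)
Your proof is correct and follows essentially the same approach as the paper: induction on $k$, expanding $f\circ[k+1]_F$ via the identity $f\bigl(F(T_1,T_2)\bigr)=G\bigl(f(T_1),f(T_2)\bigr)+aC_m(T_1,T_2)$, reducing $C_m([k]_F T,T)$ to $C_m(kT,T)$ by homogeneity, and extracting the degree-$m$ perturbation from $G$ using that $G(X_1,X_2)=X_1+X_2+\text{(higher order)}$. The only cosmetic difference is that the paper writes $[k+1]_F(T)=F\bigl(T,[k]_F(T)\bigr)$ rather than $F\bigl([k]_F(T),T\bigr)$, which is immaterial by commutativity.
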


\begin{proof}
The proof is entirely similar to that given in \cite{laz55}*{Lemme 6} or in
\cite{froh68}*{III \s1 Lemma 4}, but for convenience, we'll write out the details.  Without loss of generality, we may assume $m=n$.

We induct on $k$. The
assertion is clear for $k=1$. So assume the assertion holds for $k$.  We have
\begin{alignat*}{2}
   (f \circ [k+1]_{F})(T)
      = {}& f\Bigl( F\bigl(T,[k]_F(T)\bigr)\Bigr)\\
      = {}& G\bigl( f(T), (f\circ [k]_F)(T) \bigr)
                + (\partial f)\bigl(T,[k]_F(T)\bigr)
          & &\qquad\eqref{disp:partial_f}.
\end{alignat*}
Let us examine separately the two terms appearing in the last expression.  We
have
\begin{alignat*}{2}
   G\bigl( f(T), (f\circ [k]_F)(T) \bigr)
      &= G\Bigl( f(T), ([k]_G \circ f)(T)
                  + \tfrac{k^n-k}{\lambda(n)}aT^n \Bigr)
         & &\qquad\text{(induction)}\\
      &= G\bigl( f(T), ([k]_G \circ f)(T) \bigr)
        + \frac{k^n-k}{\lambda(n)}aT^n\\
      &= ([k+1]_G\circ f)(T) + \frac{k^n-k}{\lambda(n)}aT^n.
\end{alignat*}
Moreover,
\begin{alignat*}{2}
   (\partial f)\bigl(T,[k]_F(T)\bigr)
      &= aC_n\bigl(T,[k]_F(T)\bigr)
         & &\qquad\eqref{st:partial_f_trunc}\\
      &= aC_n(T,kT)
         & &\qquad\eqref{rk:[p].leading.coeff=p}\\
      &= a\frac{(k+1)^n-k^n-1}{\lambda(n)}T^n.
\end{alignat*}
The lemma follows at once.
\end{proof}

We are now ready to compute the $\I_i^{H^{(n)}}/\I_{i+1}^{H^{(n)}}$'s.
We denote by $\OO = \OO_{\FF_p}$ the tautological ring scheme structure on
$\AA^1_{\FF_p}$, and by $\OO^{\Fr_{\smash{p^h}}}$ the sub-ring scheme of \OO of
fixed points for the $p^h$th-power Frobenius operator:
\[
   \OO^{\Fr_{\smash{p^h}}}(S) = \bigl\{\, a \in\Gamma(S) \bigm| a^{p^h} = a \,\bigr\}.
\]
Quite as in \eqref{rk:AutTT_n_filt}, one verifies at once that the map on points
\[\tag{$\ddag$}\label{disp:I^i->G_a}
   a_{i+1}T^{i+1} + \dotsb + a_nT^n \mapsto a_{i+1}
\]
specifies a monomorphism of presheaves of rings
\[
   \I_0^{H^{(n)}}/\I_1^{H^{(n)}} \inj \OO, \qquad i=0,
\]
and a monomorphism of presheaves of abelian groups
\[
   \I^{H^{(n)}}_i / \I^{H^{(n)}}_{i+1} \inj \GG_a, \qquad 1\leq i \leq n-1.
\]
Assume $n \geq p^{h+1}$, and again let $l$ be the nonnegative integer such
that $p^l \leq n < p^{l+1}$.

\begin{thm}\label{st:EndH^(n)_succ_quot}
%$\sEnd(H^{(n)})/\I^{H^{(n)}}_1 = 
For $0\leq i \leq n-1$, \eqref{disp:I^i->G_a} induces an identification of 
presheaves
\[
   \I^{H^{(n)}}_i / \I^{H^{(n)}}_{i+1} \ciso
   \begin{cases}
      \OO^{\Fr_{\smash{p^h}}}, & i = 0;\\
      \GG_a^{\Fr_{p\smash{^h}}}, & i = p-1,\ p^2-1,\dotsc,\ p^{l-h}-1;\\
      \GG_a, & i = p^{l-h+1}-1,\ p^{l-h+2}-1, \dotsc,\ p^l-1;\\
      0, & \text{otherwise.}
   \end{cases}
\]
\end{thm}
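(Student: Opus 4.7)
The plan is to establish the isomorphism by separately verifying (I) that the displayed embedding lands in the asserted subscheme, and (II) that every section of the asserted subscheme arises as the image of an actual endomorphism.

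For (I), I split on whether $i+1$ is a power of $p$. If $i+1$ is not a power of $p$, then for $f \in \I_i^{H^{(n)}}(S)$ with $f(T) = aT^{i+1}+\cdots$, the trivial endomorphism $f^{(i)}=0$ allows one to apply \eqref{st:partial_f_trunc}, yielding $(\partial f)^{(i+1)} = cC_{i+1}$ for a unique $c\in\Gamma(S)$; and \eqref{st:partial_f_perturb} computes $c = a\lambda(i+1)$. Since $f$ is an endomorphism $c=0$, and since $\lambda(i+1)$ is a unit in any $\FF_p$-algebra (being $1$ or a prime $\ell\neq p$), $a=0$. If instead $i+1 = p^j$, then $f$ commutes with $[p]_{H^{(n)}} = T^{p^h}$ in $\sEnd(H^{(n)})(S)$, so $f(T^{p^h}) = f(T)^{p^h}$ in $\Gamma(S)[T]/(T)^{n+1}$. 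Expanding $(f(T))^{p^h} = \sum_k a_k^{p^h} T^{kp^h}$ in characteristic $p$ and comparing with $f(T^{p^h}) = \sum_k a_k T^{kp^h}$, we obtain $a_k = a_k^{p^h}$ for every $k$ with $kp^h \leq n$; taking $k = p^j$ yields $a^{p^h} = a$ exactly when $p^{j+h} \leq n$, i.e.\ $j \leq l-h$, and no constraint otherwise.

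For (II), I construct endomorphisms of $H^{(n)}$ with prescribed leading term by induction on degree. At each degree $m$ not a power of $p$, the obstruction $cC_m$ from \eqref{st:partial_f_trunc} is killed by a unique choice of the $T^m$-coefficient via \eqref{st:partial_f_perturb}, using that $\lambda(m)$ is a unit. At each power-of-$p$ degree $m=p^j$, adjusting by $bT^{p^j}$ contributes $pbC_{p^j} = 0$ in characteristic $p$, so the obstruction $c$ must vanish intrinsically. By \eqref{st:[p]_commutator}, $c$ equals (up to sign) the coefficient of $T^{p^j}$ in $f^{(p^j)}(T^{p^h}) - (f^{(p^j)}(T))^{p^h}$. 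For $j < h$ this is identically zero because $[p]_{H^{(p^j)}}=0$ (since $p^h > p^j$), so $c = 0$ automatically. For $j \geq h$, a direct expansion in characteristic $p$ identifies $c$ with $a_{p^{j-h}}^{p^h} - a_{p^{j-h}}$, involving only the power-of-$p$ coefficient $p^{j-h}$ stages earlier. Arranging the power-of-$p$ coefficients inductively so that $a_{p^k}^{p^h} = a_{p^k}$ for $k \leq l-h$ (with the prescribed $a_{p^{j_0}} = a$ either free, when $j_0 > l-h$, or constrained by hypothesis to be $\Fr_{p^h}$-fixed, when $j_0 \leq l-h$), and taking the coefficient at $p^k$ for $k > l-h$ arbitrary, ensures every obstruction vanishes and the induction terminates at degree $n$.

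The principal obstacle is verifying in (II) that choices of non-power-of-$p$ coefficients (freely selected to kill obstructions at non-power-of-$p$ degrees) do not spoil the vanishing of obstructions at subsequent power-of-$p$ degrees. This is handled by the observation that in characteristic $p$, $(f(T))^{p^h} = \sum_k a_k^{p^h} T^{kp^h}$ contains only diagonal terms, so non-power-of-$p$ coefficients contribute to the Frobenius commutator only at non-power-of-$p$ target degrees, and hence do not enter the obstruction equations at the degrees $p^j$ where obstructions are rigid.
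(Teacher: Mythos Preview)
Your argument is correct and follows essentially the same architecture as the paper's: split into the injectivity direction (I) and the surjectivity direction (II), handle non-power-of-$p$ degrees via \eqref{st:partial_f_perturb} and \eqref{st:partial_f_trunc}, and handle power-of-$p$ degrees via the commutator lemma \eqref{st:[p]_commutator}. The only noteworthy tactical divergence is in the power-of-$p$ step of (II). The paper, having observed in its non-power-of-$p$ step that the uniquely determined coefficients stay in $A^{\Fr_{p^h}}$ when the seed coefficient does (because $H$ is defined over $\FF_p$), argues that \emph{all} coefficients of the partial lift $g$ lie in $A^{\Fr_{p^h}}$, whence $g$ commutes with $T^{p^h}$ globally in $A[T]/(T)^{n+1}$. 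You instead compute the obstruction at degree $p^{j}$ explicitly as $a_{p^{j-h}}^{p^h} - a_{p^{j-h}}$, so that only the single power-of-$p$ coefficient $h$ stages back matters and the non-power-of-$p$ coefficients are visibly irrelevant. Both routes reach the same vanishing; yours is more localized and avoids tracking the Frobenius-fixedness of the intermediate non-power-of-$p$ coefficients, while the paper's is a line shorter once its Case~1 remark is in hand.
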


\begin{proof}
Let $A$ be a ring of characteristic $p$, and let $I_i:= \I^{H^{(n)}}_i(A)$, $0
\leq i \leq n$. For $i\neq 0$, $p-1$, $p^2-1,\dotsc$, $p^l-1$, we have
$I_i/I_{i+1} = 0$ by \eqref{st:[p].law.form}.  So we are left to compute the
quotients for $i$ of the form $p^j-1$.

As a first step, let $f(T) = a_1T + \dotsb a_nT^n$ be any endomorphism of
$H^{(n)}$ over $A$. Let $A^{\Fr_{\smash{p^h}}} := \OO^{\Fr_{\smash{p^h}}}(A)$. Since $f$ must commute with $[p]_{H^{(n)}}(T) = T^{p^h}$, we deduce $a_i
\in A^{\Fr_{\smash{p^h}}}$ for $ip^h \leq n$. In particular, the map
\eqref{disp:I^i->G_a} carries $I_i/I_{i+1}$ into
$A^{\Fr_{p\smash{^h}}}$ for $i = 0$, $p-1,\dotsc$, $p^{l-h} -1$, as asserted. So
we are reduced to showing the following: given $a_{p^j}T^{p^j} \in
A[T]/(T)^{n+1}$, with $a_{p^j}\in A^{\Fr_{\smash{p^h}}}$ in case $j \leq l-h$ and no
constraint on $a_{p^j}$ in case $j > l-h$, we can add terms of degree $>p^j$ to obtain an endomorphism of $H^{(n)}$.

We shall proceed by induction on the degree of the term to be added.  We work with $\partial$ \eqref{disp:partial_f} computed with regard to $F = G = H^{(n)}$.  To begin, let $f(T)= a_{p^j}T^{p^j}$.  Then
\begin{align*}
   (\partial f)^{(p^j)}(T_1,T_2)
      &= f\bigl(H^{(p^j)}(T_1,T_2)\bigl)
        - H^{(p^j)}\bigl( f(T_1),f(T_2) \bigr)\\
      &= a_{p^j}(T_1+T_2)^{p^j} -a_{p^j}T_1^{p^j} -a_{p^j}T_2^{p^j}\\
      &= 0.
\end{align*}
Hence $f$ defines an endomorphism of $H^{(p^j)}$.  We must now show that if $g(T) = a_{p^j}T^{p^j}+\dotsb + a_{m-1}T^{m-1}$ defines an endomorphism of $H^{(m-1)}$, $p^j+1 \leq m \leq n$, then we can always add a term of degree $m$ to $g$ to obtain an endomorphism of $H^{(m)}$.

\emph{Case 1: $m$ is not a power of $p$.} By \eqref{st:partial_f_trunc},
$(\partial g)^{(m)} = aC_m$ for a unique $a\in A$. Now, since $A$ is of
characteristic $p$ and $m$ is not a power of $p$, $C_m$ is a unit multiple of
$B_m$ over $A$. Hence by \eqref{st:partial_f_perturb} we can find (a unique) $a_m \in A$ such that $g(T) + a_mT^m$ defines an endomorphism of $H^{(m)}$.

Note that if $a_{p^j},\dotsc$, $a_{m-1}\in A^{\Fr_{\smash{p^h}}}$, then $a_m \in A^{\Fr_{\smash{p^h}}}$ too, since $H$ is defined over $\FF_p$, $A^{\Fr_{\smash{p^h}}}$ is a subring of $A$, and $a_m$ is uniquely determined.

\emph{Case 2: $m$ is a power of $p$.} In this case $B_m = 0$ over $A$, so the
method of case 1 breaks down.  But we claim that, in fact, $g$ is already an
endomorphism of $H^{(m)}$.  By \eqref{st:partial_f_trunc} and
\eqref{st:[p]_commutator}, it suffices to show that $g\circ [p]_{H^{(m)}} =
[p]_{H^{(m)}} \circ g$ in $A[T]/(T)^{m+1}$.  We shall show that the stronger
statement $g\circ [p]_{H^{(n)}} = [p]_{H^{(n)}} \circ g$ in $A[T]/(T)^{n+1}$
holds.  We consider separately the subcases $j \leq l-h$ and $j > l-h$.  If $j
\leq l-h$, then recall $a_{p^j} \in A^{\Fr_{\smash{p^h}}}$.  Hence by induction
all the coefficients of $g$ lie in $A^{\Fr_{\smash{p^h}}}$.  Hence $g$ commutes
with $[p]_{H^{(n)}}$.  If $j > l-h$, then the argument is even easier: $g \circ
[p]_{H^{(n)}}$ and $[p]_{H^{(n)}} \circ g$ both only involve terms of degrees
$\geq p^{j+h}$.  So both are $0$ in $A[T]/(T)^{n+1}$ by definition of $l$.
\end{proof}

At last we obtain the proof of \eqref{st:AutH^(n)_succ_quot}.

\begin{proof}[Proof of \eqref{st:AutH^(n)_succ_quot}]
Clear from \eqref{st:A_succ_quot_vs_I_succ_quot} and
\eqref{st:EndH^(n)_succ_quot}, noting for the $i=0$ case that $\mu_{p^h-1}$ sits
naturally inside $\OO^{\Fr_{\smash{p^h}}}$ as the subfunctor of units.
\end{proof}

\begin{rk}
One verifies immediately that the maps
\[
   \A^{H^{(n)}}_i / \A^{H^{(n)}}_{i+1} \to
   \begin{cases}
      \GG_m, & i=0;\\
      \GG_a, & 1\leq i \leq n-1
   \end{cases}
\] 
induced by \eqref{rk:AutTT_n_filt} and the maps
\[
   \I^{H^{(n)}}_i / \I^{H^{(n)}}_{i+1} \to
   \begin{cases}
      \OO, & i=0;\\
      \GG_a, & 1 \leq i \leq n-1
   \end{cases}
\]
of the previous theorem are compatible with the identifications of \eqref{st:A_succ_quot_vs_I_succ_quot}.
\end{rk}

\section{The height stratification: formal Lie groups}\label{s:ht_strat_flg}

We continue working with respect to fixed prime $p$.

\subsection{The height stratification on the stack of formal Lie
groups}\label{ss:ht_strat_flg}

In this section we introduce the height stratification on the stack of formal Lie groups, quite in analogy with the height stratification on \Bn, $n \geq 1$.

Let $h \geq 0$.  We denote by $\FLG^{\geq h}$ the full sub-fibered category of \FLG rendering the diagram
\[\tag{$*$}\label{disp:FLG^>=h_Cart_diag}
   \vcenter{
   \xymatrix{
      \FLG^{\geq h} \ar[r] \ar[d]
         & \FLG \ar[d]\\
      \Bn[p^h]^{\geq h} \ar[r]
         & \Bn[p^h]
      }
      }
\]
Cartesian; here, as usual, the right vertical arrow denotes truncation. Abusing
notation, we denote again by \L the pullback to \FLG of the line bundle \L on
$\Bn[1]$ \eqref{def:L}. Similarly, we abusively denote by $\L_h$ the restriction
of \L to $\FLG^{\geq h}$; then $\L_h$ is canonically isomorphic to the pullback
to $\FLG^{\geq h}$ of the line bundle $\L_h$ on $\Bn[p^h]^{\geq h}$
\eqref{def:L}. We abusively denote again by $v_h$ the section $\O_{\FLG^{\geq
h}} \to \L_h^{\tensor p^h-1}$ over $\FLG^{\geq h}$ obtained by pulling back the
section $v_h\colon \O_{\Bn[p^h]^{\geq h}} \to \L_h^{\tensor p^h-1}$ from
$\Bn[p^h]^{\geq h}$ \eqref{def:v_h}.

The fibered category $\FLG^{\geq h}$ and the various sections $v_i$ are related
in the following simple way. Let $X$ be a formal Lie group over the base scheme $S$.

\begin{prop}\label{st:ht>=h_tfae}
The following are equivalent.
\begin{enumerate}
\renewcommand{\theenumi}{\roman{enumi}}
\item
   $X$ is an object in $\FLG^{\geq h}$.
\item
   The $p^h$-bud $X^{(p^h)}$ has height $\geq h$.
\item
   For any $n\geq p^h$, the $n$-bud $X^{(n)}$ has height $\geq h$.
\item
   $X$ is an object in each of the successive zero loci \eqref{def:sectionoflb} $V(v_0)$, $V(v_1)\text{,}\dotsc$, $V(v_{h-1})$.
\end{enumerate}
\end{prop}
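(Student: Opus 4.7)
The plan is to establish the equivalences by chaining them (i) $\Leftrightarrow$ (ii) $\Leftrightarrow$ (iii) and (i) $\Leftrightarrow$ (iv), with the circle closing automatically.

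First, (i) $\Leftrightarrow$ (ii) is essentially tautological from the Cartesian diagram \eqref{disp:FLG^>=h_Cart_diag} defining $\FLG^{\geq h}$: by the universal property of the $2$-pullback, an object $X \in \FLG(S)$ lifts to $\FLG^{\geq h}(S)$ if and only if its image $X^{(p^h)}$ in $\Bn[p^h](S)$ lies in the substack $\Bn[p^h]^{\geq h}$, i.e.\ has height $\geq h$ as a $p^h$-bud in the sense of \eqref{def:bud.ht.>=h}.

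Next, (ii) $\Leftrightarrow$ (iii) follows directly from \eqref{rk:ht.>=h.compat.truncation}, which records that height $\geq h$ is stable under and detected by truncation down to the $p^h$-level: for any $n \geq p^h$, the $n$-bud $X^{(n)}$ has height $\geq h$ if and only if its further truncation $(X^{(n)})^{(p^h)} \ciso X^{(p^h)}$ does. Applying this with $X^{(n)} = (X)^{(n)}$ gives the equivalence.

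For (i) $\Leftrightarrow$ (iv), I would iterate \eqref{st:nbuds^>=h=V(a_h)}. That proposition identifies $\Bn[p^h]^{\geq h}$ with the zero locus of $v_{h-1}$ inside $\Bn[p^h]^{\geq h-1}$, and, unwinding the chain, $\Bn[p^h]^{\geq h}$ is cut out inside $\Bn[p^h]$ by the successive vanishing of the sections $v_0, v_1, \ldots, v_{h-1}$ (each defined on the previous zero locus, and each pulled back to $\Bn[p^h]$ along the appropriate inclusion). Since the line bundles $\L_i$ and their sections $v_i$ on $\FLG^{\geq i}$ were defined precisely as pullbacks along the truncation morphism $\FLG \to \Bn[p^h]$ (using \eqref{rk:L_indep_of_n} and \eqref{rk:v_h_indep_of_n} to see that these pullbacks are compatible with the indexing), pulling back the chain of zero loci through the Cartesian square \eqref{disp:FLG^>=h_Cart_diag} yields the equality of sub-fibered categories $\FLG^{\geq h} = V(v_0) \cap V(v_1) \cap \cdots \cap V(v_{h-1})$.

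The only real subtlety is bookkeeping: making sure the sections $v_i$ on $\FLG^{\geq i}$ really do agree with the pullbacks from $\Bn[p^h]$ under the various truncations $\Bn[p^h] \to \Bn[p^i]$, for $0 \leq i \leq h-1$. This is exactly what the ``$n$-independence'' remarks \eqref{rk:L_indep_of_n} and \eqref{rk:v_h_indep_of_n} are designed to supply, and will be the only step requiring attention to compatibilities rather than a direct appeal to an earlier statement.
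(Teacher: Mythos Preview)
Your proposal is correct and follows essentially the same approach as the paper: the paper's proof consists solely of citing \eqref{rk:ht.>=h.compat.truncation} and \eqref{st:nbuds^>=h=V(a_h)}, and you have simply unpacked how these two ingredients yield the equivalences. Your added care with the compatibility remarks \eqref{rk:L_indep_of_n} and \eqref{rk:v_h_indep_of_n} for the (i)$\Leftrightarrow$(iv) step is appropriate bookkeeping that the paper leaves implicit.
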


\begin{proof}
\eqref{rk:ht.>=h.compat.truncation} and \eqref{st:nbuds^>=h=V(a_h)}.
\end{proof}

\begin{defn}
$X$ has \emph{height $\geq h$} it it satisfies the equivalent conditions of \eqref{st:ht>=h_tfae}.
\end{defn}

% \begin{defn}
% The formal Lie group $(X_n,\varphi_{nm})$ has \emph{height $\geq h$} if the
% $p^h$-bud $(X_n,\varphi_{nm})^{(p^h)} = X_{p^h}$ has height $\geq h$
% \eqref{def:bud.ht.>=h}. We write $\FLG^{\geq h}$ for the full subcategory of
% \FLG of formal Lie groups of height $\geq h$.
% \end{defn}
% 
% \begin{rk}
% By \eqref{rk:ht.>=h.compat.truncation}, if $(X_n,\varphi_{nm})$ has height $\geq h$, then all the $X_n$ for $n \geq p^h$ have height $\geq h$.
% \end{rk}

\begin{eg}\label{eg:triv_ht>=h}
Quite as for buds \eqref{eg:triv_bud_ht>=h}, given a formal group law $F$ over $\Gamma(S)$, the formal Lie group $\wh\AA_S^F$ \eqref{eg:trivfg} has height $\geq h$ $\iff$ $[p]_F \in T^{p^h} \cdot \Gamma(S)[[T]]$.  So our terminology again comports with \eqref{def:law.height}.
\end{eg}

\begin{rk}
Many of the above definitions are independent of fixed choices we've made.  For example, the proposition says that we could have just as well defined $\FLG^{\geq h}$ by replacing the diagram \eqref{disp:FLG^>=h_Cart_diag} with one in which $p^h$ is everywhere replaced by any $n \geq p^h$.  Up to canonical isomorphism, \eqref{rk:L_indep_of_n} says we could have defined \L as the pullback to \FLG of the line bundle \L on $\Bn$, for any $n \geq 1$; and similarly for $\L_h$, for any $n \geq p^h$.  Analogously, \eqref{rk:v_h_indep_of_n} says that we could have defined $v_h$ as the pullback of the section $v_h$ over $\Bn^{\geq h}$, for any $n \geq p^h$.
\end{rk}

\begin{rk}
Just as for buds, $\FLG^{\geq 0} = \FLG$, and $\FLG^{\geq 1}$ is the
stack of formal Lie groups over $\FF_p$-schemes.
\end{rk}

\begin{prop}
$\FLG^{\geq h}$ is a stack for the fpqc topology, and the inclusion functor
$\FLG^{\geq h} \to \FLG$ is a closed immersion.
\end{prop}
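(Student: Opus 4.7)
The plan is to prove both assertions simultaneously by induction on $h$, using the characterization of $\FLG^{\geq h}$ as an iterated zero locus of sections of line bundles. The base case $h=0$ is just $\FLG^{\geq 0} = \FLG$, which is an fpqc stack by \eqref{st:FLG=fpqc_stack}, and the inclusion is tautologically the identity, hence a closed immersion.

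For the inductive step, I would appeal to Proposition \eqref{st:ht>=h_tfae}, and in particular the equivalence (i)~$\Leftrightarrow$~(iv) applied with $v_{h-1}$ as the ``last'' section. This shows that, inside $\FLG^{\geq h-1}$, the sub-fibered category $\FLG^{\geq h}$ coincides with the zero locus $V(v_{h-1})$ of the section $v_{h-1}$ of the line bundle $\L_{h-1}^{\tensor p^{h-1}-1}$ on $\FLG^{\geq h-1}$. Granting the induction hypothesis that $\FLG^{\geq h-1}$ is an fpqc stack, Proposition \eqref{st:V(a)=clsdimm} applied on the ambient $\FLG^{\geq h-1}$ then yields in one stroke both that $\FLG^{\geq h} = V(v_{h-1})$ is itself an fpqc stack, and that $\FLG^{\geq h} \inj \FLG^{\geq h-1}$ is a closed immersion. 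Composing with the inductively obtained closed immersion $\FLG^{\geq h-1} \inj \FLG$ gives that $\FLG^{\geq h} \inj \FLG$ is a closed immersion, completing the induction.

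Alternatively (and this is essentially equivalent), one could argue directly from the defining Cartesian square: $\FLG^{\geq h} \approx \FLG \fib{\Bn[p^h]} \Bn[p^h]^{\geq h}$. Since $\Bn[p^h]^{\geq h} \inj \Bn[p^h]$ is a closed immersion by \eqref{st:ht.>=h.clsd.imm} and closed immersions are stable under base change, the projection $\FLG^{\geq h} \to \FLG$ is a closed immersion; and since a $2$-fiber product of fpqc stacks is again an fpqc stack, the stack property follows from \eqref{st:FLG=fpqc_stack} and \eqref{st:ht.>=h.clsd.imm}.

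The only real point requiring any care is ensuring that \eqref{st:V(a)=clsdimm} applies when the ambient fibered category is itself a non-algebraic stack like $\FLG^{\geq h-1}$; but the statement of \eqref{st:V(a)=clsdimm} is phrased for arbitrary fibered categories over $\Sch$, with the stack (and algebraic stack) assertions obtained as free upgrades, so there is no genuine obstacle here. I expect the argument to be essentially formal once one commits to either of the two approaches above; no new input beyond the machinery of zero loci set up in \s\ref{ss:lb's} and the bud-level closed immersion \eqref{st:ht.>=h.clsd.imm} is required.
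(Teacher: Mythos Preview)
Your proposal is correct. Your ``alternative'' argument via the defining Cartesian square \eqref{disp:FLG^>=h_Cart_diag} is exactly the paper's proof: the stack assertion follows because $\Bn[p^h]$, $\FLG$, and $\Bn[p^h]^{\geq h}$ are fpqc stacks, and the closed immersion assertion follows by base-changing the closed immersion $\Bn[p^h]^{\geq h} \inj \Bn[p^h]$ of \eqref{st:ht.>=h.clsd.imm}.

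Your primary inductive approach via iterated zero loci is a valid alternative route. It is slightly longer but has the virtue of being self-contained at the level of $\FLG$, using only the line-bundle machinery of \s\ref{ss:lb's} rather than pulling back the bud-level result wholesale; the paper's approach is more economical precisely because it has already done this induction once at the bud level in \eqref{st:nbuds^>=h=V(a_h)}--\eqref{st:ht.>=h.clsd.imm} and simply transports the conclusion through the Cartesian square.
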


\begin{proof}
The diagram \eqref{disp:FLG^>=h_Cart_diag} is Cartesian.  So the first assertions follows because $\Bn[p^h]$ \eqref{st:nbuds=stack}, \FLG \eqref{st:FLG=fpqc_stack}, and $\Bn[p^h]^{\geq h}$ \eqref{st:ht.>=h.clsd.imm} are fpqc stacks.  And the second assertion follows because $\Bn[p^h]^{\geq h} \to \Bn[p^h]$ is a closed immersion \eqref{st:ht.>=h.clsd.imm}.
\end{proof}

% \begin{prop}
% $\FLG^{\geq h}$ is a stack, and the inclusion functor
% \[
%  \FLG^{\geq h} \to \FLG
% \]
% is a closed immersion.
% \end{prop}
% 
% \begin{proof}
% The diagram
% \[
%    \xymatrix{
%       \FLG^{\geq h} \ar[r] \ar[d]_-{\pr_{p^h}} 
%          & \FLG \ar[d]^-{\pr_{p^h}}\\
%       \Bn[p^h]^{\geq h} \ar[r]
%          & \Bn[p^n]
%       }
% \]
% is Cartesian.  So apply \eqref{st:ht.>=h.clsd.imm}.
% % 
% 
% $\FLG^{\geq h}$ is a fibered category because height $\geq h$ is stable under
% pullbacks for $n$-buds \eqref{st:ht>=hclosedimm}, and the pullback of
% $(X_n,\varphi_{nm})$ in \FLG is obtained by taking the pullback of each $X_n$.
% That $\FLG^{\geq h}$ is a stack follows from $\iota$ being a closed
% immersion, arguing just as in (\ref{st:opencomplprops}\ref{it:opencomplstack}).  That said, the proof
% that $\iota$ is a closed immersion is virtually identical to the proof of
% \eqref{st:ht>=hclosedimm}, using that height $\geq h$ for formal Lie groups is
% detected by $X_{p^h}$.
% \end{proof}

\begin{rk}
As for buds, we obtain a decreasing filtration of closed substacks $\FLG =
\FLG^{\geq 0} \ctnsneq \FLG^{\geq 1} \ctnsneq \FLG^{\geq 2} \ctnsneq \dotsb.$ By
contrast with the bud case \eqref{rk:ht.>=h.compat.truncation}, the filtration
for \FLG is of infinite length.
\end{rk}

\subsection{The stack of height $\geq h$ formal Lie groups}

In this section we collect some characterizations of $\FLG^{\geq h}$ analogous to previous results on $\Bn^{\geq h}$ and \FLG.

Let us first consider an analog to the description of $\Bn^{\geq h}$ in \eqref{Bn^>=h_quot}.  Let $U$ be a universal (for $\ZZ_{(p)}$-algebras) formal group law over $\ZZ_{(p)}[t_1,t_2,\dotsc]$ as in \eqref{rk:univ_ht_>=h_law}. Recall that, in the notation of \eqref{rk:univ_ht_>=h_law}, the reduction of $U$ over the ring
\[
   B_h := \ZZ_{(p)}[t_1,t_2,\dotsc]/(p,a_1,\dotsc,a_{h-1})
      \ciso \FF_p[t_1,t_2,\dotsc]/(\ol a _1,\dotsc,\ol a _{h-1})
\]
is a universal group law of height $\geq h$, and $B_h$ is a polynomial ring over $\FF_p$ on the images of the $t_i$ for $i\neq p^1-1$, $p^2-1,\dotsc$,
$p^{h-1}-1$.  Let $\sAut(\wh\AA)_{\FF_p} := \sAut(\wh\AA) \tensor \FF_p$, with $\sAut(\wh\AA)$ as in \eqref{def:Aut_whAA}.  Just as in \eqref{Bn^>=h_quot}, we deduce the following.

\begin{thm}
$\sAut(\wh\AA)_{\FF_p}$ acts naturally on $\Spec B_h$, and we have
$\FLG^{\geq h} \approx \sAut(\wh\AA)_{\FF_p} \bs \Spec B_h$.\hfill $\square$
\end{thm}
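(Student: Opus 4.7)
The plan is to mimic the proof of Theorem \ref{Bn^>=h_quot} essentially verbatim, using Remark \ref{rk:univ_ht_>=h_law} in place of its $n$-bud analog \ref{st:A_h=aff}.

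First, I would produce the action. The ``change of coordinate'' action of $\sAut(\wh\AA)$ on $L$ recalled in \S\ref{ss:flg_stack} (the formal-group-law analog of \ref{rk:AutTTn.acts.on.Ln}) base-changes to an action of $\sAut(\wh\AA)_{\FF_p}$ on $L_{\FF_p}$. By \ref{eg:triv_ht>=h} (or equivalently by \ref{st:ht>=h_tfae}\ref{eg:triv_bud_ht>=h}), height $\geq h$ is an isomorphism invariant of the associated formal Lie group, so this action preserves the closed locus of height $\geq h$ group laws. By Remark \ref{rk:univ_ht_>=h_law}, that locus is precisely $\Spec B_h$, since $B_h$ corepresents the functor of formal group laws of height $\geq h$ on $\FF_p$-algebras (equivalently, on $\ZZ_{(p)}$-algebras, as height $\geq 1$ forces characteristic $p$).

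Next, the reduction $U_h$ of $U$ over $B_h$ defines via \ref{eg:trivfg} a formal Lie group $\wh\AA_{B_h}^{U_h}$ of height $\geq h$, hence a classifying $1$-morphism $\Spec B_h \to \FLG^{\geq h}$. Appealing to \cite{lamb00}*{3.8} as in the proof of \ref{Bn^>=h_quot}, it suffices to verify:
\begin{enumerate}
\renewcommand{\theenumi}{\roman{enumi}}
\item the map $\Spec B_h \to \FLG^{\geq h}$ is locally essentially surjective for the Zariski topology;
\item the action and projection assemble into an isomorphism
\[
   \sAut(\wh\AA)_{\FF_p} \times \Spec B_h \isoarrow \Spec B_h \fib{\FLG^{\geq h}} \Spec B_h.
\]
\end{enumerate}
For (i), any $X\in\FLG^{\geq h}(S)$ admits a coordinate Zariski locally on $S$ by \ref{st:FLV=B()} (together with \ref{rk:B()_top_indep}), so locally $X \iso \wh\AA_S^F$ for some group law $F$ over $\Gamma(S)$ of height $\geq h$ \eqref{eg:triv_ht>=h}; applying the universality of $U_h$ from \ref{rk:univ_ht_>=h_law} produces the required classifying map $S \to \Spec B_h$. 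For (ii), an $S$-point of the fibered product is a triple $(F,G,\varphi)$ consisting of height $\geq h$ group laws $F,G$ over $\Gamma(S)$ and an isomorphism $\varphi\colon \wh\AA_S^F \isoarrow \wh\AA_S^G$; by \ref{eg:trivfg_morph}, $\varphi$ is encoded by an element of $\sAut(\wh\AA)_{\FF_p}(S)$ satisfying $\varphi\cdot F = G$, and the inverse assignment $(\varphi,F)\mapsto (F,\varphi\cdot F,\varphi)$ is manifestly bijective.

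Insofar as there is an obstacle, it is entirely packaged in \ref{rk:univ_ht_>=h_law}: once one knows that $B_h$ is a polynomial ring representing the height $\geq h$ functor (exactly the content imported from the refined form of Lazard's theorem via \ref{st:[p].difference.lem}), the formal-group-law argument is no harder than the $n$-bud one, and in particular no appeal to an induction on $h$ or to the limit presentation $\FLG \approx \ilim_n \Bn$ of \ref{st:FLG=limBn} is needed.
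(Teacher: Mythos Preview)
Your argument is correct. The paper's own proof is the single sentence ``Just as in \eqref{Bn^>=h_quot}, we deduce the following,'' so it is gesturing at the inductive argument used there (base case $h=1$ via $\FLG^{\geq 1}\approx\FLG\tensor\FF_p$, then a Cartesian-square step from $h$ to $h+1$). You instead spell out the direct route through \cite{lamb00}*{3.8} and the universality of $U_h$ from \eqref{rk:univ_ht_>=h_law}---precisely the alternative the paper flags in \eqref{rk:univ_ht_>=h_bud} for the bud case. So despite your opening line, you are not mimicking the proof of \eqref{Bn^>=h_quot} verbatim but rather its remarked variant; both are equally valid, and your version has the virtue of making the role of the universal height~$\geq h$ law explicit while avoiding the induction.
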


In analogy with \eqref{st:FLG=limBn}, let us next consider the relation between the stacks $\FLG^{\geq h}$ and $\Bn^{\geq h}$, $n\geq p^h$.  By \eqref{rk:ht.>=h.compat.truncation}, we may form the limit
$\ilim_{n\geq p^h} \Bn^{\geq h}$ of the $\Bn^{\geq h}$'s with respect to the truncation functors.  By \eqref{st:ht>=h_tfae}, truncation determines an arrow
\[\tag{$*$}\label{disp:FLG^>=h->lim}
   \FLG^{\geq h} \to \ilim_{n \geq p^h} \Bn^{\geq h}.
\]
As always, we emphasize that the limit is taken in the sense of bicategories;
see the appendix.

\begin{thm}\label{st:FLG>=h=lim}
The arrow \eqref{disp:FLG^>=h->lim} is an equivalence of stacks.
\end{thm}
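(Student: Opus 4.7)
The plan is to bootstrap from \eqref{st:FLG=limBn}, which gives the analogous equivalence without the height condition. The key observation is that the height $\geq h$ condition is stable under truncation \eqref{rk:ht.>=h.compat.truncation} and can be detected at any single level $n \geq p^h$ \eqref{st:ht>=h_tfae}, so it passes cleanly through the limit construction.

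First I would argue that the natural restriction arrow $\ilim_{n \geq 0} \Bn \to \ilim_{n \geq p^h} \Bn$ is an equivalence of fibered categories. This is a cofinality statement: given any $n$, the index $\max(n, p^h)$ lies in the subsystem and dominates $n$, and granting the standard facts about cofinal sub-limits in bicategories (presumably collected in the appendix), the restriction arrow admits a quasi-inverse that sends a family $(X_n)_{n \geq p^h}$ to its canonical extension with $X_n := X_{p^h}^{(n)}$ for $n < p^h$.

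Next I would identify $\ilim_{n \geq p^h} \Bn^{\geq h}$ with the full sub-fibered category of $\ilim_{n \geq p^h} \Bn$ consisting of families $(X_n)_{n \geq p^h}$ in which each $X_n$ has height $\geq h$. This is essentially by definition, since $\Bn^{\geq h} \to \Bn$ is a closed immersion \eqref{st:ht.>=h.clsd.imm}, hence in particular a full embedding, and height $\geq h$ is preserved under truncation \eqref{rk:ht.>=h.compat.truncation}, so the structure morphisms of the subsystem are simply the restrictions of those for $\Bn$. Combining the two steps yields an equivalence between $\ilim_{n \geq p^h} \Bn^{\geq h}$ and the full sub-fibered category of $\ilim_n \Bn$ whose objects are families $(X_n)_{n \geq 0}$ with $X_n$ of height $\geq h$ for every $n \geq p^h$. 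Transferring across the equivalence $\FLG \approx \ilim_n \Bn$ of \eqref{st:FLG=limBn}, this condition becomes the requirement that $X^{(n)}$ have height $\geq h$ for all $n \geq p^h$, which by \eqref{st:ht>=h_tfae} is exactly $\FLG^{\geq h}$. One then checks that the resulting composite equivalence is (inverse to) the arrow \eqref{disp:FLG^>=h->lim}, since both are induced by the same universal truncation maps.

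The main obstacle is not conceptual but organizational: the cofinality step and the extension $(X_n)_{n \geq p^h} \mapsto (X_n)_{n \geq 0}$ need to be carried out carefully in the bicategorical limit framework, including verifying compatibility of the relevant cocycle data under the canonical isomorphisms $(X_{p^h}^{(m)})^{(n)} \ciso X_{p^h}^{(n)}$. Once that bookkeeping is cleanly in place, the equivalence is essentially a formal consequence of \eqref{st:FLG=limBn} together with \eqref{st:ht>=h_tfae} and \eqref{rk:ht.>=h.compat.truncation}.
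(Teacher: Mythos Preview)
Your proposal is correct and follows essentially the same route as the paper: the paper's one-line proof cites exactly \eqref{st:FLG=limBn}, \eqref{st:ht>=h_tfae}, and the appendix cofinality result \eqref{st:initial_lim}, which is precisely the ``standard fact about cofinal sub-limits in bicategories'' you invoke. Your write-up simply unpacks what the paper leaves implicit in that combination.
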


\begin{proof}
Combine \eqref{st:FLG=limBn}, \eqref{st:ht>=h_tfae}, and \eqref{st:initial_lim}.
\end{proof}

\subsection{The stratum of height $h$ formal Lie groups I}\label{ss:FLG_ht_h_I}

In this section, in analogy with \s\ref{ss:ht_h_buds}, we begin to study the \emph{strata} of the height stratification on \FLG, or, in other words, the notion of (exact) height for formal Lie groups.  Let $X$ be a formal Lie group over the base scheme $S$.

\begin{prop}\label{st:ht_h_tfae}
The following are equivalent.
\begin{enumerate}
\renewcommand{\theenumi}{\roman{enumi}}
\item
   The $p^{h+1}$-bud $X^{(p^{h+1})}$ has height $h$.
\item
   For any $n \geq p^{h+1}$, the $n$-bud $X^{(n)}$ has height $h$.
\item
   $X$ is an object in the open complement of $\FLG^{\geq h+1}$ in $\FLG^{\geq h}$.
\end{enumerate}
\end{prop}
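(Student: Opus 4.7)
The equivalence (i) $\Leftrightarrow$ (ii) is almost immediate. Since truncation is idempotent, $(X^{(n)})^{(p^{h+1})} \ciso X^{(p^{h+1})}$ for any $n \geq p^{h+1}$, so Remark \ref{rk:ht.h.compat.truncation} (height $h$ is stable under truncation, provided one doesn't truncate below $p^{h+1}$) gives that $X^{(n)}$ has height $h$ iff $X^{(p^{h+1})}$ does.

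For (i) $\Leftrightarrow$ (iii), my plan is to reformulate $\FLG^{\geq h}$ and $\FLG^{\geq h+1}$ through a common truncation level. By the remark following Example \ref{eg:triv_ht>=h}, the diagram \eqref{disp:FLG^>=h_Cart_diag} defining $\FLG^{\geq h}$ may be replaced by the Cartesian square with $\Bn[p^{h+1}]^{\geq h} \to \Bn[p^{h+1}]$ in the bottom row; and $\FLG^{\geq h+1}$ is by definition the pullback of $\Bn[p^{h+1}]^{\geq h+1} \to \Bn[p^{h+1}]$. Hence, inside $\FLG^{\geq h}$, the closed substack $\FLG^{\geq h+1}$ is the pullback of $\Bn[p^{h+1}]^{\geq h+1} \to \Bn[p^{h+1}]^{\geq h}$ along the truncation morphism $\FLG^{\geq h} \to \Bn[p^{h+1}]^{\geq h}$.

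The key step is then to invoke compatibility of open complements with base change: if $\Y \to \X$ is a closed immersion of fibered categories with open complement $\U$, then for any morphism $\X' \to \X$, the pullback $\U \fib\X \X' \to \X'$ is the open complement of $\Y \fib\X \X' \to \X'$. (This is a straightforward consequence of the characterization of the open complement as the representable arrow whose pullback over any $S \to \X$ is the scheme-theoretic open complement of $\Y_S$ in $S$.) Applied to the situation above with $\Y = \Bn[p^{h+1}]^{\geq h+1}$, $\X = \Bn[p^{h+1}]^{\geq h}$, $\U = \Bn[p^{h+1}]^h$, and $\X' = \FLG^{\geq h}$, this exhibits the open complement of $\FLG^{\geq h+1}$ in $\FLG^{\geq h}$ as the Cartesian pullback of $\Bn[p^{h+1}]^h \to \Bn[p^{h+1}]^{\geq h}$ along the truncation morphism. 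Thus $X$ lies in the open complement iff $X^{(p^{h+1})} \in \Bn[p^{h+1}]^h$, which is (i).

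The only subtlety, and arguably the main obstacle, is verifying the base change property for open complements cleanly; but this is essentially formal from the defining universal property (if $f\colon T \to \X'$ is a morphism and $\Y' := \Y \fib\X \X'$, then $\Y' \fib{\X'} T \approx \Y \fib\X T$, so the complements agree). Once this is in hand, the argument reduces to pasting Cartesian squares.
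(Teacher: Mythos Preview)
Your argument is correct and follows the same route as the paper, which proves the proposition by a bare citation of \eqref{rk:ht.h.compat.truncation}. You have simply unpacked what that citation entails: the equivalence (i)$\Leftrightarrow$(ii) is the content of the remark itself, and for (i)$\Leftrightarrow$(iii) the remark's phrase ``since formation of infinitesimal neighborhoods is compatible with base change'' is doing exactly the work you spell out, namely that both $\FLG^{\geq h}$ and $\FLG^{\geq h+1}$ are pullbacks from $\Bn[p^{h+1}]$ along truncation and that open complements are stable under base change.
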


\begin{proof}
\eqref{rk:ht.h.compat.truncation}.
\end{proof}

\begin{defn}\label{def:FLG_ht_h}
$X$ has \emph{height $h$}, or \emph{exact height $h$}, if it satisfies the equivalent conditions of \eqref{st:ht_h_tfae}.  We denote by $\FLG^h$ the substack of \FLG of formal Lie groups of height $h$.
\end{defn}

%In other words, $\FLG^h$ is the open complement of $\FLG^{\geq h+1}$ in $\FLG^{\geq h}$.

% Our definition of (exact) height for formal Lie groups is the obvious analog of that for buds \eqref{def:bud.ht.h}.
% 
% \begin{defn}\label{def:FLG_ht_h}
% We denote by $\FLG^h$ the stack obtained as the open complement of $\FLG^{\geq h+1}$ in $\FLG^{\geq h}$.  We call the objects of $\FLG^h$ the formal Lie groups of \emph{height $h$}, or sometimes of \emph{exact height $h$}.   
% %    
% % The category of formal Lie groups of \emph{height $h$}, denoted $\FLG^h$, is
% % the open complement in $\FLG^{\geq h}$ of the closed immersion $\FLG^{\geq
% % h+1} \to \FLG^{\geq h}$.
% \end{defn}
% 
% The following two lemmas constitute an easy, and essentially obvious, characterization of the formal Lie groups of height $h$ in terms of buds.
% 
% \begin{lem}\label{st:flg.ht.h<=>p^h-bud.ht.h}
% $(X_n,\varphi_{nm})$ has height $h$ $\iff$ $X_{p^{h+1}}$ has
% height $h$ $\iff$ all the $X_n$ for $n \geq p^{h+1}$ have height $h$.
% \end{lem}
% 
% \begin{proof} 
% The first $\Longleftrightarrow$ follows from \eqref{st:opencomplcond}, using
% that height $\geq h+1$ is detected by $X_{p^{h+1}}$. The second
% $\Longleftrightarrow$ is clear from \eqref{rk:ht.h.compat.truncation}.
% \end{proof}

\begin{eg}
Quite as for buds \eqref{st:hthtrivgerm}, if $X = \wh\AA_S^F$ for the formal group law $F$ over $\Gamma(S)$ \eqref{eg:trivfg}, then the notion of height $h$ for $X$ recovers precisely that for $F$ \eqref{def:law.height}.
% \[
% 	F \in \Gamma(S)[[T_1, T_2]],
% \]
% then our definition of height $h$ differs from the classical one
% \cite{froh68}*{III, \s2, p.\ 70}: we require that $[p]_F(T)$ be of the form
% $aT^{p^h} + \dotsb$ with $a$ a \emph{unit} in $\Gamma(S)$, not just nonzero.
\end{eg}

\begin{rk}
The caution of \eqref{rk:ht_confusion} still applies: to
say that a formal Lie group has ``height $\geq h$'' is \emph{not} to say that it has ``height $h'$ for some $h' \geq h$''.
\end{rk}

\begin{rk}[Relation to $p$-Barsotti-Tate groups]
Our notion of height for formal Lie groups is related to, but \emph{not}
strictly compatible with, the notion of height for $p$-Barsotti-Tate, or 
$p$-divisible, groups.  In rough form,
the difference is that (exact) height for formal Lie groups is a
\emph{locally closed} condition, whereas height for Barsotti-Tate groups is a
\emph{fiberwise} condition.  For example, if $X$ is a formal Lie group of height
$h$ in the sense of \eqref{def:FLG_ht_h}, then $X$ is a Barsotti-Tate group of
height $h$ in the sense of Barsotti-Tate groups.  But the converse can easily fail. 
For example, $\wh\GG _m$ is a Barsotti-Tate group of height $1$ over any base
scheme on which $p$ is locally nilpotent.  But $\wh\GG _m$ has height $1$ as
a formal Lie group exactly when $p$ is honestly $0$.

Similar examples exist for any height $h > 1$.  Let $A$ be a ring and $I\subset
A$ a nonzero nilideal such that $B := A/I$ is of characteristic $p$.  Let $F$ be
a formal group law of height $h$ over $B$.  As $A$ is necessarily a
$\ZZ_{(p)}$-algebra, we may apply \eqref{rk:univ_ht_>=h_law} to lift $F$ to a
group law $\wt F$ over $A$ such that the coefficients in $[p]_{\wt F}(T)$ of
$T^p$, $T^{p^2},\dotsc$, $T^{p^{h-1}}$ are any elements of $I$ that we like.  In
particular, we can ensure that $\wt F$ is not of height $\geq h$.  But it is
easy to verify that $\wh\AA_A^{\wt F}$ \eqref{eg:trivfg} is a Barsotti-Tate
group of height $h$.
% Recall that a $p$-divisible group is \emph{ind-infinitesimal} if its kernel
% under multiplication by $p$ has connected fibers.
% It follows from the
% classification of finite group schemes over fields that ind-infinitesimal
% $p$-divisible groups may only occur over base schemes for which all residue
% fields are of characteristic $p$; in other words, over base scheme that are
% $\Spf \ZZ_p$-schemes.
% Over a field of characteristic $p$, every
% ind-infinitesimal $p$-divisible group is a formal Lie group (on possibly many
% parameters), and a $1$-parameter formal Lie group is a $p$-divisible group
% (necessarily ind-infinitesimal) exactly when it is of finite height.
% 
% The situation over arbitrary bases is more complicated.
\end{rk}

\begin{rk}[Relation to $p$-typical formal group laws]\label{rk:p-typ}
Let us digress for a moment to briefly discuss $BP$-theory and $p$-typical
formal group laws.  We refer to
\cite{rav86} for general background, especially to \cite{rav86}*{App.\ 2} for
the relevant group law theory.  Recall that $BP_*$ and the ring $W := BP_*[t_0,
t_0^{-1}, t_1, t_2,\dotsc]$ admit a natural Hopf algebroid structure such that
the associated internal groupoid in the category of affine $\ZZ_{(p)}$-schemes
\[\tag{$*$}\label{disp:BP_gpd}
   \Spec W \rra \Spec BP_*
\]
represents $p$-typical formal group laws and the isomorphisms between them. In
particular, letting \X denote the stackification of \eqref{disp:BP_gpd}, there
is a natural morphism $f\colon \X \to \FLG \tensor \ZZ_{(p)}$, and one verifies
just as in \cite{nau07}*{34(2)} that $f$ is an equivalence. Hence the height
stratification on \FLG induces a stratification on \X, or in other words, a
stratification on $\Spec BP_*$ by \emph{invariant} closed subschemes.

Now, recall that $BP_* \iso \ZZ_{(p)}[u_1,u_2,\dotsc]$, where for convenience we
take the $u_i$'s to be the Araki generators and set $u_0 := p$. Recall also
Landweber's ideals $I_0 := 0$ and $I_h := (u_0,u_1,\dotsc,u_{h-1})$, $h > 0$, in
$BP_*$. Then for all $h \geq 0$, \emph{the closed substack $\FLG^{\geq h}
\tensor \ZZ_{(p)}$ in $\FLG \tensor \ZZ_{(p)} \approx \X$ corresponds to the
ideal $I_h \subset BP_*$}; one may deduce this essentially from Landweber's
classification of invariant prime ideals in $BP_*$
\citelist{\cite{land73}*{2.7}\cite{land76}*{6.2}}, or in a more direct fashion
from the formula \cite{rav86}*{A2.2.4} (this formula is the only point where our
particular choice of the Araki generators enters). In particular, our
notion of (exact) height agrees with Pribble's \cite{prib04}*{4.5}. The identification of the height
stratification and the $I_h$-stratification on \X is also noted in
\cite{nau07}*{\s6 pp.\ 25--26}; one verifies immediately that Naumann's
definition of the height stratification agrees with ours.

This said, let us note that our notion of height is not completely compatible
with the notion of height for $BP_*$-algebras in \cite{hovstr05}*{4.1}.  Namely, given a $BP_*$-algebra $A$, consider the composite
\[
   \Spec A \to \Spec BP_* \to \FLG.
\]
From the point of view of this paper, it would be reasonable to say that $A$ is a $BP_*$-algebra of height $h$ if the displayed composite factors through $\FLG^h$.  But, as noted in \cite{nau07}*{24}, $A$ has height $h$ in the sense of \cite{hovstr05} if it satisfies the strictly weaker condition that $h$ is the smallest nonnegative integer for which the composite factors through the open substack $\FLG - \FLG^{\geq h+1}$ of \FLG (\cite{hovstr05} defines $A$ to have height $\infty$ if the composite fails to factor through  $\FLG - \FLG^{\geq h+1}$ for any $h$).
\end{rk}

We shall next formulate a characterization of $\FLG^h$ analogous to
\eqref{st:Bn^h=B(AutH^(n))}.  Recall the formal group law $H = H_h$ of
\eqref{ntn:H}. 

\begin{defn}\label{def:AutH}
We define $\sAut(H)$ to be the presheaf of groups on $\Sch{}_{/\FF_p}$
\[
   \sAut(H)\colon S \mapsto 
     \Aut_{\Gamma(S)}(H)
     \ciso \Aut_{\flg(S)}\bigl(\wh\AA_S^H\bigr).
\]
\end{defn}

Whereas in \eqref{st:Bn^h=B(AutH^(n))} we were led to consider torsors for the finite \'etale topology, we shall now need to consider $\sAut(H)$-torsors for the fpqc topology.  Given a group $G$ over $\Spec \FF_p$, we write $B_\fpqc(G)$ for the stack over $\Sch{}_{/\FF_p}$ of
$G$-torsors for the fpqc topology.

\begin{thm}\label{st:FLG^h=B(AutH)} 
$\FLG^h \approx B_\fpqc\bigl(\sAut(H)\bigr)$.
\end{thm}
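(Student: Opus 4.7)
The plan is to mimic exactly the proof of Theorem \ref{st:Bn^h=B(AutH^(n))}: identify $\FLG^h$ as a neutral gerbe over $\Spec\FF_p$ for the fpqc topology, with the object $\wh\AA^H_{\FF_p}$ providing a global section, and then invoke the general classification result \cite{lamb00}*{3.21} to conclude that a neutral gerbe is equivalent to the classifying stack of its automorphism group for the given topology.

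First I would check that $\FLG^h$ is in fact a stack for the fpqc topology: this is automatic since $\FLG$ is fpqc \eqref{st:FLG=fpqc_stack} and $\FLG^h$ is a locally closed substack. Next, $\wh\AA^H_{\FF_p}$ is an object of $\FLG^h(\Spec\FF_p)$ by \eqref{st:[p]_H=T^p^h.over.Fp} together with \eqref{eg:triv_ht>=h} (since $[p]_H(T)=T^{p^h}$ has leading coefficient $1\in\FF_p^\times$). By definition $\sAut(H)$ is precisely the automorphism group sheaf of this object, so neutrality is immediate.

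The main content is to show local nonemptiness and local isomorphy --- that is, that any object $X$ of $\FLG^h(S)$ becomes isomorphic to the pullback of $\wh\AA^H$ after some fpqc cover of $S$. For this, first apply \eqref{st:ngermloctriv} (or rather its formal-Lie-variety analog recorded in the proof of \eqref{st:FLV=limGn}) to reduce, after a Zariski cover of $S$, to the case $X = \wh\AA_S^F$ for some formal group law $F$ over $\Gamma(S)$ of height $h$. Then \eqref{st:ht.h.fin.et} furnishes a faithfully flat (in fact a countable ascending union of finite \'etale) extension $B$ of $\Gamma(S)$ over which $F \iso H$; passing to $\Spec B \to S$ gives the required fpqc refinement on which $X$ becomes isomorphic to $\wh\AA^H$. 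The passage to ind-finite-\'etale covers rather than finite \'etale covers is exactly what forces us to work with the fpqc topology here, in contrast to the bud case.

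The hardest part is already absorbed into the classical input \eqref{st:ht.h.fin.et}; once we have that, the proof is formal. A subtle point to double-check is that the comparison with \cite{lamb00}*{3.21} applies in the fpqc setting --- that is, that the cited neutral-gerbe classification is topology-independent --- but this is standard. One could then add the remark that, since $\FLG^h$ is an fpqc stack, $B_\fpqc(\sAut(H))$ agrees with the classifying stacks for any topology finer than the finite \'etale and coarser than the fpqc topology.
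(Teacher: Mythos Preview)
Your proposal is correct and follows essentially the same approach as the paper. The paper's own proof is literally the one-liner ``Essentially identical to the proof of \eqref{st:Bn^h=B(AutH^(n))}''; you have simply unpacked what that means, identifying $\FLG^h$ as a neutral gerbe over $\Spec\FF_p$ (using local triviality of the underlying formal Lie variety and \eqref{st:ht.h.fin.et} for local isomorphy) and invoking \cite{lamb00}*{3.21}.
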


\begin{proof}
Essentially identical to the proof of \eqref{st:Bn^h=B(AutH^(n))}.
\end{proof}

\begin{rk}
The statement of the theorem is not entirely sharp:  by \eqref{st:ht.h.fin.et}, it would suffice to replace the fpqc topology by the topology on $\Sch{}_{/\FF_p}$ generated by the Zariski topology and all maps $\Spec B \to \Spec A$ between affine schemes obtained as a limit of surjective finite \'etale maps $\dotsb \to \Spec B_2 \to \Spec B_1 \to \Spec A$.
\end{rk}

We shall study the group $\sAut(H)$ and its relation to the groups  $\sAut(H^{(n)})$ for varying $n \geq p^{h+1}$ in the next section.  

We shall conclude this section by formulating another characterization of the stack $\FLG^h$, this time the obvious analog of \eqref{st:FLG>=h=lim}.  By \eqref{rk:ht.h.compat.truncation}, we may form the limit
$\ilim_{n\geq p^{h+1}} \Bn^h$ of the $\Bn^h$'s with respect to the truncation functors.  By \eqref{st:ht_h_tfae}, truncation determines an arrow
\[\tag{$**$}\label{disp:FLG^h->lim}
   \FLG^h \to \ilim_{n \geq p^{h+1}} \Bn^h.
\]
As always, we emphasize that the limit is taken in the sense of bicategories;
see the appendix.  As in \eqref{st:FLG>=h=lim}, only replacing the reference to \eqref{st:ht>=h_tfae} with \eqref{st:ht_h_tfae}, we obtain the following.

\begin{thm}\label{st:FLG^h=lim}
The arrow \eqref{disp:FLG^h->lim} is an equivalence of stacks. \hfill $\square$
\end{thm}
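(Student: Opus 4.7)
The plan is to follow the proof of \eqref{st:FLG>=h=lim} verbatim, only substituting \eqref{st:ht_h_tfae} for \eqref{st:ht>=h_tfae}. First I would invoke \eqref{st:FLG=limBn} to identify $\FLG$ with $\ilim_n\Bn$ via truncation; under this equivalence a formal Lie group $X$ over $S$ corresponds to the compatible system $(X^{(n)})_{n\geq 0}$ of its truncations. By \eqref{st:ht_h_tfae}, $X$ lies in $\FLG^h$ if and only if $X^{(n)}$ lies in $\Bn^h$ for every $n\geq p^{h+1}$ (equivalently, for some $n\geq p^{h+1}$, in view of \eqref{rk:ht.h.compat.truncation}). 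Hence $\FLG^h$ is equivalent to the full sub-fibered category of $\ilim_n\Bn$ cut out by the condition that the $n$th component lies in $\Bn^h$ for all $n\geq p^{h+1}$.

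Second, I would appeal to the bicategorical cofinality result \eqref{st:initial_lim} from the appendix: since $\{n\geq p^{h+1}\}$ is initial in $\NN\cup\{0\}$, the limit of the $\Bn^h$'s over this cofinal subsystem agrees with the full sub-fibered category of $\ilim_n\Bn$ just described. (Here I use once more that by \eqref{rk:ht.h.compat.truncation} the truncation functors $\Bn[m]\to\Bn$ for $m\geq n\geq p^{h+1}$ restrict to well-defined, compatible truncation functors $\Bn[m]^h\to\Bn^h$, so that the limit $\ilim_{n\geq p^{h+1}}\Bn^h$ makes sense.) Combining the two steps yields the equivalence \eqref{disp:FLG^h->lim}.

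There is no real obstacle here: the only input beyond the two cited results is the formal cofinality lemma, which has already been isolated in the appendix precisely so that such arguments become routine. The proof is strictly parallel to that of \eqref{st:FLG>=h=lim}, the only change being the replacement of the closed-substack characterization of height $\geq h$ by the locally closed substack characterization of exact height $h$.
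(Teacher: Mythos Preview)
Your proposal is correct and matches the paper's own argument essentially verbatim: the paper also deduces \eqref{st:FLG^h=lim} by repeating the proof of \eqref{st:FLG>=h=lim}, only replacing \eqref{st:ht>=h_tfae} with \eqref{st:ht_h_tfae}, i.e.\ combining \eqref{st:FLG=limBn}, \eqref{st:ht_h_tfae}, and \eqref{st:initial_lim}.
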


% \begin{lem}\label{st:(fg)_h=ilim(nbuds)_h} 
% The natural map
% \[
%  \FLG^h \to \ilim_{n\geq p^{h+1}} \Bn^h
% \]
% is an equivalence of stacks.
% \end{lem}
% 
% The limit in the lemma is taken with respect to the restriction of the
% truncation functors to each $\Bn^h$; truncation preserves height, so the
% limit makes sense.  The functor is defined on each $(X_n,\varphi_{nm})$ by
% forgetting the $X_n$ and $\varphi_{nm}$ for which $n < p^{h+1}$.
% 
% \begin{proof}
% By \eqref{st:initial_lim}, we may just as well regard $\FLG^h \subset
% \ilim_{n\geq 1} \Bn$ as a subcategory of $\ilim_{n\geq p^{h+1}} \Bn$. Now the
% assertion is obvious from \eqref{st:flg.ht.h<=>p^h-bud.ht.h}.
% \end{proof}

\subsection{Automorphisms and endomorphisms of formal Lie groups of height $h$}\label{ss:hthauts}

Let $h \geq 1$.  Our result $\FLG^h \approx B_\fpqc\bigl( \sAut(H)\bigr)$
\eqref{st:FLG^h=B(AutH)}, with $\FLG^h$ the stratum in \FLG of formal Lie groups
of height $h$, leads us to consider closely the $\FF_p$-group $\sAut(H)$
\eqref{def:AutH}.  We shall devote this section to investigating some aspects of
its structure and of its relation to the groups $\sAut(H^{(n)})$
\eqref{def:AutH^(n)}, $n \geq 1$.  We shall ultimately apply our final
result of this section, \eqref{isom_pro-obs}, to obtain another characterization
of $\FLG^h$ in \s\ref{ss:modstackhth}.

Let us begin with the analog of \eqref{st:AutH^(n)=clsd_sbgp} for $\sAut(H)$. 
Recall the \ZZ-group $\sAut(\wh\AA)$ \eqref{def:Aut_whAA}, and let
$\sAut(\wh\AA)_{\FF_p} := \sAut(\wh\AA)\tensor \FF_p$.  Quite as in
\eqref{st:AutH^(n)=clsd_sbgp}, we obtain the following.

\begin{lem}
$\sAut(H)$ is canonically represented by a closed sub-group scheme of
$\sAut(\wh\AA)_{\FF_p}$. \hfill $\square$
\end{lem}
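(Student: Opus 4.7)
The plan is to mimic the argument for $\sAut(H^{(n)})$ in \eqref{st:AutH^(n)=clsd_sbgp} essentially verbatim, realizing $\sAut(H)$ as the stabilizer of $H$ under a ``change of coordinates'' action of $\sAut(\wh\AA)_{\FF_p}$ on the scheme $L_{\FF_p} := L \tensor \FF_p$ of formal group laws on $\wh\AA_{\FF_p}$.

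First I would define the action of $\sAut(\wh\AA)$ on $L$ in exact analogy with \eqref{rk:AutTTn.acts.on.Ln}: given a formal Lie group structure $\wh\AA_S^F$ and a germ automorphism $f$ of $\wh\AA_S$, transport of structure along $f$ yields a new formal Lie group structure $\wh\AA_S^G$ with $G(T_1,T_2) = f^\#\bigl[F\bigl(f^\#{}^{-1}(T_1), f^\#{}^{-1}(T_2)\bigr)\bigr]$. Since both $\sAut(\wh\AA)$ \eqref{st:Aut_whAA=repble} and $L \iso \AA_\ZZ^\infty$ are representable by affine schemes, and the formulas for the action are polynomial (in fact coefficient-wise) in the relevant indeterminates, this defines an honest morphism of schemes $\sAut(\wh\AA) \times L \to L$.

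Second, after base change to $\FF_p$, I would form the Cartesian square
\[
   \xymatrix{
      \sAut(H) \ar[r] \ar[d] \ar@{}[rd]|*{\square}
         & \sAut(\wh\AA)_{\FF_p} \ar[d]^-{f \mapsto f \cdot H} \\
      \Spec \FF_p \ar[r]_-{H}
         & L_{\FF_p},
   }
\]
where the bottom arrow is the classifying map determined by the fixed group law $H$ over $\FF_p$ \eqref{ntn:H}. Cartesianness is immediate from the definition of $\sAut(H)$ as the group of automorphisms of $\wh\AA_S$ over varying $S$ that carry $H$ to itself.

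Third, I would observe that the bottom horizontal arrow is a closed immersion: under the (noncanonical) identification $L_{\FF_p} \ciso \Spec \FF_p[a_1, a_2, \dotsc]$ coming from Lazard's theorem, the map $\Spec \FF_p \to L_{\FF_p}$ classifying $H$ is cut out by the ideal $(a_i - h_i)_{i\geq 1}$, where the $h_i$ are the (images in $\FF_p$ of the) coefficients of the universal formal group law $U$ specialized to $H$. Since closed immersions are stable under base change, the top arrow $\sAut(H) \hookrightarrow \sAut(\wh\AA)_{\FF_p}$ is also a closed immersion.

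There is no serious obstacle; the only nontrivial input is that $L$ is representable by an affine scheme, which is built into Lazard's theorem \eqref{st:Laz.thm} (as recorded in the discussion preceding the statement of the present lemma). Everything else is bookkeeping parallel to the bud case.
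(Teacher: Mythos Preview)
Your proposal is correct and is precisely the approach the paper intends: the statement is marked with $\square$ after noting ``Quite as in \eqref{st:AutH^(n)=clsd_sbgp},'' and your Cartesian-square stabilizer argument is exactly that. The only step you make explicit beyond the bud case is that $\Spec\FF_p \to L_{\FF_p}$ is a closed immersion, which is clear since the corresponding ring map $\FF_p[a_1,a_2,\dotsc] \to \FF_p$ is surjective.
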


Quite as in \s\ref{ss:aut_buds_ht_h}, although we will ultimately be interested
in automorphisms of $H$, we shall accord the endomorphisms of $H$ a more
fundamental role.  %We make the following definition.

\begin{defn}
We define $\sEnd(H)$ to be the presheaf of (noncommutative) rings on
$\Sch{}_{/\FF_p}$
\[
   \sEnd(H)\colon
      S \mapsto \End_{\Gamma(S)}(H)
      \ciso \End_{\flg(S)}\bigl(\wh\AA_S^H\bigr).
\]
\end{defn}

The ring structure on points of $\sEnd(H)$ is described explicitly in terms of
$H$ in \eqref{rk:End_A(F)}.

The $\I_\bullet^{H^{(n)}}$-filtration on $\sEnd(H^{(n)})$ \eqref{def:I^H^(n)}
admits a natural analog for $\sEnd(H)$, as follows.

\begin{defn}
For $i=0$, $1$, $2,\dotsc$, we denote by $\I^H_i$ the subpresheaf of $\sEnd(H)$
defined on points by
\[
   \I^H_i(S) := \biggl\{\, f \in \End_{\Gamma(S)}(H) \biggm|
      \parbox{32.6ex}{\centering $f(T)$ is of the form \\
                    $a_{i+1}T^{i+1} + \text{(higher order terms)}$} 
                    \,\biggr\}.
\]
\end{defn}

Quite as for $\I_i^{H^{(n)}}$, one verifies immediately that $\I^H_i$ is a
presheaf of \emph{$2$-sided ideals} in $\sEnd(H)$ for all $i$, and we have
a decreasing filtration
\[
   \sEnd(H) = \I^H_0 \supset \I^H_1 \supset \I^H_2 \supset \dotsb,
\]
this time of infinite length.

% We'll have more to say about $\sAut\bigl(\TT_n^{H^{(n)}}\bigr)$ below, but for
% the moment we turn to $\sAut\bigl(\wh\AA^H\bigr)$.  Let us first consider
% endomorphisms of $\wh\AA^H$.  Recall \eqref{eg:trivfg} that to give an
% endomorphism of $\wh\AA_S^H$ over the scheme $S$ is to give a formal group law
% endomorphism of $H$ over $\Gamma(S)$.  So let us recall the two-sided ideals
% $I_n\subset \End_{\Gamma(S)}(H)$ appearing in \eqref{st:End(H).succ.quots}:
% \[
%    I_n = \{\, f\in\End_{\Gamma(S)}(H)
%                \mid f(T) = aT^{p^n} + \text{(higher order terms)}\,\}.
% \]
% $I_n$ is clearly functorial in $S$.

We now wish to introduce the analog for $\sAut(H)$ of the
$\A_\bullet^{H^{(n)}}$-filtration on $\sAut(H^{(n)})$ \eqref{def:A^H^(n)}. We
could do
so by mimicking the definition of the $\A_\bullet^{H^{(n)}}$-filtration in the
obvious way: there is a natural filtration on $\sAut(\wh\AA)$ in plain analogy
with \eqref{rk:AutTT_n_filt}, hence an induced filtration on
$\sAut(\wh\AA)_{\FF_p}$, hence an intersection filtration on $\sAut(H)$.
Instead, we will just use directly the $\I_\bullet^H$-filtration on $\sEnd(H)$.

\begin{defn}\label{def:A^H}
We define $\A_i^H$ to be the subpresheaf of $\sEnd(H)$
\[
   \A_i^H :=
	\begin{cases}
	   \sAut(H), & i=0\\
		T +_H \I_i^H, & i=1,\ 2,\ldots.
	\end{cases}
\]
\end{defn}

Concretely, reasoning as in \eqref{st:I^i->A^i_isom}, $\A_i^H$ is given on
points by
\[
	\A^H_i(S) := \biggl\{\, f \in \Aut_{\Gamma(S)}(H) \biggm|
		\parbox{37ex}{\centering $f(T)$ is of the form \\
						  $T + a_{i+1}T^{i+1} + \text{(higher order terms)}$} 
						  \,\biggr\}.
\]
It is immediate that $\A_i^H$ is a \emph{normal subgroup} in $\sAut(H)$ for 
all $i$, and we have a decreasing filtration
\[
   \sAut(H) = \A_0^H \supset \A_1^H \supset \A_2^H \supset \dotsb.
\]

Let us now turn to the relation between $\sEnd(H)$ and the $\sEnd(H^{(n)})$'s, 
and between $\sAut(H)$ and the $\sAut(H^{(n)})$'s.
For any $m \geq n \geq 1$, truncation of $H$ induces a commutative 
diagram of presheaves of rings
\[\tag{$*$}\label{disp:EndH_diag}
   \vcenter{
	\xymatrix@C-7ex@R-1ex{
   	& \sEnd(H) \ar[rd] \ar[ld]\\
		\sEnd(H^{(m)}) \ar[rr] & & \sEnd(H^{(n)}).
	}
	}
\]

\begin{prop}\label{st:EndH_and_AutH_limits}
For all $i \geq 0$, the diagram \eqref{disp:EndH_diag} induces
\begin{enumerate}
\renewcommand{\theenumi}{\roman{enumi}}
\item\label{it:I_i=lim_I^H^n}
   $\I_i^H \isoarrow \ilim_{n \geq1} \I_i^{H^{(n)}}$, where we take
	$\I_i^{H^{(n)}} := 0$ for $i \geq n$; and
\item\label{it:A_i=lim_A^H^n}
   $\A_i^H \isoarrow \ilim_{n \geq 1} \A_i^{H^{(n)}}$, where we take
	$\A_i^{H^{(n)}} := 1$ for $i \geq n$.
\end{enumerate}
Moreover,
\begin{enumerate}
\renewcommand{\theenumi}{\roman{enumi}}
\setcounter{enumi}{2}
\item\label{it:I_i=lim_I_n}
   $\I_i^H \isoarrow \ilim_{n \geq i} \I_i^H/\I_n^H$ and
\item \label{it:A_i=lim_A_n}
   $\A_i^H \isoarrow \ilim_{n \geq i} \A_i^H/\A_n^H$.
\end{enumerate}
In particular, $\sEnd(H)$ (resp.\ $\sAut(H)$) is complete and separated with 
respect to the $\I^H_\bullet$- (resp.\ $\A^H_\bullet$-) topology.
\end{prop}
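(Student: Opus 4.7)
The approach is to reduce each part to elementary facts about formal power series rings viewed as inverse limits of truncated polynomial rings. All the relevant limits are limits of presheaves, so they can be computed sectionwise; throughout, I fix a base scheme $S$ and work with $\Gamma(S)$-valued sections.

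For (i), the key identifications are the canonical topological isomorphisms $\Gamma(S)[[T]] \ciso \ilim_n \Gamma(S)[T]/(T)^{n+1}$ and $\Gamma(S)[[T_1,T_2]] \ciso \ilim_n \Gamma(S)[T_1,T_2]/(T_1,T_2)^{n+1}$. An $S$-point of $\sEnd(H)$ is tautologically a power series $f \in \Gamma(S)[[T]]$ with $f(0)=0$ satisfying the homomorphism condition $f(H(T_1,T_2)) = H(f(T_1),f(T_2))$. Under the first isomorphism, $f$ corresponds to a compatible family of truncated polynomials $(f_n)$. Under the second, the homomorphism condition is equivalent to its truncations modulo every $(T_1,T_2)^{n+1}$, each of which is precisely the homomorphism condition for the pair $(f_n, H^{(n)})$. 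Hence $\sEnd(H)(S) \ciso \ilim_n \sEnd(H^{(n)})(S)$; passing to the $\I_i^H$'s and $\I_i^{H^{(n)}}$'s, which are cut out by the vanishing of the first $i$ coefficients, yields (i).

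For (ii), the case $i \geq 1$ follows immediately from (i) via the natural coefficient-wise bijection $\I_i \isoarrow \A_i$, $f \mapsto T +_H f$ of \eqref{st:I^i->A^i_isom}, which commutes with truncation. For $i = 0$, the subpresheaf $\sAut(H) \subset \sEnd(H)$ is cut out by the condition that the degree-one coefficient be a unit, a condition already detected at the level $n = 1$; so compatibility of $\sAut(H)$ with the inverse limit follows again from (i).

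For (iii) and (iv), these are the completeness and separatedness statements for the two filtrations. Separatedness is immediate, since any element of $\bigcap_n \I_n^H$ has every power series coefficient zero. For completeness, given a compatible system $(\bar f_m)$ in $\ilim_{m \geq i} \I_i^H/\I_m^H$, choose any lifts $f_m \in \I_i^H$; compatibility automatically yields $f_{m'} - f_m \in \I_m^H$ for $m' \geq m$, so $(f_m)$ is Cauchy in the $T$-adic topology on $\Gamma(S)[[T]]$. The $T$-adic limit $f$ still satisfies the homomorphism condition by the same continuity observation used in (i) --- the only substantive step in the entire proof, and one which is immediate since both sides of the homomorphism condition are continuous functions of $f$ in the Hausdorff topological ring $\Gamma(S)[[T_1,T_2]]$ --- and $f$ evidently lies in $\I_i^H$ and reduces to $\bar f_m$ modulo $\I_m^H$ for every $m$. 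The argument for (iv) is essentially identical, using for $i = 0$ that the unit condition on the degree-one coefficient, being a condition at level $n=1$, persists in the limit.
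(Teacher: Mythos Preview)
Your proof is correct. The treatment of (i) and (ii) is essentially the paper's argument unwound: where you compute directly with $\Gamma(S)[[T]] \ciso \ilim_n \Gamma_n(S;T)$, the paper simply cites its earlier equivalence $\FLG \approx \ilim_n \Bn$ (\ref{st:FLG=limBn}), which was itself proved by exactly this power-series computation.

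The genuine difference is in (iii) and (iv). You prove completeness from scratch via a Cauchy-sequence argument, verifying that the $T$-adic limit remains an endomorphism by continuity. The paper instead deduces (iii) directly from (i): for $n \geq i$ the kernel of $\I_i^H \to \I_i^{H^{(n)}}$ is $\I_n^H$, so $\I_i^H/\I_n^H$ embeds in $\I_i^{H^{(n)}}$, and one has a factorization
\[
   \I_i^H \longrightarrow \ilim_n \I_i^H/\I_n^H \hookrightarrow \ilim_n \I_i^{H^{(n)}}.
\]
Since the composite is the isomorphism of (i) and the second arrow is injective, the first arrow is forced to be an isomorphism. The paper's route is shorter and avoids re-proving that limits of endomorphisms are endomorphisms; your route is more self-contained and makes the underlying topology explicit. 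Both are fine.
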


\begin{proof}
Before anything else, it is clear from the definitions that truncation carries
$\I_i^H$ and $\I_i^{H^{(m)}}$ into $\I_i^{H^{(n)}}$, $m \geq n$, so that the 
limit and arrow in \eqref{it:I_i=lim_I^H^n} are well-defined; and analogously for 
\eqref{it:A_i=lim_A^H^n}.

\eqref{it:I_i=lim_I^H^n}\: The case $i = 0$ is clear because $\wh\AA^H$
\eqref{eg:trivfg} is ind-infinitesimal: precisely, use \eqref{st:FLG=limBn},
\eqref{rk:Fib_obwise}, and \eqref{rk:lims_via_cat_lims}.  The case $i > 0$ is
then clear because, for all $n \geq i$, the inverse image of $\I_i^{H^{(n)}}$ in
$\sEnd(H)$ is $\I_i^H$.

\eqref{it:A_i=lim_A^H^n}\: Immediate from \eqref{it:I_i=lim_I^H^n} and, when 
$i > 0$, from \eqref{st:I^i->A^i_isom}.

\eqref{it:I_i=lim_I_n}\: Immediate from \eqref{it:I_i=lim_I^H^n}, since for $n
\geq i$, $\I_i^H/\I_n^H$ identifies with the image of $\I_i^H$ in
$\I_i^{H^{(n)}}$.

\eqref{it:A_i=lim_A_n}\: Immediate from \eqref{it:A_i=lim_A^H^n}, since for $n
\geq i$, $\A_i^H/\A_n^H$ identifies with the image of $\A_i^H$ in
$\A_i^{H^{(n)}}$.
\end{proof}

As a consequence of the proposition and of our earlier calculation of the the
successive quotients of the $\I_\bullet^{H^{(n)}}$-filtration
\eqref{st:EndH^(n)_succ_quot}, we now obtain the successive quotients of the
$\I_\bullet^H$-filtration.  For any $i$ and any $n \geq i+1$, we have
monomorphisms 
\[\tag{$**$}\label{disp:I^H_succ_quot->G_a} 
   \I_i^H/\I_{i+1}^H 
	   \inj \I^{H^{(n)}}_i / \I^{H^{(n)}}_{i+1} 
		\inj
		\begin{cases}
			\OO, & i = 0;\\
			\GG_a, & i > 0;\\
		\end{cases}
\]
plainly the composite is independent of the choice of $n$.

\begin{cor}\label{st:EndH_succ_quot}
The diagram \eqref{disp:I^H_succ_quot->G_a} induces an identification of 
presheaves
\[
   \I^H_i / \I^H_{i+1} \ciso
   \begin{cases}
   	\OO^{\Fr_{\smash{p^h}}}, & i = 0;\\
   	\GG_a^{\Fr_{p\smash{^h}}}, & i = p-1,\ p^2-1,\ p^3-1,\dotsc;\\
   	0, & \text{otherwise.}
   \end{cases}
\]
\end{cor}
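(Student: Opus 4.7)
The plan is to fix $i \geq 0$ and exhibit $\I_i^H / \I_{i+1}^H$ as the inverse limit of the quotients $\I_i^{H^{(n)}} / \I_{i+1}^{H^{(n)}}$, whose eventual value can be read off from Theorem \ref{st:EndH^(n)_succ_quot}. Working at the level of $A$-points for an $\FF_p$-algebra $A$, Proposition \ref{st:EndH_and_AutH_limits}\eqref{it:I_i=lim_I^H^n} gives $\I_i^H(A) = \ilim_n \I_i^{H^{(n)}}(A)$ and analogously for $\I_{i+1}^H(A)$. Taking limits of the tautological short exact sequences
\[
   0 \to \I_{i+1}^{H^{(n)}}(A) \to \I_i^{H^{(n)}}(A)
      \to \I_i^{H^{(n)}}(A) / \I_{i+1}^{H^{(n)}}(A) \to 0,
\]
the six-term sequence of the limit then yields the required identification
\[
   \I_i^H(A) / \I_{i+1}^H(A) \isoarrow \ilim_n \I_i^{H^{(n)}}(A) / \I_{i+1}^{H^{(n)}}(A),
\]
provided the inverse system $\{\I_{i+1}^{H^{(n)}}(A)\}_n$ satisfies the Mittag-Leffler condition.

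To verify Mittag-Leffler, I would show that for $m \geq n$ the transition map $\I_{i+1}^{H^{(m)}}(A) \to \I_{i+1}^{H^{(n)}}(A)$ is in fact surjective. This is essentially a free-standing extraction from the proof of Theorem \ref{st:EndH^(n)_succ_quot}: given an endomorphism $g$ of $H^{(n)}$ with no terms of degree $\leq i+1$, Case~1 (for non-powers-of-$p$) and Case~2 (for powers of $p$) together allow one to iteratively add a term of each degree $n+1, n+2, \ldots, m$ to produce an endomorphism of $H^{(m)}$; at each step only terms of degree $\geq n+1 > i+1$ are added, so the extension remains in $\I_{i+1}^{H^{(m)}}$.

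Once the limit identification is in hand, I would evaluate it using Theorem \ref{st:EndH^(n)_succ_quot}. For $i$ not of the form $p^j - 1$ (including $i = 0$ handled separately), all terms $\I_i^{H^{(n)}} / \I_{i+1}^{H^{(n)}}$ vanish, giving limit $0$. For $i = 0$, the terms equal $\OO^{\Fr_{p^h}}$ for all $n \geq p^h$. For $i = p^j - 1$ with $j \geq 1$, once $n$ is large enough that $l - h \geq j$ (equivalently $n \geq p^{j+h}$), the terms are identified with $\GG_a^{\Fr_{p^h}}$; for smaller $n$ in the range $p^j \leq n < p^{j+h}$ they are identified with $\GG_a$, and the transition map to the stable range is the inclusion $\GG_a^{\Fr_{p^h}} \inj \GG_a$ induced by the coefficient-extraction map \eqref{disp:I^H_succ_quot->G_a}. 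In all cases, the limit agrees with the eventual stable value, and the natural map \eqref{disp:I^H_succ_quot->G_a} realizes the stated identification.

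The main subtlety lies in the Mittag-Leffler step: one must be careful that the inductive extension procedure from the proof of \ref{st:EndH^(n)_succ_quot} genuinely extends an \emph{arbitrary} element of $\I_{i+1}^{H^{(n)}}(A)$, over an \emph{arbitrary} $\FF_p$-algebra $A$, and that the added terms do not accidentally drop degree; but this follows because in the power-of-$p$ case, the relevant Frobenius-stability constraint on the leading coefficient is automatically satisfied for any element of $\I_{i+1}^{H^{(n)}}(A)$ once $(i+2) p^h \leq n$, and the non-power-of-$p$ case is an unconstrained addition of a uniquely determined coefficient.
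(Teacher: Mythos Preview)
Your overall strategy coincides with the paper's: pass to the inverse limit of the short exact sequences $0 \to \I_{i+1}^{H^{(n)}} \to \I_i^{H^{(n)}} \to \I_i^{H^{(n)}}/\I_{i+1}^{H^{(n)}} \to 0$, invoke Mittag--Leffler for the left-hand system, and read off the eventual value of the right-hand system from \eqref{st:EndH^(n)_succ_quot}. However, your justification of Mittag--Leffler is incorrect: the transition maps $\I_{i+1}^{H^{(m)}}(A) \to \I_{i+1}^{H^{(n)}}(A)$ are \emph{not} surjective in general.

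For a concrete failure, take $p = 2$, $h = 1$, and $i+1 = 3$. By \eqref{st:EndH^(n)_succ_quot} one has $\I_3^{H^{(7)}}/\I_4^{H^{(7)}} \ciso \GG_a$ (here $l=2$, so $3 = p^l-1$ lies in the $\GG_a$ range) but $\I_3^{H^{(8)}}/\I_4^{H^{(8)}} \ciso \GG_a^{\Fr_2}$ (here $l=3$, so $3 = p^{l-h}-1$ lies in the $\GG_a^{\Fr_2}$ range), and the transition map on these quotients is the inclusion $\GG_a^{\Fr_2} \hookrightarrow \GG_a$. Thus any $g \in \I_3^{H^{(7)}}(A)$ whose $T^4$-coefficient $a_4$ fails $a_4^2 = a_4$ cannot lie in the image of $\I_3^{H^{(8)}}(A)$. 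The flaw in your extension argument is precisely at the power-of-$p$ step: passing from an endomorphism of $H^{(p^s-1)}$ to one of $H^{(p^s)}$ requires the coefficient $a_{p^{s-h}}$ to lie in $A^{\Fr_{p^h}}$, and this is \emph{not} forced by the $H^{(p^s-1)}$-endomorphism condition. Your hypothesis $(i+2)p^h \leq n$ controls only the bottom coefficient $a_{i+2}$, not the higher coefficients $a_{p^{s-h}}$ encountered as you push $s$ upward.

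The paper instead deduces Mittag--Leffler directly from \eqref{st:EndH^(n)_succ_quot}: for each fixed $j$, the system $\bigl(\I_j^{H^{(n)}}/\I_{j+1}^{H^{(n)}}\bigr)_n$ is eventually constant. Since truncation is strictly compatible with the $\I_\bullet$-filtrations, the successive quotients of the image of $\I_{i+1}^{H^{(m)}}$ in $\I_{i+1}^{H^{(n)}}$ are exactly the images of the maps on graded pieces; these stabilize once $m$ is large, and hence so does the image itself. So only a small repair is needed: replace ``surjective'' by ``has eventually constant image'' and justify it via the stabilization of the graded pieces rather than by the lifting argument.
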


\begin{proof}
Fix $i$.  For any $n \geq i+1$, we have an exact sequence of presheaves
\[
   0 
	   \to \I^{H^{(n)}}_{i+1} 
		\to \I^{H^{(n)}}_i 
		\to \I^{H^{(n)}}_i / \I^{H^{(n)}}_{i+1}
		\to 0.
\]
It follows from \eqref{st:EndH^(n)_succ_quot} that
\begin{itemize}
\item
	$(\I^{H^{(n)}}_{i+1})_{n \geq i+1}$ satisfies the
	Mittag-Leffler condition as a diagram of pre\-sheaves of abelian groups; and
\item 
   as $n$ increases, $\I^{H^{(n)}}_i / \I^{H^{(n)}}_{i+1}$ is eventually
	constant of the asserted value.
\end{itemize}
Now take the limit over $n$ and use \eqref{st:EndH_and_AutH_limits}.
\end{proof}

In an entirely similar fashion, using \eqref{st:AutH^(n)_succ_quot} in place of
\eqref{st:EndH^(n)_succ_quot}, and using the Mittag-Leffler condition for
not-necessarily-abelian groups, we obtain the successive quotients
of the $\A_\bullet^H$-filtration.

\begin{cor}\label{st:AutH_succ_quots}
We have an identification of presheaves
\begin{xxalignat}{3}
   \parbox{5ex}{\hfill \\ \hfill \\ \phantom{$\square$}} & &
	\A^H_i / \A^H_{i+1} &\ciso
	\begin{cases}
		\mu_{p^h-1}, & i = 0;\\
		\GG_a^{\Fr_{p\smash{^h}}}, & i = p-1,\ p^2-1,\ p^3-1,\dotsc;\\
		0, & \text{otherwise.}
	\end{cases}
	& &
	\parbox{5ex}{\flushright \hfill \\ \smallskip
	             \hfill \\ \smallskip
					 $\square$}
\end{xxalignat}
\end{cor}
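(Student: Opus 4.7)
The plan is to mirror the proof of the preceding Corollary \ref{st:EndH_succ_quot}, only now working with pointed sets (or non-abelian group sheaves) rather than abelian ones. Fix $i$. For each $n\geq i+1$ we have a short exact sequence of pre\-sheaves of groups
\[
   1 \to \A^{H^{(n)}}_{i+1} \to \A^{H^{(n)}}_i \to \A^{H^{(n)}}_i/\A^{H^{(n)}}_{i+1} \to 1,
\]
and as $n$ varies these fit into a compatible system with respect to the truncation maps. The idea is to take the inverse limit over $n$ and identify each factor with the corresponding factor on the $\A^H$ side via \eqref{st:EndH_and_AutH_limits}\eqref{it:A_i=lim_A^H^n}.

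First I would verify the Mittag-Leffler condition on the system $(\A^{H^{(n)}}_{i+1})_{n\geq i+1}$. The successive quotients $\A^{H^{(n)}}_j/\A^{H^{(n)}}_{j+1}$ as computed in \eqref{st:AutH^(n)_succ_quot} are, for each fixed $j$ and $n$ large enough, independent of $n$; moreover, unravelling the dependence on $l$ in \eqref{st:AutH^(n)_succ_quot} shows that the transition map $\A^{H^{(m)}}_{i+1}\to \A^{H^{(n)}}_{i+1}$ for $m\geq n$ hits every $\GG_a$-factor of index $p^{l(n)-h+1}-1,\dotsc,p^{l(n)}-1$ in $\A^{H^{(n)}}_{i+1}$ once $m$ is large enough, since those factors are precisely the ones that come from lifting to truncations $\geq p^{l(n)+1}$. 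Hence Mittag-Leffler holds (and in fact the transition maps are eventually surjective on each filtration layer). Since the filtrations $\A^{H^{(n)}}_\bullet$ themselves have only finitely many nontrivial steps for each $n$, we can also reduce directly to surjectivity on each finite-length subquotient, avoiding any subtlety with $\ilim^1$ in the non-abelian setting.

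Next I would apply the Mittag-Leffler exactness result for inverse limits of groups (valid for non-abelian groups when one of the systems is Mittag-Leffler, see e.g.\ the standard argument with compatible sections) to conclude that taking $\ilim_{n}$ of the above exact sequence is still exact. Combining with \eqref{st:EndH_and_AutH_limits}\eqref{it:A_i=lim_A^H^n}, we obtain
\[
   \A^H_i/\A^H_{i+1} \ciso \ilim_n \bigl(\A^{H^{(n)}}_i/\A^{H^{(n)}}_{i+1}\bigr),
\]
and by \eqref{st:AutH^(n)_succ_quot} the right-hand side is constant for $n$ large (once $l-h$ exceeds the valuation of $i+1$ in the sequence $p-1,p^2-1,\dotsc$), equal to $\mu_{p^h-1}$ if $i=0$, to $\GG_a^{\Fr_{p^h}}$ if $i=p^j-1$ for some $j\geq 1$, and to $0$ otherwise.

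The main obstacle, and the one requiring the most care, is verifying exactness of the inverse limit in the non-abelian situation: although the filtration layers $\A^{H^{(n)}}_i/\A^{H^{(n)}}_{i+1}$ are themselves commutative (being sub-quotients of $\GG_a$ or $\GG_m$), the groups $\A^{H^{(n)}}_i$ are not. The cleanest route is to work layer by layer in the finite filtration for each fixed $n$, establishing the isomorphism $\A^H_i/\A^H_{i+1} \ciso \ilim_n \A^{H^{(n)}}_i/\A^{H^{(n)}}_{i+1}$ by downward induction on the truncation level; alternatively, one can exploit \eqref{st:I^i->A^i_isom} to transport the already-established identifications of $\I^H_i/\I^H_{i+1}$ from \eqref{st:EndH_succ_quot} (for $i\geq 1$) to the $\A^H$ side, and then handle the $i=0$ case separately by intersecting the $\GG_m$-inclusion $\sAut(H)/\A_1^H \hookrightarrow (\I_0^H/\I_1^H)^\times \ciso (\OO^{\Fr_{p^h}})^\times = \mu_{p^h-1}$ arising from the non-abelian analog of \eqref{st:noncomm}\eqref{it:R/(1+I)}.
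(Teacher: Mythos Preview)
Your approach is essentially identical to the paper's: it proves the corollary ``in an entirely similar fashion'' to \eqref{st:EndH_succ_quot}, replacing \eqref{st:EndH^(n)_succ_quot} by \eqref{st:AutH^(n)_succ_quot} and invoking the Mittag-Leffler condition for not-necessarily-abelian groups. Your discussion of the non-abelian exactness issue and the alternative route via \eqref{st:I^i->A^i_isom} simply fleshes out what the paper leaves implicit.
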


In the rest of the section we shall study the following quotient groups, which
appear in \eqref{st:EndH_and_AutH_limits}, and their relation to the 
$\sEnd(H^{(n)})$'s and $\sAut(H^{(n)})$'s.

\begin{defn}\label{def:E_n_and_U_n}
We define $\E_n^H$ to be the presheaf quotient ring $\sEnd(H)/\I_n^H$, and 
$\U_n^H$ to be the subpresheaf of units in $\E_n^H$.
\end{defn}

In other words, by \eqref{st:noncomm}, $\U_n^H \ciso \sAut(H)/\A_n^H$.

\begin{rk}\label{rk:E_n_and_U_n_explicit}
By \eqref{st:EndH_succ_quot} and \eqref{st:AutH_succ_quots}, $\E_n^H$ and
$\U_n^H$ can be obtained from finitely many iterated extensions of finite 
\'etale groups. Hence both are
finite \'etale over $\Spec \FF_p$.  In fact, it is easy to write down explicit
representing schemes.  To fix ideas, consider $\E_n^H$.  For all $i \geq
0$, the exact sequence of presheaves
\[
   0 \to \I_{i+1}^H \to \I_i^H \xra\can \I_i^H/\I_{i+1}^H \to 0
\]
has representable cokernel.  Hence the quotient map ``$\can$'' admits a section
in the category of \emph{set-valued} presheaves.  Hence $\I_i^H \iso \I_{i+1}^H
\times (\I_i^H/\I_{i+1}^H)$ as presheaves of sets.  Now, the possible nontrivial
values of $\I_i^H/\I_{i+1}^H$, namely $\OO^{\Fr_{p\smash{^h}}}$ and
$\GG_a^{\Fr_{p\smash{^h}}}$, both have underlying scheme $\Spec
\FF_p[T]/(T^{p^h} - T)$.  Hence, letting $l$ denote the integer such that $p^l
\leq n < p^{l+1}$, we deduce that $\E_n^H$ is representable by
\[\tag{$\sharp$}\label{disp:E_n^H_representing_scheme}
   \Spec\FF_p[T_0,\dotsc,T_l]/
	   (T_0^{p^h}-T_0^{\vphantom{p^h}},\dotsc,T_l^{p^h}-T_l^{\vphantom{p^h}}).
\]
We can even specify a natural representation: $S$-points of
\eqref{disp:E_n^H_representing_scheme} are canonically identified with ordered
$(l+1)$-tuples of elements $a\in\Gamma(S)$ satisfying $a^{p^h} = a$, and we can
take the map from $\E_n^H$ to \eqref{disp:E_n^H_representing_scheme} specified
on points by sending the class of $f(T)$ to the coefficients of $T$,
$T^p,\dotsc$, $T^{p^l}$.

Similarly, $\U_n^H$ is representable by 
\[
   \Spec\FF_p[T_0^{\vphantom{-1}},T_0^{-1},T_1^{},\dotsc,T_l^{}]/ 
	   (T_0^{p^h} - T_0^{\vphantom{p^h}}, \dotsc, T_l^{p^h} - T_l^{\vphantom{p^h}}).
\]
\end{rk}

% \begin{lem}\label{st:sG_n=scheme}
% $\Q_n$ is represented by the scheme
% \[
% 	\Spec\FF_p[T_0^{},T_0^{-1},T_1^{},\dotsc,T_n^{}]/
% 			(T_0^q - T_0^{}, \dotsc, T_n^q - T_n^{}).
% \]
% \end{lem}
% 
% \begin{proof}
% This is essentially obvious from \eqref{st:End(H).succ.quots}.  Let
% \[
% 	Y := \Spec\FF_p[T_0^{},T_0^{-1},T_1^{},\dotsc,T_n^{}]/
% 			(T_0^q - T_0^{}, \dotsc, T_n^q - T_n^{}).
% \]
% To give a morphism $S \to Y$ is to specify the image of each $T_i$ in
% $\Gamma(S)$, that is, to specify elements $a_0,\dotsc$, $a_n \in \Gamma(S)$
% such that each $a_i^q = a_i^{}$ and $a_0 \in \Gamma(S)^\times$.  On the other
% hand, \eqref{st:End(H).succ.quots} says that every element of $\End_{\Gamma(S)}
% (H)/I_{n+1}$ is determined uniquely by its coefficients $a_0$, $a_1,\dotsc$,
% $a_n$ of $T$, $T^p,\dotsc$, $T^{p^n}$, respectively, and the only constraint on
% these coefficients is that they satisfy $a_i^q = a_i^{}$ for each $i$.  The
% units in $\End_{\Gamma(S)}(H)/I_{n+1}$ are, moreover, precisely the elements
% for which $a_0$ is a unit.
% \end{proof}
% 
% \begin{rk}
% By the lemma, $\Q_n$ is a finite \'etale algebraic group over $\Spec \FF_p$.
% \end{rk}

\begin{rk}\label{rk:O_D=End(H)}
Let us digress for a moment to make a remark on the $\U_n^H$'s.  Let $D$ denote
the central division algebra over $\QQ_p$ of dimension $h^2$ and Hasse invariant
$\frac 1 h$.  Let $\O_D$ denote the maximal order in $D$.  Then a classical
theorem of Dieudonn\'e \cite{dieu57}*{Th\'eor\`eme 3} and Lubin \cite{lub64}*{5.1.3} in the theory of
formal group laws asserts that $\O_D \iso \End_{\FF_{p^h}}(H)$ as topological
rings, where $\End_{\FF_{p^h}}(H)$ has the $\I_\bullet^H(\FF_{p^h})$-topology;
precisely, one has $p^r\O_D \iso \I_{p^{rh}-1}^H(\FF_{p^h})$ for all $r\geq 0$.
Hence $\O_D^\times \iso \Aut_{\FF_{p^h}}(H) \ciso \ilim_n \U_n^H(\FF_{p^h})$ as
pro-finite groups.

The finite algebraic group $\U_n^H$ and the abstract finite group
$\U_n^H(\FF_{p^h})$ are closely related: indeed, the former is a \emph{twist}
over $\Spec \FF_p$ of the latter.  Precisely, for any abstract group $G$ and
ring $A$, write $G_A$ for the corresponding constant group scheme over $\Spec
A$.  Then $\U_n^H$ is not constant over $\Spec \FF_p$, but it becomes isomorphic
to $\bigl(\U_n^H(\FF_{p^h})\bigr)_{\FF_{p^h}}$ after the base change $\Spec
\FF_{p^h} \to \Spec \FF_p$, as we see very explicitly from
\eqref{rk:E_n_and_U_n_explicit}.
\end{rk}

Our work so far furnishes a number of immediate relations between the $\E_n^H$'s
and the $\sEnd(H^{(n)})$'s, and between the $\U_n^H$'s and the
$\sAut(H^{(n)})$'s.  To fix ideas, let us consider the $\E_n^H$'s and the
$\sEnd(H^{(n)})$'s.  For example, for all $n \geq 1$, $\E_n^H$ is identified
with the image of $\sEnd(H)$ in $\sEnd(H^{(n)})$.  And by
\eqref{st:EndH_and_AutH_limits}, the $\E_n^H$'s and the $\sEnd(H^{(n)})$'s have
the same limit, namely $\sEnd(H)$, endowed with the same topology.  Our final
goal for the section will be to show that a yet stronger statement holds:
namely, that the $\E_n^H$'s and the $\sEnd(H^{(n)})$'s determine
\emph{isomorphic pro-objects}; and similarly for the $\U_n^H$'s and the
$\sAut(H^{(n)})$'s.

Precisely, let $\prolim_{\!\!\!\!n} \E_n^H$ be the pro-ring scheme obtained from
the diagram
\[
	\cdots \to \E_3^H \to \E_2^H \to \E_1^H,
\]
and $\prolim_{\!\!\!\!n} \sEnd(H^{(n)})$ be the pro-ring scheme obtained from
the diagram
\[
	\cdots \to \sEnd(H^{(3)}) 
		\to \sEnd(H^{(2)}) 
		\to \sEnd(H^{(1)}).
\]
The natural inclusions $\E_n^H \inj \sEnd(H^{(n)})$ for $n \geq 1$ plainly 
determine a morphism of pro-objects
\[
	\alpha\colon \prolim_n \E_n^H 
		\to \prolim_n \sEnd(H^{(n)}).
\]

We shall show that $\alpha$ is an isomorphism by exhibiting an explicit inverse
$\beta$.  To define $\beta$, we must define $\beta_n\colon \prolim_{\!\!\!\!  m}
\sEnd(H^{(m)}) \to \E_n^H$ for each $n \geq 1$.  For this, let $l$ be the
integer such that $p^l \leq n < p^{l+1}$, and take any $m \geq p^{l+h}$.
Consider the natural map 
\[\tag{$\flat$}\label{disp:beta_n_prelim}
   \sEnd(H^{(m)}) \to \sEnd(H^{(n)})
\]
induced by truncation.  By \eqref{st:EndH^(n)_succ_quot},
\eqref{st:EndH_succ_quot}, and choice of $m$, the image of
\eqref{disp:beta_n_prelim} in $\sEnd(H^{(n)})$ identifies with $\E_n^H$.  Hence
\eqref{disp:beta_n_prelim} induces $ \prolim_{\!\!\!\!  m} \sEnd(H^{(m)}) \to
\E_n^H$, which we take as the desired $\beta_n$.  It is clear that the
$\beta_n$'s are compatible as $n$ varies, so that we obtain the desired $\beta$.

Analogously, we may form the pro-algebraic groups 
\[
   \prolim_n \U_n^H
	\quad\text{and}\quad
   \prolim_n \sAut(H^{(n)}),
\]
and we obtain morphisms
\[
   \prolim_n \U_n^H  \xrla{\alpha'}{\beta'} \prolim_n \sAut(H^{(n)}).
\]

\begin{thm}\label{isom_pro-obs}
The morphisms $\alpha$ and $\beta$ (resp., $\alpha'$ and $\beta'$) are inverse 
isomorphisms of pro-objects.
\end{thm}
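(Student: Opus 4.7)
The plan is to verify the two identities $\alpha\circ\beta = \mathrm{id}$ and $\beta\circ\alpha = \mathrm{id}$ in the category of pro-rings (resp., pro-groups). Both composites reduce to direct checks on each pro-object level, and the content is concentrated in a single image-computation, which follows from the successive-quotient calculations of \eqref{st:EndH^(n)_succ_quot} and \eqref{st:EndH_succ_quot}; the analog for $\alpha'$, $\beta'$ will then be purely formal via the unit functor \eqref{st:noncomm}.

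First I would verify $\alpha\circ\beta = \mathrm{id}_{\prolim_n\sEnd(H^{(n)})}$. By definition of morphism in a pro-category, this amounts to showing that for each $n\geq 1$, the composite
\[
   \prolim_m \sEnd(H^{(m)}) \xra{\beta_n} \E_n^H \xinj{\alpha_n} \sEnd(H^{(n)})
\]
coincides with one of the structural projections of the source pro-object. But by construction $\beta_n$ was defined as the factorization of the truncation map $\sEnd(H^{(m)}) \to \sEnd(H^{(n)})$ through its image (for any chosen $m\geq p^{l+h}$, where $p^l\leq n<p^{l+1}$), so $\alpha_n\circ\beta_n$ is precisely this truncation, which is a structure map of $\prolim_m \sEnd(H^{(m)})$. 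Hence the identity holds.

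The main content lies in the other direction, $\beta\circ\alpha = \mathrm{id}_{\prolim_n \E_n^H}$. Fix $n$ with $p^l\leq n<p^{l+1}$, fix $m\geq p^{l+h}$, and consider the composite
\[
   \E_m^H \xinj{\alpha_m} \sEnd(H^{(m)}) \xra{\tr} \sEnd(H^{(n)}),
\]
where $\tr$ is truncation. To conclude, I would show that this composite lands in $\E_n^H$ and equals the structural map $\E_m^H \to \E_n^H$ of the source pro-object. Factoring $\E_m^H$ as the image of $\sEnd(H) \to \sEnd(H^{(m)})$, the composite equals the image of $\sEnd(H) \to \sEnd(H^{(n)})$, which by definition is $\E_n^H$. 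Thus the composite factors through $\E_n^H$ and equals the structural map, which gives $\beta\circ\alpha=\mathrm{id}$ provided one knows that the truncation $\sEnd(H^{(m)}) \to \sEnd(H^{(n)})$ has image exactly equal to $\E_n^H$ when $m\geq p^{l+h}$. This is the main obstacle, but it is settled by comparing \eqref{st:EndH^(n)_succ_quot} and \eqref{st:EndH_succ_quot}: the image of truncation from $\sEnd(H^{(m)})$ enforces the coefficient of $T^{p^j}$ to lie in $\OO^{\Fr_{p^h}}$ or $\GG_a^{\Fr_{p^h}}$ precisely when $p^j \leq m/p^h$, i.e.\ when $j\leq l'-h$ where $p^{l'}\leq m<p^{l'+1}$; taking $m\geq p^{l+h}$ forces $l'-h\geq l$, so that all relevant coefficients (for $j=0,1,\dots,l$) are Frobenius-fixed, matching exactly the defining conditions of $\E_n^H$ on those same degrees. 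No further constraints arise from higher-degree coefficients because they can be chosen freely within $\E_n^H$'s defining equations.

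Finally, to treat $\alpha',\beta'$, I would simply observe that on $S$-points the group $\U_n^H(S)$ is the unit group of $\E_n^H(S)$ \eqref{st:noncomm} and similarly $\sAut(H^{(n)})(S)$ is the unit group of $\sEnd(H^{(n)})(S)$, and that both $\alpha_n$ and $\beta_n$ are ring maps; hence they restrict to $\alpha'_n$ and $\beta'_n$, and the identities $\alpha'\circ\beta'=\mathrm{id}$, $\beta'\circ\alpha'=\mathrm{id}$ follow from the corresponding identities for $\alpha,\beta$ just established.
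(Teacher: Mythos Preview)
Your proof is correct and follows the same approach as the paper, whose proof is simply ``Everything is elementary from what we've already said.'' You have correctly expanded this one-liner by verifying both composites directly; note that the image computation you call ``the main obstacle'' is in fact already established in the paragraph defining $\beta_n$ (just before the theorem), so you are re-deriving a fact the paper has already recorded rather than supplying a missing step.
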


\begin{proof}
Everything is elementary from what we've already said.
\end{proof}

\subsection{Functoriality of quotient stacks}\label{ss:copreshB(G)'s}

In the next section we shall need to interpret \eqref{isom_pro-obs} in terms of
the classifying stacks $B(\U_n)$ and $B\bigl(\sAut(H^{(n)})\bigr)$.  We shall
now pause a moment to record the following fact for use 
then: let \C be a site, and let \D denote the category of paris $(G,X)$, where 
$X$ is a sheaf on \C and $G$ is a group sheaf on \C acting on $X$ (on the left,
say). Then passing 
to the quotient stack defines a morphism (in the sense of bicategories; see
\s\ref{ss:bicat_morphs}) from \D to the $2$-category of stacks over \C.
%We follow \cite{gir71}.

Roughly, the essential observation is simply that stackification defines a bicategory
morphism $\Fib(\C) \to \St(\C)$.  More to the point, note that we have a canonical 
$2$-functor $\D \to \Fib(\C)$ sending $(G,X)$ to the presheaf of groupoids
\[
   G \times X \xrra{\pr_X}{a} X,
\]
where $a$ denotes the action map.  Hence the result of composing with 
stackification is to send $(G,X) \mapsto G\bs X$.

%if $G$ is a group sheaf on \C
%acting on $X$, then the stack $G \bs X$ is the coequalizer (in the sense of
%bicategories) of
%\[
%   G \times X \xrra{\pr_X}{a_G} X,
%\]
%regarded as a diagram of stacks, where $a_G$ denotes the action map.  More
%precisely, let $(X\textnormal{-Gp})$ denote the category of group sheaves on \C
%acting on $X$ and group homomorphisms compatible with the actions,
%and let \I denote the category
%\[
%   \bullet \rra \bullet.
%\]
%Then we may consider the $2$-category $\Hom\bigl(\I,\St(\C)\bigr)$ of
%bicategory morphisms $\I \to \St(\C)$, and we have a morphism
%$(X\textnormal{-Gp}) \to \Hom\bigl(\I,\St(\C)\bigr)$ sending
%\[
%   G \mapsto \biggl[ G \times X \xrra{\pr_X}{a_G} X\biggr].
%\]
%Now, $\St(\C)$ admits all coequalizers (again in the sense of
%bicategories). Hence, quite as discussed for limits in
%\eqref{rk:lim_as_rt_adjoint}, passing to the colimit admits a structure of
%bicategory morphism $\Hom\bigl(\I,\St(\C)\bigr) \to \St(\C)$.  Composing, we
%obtain $(X\textnormal{-Gp}) \to \St(\C)$ sending $G \mapsto G \bs X$, as
%desired.

In particular, taking $X$ to be the sheaf with constant value $\{*\}$,
we see that $G \mapsto B(G)$ defines a morphism from group sheaves on \C to
stacks.

\subsection{The stratum of height $h$ formal Lie groups II}\label{ss:modstackhth}

In this section we apply the work of the previous two sections to give another
characterization of the stack $\FLG^h$ of formal Lie groups of height $h$, $h
\geq 1$.  
Recall the algebraic groups $\U_n^H$, $n \geq 1$, of \eqref{def:E_n_and_U_n}.

\begin{thm}\label{st:FLG^h=limU_n}
$\dsp \FLG^h \approx \ilim_n B_\fet(\U_n^H)$.
\end{thm}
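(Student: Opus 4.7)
The plan is to chain together several of the results developed earlier in the paper to reduce the statement to an assertion about limits of classifying stacks of pro-isomorphic systems of finite \'etale groups.

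First, by \eqref{st:FLG^h=lim} we have $\FLG^h \approx \ilim_{n \geq p^{h+1}} \Bn^h$, and by \eqref{st:Bn^h=B(AutH^(n))} each $\Bn^h \approx B_\fet\bigl(\sAut(H^{(n)})\bigr)$. These equivalences are natural in truncation, so they assemble into a diagram of stacks indexed on $n \geq p^{h+1}$, whose limit computes $\FLG^h$. Hence I am reduced to showing that
\[
   \ilim_n B_\fet\bigl(\sAut(H^{(n)})\bigr) \approx \ilim_n B_\fet(\U_n^H),
\]
where both limits may equally well be taken over all $n \geq 1$, since truncating below $p^{h+1}$ only adds cofinally many terms which don't affect the limit (compare \eqref{st:initial_lim}).

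Next, I would invoke \eqref{isom_pro-obs}, which provides explicit morphisms
\[
   \alpha'\colon \prolim_n \U_n^H \rightleftarrows \prolim_n \sAut(H^{(n)}) : \beta'
\]
that are mutually inverse isomorphisms of pro-algebraic groups. In order to pass from this isomorphism of pro-objects to the desired equivalence of limits of classifying stacks, I would apply the bicategorical functoriality of the quotient stack construction discussed in \s\ref{ss:copreshB(G)'s}. Specifically, the assignment $G \mapsto B_\fet(G)$ defines a bicategory morphism from finite \'etale $\FF_p$-groups to stacks over $\Sch{}_{/\FF_p}$, and so converts a morphism of pro-objects on the group side to a morphism of pro-objects (i.e.\ a diagram morphism up to compatible $2$-isomorphism) on the stack side. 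Applying this to $\alpha'$ and $\beta'$ yields an equivalence of the two pro-systems of classifying stacks, hence of their limits by formal properties of limits in bicategories.

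The main obstacle is making precise the last step: one needs to verify that the bicategory morphism $G \mapsto B_\fet(G)$ preserves enough structure that an isomorphism of pro-objects in the source category induces an equivalence of limits in the target $2$-category. This is really a formal consequence of \s\ref{ss:copreshB(G)'s} together with the general theory of bicategorical limits in the appendix: $\alpha'$ and $\beta'$ consist of compatible systems of morphisms between the two cofinal towers, and taking $B_\fet(-)$ converts these into pseudo-natural transformations between the resulting diagrams of stacks, with mutually inverse composites up to invertible modifications; limits then transport these to an equivalence. Once this bookkeeping is completed, the result follows by stringing the equivalences together.
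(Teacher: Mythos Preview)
Your proposal is correct and follows essentially the same route as the paper: reduce via \eqref{st:FLG^h=lim} and \eqref{st:Bn^h=B(AutH^(n))} to comparing $\ilim_n B_\fet\bigl(\sAut(H^{(n)})\bigr)$ with $\ilim_n B_\fet(\U_n^H)$, then use the pro-isomorphism of \eqref{isom_pro-obs} together with the functoriality of $G\mapsto B(G)$ from \S\ref{ss:copreshB(G)'s}. One small imprecision: you describe $G\mapsto B_\fet(G)$ as a morphism out of \emph{finite \'etale} $\FF_p$-groups, but $\sAut(H^{(n)})$ is not finite \'etale (it is smooth of dimension $h$); the construction in \S\ref{ss:copreshB(G)'s} applies to arbitrary group sheaves, which is what you need. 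Also, the ``bookkeeping'' you flag in your last paragraph is exactly what the paper packages as \eqref{st:proF} in the appendix: a bicategory morphism $\C\to\D$ into a bicategory with filtered limits extends canonically to $\pro\C\to\D$, so that an isomorphism of pro-objects yields an equivalence of limits; citing this result directly would tighten the argument.
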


\begin{proof}
The proof just consists of stringing together some of our previous results.  By
\s\ref{ss:copreshB(G)'s} and \eqref{st:proF}, the isomorphism of pro-objects
\[
   \prolim_{n} \sAut(H^{(n)})
	   \isoarrow \prolim_{\!\!\!\!n} \U_n^H
\]
from \eqref{isom_pro-obs} induces an equivalence of stacks
\[
   \ilim_n B_\fet \bigl( \sAut(H^{(n)}) \bigr) 
	   \xra\approx \ilim_n B_\fet(\U_n^H).
\]
By \eqref{st:Bn^h=B(AutH^(n))}, we have an equivalence $\Bn^h \approx B_\fet
\bigl( \sAut(H^{(n)}) \bigr)$ for $n \geq p^{h+1}$, plainly compatible with
truncation on the $\Bn^h$ side and with the transition maps induced by
$\prolim_{\!\!\!\!n} \sAut(H^{(n)})$ on the $B_\fet \bigl( \sAut(H^{(n)})
\bigr)$ side.  Now use \eqref{st:FLG^h=lim}.
\end{proof}

% Since truncation is compatible with the equivalence in the proposition, we can
% combine the theorem with \eqref{st:FLG^h=lim} and
% \eqref{st:ilimB(A^(n))=ilimB(G_n)} to obtain a characterization
% $\FLG^h$.
% 
% \begin{cor}\label{st:FG_h=ilimB(G_n)}
% $\dsp \FLG^h \approx \ilim_n B_\fet(\Q_n)$. \hfill$\square$
% \end{cor}
% 
% We can also easily express $\FLG^h$ in terms of $\sAut\bigl(\wh\AA^H\bigr)$.
% 
% \begin{thm}\label{st:FG_h=B(A)}
% $\dsp \FLG^h \approx B_\textnormal{fpqc}\Bigl(\sAut\bigl(\wh\AA^H\bigr)\Bigr)$.
% \end{thm}
% 
% \begin{proof}
% Our argument is virtually identical to that in \eqref{st:Bn^h=B(AutH^(n))}.  By
% \eqref{st:FLV=B()}, every formal Lie group is of the form $\wh\AA^F$ Zariski
% locally.  Now use \eqref{rk:ht.h.fpqc}.
% \end{proof}

\begin{rk}\label{rk:O_D=End(H).analog}
One may consider the equivalences
\[
   B_\fpqc\bigl(\sAut(H)\bigr) \approx \FLG^h \approx \ilim_n B_\fet(\U_n^H)
\]
combined from \eqref{st:FLG^h=B(AutH)} and \eqref{st:FLG^h=limU_n} to be a
stack analog of the theorem $\O_D^\times \iso \Aut_{\FF_q}(H)$ discussed in
\eqref{rk:O_D=End(H)}.  Indeed, $\U_n^H$ becomes constant after the base change
$\Spec \FF_{p^h} \to \Spec \FF_p$, and we obtain equivalences over $\FF_{p^h}$
\[
   B_\fpqc\bigl(\sAut(H)_{\FF_{p^h}}\bigr)
      \approx \ilim_n B\bigl((\U_n^H)_{\FF_{p^h}}\bigr)
      \approx \ilim B(\O_D^\times/N),
\]
where the limit on the right runs through the open normal subgroups $N$ of
$\O_D^\times$.
\end{rk}

\section{Valuative criteria}\label{s:val_crit}
In this section we shall conduct a basic investigation of some properties of the
stacks \FLG and \Bn, $n\geq 1$, related to valuative criteria.  As in previous
sections, we work with the notion of height relative to fixed prime $p$.

\begin{thms}\label{thm:Bn.univ.closed}
$\Bn$ is universally closed over $\Spec \ZZ$, and for all $h \geq 1$ and $n \geq
p^h$, $\Bn^{\geq h}$ is universally closed over $\Spec \FF_p$.
\end{thms}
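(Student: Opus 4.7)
My plan is to verify the valuative criterion for universal closedness of algebraic stacks in the form of \cite{lamb00}*{7.10}: given a valuation ring $\O$ with fraction field $K$ and a morphism $\xi \colon \Spec K \to \Bn$ (resp.\ $\Bn^{\geq h}$) fitting into a commutative square with a morphism $\Spec \O \to \Spec \ZZ$ (resp.\ $\Spec \FF_p$), I want to produce a valuation ring $\O' \supset \O$ with fraction field $K'$ algebraic over $K$, dominating $\O$, together with a morphism $\Spec \O' \to \Bn$ (resp.\ $\Bn^{\geq h}$) extending $\xi_{K'}$ up to $2$-isomorphism.  Since $\Spec K$ is local, \eqref{st:ngermloctriv} and \eqref{eg:trivnbud} imply $\xi \approx \TT_K^F$ for some $n$-bud law
\[
   F(T_1,T_2) = T_1 + T_2 + \sum_{2 \leq i+j \leq n} a_{ij} T_1^i T_2^j, \qquad a_{ij} \in K.
\]

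The key idea is to exploit the change-of-coordinate action of $\sAut(\TT_n)$ on $L_n$ from \eqref{rk:AutTTn.acts.on.Ln} to translate $F$ into $L_n(\O')$.  For $c$ in some extension field, the germ automorphism $f$ of $\TT$ with $f^\#(T) = cT$ produces an isomorphic $n$-bud law
\[
   F_c(T_1, T_2) = T_1 + T_2 + \sum c^{1-i-j} a_{ij} T_1^i T_2^j.
\]
It suffices to pick $c$ so that $v(c^{1-i-j} a_{ij}) \geq 0$ whenever $a_{ij} \neq 0$, i.e., $v(c) \leq v(a_{ij})/(i+j-1)$ for every such pair.  Extending the valuation $v$ to an algebraic closure $\overline K$, the value group becomes divisible, so a $c \in \overline K^\times$ meeting this finite family of constraints certainly exists; such $c$ lies in a finite extension $K' \subset \overline K$, and I take $\O'$ to be the intersection of $K'$ with the valuation ring attached to the extended valuation.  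Then $\TT_{\O'}^{F_c}$ is an $n$-bud over $\O'$ whose base change to $K'$ is isomorphic to $\xi_{K'}$ (via the coordinate change), giving the required extension.

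For $\Bn^{\geq h}$ over $\Spec \FF_p$, the field $K$ is automatically of characteristic $p$, and height $\geq h$ for $F$ amounts to $[p]_F(T) \in T^{p^h} \cdot K[T]/(T)^{n+1}$ by \eqref{eg:triv_bud_ht>=h}.  Naturality of $[p]$ under the bud isomorphism $\TT_K^F \isoarrow \TT_K^{F_c}$ gives $[p]_{F_c}(T) = c \cdot [p]_F(c^{-1} T)$; in particular the vanishing of the coefficients of $T^k$ for $k < p^h$ is preserved, so $\TT_{\O'}^{F_c}$ is a bud of height $\geq h$ over $\O'$, completing the argument in this case.

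The main obstacle is really just the valuation-theoretic step of arranging the existence of $c$ with the right valuation; this forces the passage to an algebraic extension to access divisibility in the value group.  One cannot shortcut the argument by pulling back through the flat presentation $L_n \to \Bn$ of \eqref{st:nbuds=AT_nbsBLT_n} (nor $\Spec A_h \to \Bn^{\geq h}$ of \eqref{Bn^>=h_quot}), since $L_n \iso \AA_\ZZ^{n-1}$ and $\Spec A_h \iso \AA_{\FF_p}^{n-h}$ \eqref{st:A_h=aff} are manifestly not universally closed --- it is precisely the passage to the quotient by $\sAut(\TT_n)_{\FF_p}$, realized concretely through the rescaling trick, that secures universal closedness of the stack.
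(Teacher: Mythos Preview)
Your argument is correct and follows the same strategy as the paper: verify the valuative criterion by choosing a coordinate and rescaling by $T \mapsto cT$ to make all coefficients integral. The only difference is that you pass to an algebraic extension $K'/K$ to secure a $c$ with $v(c) \leq v(a_{ij})/(i+j-1)$, invoking divisibility of the value group of $\overline K$, whereas the paper finds $\lambda$ already in $K$.

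The extension is unnecessary. You need $(i+j-1)\,v(c) \leq v(a_{ij})$ for the finitely many nonzero $a_{ij}$. Since $i+j-1 \geq 1$, it suffices to take any $c \in K^\times$ with $v(c) \leq \min\bigl\{0,\ v(a_{ij}) : a_{ij} \neq 0\bigr\}$; this minimum is the valuation of one of the $a_{ij}$ (or of $1$), so such a $c$ exists in $K$. With this observation you may invoke the simpler form of the valuative criterion \cite{lamb00}*{7.3} without any field extension, exactly as the paper does. Your treatment of the $\Bn^{\geq h}$ case via $[p]_{F_c}(T) = c\,[p]_F(c^{-1}T)$ is fine and makes explicit what the paper leaves implicit.
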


\begin{proof}
The proof is the same in all cases, so let's just consider \Bn over $\Spec \ZZ$.
We apply the valuative criterion in \cite{lamb00}*{7.3}.  Let $\O$ be a
valuation ring and $K$ its field of fractions.  Let $X$ be an $n$-bud
over $K$.  Then $X$ admits a coordinate, so we may assume $X$ is given by a bud
law 
\[
   F(T_1,T_2) = T_1 + T_2 + \sum_{2\leq i+j \leq n} a_{ij}T_1^iT_2^j,
	\qquad a_{ij} \in K.
\]
For changes of coordinate of the form $f(T) = \lambda T$ for nonzero $\lambda
\in K$,
we obtain
\[
   f\bigl[ F\bigl(f^{-1}(T_1), f^{-1}(T_2)\bigr) \bigr]
		= T_1 + T_2 + \sum_{2\leq i+j \leq n} a_{ij}\lambda^{1-i-j} T_1^iT_2^j.
\]
So, by taking $\lambda$ of sufficiently negative valuation, we see that $F$ is 
$K$-isomorphic to a bud law defined over \O.
\end{proof}

\begin{rks}\label{rk:sep}
\Bn is not \emph{proper} over \ZZ because it is not separated.  Indeed, let \O 
be a valuation ring with fraction field $K$.  Then the natural functor
\[
   \Bn(\O) \to \Bn(K)
\]
is faithful but not full.  For example, for the additive $n$-bud
$\GG_a^{(n)}$ \eqref{eg:bud_eg's} we have
\[
   \Aut_\O (\GG_a^{(n)})
      \ctndneq \Aut_K (\GG_a^{(n)}),
\] 
since
the latter contains automorphisms of the form $f(T) = \lambda T$ for $\lambda$
of nonzero valuation.

Similarly, $\Bn^{\geq h}$ is not separated over $\FF_p$.
\end{rks}

\begin{egs}
The following may be taken as an exhibition of the non-sep\-ar\-atedness of \Bn
and of \FLG.  Let \O be a DVR with uniformizing element $\pi$ and residue
field of positive characteristic. Then the group law $F(T_1,T_2) := T_1 + T_2 +
\pi T_1T_2$ determines a formal Lie
group over $\Spec \O$.  Let $f(T) := \pi T$. Then, over the generic point
$\eta$, we have
\[
   f\bigl[ F\bigl(f^{-1}(T_1), f^{-1}(T_2)\bigr) \bigr)]
      = T_1 + T_2 + T_1T_2.
\]
Hence $f$ specifies an isomorphism $\wh\AA_\eta^F \isoarrow \wh\GG _m$.  But
$\wh\GG _m$ is
certainly not isomorphic to $\wh\AA^F$ over $\Spec\O$, since $\wh\AA^F_\O$
reduces to $\wh\GG_a$ at the closed point.  Hence $\wh\GG_m$
admits nonisomorphic extensions from the generic point to $\Spec\O$.
\end{egs}

The failure of \Bn and of $\Bn^{\geq h}$ to be separated prevents one from 
concluding formally that the valuative criterion used in the proof of
\eqref{thm:Bn.univ.closed} holds for $\FLG$ and for $\FLG^{\geq h}$, 
respectively.  Nevertheless, these stacks do satisfy a kind of ``formal 
universal closedness'', in the following sense.

\begin{thms}\label{st:fl_u_clsd}
Let \O be a valuation ring with field of fractions $K$.
\begin{enumerate}
\renewcommand{\theenumi}{\roman{enumi}}
\item \label{it:char_0_ess_surj}
   If $K$ has characteristic $0$, then $\FLG(\O) \to \FLG(K)$ is essentially 
	surjective.
\item \label{it:char_p_ess_surj}
   If $K$ has characteristic $p$ and is \emph{separably closed}, then 
	$\FLG^{\geq h}(\O) \to \FLG^{\geq h}(K)$ is essentially surjective.
\end{enumerate}
\end{thms}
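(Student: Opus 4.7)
The overarching idea is that, since $\Spec K$ has no nontrivial Zariski covers, any formal Lie variety over $K$ is in fact (globally) isomorphic to $\wh\AA_K$, hence any $F \in \FLG(K)$ admits a global coordinate and is given by an honest formal group law over $K$.  The plan is then to exhibit $F$ as isomorphic over $K$ to a formal group law $G_0$ already defined over a subring of $\O$ (namely $\ZZ$ in case \eqref{it:char_0_ess_surj} and $\FF_p$ in case \eqref{it:char_p_ess_surj}); viewing $G_0$ as an object of $\FLG(\O)$ via the inclusion $\ZZ \subset \O$ or $\FF_p \subset \O$ will then produce the desired preimage, and the height constraint in part \eqref{it:char_p_ess_surj} will be automatic, because the $[p]$-series of $G_0$ is identical whether $G_0$ is regarded over $\ZZ$ or $\FF_p$, over $\O$, or over $K$.

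For \eqref{it:char_0_ess_surj}, $K$ contains $\QQ$, and every commutative $1$-parameter formal group law $F$ over a $\QQ$-algebra admits a logarithm $\log_F$ which is a $K$-isomorphism $F \isoarrow \wh\GG_{a,K}$, so one takes $G_0 := \wh\GG_{a,\ZZ}$.  For \eqref{it:char_p_ess_surj}, write $F$ as a formal group law over $K$ with $[p]_F(T) \in T^{p^h} K[[T]]$ and split into two subcases.  If $F$ has some finite height $h' \geq h$, then \eqref{st:ht.h.fin.et} applied over the separably closed field $K$ (in which every finite \'etale cover splits) yields $F \iso H_{h'}$ over $K$, and one sets $G_0 := H_{h'}$; since $H_{h'}$ is defined over $\FF_p$ and has $[p]$-series $T^{p^{h'}}$, the base change to $\O$ has the same $[p]$-series and hence height $h' \geq h$.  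If instead $[p]_F = 0$, then the classical classification of formal group laws over a separably closed field of characteristic $p$ (Lazard \cite{laz55}*{Th\'eor\`eme IV}) gives $F \iso \wh\GG_{a,K}$ over $K$, and one sets $G_0 := \wh\GG_{a,\FF_p}$, whose $[p]$-series is already $0$ over $\FF_p$ and hence $0$ over $\O$.

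The main delicacy is the ``infinite height'' subcase $[p]_F = 0$ in \eqref{it:char_p_ess_surj}:  the statement of \eqref{st:ht.h.fin.et} covers only the finite-height regime, so one must supplement it with Lazard's full classification (or a direct argument that $[p]_F = 0$ over a separably closed field of characteristic $p$ forces $F$ to be isomorphic to the additive law).  By contrast with the proof of \eqref{thm:Bn.univ.closed}, one cannot proceed by a direct rescaling $T \mapsto \lambda T$ of the coordinate:  a formal group law over $K$ has infinitely many coefficients whose valuations may be unbounded below, so no single $\lambda$ of sufficiently negative valuation can clear all denominators at once.  This is the same obstruction that prevents \FLG from being an algebraic stack in the traditional sense, and is morally why essential surjectivity, rather than a full valuative criterion, is the best one can hope for here.
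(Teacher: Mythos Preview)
Your proposal is correct and follows essentially the same approach as the paper: reduce to a formal group law over $K$, invoke the classification over a $\QQ$-algebra (part \eqref{it:char_0_ess_surj}) or over a separably closed field of characteristic $p$ (part \eqref{it:char_p_ess_surj}), and lift the $\FF_p$- or $\ZZ$-defined representative to $\O$. The paper's proof is terser --- it cites Lazard's Th\'eor\`eme IV once to cover both the finite and infinite height cases simultaneously, rather than splitting and invoking \eqref{st:ht.h.fin.et} separately for the finite case --- but the content is the same.
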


\begin{proof}
\eqref{it:char_0_ess_surj}\: As is well-known, over a \QQ-algebra, every formal
group law is isomorphic to the additive law.

\eqref{it:char_p_ess_surj}\: By Lazard's theorem \cite{laz55}*{Th\'eor\`eme IV},
formal group laws over separably closed fields of characteristic $p$ are
classified up to isomorphism by their height.  Now use that group laws of every
height are defined over $\FF_p$, hence over \O.
\end{proof}

Our remarks in \eqref{rk:sep} suggest that the failure of $\Bn^{\geq h}$ to be
separated is tied to the additive $n$-bud, which has ``height $\infty$''.  So it
is natural to ask if the \emph{stratum} $\Bn^h$ is separated.  But the answer
here is also negative: by \eqref{st:Bn^h=B(AutH^(n))},
\eqref{st:AutH^(n)=clsd_sbgp}, and \eqref{st:AutH^(n)=sm_dim_h}, $\Bn^h$ is the
classifying stack of a group $\sAut(H^{(n)})$ which is positive dimensional and
affine,
so that $\sAut(H^{(n)})$
is not proper, so that $B\bigl(\sAut(H^{(n)})\bigr)$ is not separated
\cite{lamb00}*{7.8.1(2)}. There is, however, a positive result when we take the
limit over $n$.

\begin{thms}\label{st:fl_sep}
Let \O be a valuation ring and $K$ its field of fractions.  Then $\FLG^h(\O) \to 
\FLG^h(K)$ is fully faithful for all $h \geq 1$.
\end{thms}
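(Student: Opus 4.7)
The plan is to leverage the limit description $\FLG^h \approx \ilim_n B_\fet(\U_n^H)$ from \eqref{st:FLG^h=limU_n} to reduce the theorem to a statement about finite étale schemes over the valuation ring $\O$. Since $h \geq 1$, the base must have characteristic $p$ (else $\FLG^h(\O)$ is empty and the statement is vacuous), so both $\O$ and $K$ are $\FF_p$-algebras, and evaluating the limit yields equivalences $\FLG^h(\O) \approx \ilim_n B_\fet(\U_n^H)(\O)$ and similarly for $K$. A $2$-limit of fully faithful functors of groupoids is fully faithful---since $\Hom$-sets in the $2$-limit are inverse limits of the level-wise $\Hom$-sets, and inverse limits preserve bijections---so it suffices to show that $B_\fet(\U_n^H)(\O) \to B_\fet(\U_n^H)(K)$ is fully faithful for each $n$.

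Since $\U_n^H$ is finite étale over $\FF_p$ by \eqref{rk:E_n_and_U_n_explicit}, every $\U_n^H$-torsor over $\O$ for the finite étale topology is representable by a scheme finite étale over $\O$, and morphisms of torsors are morphisms of finite étale $\O$-schemes. The problem therefore reduces to showing that for any two finite étale schemes $X, Y$ over $\O$, the base-change map $\Hom_\O(X, Y) \to \Hom_K(X_K, Y_K)$ is bijective. Passing to the Weil restriction $R := \textnormal{Res}_{X/\O}(X \times_\O Y)$---which is itself finite étale over $\O$, as one checks after a finite étale base change splitting $X$---this becomes bijectivity of $R(\O) \to R(K)$, an instance of the claim that $Y(\O) = Y(K)$ for any scheme $Y$ finite over the valuation ring $\O$.

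This final claim is essentially the valuative criterion of properness, valid without Noetherian hypotheses. Concretely, in the affine case $Y = \Spec A$, any $\O$-algebra map $A \to K$ has image a finitely generated $\O$-submodule of $K$, hence of the form $c\O$ for some $c \in K$; containing $\O$ and being closed under multiplication forces both $c$ and $c^{-1}$ into $\O$, so $c\O = \O$ and the map factors through $\O$. Injectivity of $Y(\O) \to Y(K)$ is immediate because $Y$ is separated and $\Spec K$ is scheme-theoretically dense in $\Spec \O$. The delicate part of the argument will be verifying that the bicategorical $2$-limits commute with evaluation at $\Spec \O$ and that full faithfulness is preserved by such limits, but these should be formal consequences of the appendix's constructions.
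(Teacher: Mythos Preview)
Your proof is correct and follows essentially the same strategy as the paper: reduce via the equivalence $\FLG^h \approx \ilim_n B_\fet(\U_n^H)$ to showing full faithfulness at each finite level, then use that a limit of fully faithful functors is fully faithful. The only difference is in how the level-$n$ step is handled: the paper observes that $\U_n^H$ is finite \'etale, hence proper, so $B_\fet(\U_n^H)$ is a \emph{separated} algebraic stack by \cite{lamb00}*{7.8.1(2)}, and then invokes the valuative criterion of separatedness directly; you instead unpack this by representing torsors and their Hom-functors by finite \'etale schemes and appealing to the valuative criterion for finite morphisms by hand. Your route is more elementary and self-contained (no black-box citation to \cite{lamb00}), while the paper's is shorter; but the underlying content is the same.
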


\begin{proof}
Of course, the assertion only has content when $\charac K = p$, since
otherwise $\FLG^h(\O) = \FLG^h(K) = \emptyset$.  So assume $\charac K = p$.
By \eqref{st:FLG^h=limU_n}, $\FLG^h \approx \ilim_n B_\fet(\U_n^H)$, where
$\U_n^H$ is the finite \'etale group scheme over $\FF_p$ of
\eqref{def:E_n_and_U_n}. In particular, $\U_n^H$ is proper. Hence
$B_\fet(\U_n^H)$ is a separated algebraic stack over $\FF_p$
\cite{lamb00}*{7.8.1(2)}. Hence $B(\U_n^H)(\O) \to B(\U_n^H)(K)$ is fully
faithful. Now use that a limit of fully faithful maps is fully faithful
\eqref{rk:lim_of_ff's}.
\end{proof}

% \begin{rk}
% Of course, \eqref{st:fl_sep} only has content when $K$ is of characteristic
% $p$.
% Otherwise, $\FLG^h(\O) = \FLG^h(K) = \emptyset$ for $h\geq 1$.
% \end{rk}

\begin{rks}
As noted in the introduction, when \O is a \emph{discrete} valuation ring,
\eqref{st:fl_sep} is a special case of de Jong's theorem
that, when $\charac K = p$, the base change functor
\[\label{disp:restn_fctr}\tag{$*$}
   \biggl\{\parbox{25ex}{\centering $p$-divisible groups and\\
                 homomorphisms over \O}\biggr\}
   \to
   \biggl\{\parbox{25ex}{\centering $p$-divisible groups and\\
                 homomorphisms over $K$}\biggr\}
\]
is fully faithful \cite{dj98}*{1.2}.  (Tate proved that \eqref{disp:restn_fctr} is fully faithful
when $\charac K = 0$ \cite{tate67}*{Theorem 4}.)  Note that \eqref{st:fl_sep}
only asserts bijections between $\Isom$ sets of objects, not $\Hom$
sets, as in de Jong's theorem.  But it appears that the methods used to prove
\eqref{st:fl_sep} extend to give bijections between $\Hom$ sets, provided one
considers stacks of \emph{categories}, not just stacks of groupoids.
\end{rks}

\renewcommand{\thesection}{A}
\section{Appendix}

% This appendix is devoted to some of the basic aspects of limits of categories
% and of fibered categories.  As noted in the introduction, it is currently in a
% rather rough state.  For the time being, we have made essentially no attempt
% to cast our treatment in a good general framework.  Our goal has been simply
% to introduce the material needed in the main body with least machinery
% possible; in some cases, we have even chosen to omit proofs, owing to their
% long and technical, but straightforward, nature.  We intend to provide a more
% satisfactory version of this material in a future revision.

% Our treatment of limits is founded on what we call ``copresheaves'' of
% categories or of fibered categories; this is admittedly horrible terminology
% and will be excised in a future revision.  We note now that this notion is
% essentially that of a pseudofunctor \cite{sga1}*{VI, \s8}.

This appendix is devoted to some of the basic aspects of limits in
$2$-categorical contexts; or more precisely, to limits in \emph{bicategories}. 
From the point of view of general bicategory theory, the applications we shall
have in the main body are all of a rather simple sort, and it would doubtless
require less labor on the whole to treat them in a more ad hoc fashion as they
arise. But we believe that bicategories afford a convenient and natural
setting in which to understand many of the various notions at play, and we have
chosen to take a little time to work through some of the foundations.

Almost everything we shall discuss is probably well-known, but for some of the
material --- notably that in \s\s\ref{ss:lim_stacks}--\ref{ss:fin_lims} --- we have failed to find suitable references.  We extend our apologies to those whose work we have overlooked.

We have chosen to systematically ignore certain foundational set-theoretic issues typically resolved through consideration of universes or of regular cardinals.

\subsection{Bicategories}

A bicategory is a categorification of an (ordinary) category.  We shall decline
to recall the precise definition; see, for example, \cite{cwm}*{XII \s6},
\cite{borc94}*{7.7.1}, or --- the original reference --- \cite{ben67}*{\s1}
for complete details.  Let us instead content ourselves with a few informal
remarks.  A bicategory consists of objects; morphisms between objects, called
$1$-cells; and morphisms between $1$-cells with common source and target, called
$2$-cells.  The generalization from categories to bicategories can be understood
to a large extent in terms of the familiar generalization from sets to
categories.  In an ordinary category, morphisms $A \to B$ form a set
$\Hom(A,B)$; whereas in a bicategory, $1$-cells $A\to B$ and the $2$-cells
between them form a \emph{category} $\Hom(A,B)$.  In an ordinary category,
composition is specified by a function $\Hom(A,B) \times \Hom(B,C) \to
\Hom(A,C)$; whereas in a bicategory, composition is a \emph{functor} $\Hom(A,B)
\times \Hom(B,C) \to \Hom(A,C)$.  In an ordinary category, associativity is
expressed as an equality of functions
\[\label{disp:assoc_cond}\tag{\dag}
   \Hom(A,B) \times \Hom(B,C) \times \Hom(C,D) \rra \Hom(A,D).
\]
In a bicategory, the arrows in \eqref{disp:assoc_cond} become
functors between categories. But in category theory, it is typically unnatural
to demand that two functors be equal on the nose.  A better notion of sameness
is that the functors be naturally isomorphic.  So in a bicategory, the functors
in \eqref{disp:assoc_cond} agree up to specified natural isomorphism; that is,
the isomorphism is specified as part of the data of the bicategory.  Similarly,
each object $C$ in a bicategory is equipped with a distinguished $1$-cell $\id_C
\in\ob\Hom(C,C)$, but left and and right composition with $\id_C$ is the
identity functor only up to respective specified ``left identity'' and ``right
identity'' natural isomorphisms.  Moreover, the associativity and identity
isomorphisms are not taken to be arbitrary, but are themselves subject to
natural coherence constraints.  Philosophically, one may view the associativity
and identity isomorphisms as ``canonical'': the coherence constraints ensure
that all possible ways to use the associativity and identity isomorphisms to
obtain a $2$-cell between given $1$-cells yield a common result.

\begin{rk}\label{rk:2-cat}
A bicategory with strict associativity and strict identity $1$-cells is a
\emph{$2$-category}.  Many examples of honest $2$-categories occur in practice,
e.g.\ \Cat.
\end{rk}

\begin{rk}\label{rk:2-cat/bicat}
From some points of view, it would be more natural to use the term
``$2$-category'' in place of ``bicategory''; under such usage, one would then
refer to the objects described in \eqref{rk:2-cat} as ``strict'' $2$-categories.
But we shall defer to the now well-established conventions of category theory
and use ``$2$-category'' for the strict notion and ``bicategory'' for the more
general, ``up-to-isomorphism'' notion.
\end{rk}

\begin{rk}
We shall often refer to ordinary categories as $1$-categories.
 Just as a set may be regarded as a category in which all morphisms are
identities, a $1$-category may be regarded as a bicategory, and indeed
$2$-category, in which all $2$-cells are identities.
\end{rk}

A $1$-cell $f\colon A \to B$ in a bicategory is \emph{fully faithful} if for every object $C$, the natural functor 
\[
   \Hom(C,A) \xra{f_*} \Hom(C,B)
\]
is fully faithful.  In the case of the bicategory \Cat, we recover the usual notion of a fully faithful functor between  $1$-categories.

A $1$-cell $f\colon A \to B$ in a bicategory is an \emph{equivalence} if there
exists a $1$-cell $g\colon B \to A$ such that $gf \iso \id_A$ and $fg \iso
\id_B$. Equivalently, $f$ is an equivalence $\iff$ for every object $C$, the
natural functor
\[
   \Hom(C,A) \xra{f_*} \Hom(C,B)
\]
is an equivalence of categories $\iff$ for every object $C$, the natural functor
\[
   \Hom(B,C) \xra{f^*} \Hom(A,C)
\]
is an equivalence of categories.

An object $A$ in a bicategory is \emph{groupoidal} if for every object $C$, $\Hom(C,A)$ is a groupoid.  In \Cat, the groupoidal objects are precisely the groupoids.

For any bicategory \C, one obtains a bicategory $\C^\opp$ in a natural way
by reversing $1$-cells.  There are two other notions of ``opposite'':  one may
reverse just the $2$-cells, or the $1$- and $2$-cells simultaneously.  We shall
have no use for these latter two.

\subsection{Morphisms of bicategories}\label{ss:bicat_morphs}

Part of the philosophy of category theory is that morphisms between
mathematical objects are just as important as, if not more than, the objects
themselves.  So we shall now spend a few words on morphisms between bicategories. 
As in the previous section, we shall conduct the discussion at an informal level
and leave precise details to the references.  Let \I and \C and be bicategories.

In rough form, a \emph{morphism}, or \emph{diagram}, $F\colon \I \to \C$
consists of
\begin{itemize}
\item
   an assignment on objects $\ob\I \to \ob\C$; and
\item
   for every pair $i$, $j\in\ob\I$, a functor $\Hom_\I(i,j) \to \Hom_\C(Fi,Fj)$,
\end{itemize}
compatible with identity $1$-cells and composition of $1$-cells up to specified
coherent invertible $2$-cells.  We shall decline to make ``coherent'' more
precise; see \cite{ben67}*{4.1} for the full definition.

\begin{rk}
Our notion of morphism is often called ``homomorphism'' in the literature;
see e.g.\ \cite{ben67}*{4.2}.  More generally, one may consider a notion 
%of morphism 
in which assignment of $1$-cells is compatible with identities and
composition only up to not-necessarily-invertible coherent $2$-cells: it is
this that many authors call ``morphism'', whereas we would call it \emph{lax
morphism}.  The lax morphisms occupy an important place in the theory, but we
will not have occasion to consider them further.
\end{rk}

\begin{eg}[Corepresentable and representable morphisms]\label{eg:repble_morph}
For each $C\in\ob\C$, $h^C := \Hom_\C(C,-)$ defines a morphism
\[
   \xymatrix@R=0ex{
   \smash{\C}\vphantom{()} \ar[r] & \Cat\\
   \smash{D}\vphantom{()}  \ar@{|->}[r] & \smash{\Hom_\C(C,D)}\vphantom{()}
   }
\]
in a natural way.  We call $h^C$ the \emph{morphism corepresented by
$C$}.  Similarly, $h_C := \Hom(-,C)$ defines a morphism $\C^\opp \to \Cat$;
$h_C$ is the \emph{morphism represented by $C$}.
\end{eg}

\begin{rk}
When \I is a $1$-category, a bicategory morphism $\I \to \C$ is essentially a
pseudofunctor as defined in \cite{sga1}*{VI \s8} (strictly speaking,
\cite{sga1} treats the case $\C = \Cat$, but the modifications needed for 
general \C are slight).  In the main body, we shall only encounter morphisms
for which \I is a $1$-category, so let us give a more explicit description in
this case.  Such a morphism $F\colon \I \to \C$ consists of the data
\begin{itemize}
\item
	for every object $i$ in \I, an object $Fi$ in \C;
\item
	for every morphism $\mu\colon i\to j$ in \I, a $1$-cell $F \mu \colon F i\to F j$ in \C;
\item
	for every $i\in\ob \I$, an invertible $2$-cell $\chi_i \colon F \id_i \isoarrow \id_{F i}$ in \C; and
\item
	for every composition $\smash{i \xrightarrow\mu j \xrightarrow\nu k}$ in
	\I, an invertible $2$-cell
	\[
		\chi_{\nu,\mu}\colon \F(\nu\mu) \isoarrow (\F\nu)(\F\mu)
	\]
	in \C.
\end{itemize}
These data are subject to natural coherence conditions expressing left and right identity and associativity constraints on the $\chi$'s.  We shall decline to write out the constraints precisely.

In practice, the $\chi$'s typically arise as canonical isomorphisms.  So, to
simplify notation, we often just write same symbol ``$\can$'' in place
of the various $\chi$'s.

When \C is a $2$-category and one wishes to construct a morphism into \C, it can often be arranged that $F\id_i = \id_{Fi}$ and $\chi_i$ is the identity $2$-cell for each $i\in\ob\I$.
\end{rk}

\begin{rk}
Morphisms of bicategories carry equivalences of objects to equivalences. A
morphism of bicategories is itself called an \emph{equivalence} if it is
surjective on equivalence classes of objects and induces equivalences on all
$\Hom$ categories.
\end{rk}

\begin{rk}
As in \eqref{rk:2-cat/bicat}, there is a reasonable case for calling a morphism of 
bicategories a ``$2$-functor''.  But we shall again defer to standard usage 
in category theory and reserve ``$2$-functor'' for a morphism between 
$2$-categories which is strictly compatible with composition and with identities.
\end{rk}

Just as functors between $1$-categories admit natural transformations between
them, so do morphisms between bicategories admit arrows, which we call just
\emph{transformations}, between them.  In rough form, if $F$, $G\colon \I \to
\C$ are bicategory morphisms, then a transformation $\alpha\colon F \to G$
consists of a $1$-cell $\alpha_i \colon Fi \to Gi$ in \C for each $i\in\ob\I$,
compatible with the definitions of $F$ and $G$ on $1$-cells up to specified
coherent invertible $2$-cells.

\begin{rk}
We would obtain the notion of \emph{lax transformation} by requiring
compatibility only up to not-necessarily-invertible coherent $2$-cells.  Some
authors use ``transformation'' for our notion of lax transformation, and
``strong transformation'' for our notion of transformation.  As for lax
morphisms between bicategories, we shall not have occasion to consider lax
transformations further.
\end{rk}

There is one further notion of arrow to consider, new to the setting of
bicategories: if $\alpha$, $\beta\colon F \to G$ are transformations, then a
\emph{modification} $\xi\colon \alpha \to \beta$ consists of a
(not-necessarily-invertible) $2$-cell
\[
   \xy
      (-10,0)*+{Fi}="F";
      (10,0)*+{Gi}="G";
      {\ar@/^3ex/ "F";"G" ^-{\alpha_i}};
      {\ar@/_3ex/ "F";"G" _-{\beta_i}};
      {\ar@{=>} (0,3);(0,-3) ^-{\xi_i}}
   \endxy
\]
in \C for each $i\in\ob\I$, subject to a natural coherence condition.

Just as functors from one category to another and the natural transformations
between these functors are naturally organized into a category, the bicategory
morphisms, transformations, and modifications are naturally organized as the
respective objects, $1$-cells, and $2$-cells of a bicategory $\Hom(\I,\C)$.
When \C is a $2$-category, so is $\Hom(\I,\C)$.

\begin{rk}\label{rk:equivs_in_morph_bicat}
Let $F$, $G\colon \I \to \C$ be morphisms.  When $\I$ and $\C$ are $1$-categories, it is a familiar fact that a natural transformation $F \to G$ is an isomorphism exactly when it is an objectwise isomorphism, i.e.\ when $Fi \to Gi$ is an isomorphism for all $i\in\ob\I$.  Analogously, when $\I$ and $\C$ are arbitrary bicategories, a transformation $F \to G$ is an equivalence exactly when it is an objectwise equivalence.
\end{rk}

\begin{rk}[Yoneda's lemma]
Yoneda's lemma admits the following formulation in the setting of
bicategories:  for every object $C$ in \C and bicategory morphism
$F\colon\C^\opp \to \Cat$, the natural functor $\Hom_{\Hom(\C,\Cat)}(h_C,F) \to
FC$ is an equivalence of categories.  The analogous statement holds for $h^C$.
\end{rk}

\begin{rk}[Diagonal morphism]\label{rk:diag}
There is a canonical morphism of bicategories $\C \to \C^\I := \Hom(\I,\C)$,
called the \emph{diagonal} and denoted $\Delta_{\I,\C}$ or often just
$\Delta$, which we now describe.

As a warm-up, let us review the diagonal in the setting of $1$-categories \I and
\C; see e.g.\ \cite{cwm}*{III \s3}.  The diagonal is defined on each object
$C$ in $\C$ by taking $\Delta C$ to be the functor $\I \to \C$ with constant
value $C$ on objects and constant value $\id_C$ on morphisms.  The diagonal is
defined on morphisms in the evident way.

When \I and \C are bicategories, the diagonal is defined quite analogously.  For each $C \in \ob\C$, there is a natural morphism $\I \to \C$ sending all objects to $C$, all $1$-cells to $\id_C$, and all $2$-cells to $\id_{\id_C}$; this defines $\Delta$ on objects.  There is then an evident way to define $\Delta$ on $1$-cells and $2$-cells in \C, and $\Delta$ admits a natural structure of bicategory morphism $\C \to \Hom(\I,\C)$.
\end{rk}

\subsection{Limits in bicategories}\label{ss:lims_in_bicats}

Let \C be a $1$-category.  Informally, the limit of a diagram in \C is an object
$L$ in \C with the property that, for each object $C$ in \C, to give a morphism
from $C$ to $L$ is to give a family of morphisms from $C$, one to each object in
the diagram, compatible with the transition morphisms in the diagram.

More precisely, let us say that the diagram in \C is given by an ``index''
category \I and a functor $F\colon \I \to \C$.  Then a limit of $F$ is a pair
$(L,\alpha)$, where $L\in\ob\C$ and $\alpha$ is a natural transformation $\Delta
L \to F$ \eqref{rk:diag}, with the property that, for each object $C$ in \C, the
composition
\[
   \Hom_\C(C,L) 
      \xra\Delta \Hom_{\Hom(\I,\C)}(\Delta C, \Delta L) 
      \xra{\alpha_*} \Hom_{\Hom(\I,\C)}(\Delta C, F)
\]
is a bijection of sets.  As is standard, we write $\ilim F$
for $L$ and typically omit $\alpha$ from the notation.  By Yoneda's lemma, the
pair $(\ilim F,\alpha)$ is well-defined up to canonical isomorphism.

The generalization to bicategories is straightforward.  Let $F\colon \I \to \C$
be a morphism of bicategories.  Again, recall the diagonal $\Delta$ of
\eqref{rk:diag}.

\begin{defn}\label{def:bicat_lim}
A \emph{limit} of $F$ is a pair $(L,\alpha)$, where $L\in\ob\C$ and $\alpha$ is
a transformation $\Delta L \to F$, with the property that, for each object $C$
in \C, the composition of functors
\[
  \Hom_\C(C,L) 
     \xra\Delta \Hom_{\Hom(\I,\C)}(\Delta C, \Delta L) 
     \xra{\alpha_*} \Hom_{\Hom(\I,\C)}(\Delta C, F)
\]
is an equivalence of $1$-categories.
\end{defn}

As for limits in $1$-categories, we write $\ilim F$ for $L$ and typically omit
$\alpha$ from the notation.  It is sometimes convenient to denote the limit more informally by $\ilim_{i\in\I} Fi$ or by $\ilim_i Fi$.  By Yoneda's lemma, the pair $(\ilim F,\alpha)$ is
well-defined up to equivalence.

\begin{rk}
Of course, if $(\ilim F,\alpha)$ is a limit of $F$, then we obtain a ``projection'' $1$-cell $\pr_i := \alpha_i\colon \ilim F \to Fi$ for each $i\in\ob\I$.  As $i$ varies, the $\pr_i$'s are compatible with the $F\mu$'s, $\mu\in\mor\I$, in the precisely the sense that $\alpha$ is a transformation $\Delta \ilim F \to F$.
\end{rk}

\begin{rk}\label{rk:onelim}
What we have called ``limit'' is usually called ``bilimit'' in the categorical
literature. Consider the case that \I and \C are $2$-categories and $F$ is a
$2$-functor. Then, forgetting structure, $F$ provides a functor between the underlying $1$-categories.
But the limit of $F$ in the $1$-categorical sense and the limit of $F$ in the
sense of \eqref{def:bicat_lim} need not agree, even when \I is a $1$-category;
see e.g.\ \eqref{eg:fib_prod}. Traditionally, ``bilimit'' has been used for
limits in the sense of \eqref{def:bicat_lim}, whereas ``limit'' has been
reserved for the $1$-categorical notion. But in the setting of bicategories,
\eqref{def:bicat_lim} is the more fundamental notion. So we feel it deserves the
plainer terminology, and it is that we shall call ``limit''. When we need to
distinguish the $1$-categorical notion, we shall refer to it as ``$1$-limit''
and denote it $\nlim F$.

Note that when \I is a $2$-category and \C is a $1$-category, the notions of
limit of $F$ and of $1$-limit of $F$ are the same.  In particular,
\eqref{def:bicat_lim} recovers the usual notion of limit in the case of a
functor between $1$-categories.
\end{rk}

\begin{rk}\label{rk:lim_as_rt_adjoint}
Of course, the limit of a given $F\colon \I \to \C$ may or may not exist.  At
the other extreme, suppose that \C admits all \I-indexed limits, that is, that
every morphism $\I \to \C$ admits a limit.  Then, choosing a limit $(\ilim F,
\alpha)$ for each $F\colon \I \to \C$, and using the universal property of a
limit, it is straightforward (though tedious) to verify that passing to the
limit admits a structure of bicategory morphism $\Hom(\I,\C) \to \C$, right
adjoint to $\Delta$ (in the sense of bicategories).
\end{rk}

\begin{rk}\label{rk:lim_of_ff's}
Let $G\colon \I \to \C$ be another morphism and $\alpha\colon F \to G$ a transformation. It follows immediately from the definitions that if $\alpha$ is fully faithful as a $1$-cell in $\Hom(\I,\C)$ and $\ilim F$ and $\ilim G$ exist, then the natural induced $1$-cell $\ilim F \to \ilim G$ (well-defined up to isomorphism) in \C is fully faithful.  Moreover, there is the following easy criterion for $\alpha$ to be fully faithful, often satisfied in practice:
\[
   \text{$\alpha_i$ is fully faithful in \C for all $i\in\ob\I$} \implies \text{$\alpha$ is fully faithful.}
\]
\end{rk}

\begin{rk}\label{rk:gpdl_valued}
It follows immediately from the definitions that if $F$ is a groupoidal object in $\Hom(\I,\C)$ and $\ilim F$ exists, then $\ilim F$ is groupoidal in \C.  Moreover, there is the following easy criterion for $F$ to be groupoidal, often satisfied in practice:
\[
   \text{$Fi$ is groupoidal in \C for all $i\in\ob\I$} \implies \text{$F$ is groupoidal.}
\] 
\end{rk}

\begin{rk}\label{rk:modifications_via_limits}
For later use, consider the situation of an object $C$ in \C and transformations $\alpha$, $\beta\colon \Delta C \to F$.  For simplicity, assume that \I is a $1$-category; this will be the case of interest to us in applications.  Then it is easy to verify that the assignment $i \mapsto \Hom_\C(\alpha_i,\beta_i)$ defines a functor $\I \to \Sets$ in a natural way.  Moreover, it is then immediate from the definition of modification that the limit $\ilim_{i\in\I}\Hom_\C(\alpha_i,\beta_i)$ of this functor is canonically identified with the set of modifications $\alpha \to \beta$.  When \I is an arbitrary bicategory, one still obtains a morphism $\I \to \Sets$ whose limit is the set of modifications $\alpha \to \beta$, but the needed verifications require more work.
\end{rk}

\begin{rk}
More generally, Simpson \cite{simp97} has defined limits in $n$-categories and 
begun a study of them.
\end{rk}

\begin{rk}
More generally, there is a notion of \emph{weighted limit} in a bicategory.  
See \eqref{rk:wtd_lims} for brief remarks and a reference.
\end{rk}

\subsection{Limits of categories}\label{ss:cat_lims}

In this section we consider the important special case of limits in \Cat.

As motivation, let us begin with a brief review of limits in \Sets.  Let \I be a small $1$-category and $F\colon \I \to \Sets$ a functor.  Of course,
it is well-known that the limit of $F$ exists.  But even if we didn't know this,
we could discover the limit in the following simple way.  Let $*$ denote a singleton set, so that
\[ %\label{disp:*_prop}\tag{\dag}
   S \ciso \Hom_\Sets(*,S)
\]
for any set $S$.  If $\ilim F$ exists, then the displayed formula and the universal property of the limit would force
\[ %\label{disp:1-ilim_fmla}\tag{\dag}
   \ilim F \ciso \Hom_\Sets(*,\ilim F) \ciso \Hom_{\Hom(\I,\Sets)}(\Delta *, F).
\]
On the other hand, we may take the right-hand side of this last display as a \emph{definition} of the limit:  one verifies directly that there is a canonical functorial bijection of sets
\[
   \Hom_{\Hom(\I,\Sets)}(\Delta S, F)
      \ciso \Hom_\Sets\bigl(S, \Hom_{\Hom(\I,\Sets)}(\Delta *, F)\bigr)
\]
for any set $S$; this is just the version for covariant functors of the familiar adjunction between forming the constant presheaf and taking global sections.  Hence $\Hom_{\Hom(\I,\Sets)}(\Delta *, F)$ has the universal
property of the limit.

Everything generalizes to limits in \Cat in a straightforward way.  Let $F\colon \I \to \Cat$ be a morphism of bicategories.  We now
regard $*$ as a category with a single object and single (identity) morphism. 
Then for any category $C$, there is a canonical isomorphism (not just
equivalence) of categories
\[
   C \ciso \Hom_\Cat(*,C).
\]
Hence, just as we saw for limits in \Sets, provided $\ilim F$ exists, we are forced to compute it as
\[\label{disp:ilim_fmla}\tag{$\dag$}
   \ilim F \ciso \Hom_\Cat(*,\ilim F) \approx \Hom_{\Hom(\I,\Cat)}(\Delta *, F).
\]
On the other hand, one again finds that the right-hand side of \eqref{disp:ilim_fmla} serves to \emph{define} the limit: there is a canonical functorial isomorphism (not just equivalence) of categories
\[
\Hom_{\Hom(\I,\Cat)}(\Delta C, F)
   \ciso \Hom_\Cat \bigl(C, \Hom_{\Hom(\I,\Cat)}(\Delta *, F)\bigr)
\]
for any category $C$.  Hence $\Hom_{\Hom(\I,\Cat)}(\Delta *, F)$ has the universal
property of the limit.

\begin{rk}\label{rk:pr_i}
For each $i\in\ob\I$, the projection functor
\[
    \ilim F = \Hom_{\Hom(\I,\Cat)}(\Delta *, F) \xra{\pr_i} Fi
\]
has a simple description.  Indeed, let $X\colon \Delta * \to F$ be a
transformation.  Then $\pr_i X$ is just the object of $Fi$ identified with 
$X_i\colon * \to Fi$ via the canonical isomorphism $Fi\ciso \Hom(*,Fi)$.
\end{rk}

\begin{eg}[Fibered product of categories]\label{eg:fib_prod}
Let
\[
   \xymatrix{
                & D \ar[d]^-g \\
      C \ar[r]^-f  & E
   }
\]
be a diagram of categories and functors, say given by an (honest) functor
\[
   \vcenter{
   \xymatrix{
             & \bullet \ar[d] \\
      \bullet \ar[r] & \bullet
   }
   }
   \qquad
   \xymatrix{
      \ar[r]^F &
   }
   \qquad\Cat,
\]
where we regard the source of $F$ as a $1$-category \I.  Then
$\Hom_{\Hom(\I,\Cat)}(\Delta *, F)$ is isomorphic to the category whose
\begin{itemize}
\item objects are tuples $(X,Y,Z,\varphi,\psi)$, where
   \[
      X\in\ob C, \quad Y \in \ob D, \quad Z\in \ob E,
   \]
   and
   \[
      \varphi\colon fX \isoarrow Z \quad\text{and}\quad
      \psi\colon gY \isoarrow Z
   \]
   are isomorphisms in $E$; and
\item morphisms $(X,Y,Z,\varphi,\psi) \to (X',Y',Z',\varphi',\psi')$ are
triples $(\alpha,\beta,\gamma)$, where
\[
   \alpha\colon X\to X', \quad \beta\colon Y \to Y', \quad\text{and}
      \quad \gamma\colon Z\to Z'
\]
are morphisms in $C$, $D$, and $E$, respectively, making the evident diagrams
commute.
\end{itemize}
We take the fibered product $C \fib E D$ to be the category so defined.

It is common to find $C \fib E D$ defined instead as the category of triples
$(X,Y,\varphi)$, where $X\in\ob C$, $Y\in\ob D$, and $\varphi\colon fX \isoarrow
gY$ is an isomorphism in $E$.  One readily checks that this category is
equivalent, though not in general isomorphic, to the fibered product as we've
defined it.  In practice, we will only ever be interested in categories up to
equivalence, and we will freely use either definition.

On the other hand, since $F$ is an honest functor, we may speak of its
$1$-limit \eqref{rk:onelim}, that is, the $1$-categorical fibered product: this
is the category of pairs $X\in\ob C$ and $Y\in\ob D$ for which $fX = gY$.  It is
easy to write down simple choices of $C$, $D$, and $E$ for which $\ilim F$ and $\nlim F$ are not equivalent.
\end{eg}

\begin{eg}[Limit indexed on a $1$-category]\label{eg:lims_on_1-cats}
More generally, let \I be a $1$-category and $F\colon \I \to \Cat$ a morphism of
bicategories.  Then, by \eqref{disp:ilim_fmla}, we may take as $\ilim F$ the
category of families $(X_i,\varphi_\mu)_{i\in\ob\I,\mu\in\mor\I}$, where
\begin{itemize}
\item
   for each object $i$ in \I, $X_i$ is an object in $Fi$, and
\item
   for each morphism $\mu\colon i \to j$ in \I, $\varphi_\mu$ is an isomorphism 
   $(F\mu)X_i \isoarrow X_j$ in $Fj$,
\end{itemize}
subject to the cocycle condition 
\begin{itemize}
\item
    for every composition $i \xra\mu j \xra \nu k$ in \I, the diagram
    \[
       \xymatrix@C=8ex{
          \bigl(\F(\nu\mu)\bigr) X_i \ar[d]^-\sim_-\can \ar[r]^-{\varphi_{\nu\mu}}_-\sim
             & X_k \\
          (\F\nu)(\F\mu) X_i \ar[r]^-{(\F\nu)\varphi_\mu}_-\sim
             & (\F\nu) X_j \ar[u]^-\sim_-{\varphi_\nu}
       }
    \]
    commutes in $Fk$.
\end{itemize}
A morphism $(X_i,\varphi_\mu) \to (X_i',\varphi_\mu')$ is a family
$(\alpha_i)_{i\in\ob\I}$, with $\alpha_i\colon X_i \to X_i'$ a morphism in $Fi$
for each $i$, compatible with the $F\mu$'s and $\varphi_\mu$'s.

The reader may have noticed that, for fibered products in \eqref{eg:fib_prod}, the
families we took as objects in $\ilim F$ did not include any morphisms $\varphi$
indexed by an identity morphism in \I.  It is an easy exercise to verify that,
under the present description of $\ilim F$ in the case of fibered products, any
$\varphi$ indexed by an identity morphism must itself be an identity morphism. 
So the present description of $\ilim F$ agrees with that in \eqref{eg:fib_prod}
up to isomorphism (not just equivalence). 
\end{eg}

\begin{rk}\label{rk:lims_via_cat_lims}
It is a familiar fact in $1$-category theory that limits in arbitrary
categories can be characterized in terms of limits in \Sets.  Indeed, let
$F\colon \I \to \C$ be a functor between $1$-categories.  For each $C \in \ob
\C$, let $h^C := \Hom_\C(C,-)$ be the functor $\C \to \Sets$ corepresented by
$C$. Then the universal property of the limit of $F$ can be expressed as an
isomorphism
\[\label{disp:ilim_sets}\tag{$\ddag$}
   \Hom_\C(C,\ilim F) \iso \ilim\, (h^C \circ F)
\]
functorial in $C\in\ob\C$; or, more informally,
\[
   \Hom_\C\biggl(C,\ilim_{i\in\I}Fi\biggr) \iso \ilim_{i\in\I}\Hom_\C(C,Fi).
\]
The equivalence of the universal property formulated in \eqref{disp:ilim_sets}
with the universal property formulated in \eqref{def:bicat_lim} is
immediate, since there is a tautological identification
\[
   \ilim (h^C \circ F) \ciso \Hom_{\Hom(\I,\C)}(\Delta C, F)
\]
amounting to nothing more than the definition of natural transformation of
functors.

Quite analogously, limits in bicategories can be characterized in terms of
limits in \Cat: if $F\colon \I \to \C$ is a morphism of bicategories, then the
universal property of the limit can be expressed as an equivalence of categories
\[
   \Hom_\C(C,\ilim F) \approx \ilim\,(h^C\circ F)
\]
functorial (in the sense of bicategories) in $C \in \ob \C$; here $h^C = \Hom_\C(C,-)$ is the bicategory morphism
$\C \to \Cat$ corepresented by $C$ \eqref{eg:repble_morph}. Indeed, this time
the definitions furnish a tautological isomorphism (not just equivalence) of
categories
\[
   \ilim\, (h^C \circ F)
      = \Hom_{\Hom(\I,\Cat)}(\Delta *, h^C\circ F)
      \ciso \Hom_{\Hom(\I,\C)}(\Delta C, F),
\]
so that we recover the universal property of \eqref{def:bicat_lim}.
\end{rk}

\begin{rk}[Weighted limits]\label{rk:wtd_lims}
One may take \eqref{rk:lims_via_cat_lims} as point of departure for the notion 
of \emph{weighted limit} in a bicategory.  As we saw, the limit of $F$ is a representing object in \C for the morphism 
\[
   \vcenter{
   \xymatrix@R=0ex{
      \smash{\C^\opp}\vphantom{()} \ar[r] & \Cat\\
      \smash{C}\vphantom{()} \ar@{|->}[r] & \smash{\Hom_{\Hom(\I,\Cat)}(\Delta *, h^C\circ F).}\vphantom{()}
   }
   }
\]
More generally, one may replace $\Delta *$ in the display by an arbitrary morphism $W\colon \I \to \Cat$; a \emph{$W$-weighted limit of $F$} is a representing object in \C for the morphism $\C^\opp \to \Cat$ so obtained.  See \cite{lack07}*{\s6} for an informal introduction, and references therein for more details.
\end{rk}

\begin{rk}[Morphisms in limits of categories]\label{rk:morphs_in_lim_cats}
Let $F\colon\I \to \Cat$ be a bicategory morphism, and let $X$ and $Y$ be
objects in $\ilim F$. For simplicity, assume
that \I is a $1$-category; this will be the case of interest to us in
applications. Then by \eqref{rk:modifications_via_limits} and \eqref{rk:pr_i}, we obtain a canonical identification, which we express in informal notation,
\[
   \Hom_{\underset{i \in \I}{\ilim}Fi}(X,Y) \ciso \ilim_{i\in\I}\Hom_{Fi}(\pr_i X, \pr_i Y).
\]

We have assumed that \I is a $1$-category only so that we may apply \eqref{rk:modifications_via_limits}; one obtains the displayed formula for an arbitrary bicategory \I as soon as one allows \eqref{rk:modifications_via_limits} for arbitrary \I.
\end{rk}

\begin{rk}\label{rk:sga_ref}
Limits of categories indexed by $1$-categories were introduced in
\cite{sga1}*{VI 5.5}, and studied further in \cite{gir64}, via fibered
categories; see also \cite{sga4-2}*{VI 6.10--.11}. Recall that if \I is a
$1$-category, then there is a natural $2$-functor 
\[
   \Hom(\I,\Cat) \to \Fib(\I^\opp)
\]
\cite{sga1}*{VI \s9} which is an equivalence of $2$-categories.  Let $F\colon \I
\to \Cat$ be a bicategory morphism, and denote the corresponding fibered
category over $\I^\opp$ by \F.  In particular, $\Delta*\colon \I \to \Cat$
corresponds (up to isomorphism) to $\I^\opp$ itself, and we have
\[
   \ilim F = \Hom_{\Hom(\I,\Cat)}(\Delta*,F) 
           \approx \Hom_{\Fib(\I^\opp)} (\I^\opp,\F).
\]
The right-hand side is the notion of limit defined in \cite{sga1}.

Limits of toposes indexed by $1$-categories were first studied in
\cite{sga4-2}*{VI \s8}.  In particular, (8.1.3.3) of loc.\ cit.\ discusses the
general universal mapping property of the limit in the particular context of
toposes.
\end{rk}

\subsection{Limits in morphism bicategories}\label{ss:lims_in_morph_bicats}
It is a familiar fact in $1$-category theory that if a category \D contains all
limits, then so does $\Hom(\C,\D)$ for any category \C.  More precisely, let \I
be a category and $F\colon \I \to \Hom(\C,\D)$ a functor.  Then for each
$C\in\ob \C$, we obtain a diagram
\[
   F_C\colon
   \xymatrix@R=0ex{
      \smash{\I}\vphantom{()} \ar[r] & \smash{\D}\vphantom{()}\\
      \smash{i}\vphantom{()} \ar@{|->}[r] & (Fi)(C).
   }
\]
If $\ilim F_C$ exists for all $C$, then the assignment $C\mapsto \ilim F_C$
defines a functor $L\colon \C \to \D$ in a natural way, and $L$ is a naturally
a limit of $F$.  In more informal notation,
\[
   \biggl(\ilim_{i\in\I} Fi\biggr)(C) = \ilim_{i\in\I} (Fi)(C);
\]
or in more informal words, limits are computed objectwise.  In particular,
suppose \D contains all \I-indexed limits; that is, suppose $\ilim$ defines a
functor $\Hom(\I,\D) \to \D$, right adjoint to $\Delta_{\I,\D}$ \eqref{rk:diag}.
Then $\Hom(\C,\D)$ has all \I-indexed limits, and they are computed via the
composition
\[\label{disp:J_lim_fmla}\tag{\dag}
   \Hom\bigl(\I,\Hom(\C,\D)\bigr)
      \ciso \Hom\bigl(\C,\Hom(\I,\D)\bigr)
      \xra{(\ilim)_*} \Hom(\C,\D).
\]

As in previous sections, everything now generalizes to the
situation of bicategories.  Let \C, \D, and \I be bicategories and $F\colon \I 
\to \Hom(\C,\D)$ a morphism.  Then for each $C\in\ob\C$, we again get
a morphism $F_C\colon \I \to \D$ sending $i\mapsto (Fi)(C)$.  Suppose each $F_C$
admits a limit, and fix a particular choice $\ilim F_C$ for each $C$.  Of
course, this time the assignment $C\mapsto \ilim F_C$ does not canonically
define a morphism $\C \to \D$, since we only have equivalences
\[
   \Hom_\D\bigl(\ilim F_C, \ilim F_{C'}\bigr)
      \approx \ilim_i \Hom_\D\bigl(\ilim F_C, (Fi)(C')\bigr)
\]
for varying $C$, $C'\in\ob\C$.  Nevertheless, the right-hand side of the display
allows us to choose an assignment of $1$-cells
\[
   \bigl(C\overset{f}{\rightarrow} C'\bigr)
      \mapsto \biggl(\ilim F_C \xra{\ilim f} \ilim F_{C'}\biggr)
\]
as $f$ runs through the $1$-cells in \C.  It is straightforward to verify that
our choices determine a morphism $L\colon\C \to \D$ in a natural way, and
moreover $L$ is naturally a limit of $F$.  As in the $1$-categorical case, if 
\D has all \I-indexed limits, so that $\ilim$ admits a structure of morphism
$\Hom(\I,\D) \to \D$, then $\Hom(\C,\D)$ has all $\I$-indexed limits; these last
may be computed exactly as in \eqref{disp:J_lim_fmla}, interpreted in the
setting of  bicategories.

\subsection{Limits of fibered categories and of stacks}\label{ss:lim_stacks}
In this section we shall apply some of the general considerations in previous
sections to the particular setting of fibered categories and stacks.  Let \C be
a $1$-category.  Strictly speaking, we shall apply our previous work directly to $\Hom(\C^\opp,\Cat)$, and then to $\Fib(\C)$ via the equivalence of $2$-categories
\[
   \Hom(\C^\opp,\Cat) \to \Fib(\C)
\]
we recalled earlier in \eqref{rk:sga_ref}.

Our first result is an immediate consequence of \s\s\ref{ss:cat_lims} and 
\ref{ss:lims_in_morph_bicats}.

\begin{thm}\label{st:Fib_has_lims}
$\Fib(\C)$ contains all limits.\hfill $\square$
\end{thm}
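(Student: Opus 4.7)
The plan is to deduce the theorem by combining the equivalence of bicategories $\Hom(\C^\opp,\Cat) \approx \Fib(\C)$ recalled in \eqref{rk:sga_ref} with the two structural facts just established. Since equivalences of bicategories preserve and reflect limits (a formal consequence of \eqref{def:bicat_lim} via Yoneda), it suffices to show that $\Hom(\C^\opp,\Cat)$ contains all limits.

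For this, I would first invoke the discussion in \s\ref{ss:cat_lims}: every bicategory morphism $F\colon \I \to \Cat$ admits the category $\Hom_{\Hom(\I,\Cat)}(\Delta*,F)$ as a limit, so $\Cat$ contains all limits, and hence (per \eqref{rk:lim_as_rt_adjoint}) $\ilim$ carries a natural structure of bicategory morphism $\Hom(\I,\Cat) \to \Cat$ right adjoint to $\Delta_{\I,\Cat}$. Then I would apply the main result of \s\ref{ss:lims_in_morph_bicats}: since $\Cat$ has all $\I$-indexed limits (for arbitrary \I), the morphism bicategory $\Hom(\C^\opp,\Cat)$ has all $\I$-indexed limits, computed objectwise via the composition in \eqref{disp:J_lim_fmla}. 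As \I was arbitrary, $\Hom(\C^\opp,\Cat)$ contains all limits, and the theorem follows.

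There is no real obstacle here: the content of the theorem is essentially already packaged in the two cited sections, and the only thing to verify is that the chain of implications goes through. The one point worth pausing on is that \eqref{disp:J_lim_fmla}, which was originally written for $\Hom(\C,\D)$ with \C a $1$-category, applies equally well with $\C^\opp$ in place of \C, which is immediate since the construction there never used that \C was covariant.
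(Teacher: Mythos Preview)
Your proposal is correct and matches the paper's approach exactly: the paper states the theorem as an immediate consequence of \S\ref{ss:cat_lims} and \S\ref{ss:lims_in_morph_bicats}, applied to $\Hom(\C^\opp,\Cat)$ and transported to $\Fib(\C)$ via the equivalence recalled in \eqref{rk:sga_ref}. Your final caveat is unnecessary, since \S\ref{ss:lims_in_morph_bicats} is formulated for arbitrary bicategories $\C$, $\D$, $\I$, so taking $\C^\opp$ in the first slot requires no justification.
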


\begin{rk}\label{rk:Fib_obwise}
Quite explicitly, let $F\colon \I \to \Fib(\C)$ be a morphism of bicategories,
and let $(Fi)(C)$ denote the category fiber of $Fi$ over each $C\in\ob\C$.  Then
$\ilim F$ is the fibered category whose fiber over each $C$ is the category
$\ilim_{i\in\I} (Fi)(C)$.  In other words, limits of fibered categories are 
computed fiberwise.
\end{rk}

Of course, \eqref{st:Fib_has_lims} is a generalization of the familiar fact
that for any category \C, the category of presheaves of sets on \C contains all (small)
limits.

In the case of a diagram of CFG's in $\Fib(\C)$, \eqref{rk:gpdl_valued} asserts that the limit will again be a CFG.  So we deduce the following.

\begin{cor}\label{st:CFG_has_lims}
$\CFG(\C)$ contains all limits; they are computed exactly as limits in $\Fib(\C)$. \hfill $\square$
\end{cor}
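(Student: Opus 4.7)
The plan is to deduce the result directly from \eqref{st:Fib_has_lims} together with the observation that the fibers of a limit of CFG's are computed as limits of groupoids in \Cat. Let $F\colon \I \to \CFG(\C)$ be a morphism of bicategories. Composing with the inclusion $\CFG(\C) \inj \Fib(\C)$, we may regard $F$ as a morphism to $\Fib(\C)$, so that $\ilim F$ exists in $\Fib(\C)$ by \eqref{st:Fib_has_lims}. The main task is then to verify that $\ilim F$ is again a CFG, i.e., that each of its fibers is a groupoid.

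By \eqref{rk:Fib_obwise}, the fiber of $\ilim F$ over $C \in \ob\C$ is the category $\ilim_{i\in\I}(Fi)(C)$ computed in \Cat, where each $(Fi)(C)$ is a groupoid by assumption on $F$. Hence it suffices to know that a limit in \Cat of a diagram of groupoids is a groupoid. By \eqref{rk:morphs_in_lim_cats}, morphisms in $\ilim_{i\in\I}(Fi)(C)$ are computed as limits in \Sets of the $\Hom$ sets in the $(Fi)(C)$'s, so inverses in the groupoids $(Fi)(C)$ assemble (using functoriality of inversion) to produce inverses in the limit. Hence $\ilim F$ is a CFG.

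Finally, because $\CFG(\C)$ is a full sub-$2$-category of $\Fib(\C)$, the $\Hom$-categories between CFG's in $\CFG(\C)$ and in $\Fib(\C)$ agree. Hence for any CFG $\X$ over \C, the universal property of $\ilim F$ in $\Fib(\C)$ yields equivalences
\[
   \Hom_{\CFG(\C)}(\X,\ilim F)
      = \Hom_{\Fib(\C)}(\X,\ilim F)
      \approx \Hom_{\Hom(\I,\Fib(\C))}(\Delta\X,F),
\]
and the right-hand side agrees with $\Hom_{\Hom(\I,\CFG(\C))}(\Delta\X,F)$ by the same full-faithfulness. So $\ilim F$ has the universal property of the limit in $\CFG(\C)$, and is computed exactly as in $\Fib(\C)$. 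No real obstacle arises; the only thing to check carefully is that the fiberwise limit of groupoids is a groupoid, which amounts to the standard assembly of inverses in a $\Hom$-set limit.
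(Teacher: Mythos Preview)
Your proof is correct and follows essentially the same approach as the paper. The only difference is packaging: the paper invokes \eqref{rk:gpdl_valued} directly (CFG's being the groupoidal objects of $\Fib(\C)$, so the limit of a diagram of CFG's is automatically groupoidal, hence a CFG), whereas you unpack this fiberwise via \eqref{rk:Fib_obwise} and \eqref{rk:morphs_in_lim_cats} and then explicitly check the universal property through full faithfulness of $\CFG(\C)\inj\Fib(\C)$.
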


Let us now turn to stacks.  We continue with our $1$-category \C, and suppose it 
equipped with a Grothendieck topology; we shall not consider here Grothendieck 
topologies on arbitrary bicategories.  It is a familiar fact in sheaf theory 
that a limit of sheaves (of sets, say), computed in the category of presheaves, 
is always again a sheaf.  We shall now arrive at the analogous result for 
stacks.  

Let $F\colon \I \to \Fib(\C)$ be a morphism of bicategories; then $\ilim F$ exists by \eqref{st:Fib_has_lims}.

\begin{thm}\label{st:lim_stacks}
Suppose $Fi$ is a stack for each $i\in\ob \I$.  Then $\ilim F$ is a stack.  In
particular, $\St(\C)$ contains all limits.
\end{thm}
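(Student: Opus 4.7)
The plan is to reduce the stack axioms for $\ilim F$ to the corresponding properties of the individual $Fi$, using the fiberwise computation of \eqref{rk:Fib_obwise} together with the description of morphisms in a limit category from \eqref{rk:morphs_in_lim_cats}. We must verify two things for $\ilim F$: (i) the presheaf of morphisms between any two objects is a sheaf on the appropriate overcategory (descent for morphisms), and (ii) objects equipped with descent data may be effectively glued.

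For (i), fix $C \in \ob \C$, a covering sieve $\V$ on $C$, and objects $X, Y$ in the fiber $(\ilim F)(C)$. By \eqref{rk:morphs_in_lim_cats} there is a canonical identification, functorial in $T$ lying in $\V$,
\[
   \Hom_{(\ilim F)(T)}(X|_T, Y|_T) \ciso \ilim_{i\in\I} \Hom_{(Fi)(T)}(\pr_i X|_T, \pr_i Y|_T).
\]
Each $Fi$ is a stack, so each functor $T \mapsto \Hom_{(Fi)(T)}(\pr_i X|_T, \pr_i Y|_T)$ is a sheaf on $\C_{/C}$. Since limits of sheaves of sets (computed in presheaves) are sheaves --- i.e.\ the sheaf axiom is itself a limit condition, and limits commute with limits --- the displayed limit over $i$ is again a sheaf. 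Hence morphisms in $\ilim F$ satisfy descent.

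For (ii), suppose given a covering sieve $\V$ on $C$ and a morphism of fibered categories $\V \to \ilim F$, i.e.\ an object with descent datum. Composing with the projection morphisms $\pr_i\colon \ilim F \to Fi$ gives, for each $i \in \ob\I$, an object with descent datum in $Fi$; since $Fi$ is a stack, this descends to an object $X_i \in (Fi)(C)$, equipped with a canonical isomorphism between its restriction to $\V$ and the original family. For each $1$-cell $\mu\colon i \to j$ in $\I$, we need a coherent isomorphism $(F\mu)X_i \isoarrow X_j$ in $(Fj)(C)$; such an isomorphism is given locally on $\V$ (as part of the starting data), and by part (i) just established, morphisms between the $(F\mu)X_i$ and $X_j$ form a sheaf, so the locally given isomorphism extends uniquely to a global one. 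The same sheaf property applied to $2$-cells (or equivalently to the cocycle/coherence modifications, which by \eqref{rk:modifications_via_limits} are again limits of morphism sets) forces all coherence constraints for $(X_i, \varphi_\mu)$ as in \eqref{eg:lims_on_1-cats} to hold globally, since they hold after restriction to the cover.

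The main obstacle is bookkeeping the coherence data: one must verify that all the higher compatibility $2$-cells defining an object of $\ilim F$ descend simultaneously with the underlying family. This is handled uniformly by observing that, by the principle ``morphisms and modifications in a limit are limits of morphisms,'' every piece of structure in an object of $(\ilim F)(C)$ either \emph{is} an object of some $(Fi)(C)$, for which we invoke the stack property of $Fi$ directly, or is a morphism/modification, for which we invoke (i). The final statement --- that $\St(\C)$ contains all limits --- then follows, since a limit in $\Fib(\C)$ of stacks is a stack, and the universal property of $\ilim F$ in $\Fib(\C)$ restricts to the universal property in $\St(\C)$ (because every test object $C \in \ob \St(\C)$ is also an object of $\Fib(\C)$).
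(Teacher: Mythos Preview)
Your proof is correct, but the paper takes a more streamlined route. Rather than separating the stack condition into descent for morphisms and effectiveness of descent data, the paper uses the single characterization: a fibered category $\X$ is a stack iff for every covering sieve $U \to \ul C$, the restriction functor $\Hom_{\Fib(\C)}(\ul C,\X) \to \Hom_{\Fib(\C)}(U,\X)$ is an equivalence. Applying the universal property of the limit \eqref{rk:lims_via_cat_lims}, this functor for $\X = \ilim F$ identifies with $\ilim_i \Hom(\ul C,Fi) \to \ilim_i \Hom(U,Fi)$; each component is an equivalence because $Fi$ is a stack, so by \eqref{rk:equivs_in_morph_bicat} the limit is an equivalence. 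This handles both descent axioms at once and works uniformly for arbitrary bicategories $\I$, whereas your argument implicitly relies on the $1$-categorical descriptions in \eqref{eg:lims_on_1-cats} and \eqref{rk:morphs_in_lim_cats} (which do extend, but with extra bookkeeping). Your approach has the virtue of being concrete and of making explicit \emph{why} the coherence constraints descend; the paper's approach trades this transparency for brevity and generality.
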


\begin{proof}
Let $C\in\ob\C$ and $U \to \ul{C}$ a covering sieve on $C$.  We must show that 
the natural functor
\[\label{disp:st_cond}\tag{\dag}
   \Hom_{\Fib(\C)}(\ul C,\ilim F) \to \Hom_{\Fib(\C)}(U,\ilim F)
\]
is an equivalence of categories.  By \eqref{rk:lims_via_cat_lims}, we may
replace \eqref{disp:st_cond} with
\[\label{disp:rewrite}\tag{\ddag}
   \ilim_{i\in\I}\Hom_{\Fib(\C)}(\ul C, Fi) \to \ilim_{i\in\I} \Hom_{\Fib(\C)}(U, Fi).
\]
Since $Fi$ is a stack for each $i\in\ob\I$, each $\Hom(\ul C,Fi) \to \Hom 
(U,Fi)$ is an equivalence.  Hence $\Hom_{\Fib(\C)}(\ul C, F-)$ and 
$\Hom_{\Fib(C)}(U, F-)$ are equivalent objects in $\Hom(\I,\Cat)$ 
\eqref{rk:equivs_in_morph_bicat}.  Hence \eqref{disp:rewrite} is an equivalence.
\end{proof}

\subsection{Limits of objects in limits of categories; group objects}\label{ss:lims_in_lim_cats}

Let $F\colon \I \to \Cat$ be a morphism of bicategories.  By 
\s\ref{ss:cat_lims}, there exists a limit category $\ilim F$.  Let now \J be a 
$1$-category and $G\colon \J \to \ilim F$ a functor.  In this section, we wish 
to describe the limit of $G$ in $\ilim F$ in terms of limits in the categories 
$Fi$, $i\in\ob\I$, under certain natural assumptions on $F$ and $G$.

We shall make use below of \eqref{rk:morphs_in_lim_cats}.  So, as in \eqref{rk:morphs_in_lim_cats}, we shall make the simplifying assumption that \I is a $1$-category.  Everything we shall do would work for an arbitrary bicategory \I, but we only need the $1$-categorical case in applications. We'll leave the verifications needed for the general case to the reader.

Our assumptions on $F$ and $G$ are as follows.  For each $i\in\ob\I$, let $G_i$ denote the composite functor
\[
   \J \xra G \ilim F \xra{\pr_i} Fi.
\]
We shall assume
\begin{itemize}
\item
   for all objects $i$ in $\I$, $G_i$ admits a limit in $Fi$; and
\item
   for all morphisms $\mu\colon i \to i'$ in \I, the functor $F\mu\colon Fi \to Fi'$ sends limits for $Gi$ to limits for $Gi'$.
\end{itemize}

For each $i\in\ob\I$, choose a limit $\ilim G_i$ of $G_i$ in $Fi$.  By 
assumption, for each $\mu\colon i \to i'$ in \I, the objects $(F\mu)(\ilim G_i)$
and $\ilim G_{i'}$ are canonically isomorphic in $Fi'$.  Hence the $\ilim G_i$'s
determine an object $L$ in $\ilim F$ \eqref{eg:lims_on_1-cats}.

\begin{prop}\label{st:lims_in_lim_cats}
The object $L$ is naturally a limit of $G$ in $\ilim F$.
\end{prop}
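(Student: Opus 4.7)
The plan is to verify the universal property of a limit directly by identifying both sides of the defining equivalence with the same iterated limit of sets (or categories). The key tool is the explicit description of morphism sets in a limit category given in \eqref{rk:morphs_in_lim_cats}, together with the fact that limits commute with limits (Fubini).

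First I would compute $\Hom_{\ilim F}(X, L)$ for an arbitrary object $X\in\ob(\ilim F)$. By \eqref{rk:morphs_in_lim_cats}, this identifies canonically with $\ilim_{i\in\I}\Hom_{Fi}(\pr_i X,\pr_i L) = \ilim_{i\in\I}\Hom_{Fi}(\pr_i X,\ilim G_i)$. Applying the universal property of $\ilim G_i$ in $Fi$ (which is where the first hypothesis on $F$ and $G$ enters), this becomes
\[
   \ilim_{i\in\I}\,\ilim_{j\in\J}\Hom_{Fi}(\pr_i X, G_i(j)).
\]
Next I would compute the right-hand side of the defining equivalence. Since $\J$ is a $1$-category, $\Hom_{\Hom(\J,\ilim F)}(\Delta X, G)$ is just the ordinary set of natural transformations, which by \eqref{rk:lims_via_cat_lims} can be written as $\ilim_{j\in\J}\Hom_{\ilim F}(X, G(j))$. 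Applying \eqref{rk:morphs_in_lim_cats} once more gives
\[
   \ilim_{j\in\J}\,\ilim_{i\in\I}\Hom_{Fi}(\pr_i X, G_i(j)).
\]
The two double limits agree by Fubini for limits of sets, yielding a canonical equivalence $\Hom_{\ilim F}(X,L)\approx \Hom_{\Hom(\J,\ilim F)}(\Delta X,G)$, functorial in $X$.

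The remaining task is to verify that this equivalence is induced by a transformation $\pr_L\colon\Delta L\to G$ in $\Hom(\J,\ilim F)$; granting this, $(L,\pr_L)$ will have the required universal property. One constructs $\pr_L$ by assembling, for each $j\in\ob\J$, the $1$-cell $L\to G(j)$ in $\ilim F$ determined (via \eqref{rk:morphs_in_lim_cats}) by the family of projection morphisms $\ilim G_i \to G_i(j)$ in $Fi$, and one checks that these are compatible with the cocycle data making $L$ an object of $\ilim F$ --- this is where the second hypothesis on $F$ (that $F\mu$ preserves the chosen limits) enters, ensuring that the canonical isomorphisms $(F\mu)(\ilim G_i)\isoarrow \ilim G_{i'}$ intertwine the projection maps. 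Unwinding the two chains of identifications above then shows that the equivalence is exactly postcomposition with $\pr_L$.

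The main obstacle is bookkeeping: one must carefully track the canonical isomorphisms $\chi_\mu$ attached to $F$ as a bicategory morphism, verify that the choice of family $(\ilim G_i)_{i\in\ob\I}$ really does satisfy the cocycle condition of \eqref{eg:lims_on_1-cats}, and confirm that the Fubini identification agrees on the nose with the map induced by $\pr_L$. All of these are formal consequences of the universal property of the $\ilim G_i$, but the diagram-chasing is non-trivial; the argument is substantially cleaner if one passes to the description of $\ilim F$ via fibered categories as in \eqref{rk:sga_ref}, so that the $\chi_\mu$'s disappear into strictly Cartesian morphisms.
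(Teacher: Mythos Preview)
Your proposal is correct and follows essentially the same approach as the paper: both arguments reduce to the ingredients \eqref{rk:morphs_in_lim_cats}, \eqref{rk:lims_via_cat_lims}, and the universal property of each $\ilim G_i$. The only cosmetic difference is that the paper chains the identifications directly from $\Hom_{\Hom(\J,\ilim F)}(\Delta X,G)$ to $\Hom_{\ilim F}(X,L)$ (commuting the $\I$-limit past $\Hom(\J,-)$ in one step via \eqref{rk:lims_via_cat_lims}), whereas you unfold both sides separately to a double limit and invoke Fubini explicitly; your extra care in exhibiting the transformation $\pr_L$ is a point the paper leaves implicit.
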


\begin{proof}
Let $X\in\ob\ilim F$.  Then
\begin{alignat*}{2}
   \smash{\Hom_{\Hom(\J,\ilim F)}(\Delta X, G)}
      &\ciso \Hom_{\underset{i\in\I}{\ilim}\Hom(\J, Fi)}(\Delta X, G) 
               & &\quad \eqref{rk:lims_via_cat_lims}\\
      &\ciso \ilim_{i\in\I}\Hom_{\Hom(\J,Fi)}(\Delta\,\pr_i X,G_i)
               & &\quad\eqref{rk:morphs_in_lim_cats}\\
      &\ciso \ilim_{i\in\I}\Hom_{Fi}(\pr_i X,\ilim G_i)  \\
      &\ciso \Hom_{\ilim F}(X,L) 
               & &\quad\eqref{rk:morphs_in_lim_cats}.
\end{alignat*}
Hence $L$ has the universal property of the limit.
\end{proof}

As an application, let us now consider group objects in $\ilim F$. Quite
generally, for any category \C, write $\Gp(\C)$ for the category of group
objects and homomorphisms in \C. Let us note that there is some ambiguity as to
what we really mean by this category: namely, the products of objects in \C are
only defined up to canonical isomorphism, so some care is needed in defining
the objects of $\Gp(\C)$ precisely. Nevertheless, it is clear that all reasonable notions of $\Gp(\C)$ produce equivalent categories.  So we shall regard the issue as one of pedantry and ignore it from now on.  Similarly, we write $\Ab(\C)$ for the full subcategory of $\Gp(\C)$ of commutative group objects in \C.

We continue with a $1$-category \I and a morphism of bicategories $F\colon\I \to \Cat$.  Suppose that
\begin{itemize}
\item
   for all $i\in\ob\I$, $Fi$ admits all finite (including empty) products; and
\item
   for all $\mu\in\mor\I$, $F\mu$ preserves all finite (including empty) products.
\end{itemize}
Then each $F\mu\colon Fi \to Fi'$ induces $\Gp(Fi) \to \Gp(Fi')$, and we obtain a morphism of bicategories
\[
   \xymatrix@R=0ex{
      \smash{\I}\vphantom{()} \ar[r] & \Cat\\
      \smash{i}\vphantom{()} \ar@{|->}[r] & \smash{\Gp(Fi).}\vphantom{()}
   }
\]
Moreover, by \eqref{st:lims_in_lim_cats}, the projection $\ilim_{i'} Fi' \xra{\pr_i} Fi$ induces $\Gp(\ilim_{i'} Fi') \to \Gp(Fi)$ for all $i\in\ob\I$, and we obtain an arrow
\[\tag{$\sharp$}\label{disp:gp_obs_arrow}
   \Gp\Bigl(\ilim_{i\in\I} Fi\Bigr) \to \ilim_{i\in\I} \Gp(Fi).
\]
Similarly, we obtain the category $\ilim_i \Ab(Fi)$ and an arrow
\[\tag{$\flat$}\label{disp:ab_gp_obs_arrow}
   \Ab\Bigl(\ilim_{i\in\I}Fi\Bigr) \to \ilim_{i\in\I} \Ab(Fi).
\]

\begin{prop}\label{st:gp_obs_in_lim}
The arrows \eqref{disp:gp_obs_arrow} and \eqref{disp:ab_gp_obs_arrow} are equivalences of categories.
\end{prop}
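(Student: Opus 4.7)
The plan is to show that group objects (and abelian group objects) may be detected and built fiberwise in a limit of categories, using \eqref{st:lims_in_lim_cats} and \eqref{rk:morphs_in_lim_cats} as the two main engines. Since the arguments for \eqref{disp:gp_obs_arrow} and \eqref{disp:ab_gp_obs_arrow} are identical, I will focus on the group-object case.

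First I would unpack how finite products behave in $\ilim_i Fi$. The assumptions on $F$ are precisely those needed to apply \eqref{st:lims_in_lim_cats} to the relevant finite (including empty) diagrams in $\ilim_i Fi$: the resulting product $X \times Y$ in $\ilim_i Fi$ is computed by choosing products $\pr_i X \times \pr_i Y$ in each $Fi$, with the transition isomorphisms coming from the fact that each $F\mu$ preserves products. Likewise, the terminal object $*$ of $\ilim_i Fi$ has $\pr_i *$ a terminal object of $Fi$. Combined with \eqref{rk:morphs_in_lim_cats}, which identifies $\Hom$ sets in $\ilim_i Fi$ with limits of $\Hom$ sets in the $Fi$, this means: to give a morphism $X \times X \to X$ in $\ilim_i Fi$ is to give a compatible family of morphisms $\pr_i X \times \pr_i X \to \pr_i X$ in each $Fi$; similarly for $* \to X$ and $X \to X$.

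Next I would check essential surjectivity of \eqref{disp:gp_obs_arrow}. An object of $\ilim_i \Gp(Fi)$ is, by \eqref{eg:lims_on_1-cats}, a compatible family $(G_i, \varphi_\mu)$ with each $G_i \in \Gp(Fi)$ and each $\varphi_\mu\colon (F\mu)(G_i) \isoarrow G_{i'}$ a group homomorphism. Forgetting the group structure, this family determines an object $G$ of $\ilim_i Fi$. Applying the previous paragraph, the multiplication, unit, and inverse maps on the $G_i$ assemble into morphisms $m\colon G \times G \to G$, $e\colon * \to G$, $i\colon G \to G$ in $\ilim_i Fi$; the compatibility with $\varphi_\mu$ is exactly what is needed for them to define morphisms in $\ilim_i Fi$. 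The group-axiom diagrams (associativity, unit, inverse) must commute in $\ilim_i Fi$, but by the $\Hom$-description two parallel morphisms in $\ilim_i Fi$ agree iff their $\pr_i$'s agree, and these hold because $G_i$ is a group object in $Fi$. So $G$ admits a group object structure whose image in $\ilim_i \Gp(Fi)$ recovers $(G_i, \varphi_\mu)$.

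Finally I would check full faithfulness. A morphism $G \to H$ in $\Gp(\ilim_i Fi)$ is a morphism of underlying objects in $\ilim_i Fi$ that commutes with $m$ and $e$. Using \eqref{rk:morphs_in_lim_cats}, such a morphism is a compatible family $(f_i\colon \pr_i G \to \pr_i H)$ in the $Fi$, and the square expressing compatibility with multiplication lives in $\ilim_i Fi$; again by \eqref{rk:morphs_in_lim_cats}, it commutes iff its pullback to each $Fi$ does, which is exactly the condition that each $f_i$ be a group homomorphism. This is precisely the description of a morphism in $\ilim_i \Gp(Fi)$, so \eqref{disp:gp_obs_arrow} is fully faithful. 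The case of \eqref{disp:ab_gp_obs_arrow} is identical, since commutativity of $m$ is also detected fiberwise. The only mild obstacle is keeping the bookkeeping of coherent isomorphisms straight; there is nothing substantive beyond the two cited results and the observation that finite products in $\ilim_i Fi$ are objectwise.
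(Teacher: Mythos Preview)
Your proof is correct and follows the same approach as the paper, which simply cites \eqref{rk:morphs_in_lim_cats} and \eqref{st:lims_in_lim_cats} and declares the result immediate. You have faithfully unpacked what ``immediate'' means: \eqref{st:lims_in_lim_cats} gives that finite products in $\ilim_i Fi$ are computed objectwise, and \eqref{rk:morphs_in_lim_cats} lets you detect both the group-object structure maps and the group-axiom diagrams fiberwise, which is exactly the content of your essential surjectivity and full faithfulness arguments.
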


\begin{proof}
Immediate from \eqref{rk:morphs_in_lim_cats} and \eqref{st:lims_in_lim_cats}.
\end{proof}

\subsection{Pro-objects and limits}\label{ss:pro-ob_lims}

Let 
\begin{itemize}
\item
   \C be a $1$-category;
\item
   $\pro\C$ denote the category of pro-objects in \C;
\item
   \D be a bicategory; and 
\item
   $F\colon \C \to \D$ be a morphism of bicategories.
\end{itemize}
Our goal for the section is to show that if \D contains all filtered limits, 
then $F$ induces a morphism $\ula F \colon \pro\C \to \D$ in a ``natural'' --- or more precisely, in a canonical-up-to-equivalence --- way.

For convenience, let us begin by recalling some of the basic definitions involved.  A $1$-category \I is \emph{filtered} if
\begin{enumerate}
\item \I is nonempty;
\item for every $i_1$, $i_2 \in\ob \I$, there exist $i \in\ob \I$ and
	morphisms
	\[
		\xymatrix@R=0ex@C+2ex{
			& i_1\\
			i \ar@{-->}[ru] \ar@{-->}[rd]\\
			& i_2;
			}
	\]
	and
\item for every diagram
	\[
		\xymatrix{
			i_1 \ar@<.62ex>[r] \ar@<-.3ex>[r] & i_2
			}
	\]
	in \I, there exist $i\in \ob \I$ and an equalizing morphism
	\[
		\xymatrix{
			i \ar@{-->}[r] & i_1 \ar@<.62ex>[r] \ar@<-.3ex>[r] & i_2,
			}
	\]
	that is, the two compositions in this last diagram are equal.
\end{enumerate}

For applications, our primary interest is in limits indexed on the filtered set
\NN (e.g.\ $\ilim_n \nInf$ in \s\ref{ss:inf_inf_sheaves}, $\ilim_n \Gn$ in \s\ref{ss:flv_stack}, and $\ilim_n\Bn$ in \s\ref{ss:flg_stack}), where we regard
\NN as the category
\[
	\cdots \to 3 \to 2 \to 1,
\]
with compositions and identities suppressed.

\begin{rk}
Many authors use ``filtered'' in the sense dual to ours; see
e.g.\ \cite{sga4-1}*{I 2.7} or \cite{cwm}*{IX \s1}.  In our use of the
terminology, one typically encounters filtered limits and cofiltered colimits.
\end{rk}

A \emph{pro-object in \C} \cite{sga4-1}*{I \s8.10} is a functor $X \colon \I \to
\C$ defined on some
small filtered category \I.  We typically write $X_i$
for $Xi$, $i\in\ob\I$, and
$\prolim_{\!\!\!\!i\in \I} X_i$ for $X$.  The pro-objects in \C form a category
$\pro\C$, with morphisms defined by
\[
   \Hom_{\pro\C}\biggl( \prolim_{i\in \I} X_i, \prolim_{j\in \J} Y_j \biggr)
      := \ilim_{j\in \J} \clim_{i\in \I} \Hom_\C (X_i,Y_j).
\]
The composition law in \C induces a natural one in $\pro\C$, well-defined, as one readily checks,
because the index categories are filtered.

For simplicity, we shall consider neither the notion of filtered nor that of
pro-object in the setting of arbitrary bicategories.

Let us now explain the construction of the morphism $\ula F \colon \pro\C \to 
\D$.  It is useful to first consider the case \D is a $1$-category.  On objects, $\ula F$ sends the pro-object $X\colon \I \to \C$ to $\ilim (F\circ X)$; or, more informally,
\[
   \prolim_{i\in \I} X_i \overset{\ula{F}}{\mapsto} \ilim_{i\in\I} FX_i.
\]
$\ula F$ is defined on morphisms in the evident way, using the universal property of the limit.  It is then easy to verify, using that \I is filtered, that $\ula F$ is an honest functor.  Plainly, $\ula F$ is well-defined up to canonical isomorphism.

In the case of a general bicategory \D, essentially the same approach works, but a bit more care must be taken in the definition of $\ula F$ on morphisms.  For each pro-object $X = \prolim_{\!\!\!\! i\in\I} X_i$ in \C, we define $\ula F X$ to be any choice of limit $\ilim_{i\in\I} FX_i$ in \D.  For each morphism of pro-objects
\[
   f = (f_j)_{j\in\ob\J} \in \Hom_{\pro\C}\biggl( \prolim_{i\in \I} X_i, \prolim_{j\in \J} Y_j \biggr)
      = \ilim_{j\in \J} \clim_{i\in \I} \Hom_\C (X_i,Y_j),
\]
for each $j\in\ob J$, choose a representative $\wt{f_j}\colon X_{i_j} \to Y_j$ of $f_j$.  Then we get a $1$-cell $\ula{f_j}\colon\ilim_i FX_i \to FY_j$ in \D by composing
\[
	\xymatrix{
		\ilim_i FX_i \ar[r]^-{\pr_{i_j}} \ar@/_4ex/[rr]_-{\ula{f_j}}
			& FX_{i_j} \ar[r]^-{F\wt{f_j}}
			& FY_j
		}.
\]
For each $\nu\colon j_1 \to j_2$ in \J, denote by $\nu_*$ the induced morphism
$Y_{j_1} \to Y_{j_2}$.  One then verifies, using that \I is filtered, that the diagram
\[
	\xymatrix@C=0cm@R=5ex{
		& \ilim_i FX_i \ar[dl]_-{\ula{f_{j_1}}} \ar[dr]^-{\ula{f_{j_2}}}\\
	   FY_{j_1} \ar[rr] _-{F\nu_*}
		   & & FY_{j_2}
		}
\]
commutes up to canonical isomorphism.  Hence, by the universal property of the 
limit (\ref{eg:lims_on_1-cats}, \ref{rk:lims_via_cat_lims}), we obtain a $1$-cell $\ilim_i FX_i \to \ilim_j FY_j$, which we define to be $\ula F f$.

\begin{prop}\label{st:proF}
The assignments $X \mapsto \ula F X$ on objects and $f \mapsto \ula F f$ on morphisms admit a natural structure of bicategory morphism $\ula F \colon \pro\C \to \D$.
\end{prop}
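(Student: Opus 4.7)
The plan is to construct the required structure $2$-cells and verify the coherence conditions, exploiting the fact that \D admits the relevant filtered limits so that every $1$-cell with target a limit is determined up to canonical isomorphism by its composites with the projections. Concretely, for each pro-object $X = \prolim_{\!\!\!\!i\in\I} X_i$ I need an invertible $2$-cell $\chi_X \colon \ula F \id_X \isoarrow \id_{\ula F X}$, and for each composable pair $f \colon X \to Y$, $g \colon Y \to Z$ of pro-morphisms I need an invertible $2$-cell $\chi_{g,f} \colon \ula F(gf) \isoarrow (\ula F g)(\ula F f)$, subject to the standard left/right unit and pentagon-type associativity constraints for a bicategory morphism.

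First I would construct $\chi_X$. By the universal property (\ref{eg:lims_on_1-cats}, \ref{rk:lims_via_cat_lims}), it suffices to produce, for each $i \in \ob \I$, an invertible $2$-cell between the two composites $\pr_i \circ \ula F \id_X$ and $\pr_i$ in \D, compatibly with the transition morphisms in \I; by construction of $\ula F$ on morphisms, the former is (up to canonical isomorphism) $F\id_{X_i}\circ\pr_i$, and the structure isomorphism $\chi_{X_i}$ of $F$ supplies the required $2$-cell. Compatibility with transitions is immediate from the naturality-style coherence of the $\chi$'s of $F$. Next I would construct $\chi_{g,f}$. Writing $g = (g_k)_{k \in \ob \K}$ with representatives $\wt{g_k} \colon Y_{j_k} \to Z_k$ and $f = (f_j)_{j \in \ob \J}$ with representatives $\wt{f_j} \colon X_{i_j} \to Y_j$, the composite $gf$ is computed using representatives $\wt{g_k}\circ\wt{f_{j_k}}\colon X_{i_{j_k}} \to Z_k$ (any coherent choice is possible because \I is filtered). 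After projecting to $FZ_k$, the two sides of $\chi_{g,f}$ become $F(\wt{g_k}\wt{f_{j_k}})\circ\pr_{i_{j_k}}$ and $F\wt{g_k}\circ F\wt{f_{j_k}}\circ\pr_{i_{j_k}}$, and the structure $2$-cell $\chi_{\wt{g_k},\wt{f_{j_k}}}$ of $F$ furnishes an invertible $2$-cell between them. Again the universal property of $\ula F Z = \ilim_k FZ_k$ assembles these into $\chi_{g,f}$, provided they are compatible with the transition maps indexed by $\K$; this compatibility follows from the coherence of $F$'s $\chi$'s together with the elementary observation that any two choices of representatives become equal after passing to a sufficiently small index, which is exactly where the filteredness of $\I$, $\J$, $\K$ is used.

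The main obstacle, and the bulk of the work, is verifying the coherence axioms: namely that the $\chi_X$'s satisfy the left and right unit laws with respect to the $\chi_{g,f}$'s, and that the $\chi_{g,f}$'s satisfy the associativity pentagon for a triple of composable pro-morphisms. The strategy in each case is the same: project to a component $Fw$ of the target limit via $\pr_w$, reduce via the construction of $\ula F$ on morphisms to an identity of pasted $2$-cells built from the $\chi$'s of the original bicategory morphism $F$ applied to chosen representatives, and appeal to the fact that $F$ itself satisfies the corresponding coherence axiom; then use the universal property of the limit to conclude that the original identity holds in \D. The filteredness of the indexing categories is again what allows one to choose common representatives so that the pasting diagrams actually line up on the nose, rather than only up to further $2$-cells.

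Finally, I would verify that this structure is well-defined up to canonical equivalence, independent of the various choices (of limits $\ula F X$ in \D, of representatives $\wt{f_j}$, and of indices $i_j$): any two such choices give rise to bicategory morphisms related by a canonical equivalence in $\Hom(\pro\C,\D)$, so $\ula F$ is well-defined in the appropriate weak sense.
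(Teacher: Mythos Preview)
Your proposal is correct and in fact supplies precisely the details that the paper omits: the paper's own ``proof'' consists only of the sentence ``The proof is straightforward, and we shall leave the details to the reader. Of course, the notion of filtered category plays an essential role.'' Your sketch---constructing the structure $2$-cells $\chi_X$ and $\chi_{g,f}$ by projecting to components of the target limit, invoking the corresponding structure $2$-cells of $F$, and then verifying coherence by the same projection-and-reduce method, with filteredness used to align representatives---is exactly the straightforward argument the paper has in mind.
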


The proof is straightforward, and we shall leave the details to the reader.  Of course, the notion of filtered category plays an essential role.

It is clear that $\ula F$ is well-defined up to equivalence.

\begin{rk}
There is a canonical functor $\chi\colon \C \to \pro\C$ sending each object $C$ to the 
pro-object indexed on $*$ with value $C$.  Moreover, one verifies easily that 
$\pro\C$ contains all small filtered limits.  One may now distinguish $\pro\C$
and $\chi$ amongst the maps from \C into $1$-categories by the following 
universal mapping property:  up to isomorphism of functors, to give a functor 
$\C \to \D$ into a $1$-category \D containing small filtered limits is to give 
a functor $\pro\C \to \D$ preserving small filtered limits.  Analogously,
$\pro\C$ and $\chi$ enjoy a universal position amongst the bicategories:  up to 
equivalence of morphisms of bicategories, to give a  morphism $\C \to \D$ into 
a bicategory \D containing small filtered limits is to give a morphism $\pro\C 
\to \D$ preserving small filtered limits.
\end{rk}

\subsection{Initiality and limits}\label{ss:fin_lims}

It is a standard fact in $1$-category theory that limits are invariant under
right composition with an initial functor.  Precisely, recall \cite{cwm}*{IX
\s3} that a functor $L \colon \J \to \I$ is \emph{initial} if for all
$i\in\ob\I$, the comma category $\J_{/i}$ (denoted $L\downarrow i$ in the
notation of \cite{cwm}) is nonempty and connected.  If $L\colon \J \to \I$ is
initial, $F\colon \I \to \C$ is any  functor, and $\ilim FL$ exists, then one
proves easily that $\ilim FL$ is canonically a limit of $F$ \cite{cwm}*{IX 3.1}.  

In this section we shall obtain a suitable generalization to the following
situation:
\begin{itemize}
\item
   \C is an arbitrary bicategory;
\item
   for simplicity, \I and \J are $1$-categories, and $L\colon \J \to \I$ is an
   initial functor between them; and
\item
   $F$ is an arbitrary bicategory morphism $\I \to \C$.
\end{itemize}
Let $C\in\ob\C$.

\begin{prop}
The natural functor
\[
   \Hom_{\Hom(\I,\C)}(\Delta_{\I,\C}C,F)
      \to \Hom_{\Hom(\J,\C)}(\Delta_{\J,\C}C,FL)
\]
induced by $L$ is an equivalence of categories.
\end{prop}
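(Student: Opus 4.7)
The plan is to reduce to the case $\C = \Cat$ by invoking the characterization of $\Hom$ categories between transformations as limits of $\Cat$-valued morphisms, and then to argue the $\Cat$ case directly using the explicit description of $1$-categorically indexed limits from \eqref{eg:lims_on_1-cats}. Concretely, applying \eqref{rk:lims_via_cat_lims} to the corepresented morphism $h^C = \Hom_\C(C,-)\colon \C \to \Cat$, one obtains canonical identifications
\[
   \Hom_{\Hom(\I,\C)}(\Delta_{\I,\C}C, F) \approx \ilim (h^C \circ F)
   \quad\text{and}\quad
   \Hom_{\Hom(\J,\C)}(\Delta_{\J,\C}C, FL) \approx \ilim \bigl((h^C \circ F) \circ L\bigr),
\]
under which the functor induced by $L$ corresponds to the natural restriction $1$-cell. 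Setting $G := h^C \circ F \colon \I \to \Cat$, the proposition reduces to showing that, for any bicategory morphism $G\colon \I \to \Cat$ from a $1$-category \I and any initial functor $L\colon \J \to \I$ of $1$-categories, the restriction $\ilim G \to \ilim GL$ is an equivalence.

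By \eqref{eg:lims_on_1-cats}, an object of $\ilim G$ is a family $(X_i,\varphi_\mu)$ with $X_i \in \ob Gi$ and $\varphi_\mu \colon (G\mu)X_i \isoarrow X_{i'}$ satisfying the cocycle condition, and the restriction functor sends $(X_i,\varphi_\mu) \mapsto (X_{Lj},\varphi_{L\nu})$. For essential surjectivity, given $(Y_j,\psi_\nu)$ over \J, the plan is to build $(X_i,\varphi_\mu)$ by choosing, using nonemptiness of $\J_{/i}$, an object $(j_i,\mu_i \colon L j_i \to i)$ for each $i \in \ob\I$, and setting $X_i := (G\mu_i)Y_{j_i}$. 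For a morphism $\mu\colon i \to i'$ in \I, both $(j_i,\mu\mu_i)$ and $(j_{i'},\mu_{i'})$ lie in $\J_{/i'}$, and connectedness of $\J_{/i'}$ links them by a zigzag; each arrow in the zigzag produces a canonical isomorphism built from the $\psi_\nu$'s and the coherence cells $\chi_{\mu\nu}$ of $G$, and composing yields the desired $\varphi_\mu \colon (G\mu)X_i \isoarrow X_{i'}$. Independence of the choice of zigzag, and hence well-definedness, follows again from connectedness.

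For full faithfulness, given two objects $(X_i,\varphi_\mu)$ and $(X'_i,\varphi'_\mu)$ of $\ilim G$ and a morphism $(\beta_j)$ between their restrictions to \J, the plan is to define $\alpha_i \colon X_i \to X'_i$ by the formula $\alpha_i := \varphi'_{\mu_i} \circ (G\mu_i)\beta_{j_i} \circ \varphi_{\mu_i}^{-1}$ for any choice of $(j_i,\mu_i) \in \ob\J_{/i}$. Connectedness of $\J_{/i}$, together with the compatibility of $(\beta_j)$ with the $\psi_\nu$'s, ensures that $\alpha_i$ is independent of the choice; compatibility of $(\alpha_i)$ with an arbitrary $\mu \colon i \to i'$ then follows from a parallel zigzag argument in $\J_{/i'}$. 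Uniqueness of $(\alpha_i)$ extending $(\beta_j)$ is immediate, since for each $i$ the value $\alpha_i$ is forced by naturality along any single $(j_i,\mu_i)$.

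The main technical obstacle is purely one of bookkeeping: ensuring that the canonical isomorphisms manufactured from zigzags are compatible with the coherence cells $\chi_{\mu\nu}$ and $\chi_i$ of the bicategory morphism $G$, and verifying the cocycle condition for the $\varphi_\mu$'s and naturality of the $\alpha_i$'s constructed above. No new conceptual input is required beyond the defining properties of an initial functor (nonemptiness and connectedness of the comma categories $\J_{/i}$), and the argument is therefore a straightforward $2$-categorification of the classical $1$-categorical fact that initial functors preserve limits.
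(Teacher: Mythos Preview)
Your proof is correct and the core argument is the same as the paper's: both exploit nonemptiness of $\J_{/i}$ to make the requisite choices and connectedness of $\J_{/i}$ to show independence of those choices (for faithfulness/fullness) or to manufacture the coherence isomorphisms (for essential surjectivity). The one structural difference is your preliminary reduction to $\C = \Cat$ via the corepresented morphism $h^C$ and the identification of \eqref{rk:lims_via_cat_lims}; the paper instead works directly with transformations $\Delta C \to F$ and modifications between them in $\Hom(\I,\C)$. Your reduction buys the concreteness of the family description in \eqref{eg:lims_on_1-cats}, at the cost of having to check that the restriction functor on $\Hom$ categories really does correspond to the restriction of limits under that identification; the paper's direct approach avoids this bookkeeping but requires manipulating transformations and modifications in a general bicategory. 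Either way the substantive verifications are identical.
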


\begin{proof}[Sketch proof]
The proof is tedious but straightforward, so we'll just give a sketch.  Let
$\Phi$ denote the functor in the statement of the proposition.  

One sees that
$\Phi$ is faithful directly from the definition of $2$-cells in morphism
bicategories and from the assumption that every comma category $\J_{/i}$
is nonempty.

To see that $\Phi$ is full, suppose there are $1$-cells
\[
   \xy
      (-10,0)*+{\Delta C}="C";
      (10,0)*+{F}="FL";
      {\ar@/^3ex/ "C";"FL" ^-{\alpha}};
      {\ar@/_3ex/ "C";"FL" _-{\beta}};
   \endxy
\]
in $\Hom(\I,\C)$ and a $2$-cell
\[
   \xy
      (-10,0)*+{\Delta C}="C";
      (10,0)*+{FL}="FL";
      {\ar@/^3ex/ "C";"FL" ^-{\Phi\alpha}};
      {\ar@/_3ex/ "C";"FL" _-{\Phi\beta}};
      {\ar@{=>} (0,3);(0,-3) ^-{\xi}}
   \endxy
\]
in $\Hom(\J,\C)$.  We must find a $2$-cell $\wt\xi\colon\alpha \to
\beta$ mapping to $\xi$.  For each $i\in\ob\I$, choose an object $(j, Lj
\xra\mu i)$ in $\J_{/i}$. One then defines $\wt\xi_i$ in an obvious way
using $\xi_j$ and $\mu$.  One next shows that $\wt\xi_i$ is independent of the
choice of object in $\J_{/i}$, first in the special case that two choices
admit a morphism in $\J_{/i}$ between them, and then in general using the
special case and
that $\J_{/i}$ is connected.  It then follows easily from independence
of choices that the $\wt\xi_i$'s define a $2$-cell $\wt\xi$ mapping to $\xi$.

To see that $\Phi$ is essentially surjective, let $\alpha\colon \Delta C \to
FL$ be a $1$-cell in $\Hom(\J,\C)$.  As usual, for each $i\in\ob\I$, choose
$(j,Lj\xra\mu i)$ in $\J_{/i}$, and define $\wt\alpha$ to be the
composite $1$-cell
\[
   \xymatrix{
      C \ar[r]^-{\alpha_j} \ar@/_3.5ex/[rr]_-{\wt\alpha_i}
         & FLj \ar[r]^-{F\mu}
         & Fi 
   }.
\]
This time $\wt\alpha_i$ is not independent of the choice of $j$ and $\mu$.  But
using that $\J_{/i}$ is connected, one verifies that different choices
produce canonically isomorphic $\wt\alpha_i$'s.  So the $\wt\alpha_i$'s define
$\wt\alpha\colon \Delta C \to F$ in a natural way, and moreover $\Phi\wt\alpha
\ciso
\alpha$.
\end{proof}

\begin{cor}\label{st:initial_lim}
$F$ admits a limit $\iff$ $FL$ does, and the limits are equivalent when they
exist. \hfill$\square$
\end{cor}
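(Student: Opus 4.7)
The plan is to derive the corollary as a direct application of the preceding proposition, using only the universal mapping property definition of limit in \eqref{def:bicat_lim}. Fix $C \in \ob\C$. By definition, for a pair $(M, \alpha)$ consisting of an object $M \in \ob\C$ and a transformation $\alpha \colon \Delta_{\I,\C}M \to F$, the assertion that $(M,\alpha)$ is a limit of $F$ is precisely the assertion that for every $C$ the composite
\[
   \Hom_\C(C,M) \xra{\Delta} \Hom_{\Hom(\I,\C)}(\Delta_{\I,\C}C, \Delta_{\I,\C}M) \xra{\alpha_*} \Hom_{\Hom(\I,\C)}(\Delta_{\I,\C}C, F)
\]
is an equivalence of categories; analogously for limits of $FL$.

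First I would observe that precomposition with $L$ provides a $2$-functor $\Hom(\I,\C) \to \Hom(\J,\C)$ sending each transformation $\alpha \colon \Delta_{\I,\C}M \to F$ to a transformation $\alpha L \colon \Delta_{\J,\C}M \to FL$ (using the canonical identification $\Delta_{\I,\C}M \circ L = \Delta_{\J,\C}M$). The key point is that, by the naturality of this construction, the square
\[
   \xymatrix@C+5ex{
      \Hom_\C(C,M) \ar[r]^-{\alpha_*\circ \Delta} \ar[rd]_-{(\alpha L)_*\circ\Delta}
         & \Hom_{\Hom(\I,\C)}(\Delta_{\I,\C}C,F) \ar[d]^-{\Phi} \\
         & \Hom_{\Hom(\J,\C)}(\Delta_{\J,\C}C,FL)
   }
\]
commutes up to canonical isomorphism, where $\Phi$ is the functor considered in the preceding proposition.

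The preceding proposition asserts that $\Phi$ is an equivalence of categories. Hence the top arrow is an equivalence if and only if the diagonal arrow is. Taking this for every $C \in \ob\C$, we conclude that $(M,\alpha)$ is a limit of $F$ if and only if $(M, \alpha L)$ is a limit of $FL$. In particular, $F$ admits a limit iff $FL$ does, and when both exist, a choice of limit of $F$ yields a (canonically equivalent) choice of limit of $FL$ and vice versa via the essential surjectivity of $\Phi$. Since the only real work is confined to the preceding proposition, no new obstacle arises here; the mildest subtlety is merely to verify the naturality square above, which is immediate from unwinding the definitions of $\Delta$, of the transformation $\alpha L$, and of $\Phi$.
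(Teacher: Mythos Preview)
Your argument is correct and is exactly the intended one: the paper offers no proof beyond the $\square$, signaling that the corollary is an immediate consequence of the preceding proposition, and you have simply unwound what ``immediate'' means here. The only point worth noting is that your converse direction (lifting a limit of $FL$ to one of $F$ via essential surjectivity of $\Phi$) implicitly uses that $(\alpha L)_* \circ \Delta$ and $\beta_* \circ \Delta$ are isomorphic functors when $\alpha L \iso \beta$, hence one is an equivalence iff the other is; this is harmless but could be stated.
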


\begin{bibdiv}
\begin{biblist}

\bib{sga4-1}{book}{
  label={SGA4$_{\text {1}}$},
  author={Artin, M.},
  author={Grothendieck, Alexander},
  author={Verdier, J. L.},
  title={Th\'eorie des topos et cohomologie \'etale des sch\'emas. Tome 1: Th\'eorie des topos},
  contribution={avec la collaboration de N.~Bourbaki, P.~Deligne et B.~Saint-Donat},
  language={French},
  series={S\'eminaire de G\'eom\'etrie Alg\'ebrique du Bois Marie 1963--1964 (SGA 4). Lecture Notes in Mathematics},
  volume={269},
  publisher={Springer-Verlag},
  place={Berlin-New York},
  date={1972},
  pages={xix+525},
  review={\MR {0354652 (50 \#7130)}},
}

\bib{sga4-2}{book}{
  label={SGA4$_{\text {2}}$},
  author={Artin, M.},
  author={Grothendieck, Alexander},
  author={Verdier, J. L.},
  title={Th\'eorie des topos et cohomologie \'etale des sch\'emas. Tome 2},
  contribution={avec la collaboration de N.~Bourbaki, P.~Deligne et B.~Saint-Donat},
  language={French},
  series={S\'eminaire de G\'eom\'etrie Alg\'ebrique du Bois Marie 1963--1964 (SGA 4). Lecture Notes in Mathematics},
  volume={270},
  publisher={Springer-Verlag},
  place={Berlin-New York},
  date={1972},
  pages={iv+418},
  review={\MR {0354653 (50 \#7131)}},
}

\bib{ben67}{article}{
  label={B\'e},
  author={B{\'e}nabou, Jean},
  title={Introduction to bicategories},
  conference={ title={Reports of the Midwest Category Seminar}, },
  book={ publisher={Springer}, place={Berlin}, },
  date={1967},
  pages={1--77},
  review={\MR {0220789 (36 \#3841)}},
}

\bib{borc94}{book}{
  label={Bo},
  author={Borceux, Francis},
  title={Handbook of categorical algebra. 1},
  series={Encyclopedia of Mathematics and its Applications},
  volume={50},
  note={Basic category theory},
  publisher={Cambridge University Press},
  place={Cambridge},
  date={1994},
  pages={xvi+345},
  isbn={0-521-44178-1},
  review={\MR {1291599 (96g:18001a)}},
}

\bib{dj98}{article}{
  author={de Jong, A. J.},
  title={Homomorphisms of Barsotti-Tate groups and crystals in positive characteristic},
  journal={Invent. Math.},
  volume={134},
  date={1998},
  number={2},
  pages={301--333},
  issn={0020-9910},
  review={\MR {1650324 (2000f:14070a)}},
}

\bib{sga3-1}{book}{
  label={SGA3$_{\text {I}}$},
  author={Demazure, M.},
  author={Grothendieck, Alexander},
  title={Sch\'emas en groupes. I: Propri\'et\'es g\'en\'erales des sch\'emas en groupes},
  language={French},
  series={S\'eminaire de G\'eom\'etrie Alg\'ebrique du Bois Marie 1962/64 (SGA 3). Lecture Notes in Mathematics},
  volume={151},
  publisher={Springer-Verlag},
  place={Berlin},
  date={1970},
  pages={xv+564},
  review={\MR {0274458 (43 \#223a)}},
}

\bib{demga70}{book}{
  author={Demazure, Michel},
  author={Gabriel, Pierre},
  author={},
  title={Groupes alg\'ebriques. Tome I: G\'eom\'etrie alg\'ebrique, g\'en\'eralit\'es, groupes commutatifs},
  language={French},
  note={Avec un appendice {\it Corps de classes local}\ par Michiel Hazewinkel},
  publisher={Masson \& Cie, \'Editeur, Paris},
  date={1970},
  pages={xxvi+700},
  review={\MR {0302656 (46 \#1800)}},
}

\bib{dieu57}{article}{
  author={Dieudonn{\'e}, Jean},
  title={Groupes de Lie et hyperalg\`ebres de Lie sur un corps de caract\'eris\-tique $p>0$. VII},
  language={French},
  journal={Math. Ann.},
  volume={134},
  date={1957},
  pages={114\ndash 133},
  issn={0025-5831},
  review={\MR {0098146 (20 \#4608)}},
}

\bib{froh68}{book}{
  author={Fr{\"o}hlich, A.},
  title={Formal groups},
  series={Lecture Notes in Mathematics},
  volume={74},
  publisher={Springer-Verlag},
  place={Berlin},
  date={1968},
  pages={iii+140},
  review={\MR {0242837 (39 \#4164)}},
}

\bib{gir64}{article}{
  label={Gi},
  author={Giraud, Jean},
  title={M\'ethode de la descente},
  language={French},
  journal={Bull. Soc. Math. France M\'em.},
  volume={2},
  date={1964},
  pages={viii+150},
  review={\MR {0190142 (32 \#7556)}},
}

\bib{goerss04}{article}{
   label={Go1},
   author={Goerss, Paul G.},
   title={(Pre-)sheaves of ring spectra over the moduli stack of formal
   group laws},
   conference={
      title={Axiomatic, enriched and motivic homotopy theory},
   },
   book={
      series={NATO Sci. Ser. II Math. Phys. Chem.},
      volume={131},
      publisher={Kluwer Acad. Publ.},
      place={Dordrecht},
   },
   date={2004},
   pages={101--131},
   review={\MR{2061853 (2005d:55007)}},
}

\bib{goerss?}{book}{
   label={Go2},
   author={Goerss, Paul G.},
   title={Quasi-coherent sheaves on the moduli stack of formal groups},
   status={in preparation},
}

\bib{ghmr05}{article}{
  author={Goerss, P.},
  author={Henn, H.-W.},
  author={},
  author={Mahowald, M.},
  author={},
  author={Rezk, C.},
  title={A resolution of the $K(2)$-local sphere at the prime 3},
  journal={Ann. of Math. (2)},
  volume={162},
  date={2005},
  number={2},
  pages={777--822},
  issn={0003-486X},
  review={\MR {2183282 (2006j:55016)}},
}

%\bib{egaI}{article}{
%  label={EGAI},
%  author={Grothendieck, Alexander},
%  title={\'El\'ements de g\'eom\'etrie alg\'ebrique. I. Le langage des sch\'emas},
%  contribution={r\'edig\'es avec la collaboration de J.~Dieudonn\'e},
%  language={French},
%  journal={Inst. Hautes \'Etudes Sci. Publ. Math.},
%  number={4},
%  eprint={http://www.numdam.org/item?id=PMIHES_1960__4__5_0},
%  date={1960},
%  pages={228 pp.},
%  review={\MR {0163908 (29 \#1207)}},
%}

\bib{egaIV4}{article}{
  label={EGAIV$_{\text {4}}$},
  author={Grothendieck, Alexander},
  title={\'El\'ements de g\'eom\'etrie alg\'ebrique. IV. \'Etude locale des sch\'emas et des morphismes de sch\'emas, Quatri\`eme partie},
  contribution={r\'edig\'es avec la collaboration de J.~Dieudonn\'e},
  language={French},
  journal={Inst. Hautes \'Etudes Sci. Publ. Math.},
  number={32},
  date={1967},
  pages={361 pp.},
%  eprint={http://www.numdam.org/item?id=PMIHES_1967__32__5_0},
  issn={0073-8301},
  review={\MR {0238860 (39 \#220)}},
}

\bib{gro74}{book}{
  author={Grothendieck, Alexander},
  title={Groupes de Barsotti-Tate et cristaux de Dieudonn\'e},
  language={French},
  publisher={Les Presses de l'Universit\'e de Montr\'eal, Montreal, Que.},
  date={1974},
  pages={155},
  review={\MR {0417192 (54 \#5250)}},
}

\bib{sga1}{book}{
  label={SGA1},
  author={Grothendieck, Alexander},
  title={Rev\^etements \'etales et groupe fondamental (SGA 1)},
  language={French},
  contribution={augment\'e de deux expos\'es de Mme M. Raynaud},
  series={S\'eminaire de G\'eom\'etrie Alg\'ebrique du Bois Marie 1960--61. Documents Math\'ematiques (Paris) [Mathematical Documents (Paris)], 3},
  publisher={Soci\'et\'e Math\'ematique de France},
  place={Paris},
  date={2003},
  edition={\'edition recompos\'ee et annot\'ee du volume 224 des Lecture Notes in Mathematics publi\'e en 1971 par Springer-Verlag},
  pages={xviii+327},
  isbn={2-85629-141-4},
  review={\MR {2017446 (2004g:14017)}},
  eprint={arXiv:math.AG/0206203},
%  note={Available at \href {http://www.arxiv.org/abs/math.AG/0206203}{\texttt {arXiv:math.AG/0206203}}},
}

\bib{haz78}{book}{
  label={Ha},
  author={Hazewinkel, Michiel},
  title={Formal groups and applications},
  series={Pure and Applied Mathematics},
  volume={78},
  publisher={Academic Press Inc. [Harcourt Brace Jovanovich Publishers]},
  place={New York},
  date={1978},
  pages={xxii+573pp},
  isbn={0-12-335150-2},
  review={\MR {506881 (82a:14020)}},
}

\bib{hol?}{article}{
  label={Hol},
  author={Hollander, Sharon},
  title={Characterizing algebraic stacks},
  date={2007-08-20},
  status={to appear in Proc. Amer. Math. Soc.},
  note={\href{http://www.arxiv.org/abs/0708.2705v1}{\texttt{arXiv:0708.2705v1 [math.AT]}}},
}

\bib{hop99}{misc}{
  label={Hop},
  author={Hopkins, Michael J.},
  title={Complex oriented cohomology theories and the language of stacks},
  date={1999-08-13},
  status={unpublished},
  note={Course notes for 18.917, Spring 1999, MIT, currently available at
         \url {http://www.math.rochester.edu/people/faculty/doug/papers.html}},
}

\bib{hovstr05}{article}{
  author={Hovey, Mark},
  author={Strickland, Neil},
  title={Comodules and Landweber exact homology theories},
  journal={Adv. Math.},
  volume={192},
  date={2005},
  number={2},
  pages={427--456},
  issn={0001-8708},
  review={\MR {2128706 (2006e:55007)}},
}

%\bib{knut71}{book}{
%  author={Knutson, Donald},
%  title={Algebraic spaces},
%  series={Lecture Notes in Mathematics},
%  volume={203},
%  publisher={Springer-Verlag},
%  place={Berlin-New York},
%  date={1971},
%  pages={vi+261},
%  review={\MR {0302647 (46 \#1791)}},
%}

\bib{lack07}{article}{
  label={Lack},
  author={Lack, Stephen},
  title={A $2$-categories companion},
  date={2007-02-19},
  note={\href {http://www.arxiv.org/abs/math/0702535v1}{\texttt
            {arXiv:math/0702535v1 [math.CT]}}},
}

\bib{land73}{article}{
   label={Lan1},
   author={Landweber, Peter S.},
   title={Annihilator ideals and primitive elements in complex bordism},
   journal={Illinois J. Math.},
   volume={17},
   date={1973},
   pages={273--284},
   issn={0019-2082},
   review={\MR{0322874 (48 \#1235)}},
}

\bib{land76}{article}{
   label={Lan2},
   author={Landweber, Peter S.},
   title={Homological properties of comodules over $M{\rm U}\sb\ast (M{\rm
   U})$\ and BP$\sb\ast $(BP)},
   journal={Amer. J. Math.},
   volume={98},
   date={1976},
   number={3},
   pages={591--610},
   issn={0002-9327},
   review={\MR{0423332 (54 \#11311)}},
}

\bib{lamb00}{book}{
  author={Laumon, G{\'e}rard},
  author={Moret-Bailly, Laurent},
  title={Champs alg\'ebriques},
  language={French},
  series={Ergebnisse der Mathematik und ihrer Grenzgebiete. 3. Folge. A Series of Modern Surveys in Mathematics [Results in Mathematics and Related Areas. 3rd Series. A Series of Modern Surveys in Mathematics]},
  volume={39},
  publisher={Springer-Verlag},
  place={Berlin},
  date={2000},
  pages={xii+208},
  isbn={3-540-65761-4},
  review={\MR {1771927 (2001f:14006)}},
}

\bib{laz55}{article}{
  author={Lazard, Michel},
  title={Sur les groupes de Lie formels \`a un param\`etre},
  language={French},
  journal={Bull. Soc. Math. France},
  volume={83},
  date={1955},
  pages={251\ndash 274},
  issn={0037-9484},
  review={\MR {0073925 (17,508e)}},
}

\bib{lub64}{article}{
  label={Lub},
  author={Lubin, Jonathan},
  title={One-parameter formal Lie groups over ${\germ p}$-adic integer rings},
  review={\MR {0168567 (29 \#5827)}},
  partial={
    journal={Ann. of Math. (2)},
    volume={80},
    date={1964},
    pages={464\ndash 484},
    issn={0003-486X},
  },
  partial={
    part={correction},
    journal={Ann. of Math. (2)},
    volume={84},
    date={1966},
    pages={372},
  },
}

\bib{lur06}{article}{
  label={Lur},
  author={Lurie, Jacob},
  title={A survey of elliptic cohomology},
  date={2007-04-19},
  note={Currently available at \url {http://www-math.mit.edu/~lurie/}},
}

\bib{cwm}{book}{
  label={CWM},
  author={Mac Lane, Saunders},
  title={Categories for the working mathematician},
  edition={2},
  series={Graduate Texts in Mathematics},
  volume={5},
  publisher={Springer-Verlag},
  place={New York},
  date={1998},
  pages={xii+314},
  isbn={0-387-98403-8},
  review={\MR {1712872 (2001j:18001)}},
}

\bib{mes72}{book}{
  label={Me},
  author={Messing, William},
  title={The crystals associated to Barsotti-Tate groups: with applications to abelian schemes},
  series={Lecture Notes in Mathematics},
  volume={264},
  publisher={Springer-Verlag},
  place={Berlin},
  date={1972},
  pages={iii+190},
  review={\MR {0347836 (50 \#337)}},
}

\bib{mil03}{article}{
  label={Mi},
  author={Miller, Haynes R.},
  title={Sheaves, gradings, and the exact functor theorem},
  date={2003-09-14},
  note={Currently available at 
     \url {http://www-math.mit.edu/~hrm/papers/papers.html}},
}

\bib{nau07}{article}{
  author={Naumann, Niko},
  title={The stack of formal groups in stable homotopy theory},
  date={2007-02-14},
  note={\href{http://www.arxiv.org/abs/math/0503308v2}{\texttt{arXiv:math/0503308v2 [math.AT]}}},
}

\bib{prib04}{thesis}{
  author={Pribble, Ethan},
  title={Algebraic stacks for stable homotopy theory and the algebraic chromatic convergence theorem},
  date={2004},
  organization={Northwestern University},
  type={Ph.D. thesis},
  note={Currently available at \url {http://pribblee.googlepages.com}},
}

\bib{qui69}{article}{
  author={Quillen, Daniel},
  title={On the formal group laws of unoriented and complex cobordism theory},
  journal={Bull. Amer. Math. Soc.},
  volume={75},
  date={1969},
  pages={1293\ndash 1298},
  review={\MR {0253350 (40 \#6565)}},
}

\bib{rav86}{book}{
  author={Ravenel, Douglas C.},
  title={Complex cobordism and stable homotopy groups of spheres},
  series={Pure and Applied Mathematics},
  volume={121},
  publisher={Academic Press Inc.},
  place={Orlando, FL},
  date={1986},
  pages={xx+413},
  isbn={0-12-583430-6},
  isbn={0-12-583431-4},
  review={\MR {860042 (87j:55003)}},
}

\bib{rav92}{book}{
  author={Ravenel, Douglas C.},
  title={Nilpotence and periodicity in stable homotopy theory},
  series={Annals of Mathematics Studies},
  volume={128},
  note={Appendix C by Jeff Smith},
  publisher={Princeton University Press},
  place={Princeton, NJ},
  date={1992},
  pages={xiv+209},
  isbn={0-691-02572-X},
  review={\MR {1192553 (94b:55015)}},
}

\bib{simp97}{article}{
  label={Si},
  author={Simpson, Carlos},
  title={Limits in $n$-categories},
  date={1997-08-07},
  note={\href {http://www.arxiv.org/abs/alg-geom/9708010v1}{\texttt
           {arXiv:alg-geom/9708010v1}}},
}

\bib{sm07}{thesis}{
  label={Sm},
  author={Smithling, Brian},
  title={On the moduli stack of commutative, $1$-parameter formal Lie groups},
  date={2007},
  organization={University of Chicago},
  type={Ph.D. thesis},
  note={\href {http://www.arxiv.org/abs/0708.3326v2}{\texttt {arXiv:0708.3326v2 [math.AG]}}},
}

\bib{tate67}{article}{
  author={Tate, J. T.},
  title={$p$-divisible groups.},
  conference={ title={Proc. Conf. Local Fields}, address={Driebergen}, date={1966}, },
  book={ publisher={Springer}, place={Berlin}, },
  date={1967},
  pages={158--183},
  review={\MR {0231827 (38 \#155)}},
}

\end{biblist}
\end{bibdiv}

\end{document}